\newtheorem{prop}{Proposition}[subsection]
\newtheorem{theo}[prop]{Theorem}
\newtheorem{coro}[prop]{Corollary}
\newtheorem{lemm}[prop]{Lemma}
\newtheorem{lemm2}{Lemma}[prop]
\theoremstyle{definition}
\newtheorem{conj*}{Conjecture}
\newtheorem{empt}[prop]{}
\newtheorem{defi}[prop]{Definition}
\newtheorem*{nota*}{Notation}
\theoremstyle{remark}
\newtheorem{rema}[prop]{Remarks}
\newtheorem{exam}[prop]{Examples}
\newtheorem{nota}[prop]{Notation}
\numberwithin{equation}{prop}
\newcommand{\riso}{ \overset{\sim}{\longrightarrow}\, }
\newcommand{\liso}{ \overset{\sim}{\longleftarrow}\, }
\newcommand{\Spec}{\mathrm{Spec}\,}
\newcommand{\Spf}{\mathrm{Spf}\,}
\renewcommand{\sp}{\mathrm{sp}}
\newcommand{\gr}{\mathrm{gr}}
\renewcommand{\AA}{{\mathcal{A}}}
\newcommand{\RR}{{\mathcal{R}}}
\newcommand{\FF}{{\mathcal{F}}}
\newcommand{\B}{{\mathcal{B}}}
\newcommand{\E}{{\mathcal{E}}}
\newcommand{\G}{{\mathcal{G}}}
\renewcommand{\H}{{\mathcal{H}}}
\newcommand{\M}{{\mathcal{M}}}
\newcommand{\D}{{\mathcal{D}}}
\newcommand{\I}{{\mathcal{I}}}
\newcommand{\PP}{{\mathcal{P}}}
\newcommand{\QQ}{{\mathcal{Q}}}
\renewcommand{\O}{{\mathcal{O}}}
\newcommand{\V}{\mathcal{V}}
\renewcommand{\S}{\mathcal{S}}
\newcommand{\T}{{\mathcal{T}}}
\newcommand{\Y}{\mathcal{Y}}
\newcommand{\ZZ}{\mathcal{Z}}
\newcommand{\X}{\mathfrak{X}}
\newcommand{\HH}{\mathfrak{H}}
\newcommand{\U}{\mathfrak{U}}
\newcommand{\Xan}{\X_K}
\newcommand{\Xlogan}{\X_K^{\#}}
\newcommand{\A}{\mathbb{A}}
\newcommand{\DD}{\mathbb{D}}
\renewcommand{\L}{\mathbb{L}}
\newcommand{\R}{\mathbb{R}}
\newcommand{\Q}{\mathbb{Q}}
\newcommand{\Z}{\mathbb{Z}}
\newcommand{\N}{\mathbb{N}}
\newcommand{\hdag}{  \phantom{}{^{\dag} }    }
\begin{document}
\title{Overholonomicity of overconvergent $F$-isocrystals over smooth varieties}
\author{Daniel Caro\footnote{The first author was supported by the european network TMR
\textit{Arithmetic Algebraic Geometry}},
Nobuo Tsuzuki\footnote{The second author was supported by Japan Society for the Promotion
of Science and Inamori Foundation}}

\date{March 14, 2008}

\maketitle

\begin{abstract}
  We prove the overholonomicity of overconvergent $F$-isocrystals over smooth varieties.
  This implies that the notions of overholonomicity and devissability in overconvergent $F$-isocrystals
  are equivalent. Then the overholonomicity is stable under tensor products. So, the overholonomicity
  gives a $p$-adic cohomology stable under Grothendieck's cohomological operations.
\end{abstract}

\tableofcontents

\section*{Introduction}


Let $\V$ be a complete discrete valuation ring of characteristic $0$, with
perfect residue field $k$ of characteristic $p>0$ and fractions field $K$.
In order to define a good category of $p$-adic coefficients over
$k$-varieties (i.e., separated schemes of finite type over $\Spec k$)
stable under cohomological operations, Berthelot introduced the
notion of arithmetic $\D$-modules and their cohomological operations
(see \cite{Be0}, \cite{Beintro2}, \cite{Be1}, \cite{Be2}). These
arithmetic $\D$-modules over $k$-varieties correspond to an
arithmetic analogue of the classical theory of $\D$-modules over
complex varieties. Also, he defined holonomic $F$-complexes of
arithmetic $\D$-modules and conjectured its stability under the following
Grothendieck's five operations: direct images (to be precise,
morphisms should be proper at the level of formal $\V$-schemes),
extraordinary direct images, inverse images, extraordinary inverse images,
tensor products (see \cite[5.3.6]{Beintro2}).
We checked that the conjecture on the stability of holonomicity under inverse images
implies the others ones (see \cite{caro_surholonome}).

In order to avoid these conjectures and to get a category of
$F$-complexes of arithmetic $\D$-modules which satisfies these
stability conditions, the first step was to introduced the notion of
overcoherence as follows: a coherent $F$-complex of arithmetic
$\D$-modules is overcoherent (in fact, the `$F$', i.e. the Frobenius
structure, is not necessary) if its coherence is stable under
extraordinary inverse image (see \cite{caro_surcoherent} for the definition
and \cite{caro_caract-surcoh} for this characterization). We checked that this notion of
overcoherence is stable under extraordinary inverse image, direct
image (by a proper morphism at the level of formal $\V$-schemes) and
local cohomological functors. This stability allows for instance to
define canonically overcoherent arithmetic $\D$-modules over
$k$-varieties (otherwise, we work on formal $\V$-schemes). To
improve the stability properties, we defined the category of
overholonomic $F$-complexes over $k$-varieties which is, roughly
speaking, the smallest subcategory of overcoherent $F$-complexes
such that it is moreover stable by dual functors (more precisely,
see the definition \cite[3.1]{caro_surholonome}).
We got the
stability of overholonomicity by direct images, extraordinary direct images,
extraordinary inverse images and inverse images.
Moreover, it is already known that this category of $p$-adic coefficients is not zero since
it contains unit-root overconvergent $F$-isocrystals (see
\cite{caro_surholonome}) and in particular the constant coefficient
associated to a $k$-variety (i.e., which gives for example the
corresponding Weil's zeta functions). Because an overholonomic
arithmetic $F$-$\D$-module is holonomic (which is not obvious),
these gave new examples of holonomicity. This was checked by descent
of the overholonomicity property (this descent is technically
possible thanks to its stability) using de Jong's desingularization
theorem. Now, it remains to check the stability of overholonomicity by (internal or
external) tensor products.

The second step was to construct an equivalence between the category
of overconvergent $F$-isocrystal over a smooth $k$-variety $Y$
(which is the category of $p$-adic coefficients associated to Berthelot's rigid
cohomology: see \cite{LeStum-book-isoc}) and the category of
overcoherent $F$-isocrystals on $Y$, where this last one is a
subcategory of arithmetic $F$-$\D$-modules over $Y$ (see
\cite{caro_devissge_surcoh} and \cite{caro-2006-surcoh-surcv} for
the general case). Next, we got from this equivalence the notion of
$F$-complexes of arithmetic $\D$-modules devissable in
overconvergent $F$-isocrystals. We proved first that overholonomic
(see \cite{caro_devissge_surcoh}) and next overcoherent (see
\cite{caro-2006-surcoh-surcv}) $F$-complexes of arithmetic
$\D$-modules are devissable in overcoherent $F$-isocrystals. Since
overcoherent $F$-isocrystals are stable under tensor products, we
established that $F$-complexes devissable in overcoherent
$F$-isocrystals are also stable under tensor products (see
\cite{caro-stab-prod-tens}).

The third step is to prove that the notions of overcoherence,
overholonomicity and devissability in overconvergent $F$-isocrystals
are identical. With what we have proved in the first and second steps,
the equality between the overholonomicity
and the devissability in overconvergent $F$-isocrystals
implies that the overholonomicity is stable under
Grothendieck's aforesaid five cohomological operations and is wide enough since it contains
overconvergent $F$-isocrystals on smooth $k$-varieties. Also, for this
purpose, it is enough to prove the overholonomicity of
overconvergent $F$-isocrystals on smooth $k$-varieties. Fortunately,
Kedlaya has just checked that Shiho's semistable reduction
conjecture is exact, i.e., that given an overconvergent
$F$-isocrystal on a smooth $k$-variety, one can pull back along a
suitable generically finite cover to obtain an isocrystal which
extends, with logarithmic singularities and nilpotent residues, to
some complete variety (see \cite{kedlaya-semistableI},
\cite{kedlaya-semistableII}, \cite{kedlaya-semistableIII} and at last
\cite{kedlaya-semistableIV}). Kedlaya's semistable reduction
theorem gives us a very important tool since we come down by
descent (indeed overholonomicity behaves well by proper
generically \'etale descent thanks to its stability by extraordinary
inverse images and direct images) to study the case of the overconvergent
$F$-isocrystals which extend with logarithmic singularities and
nilpotent residues to some complete variety. We began this study in
\cite{caro_log-iso-hol}. We proceed in this article and check the
overholonomicity of these log-extendable overconvergent
$F$-isocrystals, which finish the check of our third step. The
technical key point of this overholonomicity is a comparison theorem
between relative logarithmic rigid cohomology and rigid cohomology and above all, in a more
general essential context, the fact that both cohomologies are not so different. This
fundamental key point was checked by the second author and the fact
that this implies the overholonomicity of log-extendable
overconvergent $F$-isocrystals was checked by the first one.

Now, let us describe the contents.
Let $g : \X \rightarrow \T$ be a smooth morphism of smooth formal $\V$-schemes,
relative dimension pure of $d$,
let $\ZZ$ be a relatively strict normal crossing divisor of $\X$ over $\T$,
let $\Y$ be a complement of $\ZZ$ in $\X$,
let $D$ be a closed subscheme of $X$ and $U$ the complement of $D$ in $X$.
Let $\X^\# = (\X,\ZZ)$ be the logarithmic formal $\mathcal V$-scheme
with the logarithmic structure associated to $\ZZ$ and $u\,:\, \X ^\# \rightarrow \X$ be
the canonical morphism.

In the first chapter, we compare
logarithmic rigid cohomology and rigid cohomology with overconvergent coefficients in the relative situations.
Let $E$ be a log-isocrystal on $U^\#/\T_K$ overconvergent along $D$ (see the definition in \ref{logovcon}).
Suppose that, along each irreducible component of $Z$ which is not included in $D$,
(a) none of differences of exponents is a $p$-adic Liouville number and
(b') any exponent is neither a $p$-adic Liouville number nor a positive integer.
Then the natural comparison map
$\R g_{K*}(j_U^\dag\Omega_{\Xlogan/\T_K}^\bullet
   \otimes_{j_U^\dag\O_{]X[_\X}} E)
       \riso  \R g_{K*}(j_{Y \cap U}^\dag\Omega_{\Xan/\T_K}^\bullet
       \otimes_{j_{Y \cap U}^\dag\O_{]X[_\X}}
       j_{Y \cap U}^\dag E)$ is an isomorphism (see \ref{Nobuorigid}).
Let us consider the case where $g$ has a section which is identified with $\ZZ$
such that $Z \not\subset D$.
If one assumes (a) above and (b) none of exponents is a $p$-adic Liouville number,
then the difference is given by the complex which consists of overconvergent log-isocrystals
on the divisor (see \ref{section}).
In the second section we develop a notion of quasi-coherence in formal log-schemes,
which was studied by Berthelot in the case of formal schemes (see \cite{Beintro2}), and
cohomological operators such as direct images and
extraordinary inverse images by morphisms of smooth formal $\V$-log-schemes.
Furthermore, we translate this comparison in the language of arithmetic $\D$-modules
in the third section.

In the second chapter, we recall in the first section Kedlaya's semistable reduction theorem.
Let $\E$ be a coherent
$\D ^\dag _{\X^\#,\Q}$-module which is a locally projective
$\O_{\X,\Q}$-module of finite type which satisfies the conditions (a) and (b') above.
Then, using the comparison theorem of the first chapter, we check that the canonical morphism
$u _{+} (\E) \rightarrow \E (\hdag Z)$ is an isomorphism (see \ref{theorhoiso}).
This implies that the canonical morphism
$\Omega _{\X ^\#/\T,\Q} ^\bullet \otimes _{\O _{\X,\Q}}  \E
\rightarrow
\Omega _{\X /\T,\Q} ^\bullet \otimes _{\O _{\X,\Q}}
  \E (\hdag Z)$
is a quasi-isomorphism (see \ref{coro-theorhoiso}).
In the third section, we prove that if
(c) none of elements of $\mathrm{Exp}(\E) ^\gr $ (the group generated by all exponents of $\E$)
is a $p$-adic Liouville number,
then $u _{+} (\E)$ is overholonomic, which implies that $\E (\hdag Z)$
(the isocrystal on $Y$ overconvergent along $Z$ associated to $\E$) is overholonomic.
The principal reason why we need to replace the conditions (a) and (b') by the condition (c) is
because we need here something stable under duality and because the log-relative duality isomorphism is of the form
(see \cite[5.25.2]{caro_log-iso-hol} and \cite[5.22]{caro_log-iso-hol}):
$\DD _{\X } \circ u_+ (\E) \riso u_+ (\E ^\vee (-\ZZ))$, where ``$\DD _{\X }$''
means the dual as $\D ^\dag _{\X,\Q}$-module
and ``$\vee$'' is the dual as a convergent log-isocrystal
(e.g., even if $\E$ is a convergent log-$F$-isocrystal,
then unfortunately $\E^\vee(-\ZZ)$ have positive exponents).
Hence, using Kedlaya's semistable reduction theorem, we obtain by descent the overholonomicity of
overconvergent $F$-isocrystals on smooth $k$-varieties.
Thus, the notion of overholonomicity, overcoherence and devissability in overconvergent $F$-isocrystals
are the same. Also, the overholonomicity behaves as good as the holonomicity in the classical theory.
Finally, we extend some results of \cite{caro_courbe-nouveau}.
More precisely, let $\X$ be a smooth separated formal $\V$-scheme of dimension $1$,
$Z$ a divisor of $X$, $\Y:= \X \setminus T$ and $\E $ a complex of
$ F\text{-}D ^\mathrm{b} _\mathrm{coh} (\D ^\dag _{\X } (\hdag Z) _{\Q})$.
Then, firstly $\E$ is holonomic if and only if $\E$ is overholonomic. Secondly,
 if the restriction of $\E$ on $\Y$ is a holonomic $F$-$\D ^\dag _{\Y ,\Q}$-module,
 then $\E$ is a holonomic $F$-$\D ^\dag _{\X ,\Q}$-module.
Both results should be true in higher dimensions but are still conjectures.
Besides, this second conjecture implies the first one
and is the strongest Berthelot's conjecture on the stability of holonomicity (see \cite[5.3.6.D]{Beintro2}).

\begin{nota*}
\label{nota}
Let $\V$ be a complete valuation ring of characteristic $0$,
$k$ its residue field of characteristic $p>0$,
$K$ its fractions field with a multiplicative valuation $|\mbox{-}|$,
$\S := \Spf \V$.
From the section \ref{nota-1.2} we assume furthermore that $K$ is discrete,
$\pi$ is a uniformizer and the residue field $k$ is perfect.
We also fix $\sigma$ : $\V \rightarrow \V$ a lifting of the $a$th power Frobenius.

If $\X \rightarrow \T$ is a morphism of smooth formal schemes over $\S$ and
if $\ZZ$ is a relatively strict normal crossing divisor of $\X$ over $\T$,
we denote by $\X^\# = (\X,\ZZ)$ the smooth log-formal
$\V$-scheme whose underlying smooth formal $\V$-scheme is $\X$
and whose logarithmic structure is the canonical one induced by $\ZZ$.
To indicate the corresponding special fibers, we use roman letters, e.g.,
$X$, $Z$ and $T$ are the special fibers of $\X$, $\ZZ$ and $\T$.
Similarly, $X^\# = (X,Z)$ means the canonical log-scheme induced by any smooth scheme $X$
and any strict normal crossing divisor $Z$ of $X$.
We denote by $d_X$ or simply $d$ the dimension of $X$.
The subscript $\Q$ means that we have applied the functor $- \otimes _\Z \Q$.
Modules over a noncommutative ring are left modules, unless otherwise indicated.
\end{nota*}
\vspace{3mm}

\noindent
\textbf{Acknowledgment.}
Both authors heartily thank Mr. Horiba who supported the conference ``$p$-adic aspects in Arithmetic Geometry, Tambara"(June, 2007),
where we started this project.
The first author thanks the University of Paris-Sud for his excellent working conditions and Hiroshima University
for his nice hospitality in June, 2007.
The second author expresses his appreciation for the hospitality
of Department of Mathematics, Hiroshima University, where he has done this work.

\section{A comparison theorem between relative log-rigid cohomology and
relative rigid cohomology}

\subsection{Proof of the comparison theorem}

In this section we only suppose that $K$ is a complete field of characteristic $0$ under the valuation $| \mbox{-} |$
and the residue field $k$ of the integer ring $\V$ is of characteristic $p > 0$.
Let us fix several notation in rigid cohomology.
For a formal $\V$-scheme $\PP$ of finite type,
let $\PP_K$ be the Raynaud generic fiber of $\PP$ which is a
quasi-compact and quasi-separated rigid analytic $K$-space,
$\sp : \PP_K \rightarrow \PP$ the specialization map,
and $]T[_\PP = \sp^{-1}(T)$ the tube of a locally closed
subscheme $T$ in $P = \PP \times_{\Spf \V}\, \Spec\, k$.
For a morphism $u : \PP \rightarrow \QQ$, we denote by $u_K : \PP_K \rightarrow \QQ_K$
the morphism of rigid analytic spaces associated to $u$.
Let $X$ be a closed subscheme of $P$, $Z$ a closed subscheme
of $X$, and $Y$ the complement of $Z$ in $X$.
For any admissible open subset $V \subset  ]X[_\PP$,
we denote by
$\alpha_V : V \rightarrow ]X[_\PP$
the canonical inclusion.
Let $\AA$ be a sheaf of rings on $]X[_\PP$.
For an $\AA$-module $\H$, let
$j_Y^\dag\H = \underset{\underset{V}{\longrightarrow}}{\mathrm{lim}}\,
\alpha_{V*}(\H|_V)$ denote the sheaf of sections of $\H$ overconvergent along $Z$,
where $V$ runs over all strict neighborhoods of $]Y[_\PP$ in $]X[_\PP$.
The functor $j_Y^\dag$ is exact and the natural morphism
$\H \rightarrow j_Y^\dag\H$ is an epimorphism \cite[2.1.3]{Be}.
The sheaf $\underline{\Gamma}_{]Z[_\PP}^\dag(\H)$
of sections of $\H$ whose supports are included in $]Z[_\PP$ is defined by
the exact sequence
\begin{equation}
  \label{exact-trngl-rig}
       0\, \longrightarrow\, \underline{\Gamma}_{]Z[_\PP}^\dag(\H)\,
     \longrightarrow\, \H\, \longrightarrow\, j_Y^\dag\H\, \longrightarrow\, 0.
\end{equation}
Then $\underline{\Gamma}_{]Z[_\PP}^\dag$ is an exact functor by the snake lemma
\cite[2.1.6]{Be}.

We will fix some notation:
let $g : \X \rightarrow \T$ be a smooth morphism of smooth formal schemes over $\S$,
relative dimension pure of $d$,
let $\ZZ$ be a relatively strict normal crossing divisor of $\X$ over $\T$,
let $\Y$ be a complement of $\ZZ$ in $\X$,
let $D$ be a closed subscheme of $X$ and $\U$ the complement of $D$ in $\X$.
Let $\X^\# = (\X,\ZZ)$ be the logarithmic formal $\mathcal V$-scheme
with the logarithmic structure associated to $\ZZ$, and $\U^\#$
the restriction of $\X^\#$ on $\U$.
Let $\Xlogan = (\X_K,\ZZ _K)$ be the rigid analytic space endowed
with the logarithmic structure associated to $\ZZ _K$
and $\Omega_{\Xlogan/\T_K}^\bullet$ the de Rham complex of logarithmic
K\"ahler differential forms on $\Xlogan$.
Then the underlying analytic space of $\Xlogan$ is $]X[_\X = \Xan$ and
$\Omega_{\Xlogan/\T_K}^\bullet\, \cong\, \sp^*\Omega_{\X^\# /\T, \Q}^\bullet$.

We recall the definition of logarithmic connection
with the overconvergent condition
(\cite[4.2]{caro_log-iso-hol} and \cite[6.5.4]{kedlaya-semistableI}).
Since the condition is local,
we may suppose that $\X$ and $\T$ are affine and $D$ is defined by $f = 0$
in $X$ for $f \in \Gamma(\X, \O_\X)$.
Let $z_1, z_2, \cdots, z_d$ are relatively local coordinates of $\X$ over $\T$
such that the irreducible component $\ZZ_i$ of the relatively strict normal crossing
divisor $\ZZ = \cup_{i=1}^s\, \ZZ_i$ is defined by $z_i = 0$.
An integrable logarithmic connection
$\nabla : E \rightarrow
j_U^\dag\Omega_{\Xlogan/\T_K}^1 \otimes_{j_U^\dag\O_{]X[_\X}} E$
is overconvergent if there exist
a strict neighborhood $V$ of $]U[_\X$ in $]X[_\X$
and a locally free $\O_V$-module $\E$
furnished with an integrable logarithmic connection
$\nabla : \E \rightarrow (\Omega_{\Xlogan/\T_K}^1|_V)
\otimes_{\O_V} \E$ such that $j_U^\dag(\E, \nabla) = (E, \nabla)$, which
satisfies the following overconvergent condition :
for any $\xi \in |K^\times|_\Q \cap\, ]0, 1[$,
there exists an affinoid strict neighborhood $W \subset V$ of $]U[_\X$ in $]X[_\X$
such that
\begin{equation}
\label{logovcon}
       ||\underline{\partial}_\#^{[\underline{n}]}(e)||\xi^{|\underline{n}|} \rightarrow 0\, \, \,
       (\mathrm{as}\, \, |\underline{n}| \rightarrow \infty)
\end{equation}
for any section $e \in \Gamma(W, \E)$.
Here $||\, \mbox{-}\, ||$ is a Banach $\Gamma(W, \O_{]X[_\X})$-norm on
$\Gamma(W, \E)$,
$\partial_{\#i} = \nabla(z_i\frac{\partial}{\partial z_i})$ for $1 \leq i \leq s$,
$\partial_i = \nabla(\frac{\partial}{\partial z_i})$ for $s + 1 \leq i \leq d$,
and, $|\underline{n}| = n_1 + \cdots + n_d$,
$\underline{n}! = n_1! \cdots n_d!$ and $\underline{\partial}_{\#}^{[\underline{n}]}
= \frac{1}{\underline{n}!}
\left(\prod_{i = 1}^s\prod_{j= 0}^{n_i - 1}(\partial_{\#i} - j)\right)
\partial_{s+1}^{n_{s+1}} \cdots \partial_d^{n_d}$
for a multi-index $\underline{n} = (n_1, \cdots, n_d)$. $(E, \nabla)$
is called a log-isocrystal on $U^{\#}/\T_K$ overconvergent along $D$
(simply denote by $E$ and called an overconvergent log-isocrystal).

Let $(E, \nabla)$ be a log-isocrystals on $U^\#/\T_K$ overconvergent along $D$
and let $Z_i$ be an irreducible component $Z_i$ of $Z$
 which is not included in $D$.
The eigenvalues of the {\it residue} of $\nabla$ along $\ZZ_{iK}$
at the generic point of $\ZZ_{iK}$ is called
 ``exponent'' of  $E$ along $Z_i$
(for a definition of the residue, see for example \cite[2.3.9]{kedlaya-semistableI}).
This is related with the definition in \cite[1, sect. 6]{AB1}.
Any exponent is contained in $\Z_p$ by \ref{logovcon}.

Let $\I_\ZZ$ be a sheaf of ideals of $\ZZ$ in $\X$. Since $\I_\ZZ$ is invertible,
$\I_{\ZZ, \Q}$ is a coherent
$\D_{\X^\#, \Q}^\dag$-module which is an invertible $\O_{\X, \Q}$-module.
Hence, $I_{\ZZ, \Q} = \sp^*\I_{\ZZ, \Q}$ is a convergent isocrystal on $X/K$
with logarithmic poles along $Z$. Let $E$ be a log-isocrystal
on $U^{\#}/\T_K$ overconvergent along $D$.
For an integer $m$, we put
$$
        E(m\ZZ) = E \otimes_{j_U^\dag\O_{]X[_\X}}j_U^\dag I_{\ZZ, \Q}^{\otimes -m}.
$$
$E(m\ZZ)$ is an overconvergent log-isocrystal
and the exponents of $E(m\ZZ)$ is the exponents of $E$ minus $m$.
Then there is a natural commutative diagram
\begin{equation}
\label{finitepoles}
      \begin{array}{ccc}
          E &\displaystyle{\mathop{\longrightarrow}^\subset} &E(m\ZZ) \\
          = \downarrow \hspace*{3mm} & &\downarrow \\
          E &\longrightarrow &j_{Y \cap U}^\dag E
       \end{array}
\end{equation}
for any nonnegative integer $m$.

A $p$-adic integer $\alpha$ is a ``$p$-adic Liouville number'' if
the radius of convergence of formal power series,
either $\sum_{n \in \Z_{\geq 0}, n \ne \alpha}\, x^n/(n - \alpha)$
or $\sum_{n \in \Z_{\geq 0}, n \ne -\alpha}\, x^n/(n + \alpha)$, is less than $1$.
Note that (1) a $p$-adic integer which is an algebraic number
is not a $p$-adic Liouville number
and (2) a $p$-adic integer $\alpha$ is a $p$-adic Liouville number if and only if
so is $-\alpha$ (resp. $\alpha + m$ for any integer $m$).
For $p$-adic Liouville numbers, we refer to \cite[VI, 1]{dgs} and \cite[1.2]{BC1}.

\begin{theo}\label{Nobuorigid}
With the above notation, let $E$ be a log-isocrystal on $U^\#/\T_K$ overconvergent
along $D$. Suppose that
\begin{list}{}{}
\item[\mbox{\rm (a)}] none of differences of exponents
of $E$ is a $p$-adic Liouville number, and
\item[\mbox{\rm (b)}] none of exponents of $E$ is a $p$-adic Liouville number
\end{list}
\noindent
along each irreducible component $Z_i$ of $Z$ such that $Z_i \not\subset D$. Let $c$ be a nonnegative integer defined by
$$
      c = \max\{ e\, |\, e\, \mbox{is a positive integral exponent
      of $E$ along some irreducible
      component $Z_i$ of $Z$ such that $Z_i \not\subset D$} \}\, \cup\, \{ 0 \}
$$
Then the diagram \ref{finitepoles} induces an isomorphism
\begin{equation}
  \label{Nobuorigid-isocone}
\R g_{K*}\underline{\Gamma}^\dag _{]Z[  _\X}
(j_U^\dag\Omega_{\Xlogan/\T_K}^\bullet \otimes_{j_U^\dag\O_{]X[_\X}} E)\,
\cong\, \R g_{K*}\mathrm{Cone}\left(
j_U^\dag\Omega_{\Xlogan/\T_K}^\bullet \otimes_{j_U^\dag\O_{]X[_\X}} E
\rightarrow j_U^\dag\Omega_{\Xlogan/\T_K}^\bullet
   \otimes_{j_U^\dag\O_{]X[_\X}} E(m\mathcal Z)\right)[-1]
\end{equation}
for any $m \geq c$.
In particular, if none of exponents along each irreducible component $Z_i$
of $Z$ such that $Z_i \not\subset D$
is a positive integer, then the restriction induces an isomorphism
\begin{equation}
\label{Nobuorigid-iso}
\R g_{K*}(j_U^\dag\Omega_{\Xlogan/\T_K}^\bullet
   \otimes_{j_U^\dag\O_{]X[_\X}} E)
       \riso  \R g_{K*}(j_{Y \cap U}^\dag\Omega_{\Xan/\T_K}^\bullet
       \otimes_{j_{Y \cap U}^\dag\O_{]X[_\X}}
       j_{Y \cap U}^\dag E).
\end{equation}
\end{theo}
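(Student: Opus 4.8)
The plan is to reduce everything to a local, relative computation over $\T_K$ and then to a one-variable model situation, where the comparison between logarithmic and ordinary de Rham cohomology is governed by the classical analysis of $p$-adic differential equations with a regular singular point. Since the asserted isomorphism \eqref{Nobuorigid-isocone} is a statement about $\R g_{K*}$ of a map of complexes of sheaves on $]X[_\X$, and since the formation of $j^\dag$, $\underline{\Gamma}^\dag$, tensor products and the de Rham complex all commute with restriction to opens, I would first localize on $\X$ and $\T$: we may assume $\X$ and $\T$ affine, $D = \{f = 0\}$, and that $\ZZ = \bigcup_{i=1}^s \ZZ_i$ is cut out by relative coordinates $z_1, \dots, z_d$ as in the setup before \eqref{logovcon}. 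After localizing, the key reduction is dévissage along the irreducible components $Z_i$ not contained in $D$: writing the logarithmic de Rham complex as a (relative) Koszul-type total complex on the $\partial_{\#i}$, one can peel off one component at a time, so that it suffices to treat the case $s = 1$, i.e. a single smooth relative divisor $\ZZ_1 = \{z_1 = 0\}$, with $E$ an overconvergent log-isocrystal on it; the remaining coordinates $z_2, \dots, z_d$ and the base $\T_K$ play no essential role and can be suppressed (they only contribute an ordinary relative de Rham factor that is unchanged by the map in \eqref{finitepoles}).

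In the one-variable situation the statement becomes: for the $\O$-module with connection $E$ near $z_1 = 0$ with residue having eigenvalues (the exponents) $\alpha_1, \dots, \alpha_r \in \Z_p$, compare the logarithmic de Rham complex $[E \xrightarrow{\,z_1\partial_{z_1}\,} E]$ to $[E(m\ZZ_1) \xrightarrow{\,z_1\partial_{z_1}\,} E(m\ZZ_1)]$ after applying $\underline{\Gamma}^\dag_{]Z_1[}$, and show the cone is acyclic for $m \geq c$. Concretely, on sections supported on the tube $]Z_1[$ one may pass to formal expansions $\sum_{n} e_n z_1^n$ and analyze the operator $z_1\partial_{z_1}$, which acts on the $z_1^n$-component essentially as multiplication by $n$ plus the residue; hypothesis (a) guarantees that differences of exponents are invertible with controlled growth, and (b) that $z_1\partial_{z_1}$ itself is invertible on each graded piece $z_1^n E$ for $n < 0$ with radii of convergence under control — this is exactly where the ``not a $p$-adic Liouville number'' condition enters, via the Christol--Dwork--Robba estimates on $\sum x^n/(n+\alpha)$ cited in the paper. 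The twist by $I_{\ZZ_1,\Q}^{\otimes -m}$ shifts exponents down by $m$; choosing $m \geq c$ kills all the remaining non-negative integral exponents, so that on $\underline{\Gamma}^\dag_{]Z_1[}$ the operator $z_1\partial_{z_1}$ becomes bijective on the relevant range and the cone is acyclic. I would do this first for the ``model'' isocrystal $z_1^{\alpha}$ (rank one, constant residue $\alpha$) by explicit power-series inversion, then bootstrap to the general overconvergent $E$ by a filtration/approximation argument using that near the generic point of $Z_1$ the connection is, up to a suitable base change and a bounded operator, a direct sum of such models — this is the standard ``exponents are well-defined and the residue can be put in a good form'' input, for which I would invoke \cite[2.3.9]{kedlaya-semistableI} and the Turrittin-type normal form in that reference.

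For the final assertion \eqref{Nobuorigid-iso}, when no exponent along any $Z_i \not\subset D$ is a positive integer we have $c = 0$, so \eqref{Nobuorigid-isocone} already says $\R g_{K*}\underline{\Gamma}^\dag_{]Z[_\X}(j_U^\dag\Omega^\bullet_{\Xlogan/\T_K}\otimes E) \cong 0$; feeding this into the distinguished triangle
\begin{equation*}
\R g_{K*}\underline{\Gamma}^\dag_{]Z[_\X}(\cdots) \to \R g_{K*}(j_U^\dag\Omega^\bullet_{\Xlogan/\T_K}\otimes E) \to \R g_{K*} j_{Y\cap U}^\dag(j_U^\dag\Omega^\bullet_{\Xlogan/\T_K}\otimes E) \to
\end{equation*}
coming from \eqref{exact-trngl-rig}, together with the identification $j_{Y\cap U}^\dag(\Omega^\bullet_{\Xlogan/\T_K}) \cong j_{Y\cap U}^\dag(\Omega^\bullet_{\Xan/\T_K})$ (the log and ordinary differentials agree away from $Z$) and $j_{Y\cap U}^\dag j_U^\dag = j_{Y\cap U}^\dag$, yields the stated isomorphism.

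The main obstacle I anticipate is the passage from the rank-one model $z_1^{\alpha}$ to an arbitrary overconvergent log-isocrystal with non-integral exponent differences: one needs a version of the decomposition of a $p$-adic regular-singular connection into generalized eigencomponents for the residue that is uniform in strict neighborhoods and compatible with the overconvergence estimate \eqref{logovcon}, so that the power-series inversion argument still converges on a suitable affinoid $W$. Making the radius bookkeeping in \eqref{logovcon} interact correctly with the Liouville-number estimates — i.e. showing the inverse of $z_1\partial_{z_1}$ (or of $z_1\partial_{z_1}+$ residue) preserves overconvergence and not merely formal convergence — is the technical heart of the proof; everything else is dévissage and the standard formalism of $\underline{\Gamma}^\dag$ and $\R g_{K*}$.
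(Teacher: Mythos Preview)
Your strategy is essentially the paper's: localize, d\'evissage on the components of $Z$, reduce to a one-variable analysis of $z_1\partial_{z_1}$ near a smooth divisor, invert this operator on negative modes using the non-Liouville hypotheses, and read off \eqref{Nobuorigid-iso} from the distinguished triangle. Two points where the paper is more careful than your sketch:

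First, the line ``the remaining coordinates $z_2,\dots,z_d$ and the base $\T_K$ play no essential role and can be suppressed'' is where the real geometric work is. The connection in the transverse directions need not be trivial, so you cannot just factor it off. The paper handles this in two steps: a weak fibration argument (replacing $\X$ by $\widetilde{\X} = \ZZ_1 \times_{\widehat{\A}^{d-1}_\T} \X$ minus the off-diagonal, so that $\ZZ_1$ becomes a genuine \emph{section} over a new base), followed by a Gauss-Manin decomposition of the log de Rham complex into a double complex with $\widetilde\nabla_1 = \partial_{\#1}$ in one direction and $\widetilde\nabla_0$ (the transverse connection) in the other. Only after this does the problem become the one-variable statement (Proposition~\ref{section}) over a new relative base.

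Second, for the one-variable normal form the paper does not bootstrap from rank-one models $z_1^\alpha$. Instead it invokes Christol's transfer theorem over an integral affinoid base (Proposition~\ref{clog}): under (a), the connection on a disk $D(0,\xi^-)$ is isomorphic to $(\O_W\otimes_R L,\partial_{\#N})$ with $N$ a \emph{constant} $R$-linear endomorphism, so that the matrix $G$ has entries in $R$. The inversion of $\partial_\# + G$ on $(\RR_R(\eta)/\AA_R(\eta))^r$ is then a direct matrix computation (Lemma~\ref{estimate}). When exponent differences can be nonzero integers, one first passes to a submodule $\E'$ with shifted exponents (Proposition~\ref{intexp}) and checks the discrepancy is acyclic. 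Your filtration/approximation idea would likely work too, but the transfer-theorem route avoids having to control how the generalized eigenspaces glue across strict neighborhoods --- exactly the obstacle you flagged.
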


\begin{rema}
\begin{enumerate}
\item In fact, we will see in \ref{coro-theorhoiso} that the isomorphism \ref{Nobuorigid-iso} remains true without the functor $\R g_{K*}$.
But the first step towards this result is to establish \ref{Nobuorigid}.

\item Note that $ j_{Y \cap U}^\dag E$ is an isocrystal on $Y \cap U/\T_K$
overconvergent along $Z \cup D$
and the right handside of the isomorphism in the theorem above is a relative rigid cohomology
with respect to
the closed immersion $T \rightarrow \T$. It is independent of the choice
of $\X$ which is smooth over $\T$ around $U$ \cite[sect. 10]{descent}.
The left handside of \ref{Nobuorigid-isocone} in the theorem above
is regarded as a relative logarithmic rigid cohomology.

\item This type of comparison theorem between
$p$-adic cohomology with logarithmic poles and rigid cohomology was
studied in \cite[3.1]{BC2}, \cite[3.5.1]{Tsu1}, \cite[2.2.4 and 2.2.13]{Shi1} (see also the definition \cite[2.1.5]{Shi1})
and \cite[A.1]{BB1}. They suppose that $\FF$ is locally free
on the formal side or for \cite[2.2.4 and 2.2.13]{Shi1} it concerns the absolute case.
In the theorem above we relax this assumption
and suppose that $\FF$ is locally free only on the analytic side.

\item One can also prove the comparison theorem in the case $g$ is smooth around $U$
replacing \ref{wsfib} and \ref{fib} (the weak fibration theorem) by the strong
forms (the strong fibration theorem) with modifications.
\end{enumerate}
\end{rema}

\begin{rema}\label{remexp}
For a log-isocrystal $E$ on $U^{\#}/\T_K$ overconvergent along $D$, we put
a monoid $\mathrm{Exp}(E)$ (resp. an abelian group $\mathrm{Exp}(E)^{\mathrm{gr}}$)
which is generated by all exponents along irreducible components $Z_i$ of $Z$
such that $Z_i \not\subset D$.
$\mathrm{Exp}(E)$ (resp. $\mathrm{Exp}(E)^{\mathrm{gr}}$)
is included in $\mathbb Z_p$ and
does not depend on the choice of local coordinates.

\begin{enumerate}
\item\label{remexp-5}
Let $\X^\# = (\X, \ZZ)$ and ${\X'}^\# = (\X', \ZZ')$ be
smooth formal $\V$-schemes with relatively strict normal crossing divisors over $\T$,
let $\U, D, \U^\#, \U', D', \U'{^\#}$ as above,
and let $h : \X' \rightarrow \X$ be a morphism over $\T$ such that
$h^{-1}(D \cup Z) \subset D' \cup Z'$.
Suppose that $h$ induces a log-morphism $(h|_{\U'})^\# : {\U'}^\# \rightarrow \U^\#$.
Then the inverse image $h_K^{\#*}E$ is a log-isocrystal
on ${U'}^{\#}/\T_K$ overconvergent along $D'$ because
$h_K$ induces a log-morphism of rigid analytic spaces
between suitable strict neighborhoods by our assumption.
Suppose furthermore that
none of elements in $\mathrm{Exp}(E)$ (resp. $\mathrm{Exp}(E)^{\mathrm{gr}}$)
is a $p$-adic Liouville number.
Then the same holds for the inverse image $h_K^{\#*}E$.
Indeed, for a suitable choice of local coordiantes $z_i\, (1 \leq i \leq s)$
and $z_j'\, (1 \leq j \leq s')$ along normal crossing divisors
$\ZZ$ and $\ZZ'$ of $\X$ and $\X'$ respectively, we have
$z_i = u_i{z_1'}^{m_{i1}}\cdots{z_{s'}'}^{m_{is'}}$ locally at a generic point of $\ZZ'$.
Here $u_i$ is a unit of $\O_{\U'}$ and $m_{ij}$ is a nonnegative integer.
Since the residues of $E$ with respect to $Z_{i_1}$ and $Z_{i_2}$ commute
with each other by the integrability of the log-conenction
and $dz_i/z_i\, \equiv\, \sum_j\, m_{ij}dz_j'/z_j'\, (\mathrm{mod}\, \Omega_{\U'/\T}^1)$,
$\mathrm{Exp}(h_K^{\#*}E)$ is a submonoid of $\mathrm{Exp}(E)$.
(See \cite[6.2.5]{AB1}.)

Even if $\mathrm{Exp}(E)$ does not contained any positive integers,
it might happen that some exponent of inverse image $h_K^{\#*}E$
is a positive integer. If we denote by $\Q_{\geq 0}$
the monoid consisting of nonnegative rational numbers,
then $\mathrm{Exp}(E) \cap \Q_{\geq 0}$ is finitely generated as a monoid.
Hence, if one takes a sufficiently large integer $m$, then
$\mathrm{Exp}(E(m\ZZ))$ does not contained any positive rational numbers
and the same holds for any inverse image
$h_K^{\#*}E(m\ZZ)$ as above.

\item
\label{remexp-6}
Let $h^{\#} : {\X'}^\#\rightarrow \X^\#$ be a log-morphism
such that $h^{-1}(D) = D'$ and $h^{-1}(\ZZ) = \ZZ'$.
Suppose that the underlying morphism $h$ is finite \'etale.
Note that local parameters of $\X^\#$ becomes local parameters of ${\X'}^\#$.
Then,
for a log-isocrystal $E'$ on ${U'}^{\#}/\T_K$ overconvergent along $D'$,
$h_{K*}^{\#}E'$ is a log-isocrystal on $U^{\#}/\T_K$ overconvergent along $D$.
Moreover, for an irreducible component $Z_i$ of $Z$ such that $Z_i \not\subset D$,
the exponents of $h_{K*}^{\#}E'$
$Z_i$ coincide with the exponents of $E'$
along $h^{-1}(Z)$ (including multiplicities).
In particular, $\mathrm{Exp}(h_{K*}^{\#}E') = \mathrm{Exp}(E')$.
(See \cite[6.5.4]{AB1}.)
The first part easily follows from our geometrical situation and
we have $\mathrm{rank}_{j^\dag\O_{]\overline{X}[_\X}} h_{K*}^{\#}E'
= \mathrm{deg}(h)\mathrm{rank}_{j^\dag\O_{]\overline{X}'[_{\X'}}} E'$, where
$\mathrm{deg}(h)$ is the degree of the underlying morphism of $h$.
The second part is a problem only along the generic point
of $\ZZ_i$. We may assume that $Z$ is irreducible and does not included in $D$.
Let $(j^\dag \O_{]\overline{X}[_\X})_\ZZ^{\widehat{\, \, }}$ be a completion of
localization of $j^\dag \O_{]\overline{X}[_\X}$ along $\ZZ_K$.
Then the ring of global sections of
$(j^\dag \O_{]\overline{X}[_\X})_\ZZ^{\widehat{\, \, }}$ is isomorphic
to $K(\ZZ)[[z]]$,
where $z$ is a local coordinate of $\ZZ$ and $K(\ZZ)$ is the function field of $\ZZ$,
and the ring of global sections of
$(j^\dag \O_{]\overline{X}'[_{\X'}})_\ZZ^{\widehat{\, \, }}$ is isomorphic to a
direct sum of finite unramified extensions of $K(\ZZ)[[z]]$.
We may replace the residue field $K(\ZZ)$ of
$K(\ZZ)[[z]]$ by its algebraic closure $\overline{K(\ZZ)}$
since all exponents are contained in $\Z_p$
and invariant under any automorphism of $\overline{K(\ZZ)}$.
Hence, the corresponding extension to
$(j^\dag \O_{]\overline{X}'[_{\X'}})_\ZZ^{\widehat{\, \, }}$ is
a direct sum of $\mathrm{deg}(h)$ copies of $\overline{K(\ZZ)}[[z]]$.
Now our second assertion is clear.
\end{enumerate}
\end{rema}

\vspace{3mm}

First we prove a special case.

\begin{prop}\label{section} Under the hypothesis in \ref{Nobuorigid}, suppose
that $\ZZ$ is irreducible such that $Z \not\subset D$,
and that the composition $g \circ i : \ZZ \rightarrow \T$
of the closed immersion $i : \ZZ \rightarrow \X$ and
$g : \X \rightarrow \T$ is an isomorphism.
If we define $T \cap U = Z \cap U$ through the isomorphism $g \circ i : Z \rightarrow T$,
then $g_{K*}\nabla : g_{K*}(E(m\ZZ)/E)
\rightarrow g_{K*}(j_U^\dag\Omega_{\Xlogan/\T_K}^1
   \otimes_{j_U^\dag\O_{]X[_\X}} E(m\ZZ)/E)$
is a $j_{T \cap U}^\dag\O_{]T[_\T}$-homomorphism of locally free
$j_{T \cap U}^\dag\O_{]T[_\T}$-modules of finite type
and the natural morphism \ref{Nobuorigid-iso} induces an isomorphism
\begin{equation}
   \R g_{K*}\underline{\Gamma}^\dag_{]Z[_\X}(j_U^\dag\Omega_{\Xlogan/\T_K}^\bullet
   \otimes_{j_U^\dag\O_{]X[_\X}} E)\,
   \cong\, \left[g_{K*}(E(m\ZZ)/E)\,
   \displaystyle{\mathop{\longrightarrow}^{g_{K*}\nabla}}\,
   g_{K*}(j_U^\dag\Omega_{\Xlogan/\T_K}^1
   \otimes_{j_U^\dag\O_{]X[_\X}} E(m\ZZ)/E)\right][-1]
\end{equation}
for any $m \geq c$ in the derived category of complexes of
$j_{T \cap U}^\dag\mathcal O_{]T[_\T}$-modules.
Here $[A \rightarrow B]$ means a complex consisting of the terms of degree $0$ and
degree $1$.
\end{prop}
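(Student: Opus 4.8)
The plan is to localize, reduce both assertions to a single vanishing statement, and then isolate the one genuinely analytic input. Since $i\colon\ZZ\to\X$ is a divisor and $g\circ i$ is an isomorphism, $g$ is of relative dimension pure of $1$; working locally with a relative coordinate $z$ with $\ZZ=\{z=0\}$ (and $D=\{f=0\}$), the sheaf $j_U^\dag\Omega^1_{\Xlogan/\T_K}$ is invertible with basis $dz/z$, so $j_U^\dag\Omega^\bullet_{\Xlogan/\T_K}\otimes_{j_U^\dag\O_{]X[_\X}}E$ is the two-term complex $[E\to j_U^\dag\Omega^1_{\Xlogan/\T_K}\otimes E]$ with differential $\nabla$, and similarly with coefficients $E(m\ZZ)$ and $E(m\ZZ)/E$. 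For the first assertion: by construction $E(m\ZZ)/E$ is locally $z^{-m}\E/\E$, where $\E$ is a locally free model of $E$ on a strict neighbourhood of $]U[_\X$; it is annihilated by $z^m$ and carries the pole-order filtration with graded pieces $z^{-j}\E/z^{-j+1}\E\simeq\E/z\E$ for $1\le j\le m$, so its support is a thickening of $\ZZ$ that $g$ maps finite flatly of degree $m$ onto $\ZZ\simeq\T$, and $g_{K*}(E(m\ZZ)/E)$, and likewise $g_{K*}(j_U^\dag\Omega^1_{\Xlogan/\T_K}\otimes E(m\ZZ)/E)$, is a locally free $j_{T\cap U}^\dag\O_{]T[_\T}$-module of rank $m\cdot\mathrm{rank}(E)$ (overconvergence along $D$ becoming overconvergence along $T\setminus(T\cap U)$ under $g\circ i$). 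Moreover $\partial_{\#}=\nabla(z\tfrac{\partial}{\partial z})$ kills $g^{-1}\O_\T$, so $g_{K*}\nabla$ is $j_{T\cap U}^\dag\O_{]T[_\T}$-linear; and the finiteness of $g$ on that thickening gives $\R^{>0}g_{K*}(j_U^\dag\Omega^\bullet_{\Xlogan/\T_K}\otimes E(m\ZZ)/E)=0$, so $\R g_{K*}(j_U^\dag\Omega^\bullet_{\Xlogan/\T_K}\otimes E(m\ZZ)/E)$ is represented by $[g_{K*}(E(m\ZZ)/E)\to g_{K*}(j_U^\dag\Omega^1_{\Xlogan/\T_K}\otimes E(m\ZZ)/E)]$ in degrees $0$ and $1$.

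For the main isomorphism I would apply the exact functor $\underline{\Gamma}^\dag_{]Z[_\X}$, and then $\R g_{K*}$, to the termwise-exact sequence \ref{exact-trngl-rig} for the de Rham complex of $E$: this produces a distinguished triangle identifying $\R g_{K*}\underline{\Gamma}^\dag_{]Z[_\X}(j_U^\dag\Omega^\bullet_{\Xlogan/\T_K}\otimes E)$ with $\mathrm{Cone}(\R g_{K*}(j_U^\dag\Omega^\bullet_{\Xlogan/\T_K}\otimes E)\to\R g_{K*}(j_{Y\cap U}^\dag\Omega^\bullet_{\Xan/\T_K}\otimes j_{Y\cap U}^\dag E))[-1]$, the map here being exactly \ref{Nobuorigid-iso}. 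By diagram \ref{finitepoles} this map factors through $\R g_{K*}(j_U^\dag\Omega^\bullet_{\Xlogan/\T_K}\otimes E(m\ZZ))$, and the cone of the first arrow (coming from the injection $E\hookrightarrow E(m\ZZ)$) is $\R g_{K*}(j_U^\dag\Omega^\bullet_{\Xlogan/\T_K}\otimes E(m\ZZ)/E)$, already computed. So the proposition reduces to showing that the second arrow, $\R g_{K*}(j_U^\dag\Omega^\bullet_{\Xlogan/\T_K}\otimes E(m\ZZ))\to\R g_{K*}(j_{Y\cap U}^\dag\Omega^\bullet_{\Xan/\T_K}\otimes j_{Y\cap U}^\dag E)$, is an isomorphism for $m\ge c$ --- equivalently, that $\R g_{K*}\underline{\Gamma}^\dag_{]Z[_\X}(j_U^\dag\Omega^\bullet_{\Xlogan/\T_K}\otimes E(m\ZZ))=0$ --- for then the cone of \ref{Nobuorigid-iso} is the cone of its first arrow, namely $\R g_{K*}(j_U^\dag\Omega^\bullet_{\Xlogan/\T_K}\otimes E(m\ZZ)/E)$, whose $[-1]$-shift is the right-hand side in degrees $1$ and $2$.

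This last vanishing is the crux, and where I expect the real work to lie. Because $g\circ i$ is an isomorphism, a neighbourhood of $\ZZ$ is a relative disc over $\T$ with $\ZZ$ the zero section; completing along $\ZZ$ exhibits $j_{Y\cap U}^\dag E(m\ZZ)/E(m\ZZ)$ as the module of overconvergent principal parts of $E$ along $\ZZ$ of pole order $>m$, on which $\partial_{\#}$ acts triangularly for the pole-order filtration, with diagonal block $R-(m+n)$ on the $n$-th graded piece, $R$ the residue of $E$ (whose eigenvalues are the exponents). For $n\ge 1$ and $m\ge c$ one has $m+n>c$, so $R-(m+n)$ is invertible; conditions (a) and (b) are precisely what is needed to bound the growth in $n$ of $(R-(m+n))^{-1}$, so that the Neumann series inverting $\partial_{\#}$ converges in the overconvergent topology --- the relative form of the classical estimates of Dwork--Robba and Christol--Mebkhout, cf. \cite{dgs}, \cite{BC1}, \cite{AB1}. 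Hence this de Rham complex is acyclic after $\R g_{K*}$, which gives the vanishing, and reassembling the triangles yields the stated isomorphism of complexes of $j_{T\cap U}^\dag\O_{]T[_\T}$-modules. The main obstacle is thus to make the Dwork--Robba estimate uniform over the base $\T_K$ rather than over a single point, and to track how a positive integral exponent forces the exact threshold $c$; the rest is bookkeeping with the functors $j_U^\dag$, $j_{Y\cap U}^\dag$ and $\underline{\Gamma}^\dag_{]Z[_\X}$ and with the finiteness of $g$ over infinitesimal neighbourhoods of the section $\ZZ$.
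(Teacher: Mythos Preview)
Your outline is correct and follows essentially the same architecture as the paper: reduce to the vanishing $\R g_{K*}\underline{\Gamma}^\dag_{]Z[_\X}(j_U^\dag\Omega^\bullet_{\Xlogan/\T_K}\otimes E(m\ZZ))=0$ for $m\ge c$ (equivalently, reduce to $c=0$), identify the quotient part as the two-term complex on $\T_K$, and then prove the vanishing by inverting $\partial_\#$ on Laurent tails using the non-Liouville hypothesis. Your treatment of the first assertion (local freeness and $\O_{\T_K}$-linearity) via the pole-order filtration is fine; the paper instead uses the cleaner observation $E(\ZZ)/E\cong i_{K*}i_K^*E(\ZZ)$ together with $g\circ i$ being an isomorphism, but the content is the same.

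Where the paper does substantially more than your sketch is in the vanishing step. Your ``Neumann series'' for $\partial_\#$ on principal parts is exactly the right idea, but making it converge uniformly over the affinoid base requires more than the raw Dwork--Robba estimate. The paper first performs a careful rigid-analytic reduction (introducing approximate support functors $\underline{\Gamma}^\dag_{]Z[_\X,\eta}$ and using the weak fibration theorem and Kiehl's Theorems A and B to reduce to a computation on $\AA_R(\eta)$ and $\RR_R(\eta)$ for an affinoid base $R$), and then invokes a relative version of Christol's transfer theorem (their Proposition~1.1.7, extending \cite{transfer} and \cite{BC1} to integral affinoid bases via Noether normalization) to change basis so that the connection matrix $G$ becomes \emph{constant} in $z$. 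Only then is the inverse of $\partial_\#+G$ on $(\RR_R(\eta)/\AA_R(\eta))^r$ given by the explicit series $-\sum_j\varphi_G(j)^{-1}g_j(G)\,\underline{a}_jz^{-j}$, whose convergence follows directly from the non-Liouville condition. Your triangular picture with diagonal blocks $R-(m+n)$ is what one sees \emph{after} this transfer; without it the off-diagonal contributions from the higher Taylor coefficients of $G(z)$ must be controlled, and that is not automatic. Finally, the transfer theorem as stated needs the differences of exponents to avoid nonzero integers; the paper handles the general case (condition (a) only) by a separate shearing argument (their Proposition~1.1.14), replacing $\E$ by a submodule $\E'$ with improved exponents and checking that the discrepancy is acyclic. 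You should flag both of these ingredients --- the relative Christol transfer and the shearing step --- as the real content hidden in your last paragraph.
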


\vspace{3mm}

We will see, in \ref{overconv}, the overconvergence of the induced
Gauss-Manin connection
on $g_{K \ast}(E(m\ZZ)/E)$ in the relative case.
An example such that the cokernel of
$g_{K*}\nabla : g_{K*}(E(m\ZZ)/E)
\rightarrow g_{K*}(j_U^\dag\Omega_{\Xlogan/\T_K}^1
   \otimes_{j_U^\dag\O_{]X[_\X}} E(m\ZZ)/E)$ is not locally free
is also given in \ref{example}

\vspace{3mm}

\begin{proof} We divide the proof of \ref{section} into 7 parts.

\vspace{3mm}

\noindent
$0^\circ$ {\it Reduce to the case where none of exponents of $E$
along $\ZZ$ is a positive integer, thai is, $c = 0$.}

We shall prove that $\R^q g_{K *}(E(\ZZ)/E) =  0$ for $q \ne 0$
and the locally freeness of $g_{K*}(E(\ZZ)/E)$.
Since $i^{-1}(X \setminus U) = Z \setminus U$ as underlying topological spaces,
$i_K^*E(\ZZ) = j_{Z \cap U}^\dag\O_{]Z[_\ZZ}
\otimes_{i_K^{-1}j_U^\dag\O_{]X[_\X}} i_K^{-1}E(\ZZ)$
is a locally free $j_{Z \cap U}^\dag\O_{]Z[_\ZZ}$-module and
the adjoint gives an isomorphism $i_{K *}i_K^*E(\ZZ)\, \cong\, E(\ZZ)/E$.
Because $i$ is a closed immersion, $i_K : ]Z[_\ZZ \rightarrow ]X[_\X$ is an affinoid morphism.
Hence $\R i_{K *}\M = i_{K *}\M$ for any coherent
$j_{Z \cap U}^\dag\O_{]Z[_\ZZ}$-module $\M$ by $i^{-1}(X \setminus U) = Z \setminus U$
\cite[5.2.2]{descent}. Since $g\circ i$ is an isomorphism, we have
$$
    \R g_{K*}(E(\ZZ)/E) = \R g_{K*}(i_{K*}i_K^*E(\ZZ))
    = \R g_{K*}\R i_{K*}i_K^*E(\ZZ) = \R (g\circ i)_{K *}i_K^*E(\ZZ)
    = (g\circ i)_{K *}i_K^*E(\ZZ).
$$
and the two assertions above.
Therefore, we show, for $m \geq 0$, $\R^q g_{K *}(E(m\ZZ)/E) =  0$ for $q \ne 0$
and $g_{K*}(E(m\ZZ)/E)$ is a locally free $j_{T \cap U}^\dag\O_{]T[_\T}$-module
of finite type by induction on $m$.

The commutative diagram \ref{finitepoles} induces a triangle
$$
    \begin{array}{c}
       \R g_{K*}\mathrm{Cone}\left(j_U^\dag\Omega_{\Xlogan/\T_K}^\bullet
   \otimes_{j_U^\dag\O_{]X[_\X}} E \rightarrow
       j_U^\dag\Omega_{\Xlogan/\T_K}^\bullet
   \otimes_{j_U^\dag\O_{]X[_\X}} E(m\ZZ)\right)[-1] \\
   +1 \swarrow \hspace*{30mm} \nwarrow \\
       \R g_{K*}\underline{\Gamma}^\dag_{]Z[_\X}
       (j_U^\dag\Omega_{\Xlogan/\T_K}^\bullet \otimes_{j_U^\dag\O_{]X[_\X}} E)\,
   \rightarrow  \R g_{K*}\underline{\Gamma}^\dag_{]Z[_\X}
   (j_U^\dag\Omega_{\Xlogan/\T_K}^\bullet
   \otimes_{j_U^\dag\O_{]X[_\X}} E(m\ZZ))
   \end{array}
$$
for any $m \geq 0$.
If we prove the vanishing
$\R g_{K*}\underline{\Gamma}^\dag_{]Z[_\X}(j_U^\dag\Omega_{\Xlogan/\T_K}^\bullet
       \otimes_{j_U^\dag\O_{]X[_\X}} E) = 0$ for $c = 0$,
then the triangle above induces the desired isomorphism.
Hence, we may assume $m = c = 0$ and we shall prove the vanishing.

\vspace{3mm}

\noindent
$1^\circ$ {\it Local problem on $X$ and $U$.}

By the \v{C}ech spectral sequences associated to a finite open covering $\{ \X_i \}$ of $\X$
(resp. a finite open covering $\{ \U_{ij} \}$ of each $\X_i \cap \U$) \cite[4.1.3]{Be0}
\cite[8.3.3]{descent}, the vanishing is local on $X$ and $U$. Since the vanishing
of $\R g_{K*}\underline{\Gamma}^\dag_{]Z[_\X}(j_U^\dag\Omega_{\Xlogan/\T_K}^\bullet
   \otimes_{j_U^\dag\O_{]X[_\X}} E)$ is trivial
in the case where $Z = \emptyset$, we may assume that $\X$ is affine,
$D$ is defined by a single equation $f = 0$ in $X$ for some $f \in \Gamma(\X, \O_\X)$,
and there are coordinates $z$ of $\X$ over $\T$
such that $\ZZ$ is defined by $z = 0$ in $\X$. Indeed, it is enough to take a certain covering
consisting of $\X \setminus \ZZ$ and a covering $\ZZ$.

\vspace*{3mm}

\noindent
$2^\circ$ {\it Reduction to the local case by rigid analytic geometry.}

Let us add some notation. Let us put
$]U[_{\X, \lambda} = \{ x \in ]X[_{\X}\, |\, |f(x)| \geq \lambda \}$
(resp. $]Y[_{\X, \lambda} = \{ x \in ]X[_{\X}\, |\, |z(x)| \geq \lambda \}$,
resp. $]Z \cap U[_{\ZZ, \lambda} = \{ x \in ]Z[_{\ZZ}\, |\, |\overline{f}(x)| \geq \lambda \}$,
resp. $[Z]_{\X, \lambda} = \{ x \in ]Z[_{\X}\, |\, |z(x)| \leq \lambda \}$)
for $\lambda \in |K^\times|_\Q \cap ]0, 1[$,
where $\overline{f}$ is the reduction of $f$ in $\Gamma(\ZZ, \O_{\ZZ})$.
We define $]T \cap U[_{\T, \lambda} = ]Z \cap U[_{\ZZ, \lambda}$
by the idetification through $g \circ i$.
Note that the set
$\{ ]U[_{\X, \lambda} \}_{\lambda \in \mathbb |K^\times|_\mathbb Q \cap ]0, 1[}$
forms a fundamental system of strict neighborhoods of $]U[_\X$ in $]X[_\X$.
Let $\alpha_V : V \rightarrow ]X[_\X$ denote the canonical morphism
for admissible open sets $V$ in $]X[_\X$.

Take $\nu \in |K^\times|_\Q \cap\, ]0, 1[$ such that
there is a locally free $\O_{]U[_{\X, \nu}}$-module $\E$
endowed with a logarithmic connection
$\nabla : \E \rightarrow (\Omega_{\Xlogan/\T_K}^1|_{]U[_{\X, \nu}})
\otimes_{\O_{]U[_{\X, \nu}}} \E$ which satisfies the overconvergent
condition \ref{logovcon}. Hence, there exist
a strictly increasing sequence $\underline{\xi} = (\xi_l)$
in $|K^\times|_\Q \cap\, ]0, 1[$
with $\xi_l \rightarrow 1^-$ as $l \rightarrow \infty$
and an increasing sequence $\underline{\lambda} = (\lambda_l)$
in $|K^\times|_\Q \cap\, [\nu, 1[$ such that, for any $l$,
\begin{equation}
\label{logoct}
       ||\partial_\#^{[n]}(e)||\xi_l^n \rightarrow 0\, \, \,
       (\mathrm{as}\, \, n \rightarrow \infty)
\end{equation}
for any section $e \in \Gamma(]U[_{\X, \lambda_l}, \mathcal E)$.
Here $\partial_{\#} = \nabla(z\frac{d}{dz})$ and
$\partial_{\#}^{[l]} = \frac{1}{l!}\partial_{\#}^l$.

Let $\AA$ be a sheaf of rings on $]X[_\X$.
Let $\eta \in |K^\times|_\Q \cap\, ]0, 1[$. We define a functor
$\underline{\Gamma}_{]Z[_\X, \eta}^\dag$
between the category of $\AA$-modules by the exact sequence
\begin{equation}
\label{suppZ}
      0\, \longrightarrow\, \underline{\Gamma}_{]Z[_\X, \eta}^\dag(\H)\, \longrightarrow\,
      \H\, \longrightarrow \underset{\mu \rightarrow \eta^-}{\mathrm{lim}}\,
    \alpha_{]Y[_{\X, \mu}*}(\H|_{]Y[_{\X, \mu}})\,
    \longrightarrow\, 0
\end{equation}
for any $\AA$-module $\H$. Here the morphism $\H\,
    \rightarrow\, \underset{\mu \rightarrow \eta^-}{\mathrm{lim}}\,
    \alpha_{]Y[_{\X, \mu}*}(\H|_{]Y[_{\X, \mu}})$
    is an epimorphism by the same reason
of the epimorphism $\H \rightarrow j_Y^\dag\H$.
One can easily see that $\underline{\Gamma}_{]Z[_\X, \eta}^\dag(\H)|_{]Y[_{\X, \eta}} = 0$
and $\underline{\Gamma}_{]Z[_\X, \eta}^\dag$ is an exact functor by the snake lemma.
For $\xi \in |K^\times|_\Q \cap [\eta, 1[$,
the restriction induces a morphism
$$
       \underline{\Gamma}_{]Z[_\X, \eta}^\dag(\H) \rightarrow
       \underline{\Gamma}_{]Z[_\X, \xi}^\dag(\H)
$$
of $\AA$-modules. By definition we have

\begin{prop}\label{supp} With the notation as above, the inductive system
induces an isomorphism
$$
\underset{\eta \rightarrow 1^-}{\mathrm{lim}}\,
\underline{\Gamma}_{]Z[_\X, \eta}^\dag(\H)\,
\cong\, \underline{\Gamma}_{]Z[_\X}^\dag(\H).
$$
\end{prop}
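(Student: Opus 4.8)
The plan is to obtain the isomorphism formally by passing to the filtered colimit $\varinjlim_{\eta \to 1^-}$ in the defining short exact sequence \ref{suppZ}. The first bookkeeping step is to make the structure maps explicit. For $\eta \leq \xi$ in $|K^\times|_\Q \cap\, ]0,1[$ the index set $\{\mu : \mu < \eta\}$ is contained in $\{\mu : \mu < \xi\}$, so there is a canonical map
$$
\varinjlim_{\mu \to \eta^-}\, \alpha_{]Y[_{\X, \mu}*}(\H|_{]Y[_{\X, \mu}})\, \longrightarrow\, \varinjlim_{\mu \to \xi^-}\, \alpha_{]Y[_{\X, \mu}*}(\H|_{]Y[_{\X, \mu}})
$$
compatible with the epimorphisms out of $\H$, and the induced morphism on kernels is exactly the transition morphism $\underline{\Gamma}_{]Z[_\X, \eta}^\dag(\H) \to \underline{\Gamma}_{]Z[_\X, \xi}^\dag(\H)$ of the statement. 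Similarly, since $\{\, ]Y[_{\X, \mu}\, \}_{\mu \to 1^-}$ is a fundamental system of strict neighborhoods of $]Y[_\X$ in $]X[_\X$, there is a canonical map from $\varinjlim_{\mu \to \eta^-} \alpha_{]Y[_{\X, \mu}*}(\H|_{]Y[_{\X, \mu}})$ to $j_Y^\dag\H$ compatible with the maps from $\H$; consequently each $\underline{\Gamma}_{]Z[_\X, \eta}^\dag(\H)$ is identified with a subsheaf of $\underline{\Gamma}_{]Z[_\X}^\dag(\H)$, these inclusions are compatible with the transition maps, and they yield a morphism $\varinjlim_\eta \underline{\Gamma}_{]Z[_\X, \eta}^\dag(\H) \to \underline{\Gamma}_{]Z[_\X}^\dag(\H)$, which is the map I aim to prove is an isomorphism.

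Next I would apply $\varinjlim_{\eta \to 1^-}$ to \ref{suppZ}. Filtered colimits of sheaves of abelian groups are exact, and $\varinjlim_\eta \H = \H$, so one gets a short exact sequence
$$
0\, \longrightarrow\, \varinjlim_{\eta \to 1^-} \underline{\Gamma}_{]Z[_\X, \eta}^\dag(\H)\, \longrightarrow\, \H\, \longrightarrow\, \varinjlim_{\eta \to 1^-} \varinjlim_{\mu \to \eta^-}\, \alpha_{]Y[_{\X, \mu}*}(\H|_{]Y[_{\X, \mu}})\, \longrightarrow\, 0.
$$
The iterated colimit on the right is $j_Y^\dag\H$: the projection $(\eta, \mu) \mapsto \mu$ from the poset $\{\, (\eta, \mu) \in (|K^\times|_\Q \cap\, ]0,1[)^2 : \mu < \eta\, \}$ to $|K^\times|_\Q \cap\, ]0,1[$ is cofinal, so the double colimit collapses to $\varinjlim_{\mu \to 1^-} \alpha_{]Y[_{\X, \mu}*}(\H|_{]Y[_{\X, \mu}})$, which equals $j_Y^\dag\H$ by cofinality of $\{\, ]Y[_{\X, \mu}\, \}$ among strict neighborhoods, and the identification is compatible with the canonical maps from $\H$. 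Substituting, the colimit sequence becomes precisely the exact sequence \ref{exact-trngl-rig} (with $\PP = \X$). Since both short exact sequences exhibit their first terms as the kernel of one and the same morphism $\H \to j_Y^\dag\H$, comparing kernels gives $\varinjlim_\eta \underline{\Gamma}_{]Z[_\X, \eta}^\dag(\H)\, \riso\, \underline{\Gamma}_{]Z[_\X}^\dag(\H)$, and by the compatibilities recorded above this is the morphism induced by the inductive system.

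The argument is purely formal — as the proposition indicates, the statement holds essentially by definition — so I do not expect any real obstacle. The only points that deserve a line of justification are the exactness of filtered colimits of abelian sheaves (needed to pass the short exact sequence \ref{suppZ} through the colimit) and the cofinality bookkeeping that turns the double colimit over $\{\, \mu < \eta < 1\, \}$ into the single colimit defining the functor $j_Y^\dag$.
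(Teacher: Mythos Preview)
Your argument is correct and is precisely the ``by definition'' verification the paper has in mind: the paper gives no further proof beyond the words ``By definition we have,'' and your unpacking --- passing the defining exact sequence \ref{suppZ} through the filtered colimit, using exactness of filtered colimits of abelian sheaves, and collapsing the double colimit to $j_Y^\dag\H$ by cofinality --- is exactly what that phrase encodes.
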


\begin{prop}\label{suppdag} Let $\lambda \in |K^\times|_\Q \cap ]0, 1[$.
\begin{enumerate}
\item The functor $\underline{\Gamma}_{]Z[_\X, \eta}^\dag$ commutes with filtered inductive limites. Also, for any $\AA$-module $\H$,
the natural morphism
$$
      \alpha_{]U[_{\X, \lambda}*}\left(
      \underline{\Gamma}_{]Z[_\X, \eta}^\dag(\H)|_{]U[_{\X, \lambda}}\right) \rightarrow
      \underline{\Gamma}_{]Z[_\X, \eta}^\dag
      \left(\alpha_{]U[_{\X, \lambda}*}(\H|_{]U[_{\X, \lambda}})\right)
$$
is an isomorphism. Moreover,
$j_U^\dag\underline{\Gamma}_{]Z[_\X, \eta}^\dag =
\underline{\Gamma}_{]Z[_\X, \eta}^\dag j_U^\dag$.
\item For any coherent $\O_{]U[_{\X, \lambda}}$-module $\H _\lambda$
and any $q \geq 1$ we have
$\R^q\alpha_{]U[_{\X, \lambda}*}\left(
      \underline{\Gamma}_{]Z[_\X, \eta}^\dag
      (\alpha_{]U[_{\X, \lambda}*}\H _\lambda)|_{]U[_{\X, \lambda}}\right) = 0$.
\end{enumerate}
\end{prop}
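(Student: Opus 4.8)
The strategy is to deduce both statements from the defining exact sequence \ref{suppZ}, from the exactness of the functor $\underline{\Gamma}^\dag_{]Z[_\X,\eta}$, and from two elementary facts. First, in the situation of $1^\circ$--$2^\circ$ the tubes $]X[_\X\,(=\X_K)$, $]U[_{\X,\lambda}$, $]Y[_{\X,\mu}$ and $W_{\mu,\lambda}:=\,]Y[_{\X,\mu}\cap\,]U[_{\X,\lambda}$ are affinoid, hence quasi-compact and quasi-separated, so that for any open immersion $\alpha$ among them the functor $\alpha_*$ (and $\R\alpha_*$) commutes with filtered inductive limits and restriction along $\alpha$ is exact and commutes with all limits. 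Second, the base change formula $(\alpha_{]Y[_{\X,\mu}*}\G)|_{]U[_{\X,\lambda}}\cong\beta_{\mu,\lambda*}(\G|_{W_{\mu,\lambda}})$ holds, where $\beta_{\mu,\lambda}:W_{\mu,\lambda}\hookrightarrow\,]U[_{\X,\lambda}$ and, as in the text, $\alpha_{?}$ denotes the canonical open immersion into $]X[_\X$.

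For the first assertion of (1): $\alpha_{]Y[_{\X,\mu}*}$ and the restriction to $]Y[_{\X,\mu}$ commute with filtered inductive limits, hence so does $\varinjlim_{\mu\to\eta^-}\alpha_{]Y[_{\X,\mu}*}(\,\cdot\,|_{]Y[_{\X,\mu}})$, and, filtered inductive limits being exact on sheaves, so does its kernel $\underline{\Gamma}^\dag_{]Z[_\X,\eta}$. For the displayed isomorphism I would restrict \ref{suppZ} to $]U[_{\X,\lambda}$ (restriction is exact) and then apply the left exact functor $\alpha_{]U[_{\X,\lambda}*}$; this presents both sides as the kernel of a morphism out of $\alpha_{]U[_{\X,\lambda}*}(\H|_{]U[_{\X,\lambda}})$, and the two target objects coincide because, using the base change formula, the commutation of $\alpha_{]U[_{\X,\lambda}*}$ with filtered inductive limits, and the transitivity of direct images, both equal $\varinjlim_{\mu\to\eta^-}\alpha_{W_{\mu,\lambda}*}(\H|_{W_{\mu,\lambda}})$. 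Finally, since $\{]U[_{\X,\lambda}\}_\lambda$ is a fundamental system of strict neighbourhoods of $]U[_\X$, one has $j_U^\dag\H=\varinjlim_{\lambda\to1^-}\alpha_{]U[_{\X,\lambda}*}(\H|_{]U[_{\X,\lambda}})$, and the identity $j_U^\dag\underline{\Gamma}^\dag_{]Z[_\X,\eta}=\underline{\Gamma}^\dag_{]Z[_\X,\eta}j_U^\dag$ follows by applying the displayed isomorphism term by term and using that $\underline{\Gamma}^\dag_{]Z[_\X,\eta}$ commutes with filtered inductive limits.

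For (2), put $\G_\lambda:=\underline{\Gamma}^\dag_{]Z[_\X,\eta}(\alpha_{]U[_{\X,\lambda}*}\H_\lambda)|_{]U[_{\X,\lambda}}$. Restricting \ref{suppZ} for $\H=\alpha_{]U[_{\X,\lambda}*}\H_\lambda$ to $]U[_{\X,\lambda}$ and applying the base change formula yields on $]U[_{\X,\lambda}$ a short exact sequence $0\to\G_\lambda\to\H_\lambda\to\QQ_\lambda\to0$ with $\QQ_\lambda:=\varinjlim_{\mu\to\eta^-}\beta_{\mu,\lambda*}(\H_\lambda|_{W_{\mu,\lambda}})$. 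Now (i) $\R^q\alpha_{]U[_{\X,\lambda}*}\H_\lambda=0$ for $q\geq1$, since for an affinoid $V\subset\,]X[_\X$ the intersection $V\cap\,]U[_{\X,\lambda}$ is affinoid and coherent modules are acyclic on affinoids (Tate); (ii) $\R^q\alpha_{]U[_{\X,\lambda}*}\QQ_\lambda=0$ for $q\geq1$, because $\R^q\alpha_{]U[_{\X,\lambda}*}$ commutes with the filtered inductive limit and $\R\alpha_{]U[_{\X,\lambda}*}\beta_{\mu,\lambda*}(\H_\lambda|_{W_{\mu,\lambda}})\cong\R\alpha_{W_{\mu,\lambda}*}(\H_\lambda|_{W_{\mu,\lambda}})$ lies in degree $0$ by (i) applied twice; (iii) $\alpha_{]U[_{\X,\lambda}*}\H_\lambda\to\alpha_{]U[_{\X,\lambda}*}\QQ_\lambda$ is an epimorphism of sheaves on $]X[_\X$, by the argument for the epimorphism $\H\to j_Y^\dag\H$ of \cite[2.1.3]{Be}: a section of $\QQ_\lambda$ over $V\cap\,]U[_{\X,\lambda}$ comes from a section of $\H_\lambda$ over $V\cap W_{\mu_0,\lambda}$ for some $\mu_0<\eta$, and on the admissible covering $V=(V\cap\,]Y[_{\X,\mu_1})\cup(V\cap\{\,|z|<\mu_1\,\})$ with $\mu_0<\mu_1<\eta$ it lifts to $\alpha_{]U[_{\X,\lambda}*}\H_\lambda$ on the first member (the given section already serves there), while $\alpha_{]U[_{\X,\lambda}*}\QQ_\lambda$ vanishes on the second (there $W_{\mu,\lambda}\cap\{\,|z|<\mu_1\,\}=\emptyset$ for $\mu\geq\mu_1$, so the system defining $\QQ_\lambda$ is cofinally zero). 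The long exact sequence of $\R\alpha_{]U[_{\X,\lambda}*}$ applied to $0\to\G_\lambda\to\H_\lambda\to\QQ_\lambda\to0$, combined with (i)--(iii), then gives $\R^1\alpha_{]U[_{\X,\lambda}*}\G_\lambda=\mathrm{coker}(\alpha_{]U[_{\X,\lambda}*}\H_\lambda\to\alpha_{]U[_{\X,\lambda}*}\QQ_\lambda)=0$ and $\R^q\alpha_{]U[_{\X,\lambda}*}\G_\lambda\cong\R^{q-1}\alpha_{]U[_{\X,\lambda}*}\QQ_\lambda=0$ for $q\geq2$.

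The genuinely geometric point, and the one I expect to be the main obstacle, is step (iii) --- equivalently, the right exactness built into \ref{suppZ}. In contrast with the otherwise formal manipulations, it requires exhibiting the admissible covering of $]X[_\X$ by $]Y[_{\X,\mu_1}$ and the semi-open tube $\{\,|z|<\mu_1\,\}$ and noticing that on the latter the inductive system defining $\QQ_\lambda$ becomes cofinally zero. Everything else is routine bookkeeping with tubes, resting on the affinoidness of $]X[_\X$, $]U[_{\X,\lambda}$, $]Y[_{\X,\mu}$, $W_{\mu,\lambda}$ secured in $1^\circ$--$2^\circ$, on Tate's acyclicity theorem, on the base change formula for restriction and direct image along open immersions, and on the commutation of $\R\alpha_{]U[_{\X,\lambda}*}$ with filtered inductive limits.
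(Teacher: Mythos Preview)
Your proof follows essentially the same route as the paper's: both arguments deduce everything from the defining exact sequence \ref{suppZ}, establish right-exactness after $\alpha_{]U[_{\X,\lambda}*}$ by the method of \cite[2.1.3]{Be}, and then invoke Kiehl's Theorem~B for the vanishing of $\R^q\alpha_{]U[_{\X,\lambda}*}$ on the middle and right terms. The paper compresses the long exact sequence step into one sentence; you spell it out, which is fine.

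There is one minor technical slip in your step~(iii). The decomposition $V=(V\cap\,]Y[_{\X,\mu_1})\cup(V\cap\{|z|<\mu_1\})$ is \emph{not} an admissible covering: any affinoid contained in the wide open $\{|z|<\mu_1\}$ already lies in some $\{|z|\le\mu''\}$ with $\mu''<\mu_1$ (quasi-compactness), so a finite subordinate affinoid covering would miss the annulus $\{\mu''<|z|<\mu_1\}$. The fix is immediate and does not affect the rest of the argument: replace $\{|z|<\mu_1\}$ by the closed disk $\{|z|\le\mu_1\}$. Then $V=(V\cap\,]Y[_{\X,\mu_1})\cup(V\cap\{|z|\le\mu_1\})$ is an admissible (affinoid) covering, and on the second member one still has $\alpha_{]U[_{\X,\lambda}*}\QQ_\lambda=0$, since for $\mu$ with $\mu_1<\mu<\eta$ the intersection $\{|z|\le\mu_1\}\cap\,]Y[_{\X,\mu}$ is empty, so the inductive system is cofinally zero. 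With this adjustment your argument for~(iii) is exactly the one referenced by the paper via \cite[2.1.3]{Be}.
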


\begin{proof} (1) Since the morphism $\alpha_{]Y[_{\X, \mu}}$ is quasi-compact and quasi-separated, we obtain from \ref{suppZ} the first assertion.
By applying the functor
$\alpha_{]U[_{\X, \lambda}*}\alpha_{]U[_{\X, \lambda}}^{-1}$
to the exact sequence \ref{suppZ}, we get the sequence
$$
      0\, \longrightarrow\, \alpha_{]U[_{\X, \lambda}*}\left(
      \underline{\Gamma}_{]Z[_\X, \eta}^\dag(\H)|_{]U[_{\X, \lambda}}\right)\,
      \longrightarrow\, \alpha_{]U[_{\X, \lambda}*}(\H|_{]U[_{\X, \lambda}})\,
      \longrightarrow\, \alpha_{]U[_{\X, \lambda}*}
    \left(\left(\underset{\mu \rightarrow \eta^-}{\mathrm{lim}}\,
    \alpha_{]Y[_{\X, \mu}*}(\H|_{]Y[_{\X, \mu}})\right)|_{]U[_{\X, \lambda}}\right)
    \longrightarrow\, 0,
$$
which is exact by the similar proof of \cite[2.1.3.(i)]{Be}.
The quasi-compactness and quasi-separateness of $\alpha_{]U[_{\X, \lambda}}$
implies the assertions.

(2) Because $\H _\lambda$ is a coherent $\O_{]U[_{\X, \lambda}}$-module
and both $]U[_{\X, \lambda}$ and $]Y[_{\X, \mu}$
are affinoid subdomains of the affinoid $]X[_\X$,
$\R^q\alpha_{]U[_{\X, \lambda}*}(\H _\lambda) = 0$
and $\R^q\alpha_{]U[_{\X, \lambda}*}
\left(\left(\underset{\mu \rightarrow \eta^-}{\mathrm{lim}}\,
    \alpha_{]Y[_{\X, \mu} *}(\H _\lambda|_{]Y[_{\X, \mu}})\right)|_{]U[_{\X, \lambda}}\right) = 0$
for $q \geq 1$ by Kiehl's Theorem B \cite[2.4]{quasistein}.
These facts and the exactness of the sequence in the proof of (1) imply the vanishing of higher direct images.
\end{proof}

Since $g_K$ is an affinoid morphism, it is quasi-compact and $\R g_{K*}$ commutes with
filtered inductive limits \cite[0.1.8]{Be}.
Hence we have
$$
   \begin{array}{lll}
      \R^q g_{K*}\underline{\Gamma}_{]Z[_\X}^\dag
   (j_U^\dag\Omega_{\Xlogan/\T_K}^\bullet \otimes_{j_U^\dag\O_{]X[_\X}} E) \\
   \hspace*{30mm} \cong\,
   \R^q g_{K*}\left(\underset{\eta \rightarrow 1^-}{\mathrm{lim}}\,
   \underline{\Gamma}_{]Z[_\X, \eta}^\dag
   \left(j_U^\dag(\Omega_{\Xlogan/\T_K}^\bullet
   \otimes_{\O_{]X[_\X}} \alpha_{]U[_{\X, \nu}*}\E)\right)\right) \\
      \hspace*{30mm} \cong\, \underset{\eta \rightarrow 1^-}{\mathrm{lim}}\,
      \R^q g_{K*}\underline{\Gamma}_{]Z[_\X, \eta}^\dag
   \left(\underset{\lambda \rightarrow 1^-}{\mathrm{lim}}\,\alpha_{]U[_{\X, \lambda}*}
   \left((\Omega_{\Xlogan/\T_K}^\bullet|_{]U[_{\X, \lambda}})
    \otimes_{\O_{]U[_{\X, \lambda}}} \E |_{]U[_{\X, \lambda}} \right)\right) \\
   \hspace*{30mm} \cong\, \underset{\eta \rightarrow 1^-}{\mathrm{lim}}\,
   \underset{\lambda \rightarrow 1^-}{\mathrm{lim}}\,
   \R^q g_{K*}\underline{\Gamma}_{]Z[_\X, \eta}^\dag
   \left(\alpha_{]U[_{\X, \lambda}*}
   \left((\Omega_{\Xlogan/\T_K}^\bullet|_{]U[_{\X, \lambda}})
    \otimes_{\O_{]U[_{\X, \lambda}}} \E |_{]U[_{\X, \lambda}} \right)\right) \\
      \hspace*{30mm} \cong\, \underset{\eta, \lambda \rightarrow 1^-}{\mathrm{lim}}\,
   \R^q g_{K*}\underline{\Gamma}_{]Z[_\X, \eta}^\dag
   \left(\alpha_{]U[_{\X, \lambda}*}
   \left((\Omega_{\Xlogan/\T_K}^\bullet|_{]U[_{\X, \lambda}})
    \otimes_{\O_{]U[_{\X, \lambda}}} \E |_{]U[_{\X, \lambda}} \right)\right)
   \end{array}
$$
for any $q$. Indeed, the first isomorphism follows from \ref{supp} and the other ones from
the commutation of the functors $\R g_{K*}$ and $\underline{\Gamma}_{]Z[_\X, \eta}^\dag$ (by \ref{suppdag})
with filtered inductive limits.
We will consider a filtered category indexed by
\begin{equation}
\label{index}
    \Lambda_{\underline{\xi}, \underline{\lambda}}
    = \left\{ (\lambda, \eta)\, \in \left(|K^\times|_\Q \cap ]0, 1[\right)^2\,\left|\,
    \begin{array}{l}
    \lambda > \eta,
     \lambda \geq \max\{\lambda_l, \nu\}, \\
     \eta < \xi_l\,\, \mbox{for some}\,\, l
     \end{array} \right.\right\}.
\end{equation}
Here the condition $\lambda > \eta$ comes from \ref{wsfib} (2).
This filtered category becomes a fundamental system for $\eta, \lambda \rightarrow 1^-$,
so that the limit with respect to $\Lambda_{\underline{\xi}, \underline{\lambda}}$
is same to the original one.

Let $g_\lambda :\, ]U[_{\X, \lambda} \rightarrow ]T[_\T$
and $g_{\lambda, \eta} :\, ]U[_{\X, \lambda} \cap [Z]_{\X, \eta} \rightarrow ]T[_\T$
denote restrictions of $g$ for
$(\lambda, \eta) \in \Lambda_{\underline{\xi}, \underline{\lambda}}$.
Then
$$
\begin{array}{l}
     \R g_{K*}\underline{\Gamma}_{]Z[_\X, \eta}^\dag
   \left(\alpha_{]U[_{\X, \lambda}*}
   \left(
   (\Omega_{\Xlogan/\T_K}^\bullet|_{]U[_{\X, \lambda}})
    \otimes_{\O_{]U[_{\X, \lambda}}} \E |_{]U[_{\X, \lambda}} \right)\right)
    \\
    \hspace*{45mm}
    \cong\,
    \R g_{\lambda*}\left(\underline{\Gamma}_{]Z[_\X, \eta}^\dag
   \left(\alpha_{]U[_{\X, \lambda}*}
   \left((\Omega_{\Xlogan/\T_K}^\bullet|_{]U[_{\X, \lambda}})
    \otimes_{\O_{]U[_{\X, \lambda}}} \E |_{]U[_{\X, \lambda}}
    \right)\right)|_{]U[_{\X, \lambda}}\right)
\end{array}
$$
by \ref{suppdag}.
Since $\underline{\Gamma}_{]Z[_\X, \eta}^\dag
\left((\Omega_{\Xlogan/\T_K}^\bullet|_{]U[_{\X, \lambda}})
    \otimes_{\O_{]U[_{\X, \lambda}}} \E\right)|_{]Y[_{\X, \eta}} = 0$
    and $\{ ]U[_{\X, \lambda} \cap\, ]Y[_{\X, \eta},
]U[_{\X, \lambda} \cap [Z]_{\X, \eta}\}$ is an admissible covering of $]U[_{\X, \lambda}$,
we have
$$
\begin{array}{l}
\R g_{\lambda*}\left(\underline{\Gamma}_{]Z[_\X, \eta}^\dag
   \left(\alpha_{]U[_{\X, \lambda}*}
   \left((\Omega_{\Xlogan/\T_K}^\bullet|_{]U[_{\X, \lambda}})
    \otimes_{\O_{]U[_{\X, \lambda}}} \E |_{]U[_{\X, \lambda}}
    \right)\right)|_{]U[_{\X, \lambda}}\right)
    \\
    \hspace*{45mm} \cong\,
    \R g_{\lambda, \eta*}
    \left(
    \underline{\Gamma}_{]Z[_\X, \eta}^\dag
   \left( \alpha_{]U[_{\X, \lambda}*}
   \left((\Omega_{\Xlogan/\T_K}^\bullet|_{]U[_{\X, \lambda}})
    \otimes_{\O_{]U[_{\X, \lambda}}} \E |_{]U[_{\X, \lambda}}
    \right)\right) |_{ ]U[_{\X, \lambda} \cap [Z]_{\X, \eta}} \right).
    \end{array}
$$
Hence, in order to prove the vanishing
$\R g_{K*}\underline{\Gamma}^\dag_{]Z[_\X}
      (j_U^\dag\Omega_{\Xlogan/\T_K}^\bullet \otimes_{j_U^\dag\O_{]X[_\X}} E)
       = 0$, we have only to prove the vanishing
\begin{equation}
\label{coneZ}
\R g_{\lambda, \eta*}
    \left(
    \underline{\Gamma}_{]Z[_\X, \eta}^\dag
   \left(\alpha_{]U[_{\X, \lambda}*}
   \left((\Omega_{\Xlogan/\T_K}^\bullet|_{]U[_{\X, \lambda}})
    \otimes_{\O_{]U[_{\X, \lambda}}} \E |_{]U[_{\X, \lambda}}
    \right)\right)|_{ ]U[_{\X, \lambda} \cap [Z]_{\X, \eta}} \right)
   = 0
\end{equation}
for any $(\lambda, \eta) \in \Lambda_{\underline{\xi}, \underline{\lambda}}$.

\vspace{3mm}

\noindent
$3^\circ$ {\it Reduce to the local computations.}

Let us denote the $1$-dimensional open (resp. closed) unit disk
over $\mathrm{Spm}\, K$ of radius $\eta \in |K^\times|_\Q$ by $D(0, \eta^-)$
(resp. $D(0, \eta^+)$). Since $Z \not\subset D$, we have the lemma below
by the weak fibration theorem \cite[1.3.1, 1.3.2]{Be} (see also \cite[4.3]{BC2}.).

\begin{lemm}\label{wsfib} With the notation as above, we have
\begin{enumerate}
\item There is an admissible covering $\{V_\beta \}_\beta$ of $]T[_{\T}$
such that
$$
       g_K^{-1}(V_\beta) \cap ]Z[_\X\, \cong\, V_\beta \times_{\mathrm{Spm}\, K} D(0, 1^-)
$$
of rigid analytic $K$-spaces, where the coordinate of $D(0, 1^-)$ is $z$
as above under this isomorphism.
\item Under the isomorphism in (1),
$$
      g_{\lambda, \eta}^{-1}(V_\beta)\, \cong\,
      \left(V_\beta \cap ]T \cap U[_{\T, \lambda}\right)\times_{\mathrm{Spm}\, K} D(0, \eta^+)
$$
for any $\lambda, \eta \in |K^\times|_\Q \cap ]0, 1[$ with $\lambda > \eta$.
\end{enumerate}
\end{lemm}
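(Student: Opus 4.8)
The plan is to read off both statements from Berthelot's weak fibration theorem \cite[1.3.1, 1.3.2]{Be}. I would first record a structural remark: in the situation of \ref{section} the relative dimension is necessarily $d=1$, since $\ZZ$ has codimension $1$ in $\X$ while $g\circ i$ being an isomorphism forces $\dim\ZZ=\dim\T=\dim\X-d$. Thus the coordinate $z$ fixed in $1^\circ$ is a single relative coordinate of $\X$ over $\T$, and $s:=i\circ(g\circ i)^{-1}:\T\to\X$ exhibits $\ZZ$ as the image of a section of the smooth morphism $g$.

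For assertion (1) I would apply the weak fibration theorem to $g$, regarded as a smooth morphism of relative dimension $1$, together with the section $\ZZ$ and the relative coordinate $z$ vanishing on it. Because $]Z[_\X=\{x\in\Xan\mid|z(x)|<1\}$, the theorem yields an admissible covering $\{V_\beta\}_\beta$ of $]T[_\T=\T_K$ over which $g_K$ induces isomorphisms $g_K^{-1}(V_\beta)\cap]Z[_\X\riso V_\beta\times_{\mathrm{Spm}\,K}D(0,1^-)$, compatibly with the projections to $V_\beta$ and carrying $z$ to the coordinate of $D(0,1^-)$; this is exactly (1). (One may take the $V_\beta$ affinoid.)

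For assertion (2) I would fix $(\lambda,\eta)$ with $\lambda>\eta$ and restrict the isomorphism of (1). Since $[Z]_{\X,\eta}=\{x\in]Z[_\X\mid|z(x)|\le\eta\}$ corresponds, under $z\leftrightarrow$ (coordinate of $D(0,1^-)$), to $V_\beta\times D(0,\eta^+)$, it remains to show that $]U[_{\X,\lambda}=\{|f|\ge\lambda\}$ meets $V_\beta\times D(0,\eta^+)$ in $(V_\beta\cap]T\cap U[_{\T,\lambda})\times D(0,\eta^+)$. To this end I would let $F$ denote the function on $V_\beta\times D(0,1^-)$ corresponding to $f|_{]Z[_\X}$, expand $F=\sum_{k\ge0}F_k(t)z^k$ with $F_k\in\O(V_\beta)$, and note that $F_0=\overline f$ (the reduction of $f$ in $\Gamma(\ZZ,\O_\ZZ)$, transported to $]T[_\T$ via $g\circ i$) because the locus $z=0$ is $\ZZ$. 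Since $f\in\Gamma(\X,\O_\X)$ its supremum norm on $\Xan$ is $\le1$, hence $|F_k(t)|\le1$ for every $k$, so that for $x=(t,\zeta)$ with $|\zeta|\le\eta$ one gets $|F(x)-\overline f(t)|\le\max_{k\ge1}|F_k(t)|\,|\zeta|^k\le|\zeta|\le\eta<\lambda$. By the ultrametric inequality this forces $|F(x)|\ge\lambda\Longleftrightarrow|\overline f(t)|\ge\lambda$ on $V_\beta\times D(0,\eta^+)$, which gives the stated description and hence (2).

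I expect the only genuinely delicate step to be this last comparison: the point is that because $f$ is an honest function on the formal scheme $\X$, its Taylor expansion in the $z$-direction has coefficients of norm $\le1$, and this is precisely what makes the single hypothesis $\lambda>\eta$ — rather than $\eta$ small relative to $\lambda$ — sufficient; everything else is a formal restriction of the weak fibration isomorphism. As noted in the last remark after \ref{Nobuorigid}, one could alternatively run the argument with the strong fibration theorem, at the cost of additional bookkeeping.
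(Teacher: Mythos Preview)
Your proof is correct and follows the same approach as the paper: both appeal to Berthelot's weak fibration theorem \cite[1.3.1, 1.3.2]{Be} for (1), and both identify the condition $\lambda>\eta$ as the point where the use of $\overline{f}$ (rather than $f$ itself) in defining $]T\cap U[_{\T,\lambda}$ matters. The paper's proof is a one-line citation plus this remark, whereas you have spelled out the Taylor-expansion estimate $|F(x)-\overline{f}(t)|\le\eta<\lambda$ that makes the equivalence $|f|\ge\lambda\Leftrightarrow|\overline{f}|\ge\lambda$ work on $V_\beta\times D(0,\eta^+)$; this is exactly the content the paper leaves implicit.
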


In order to prove \ref{wsfib} (2), the condition $\lambda > \eta$ is needed
because of using $\overline{f}$
for the definition of $]T \cap U[_{\T, \lambda}$.

Let $S = \mathrm{Spm}\, R$ be an integral smooth $K$-affinoid subdomain of
$V_\beta \cap\, ]T \cap U[_{\T, \lambda}$
with a complete $K$-algebra norm $|\, \mbox{-}\, |_R$ on $R$.
Since $R$ is an integral $K$-Banach algebra,
all complete $K$-algebra norms are equivalent \cite[3.8.2, Cor. 4]{BGR}.
In order to prove the vanishing \ref{coneZ}, it is sufficient to prove
the vanishing
$$
\R\Gamma\left(g_{\lambda, \eta}^{-1}(S), \,
\underline{\Gamma}_{]Z[_\X, \eta}^\dag\left(
  \left[\E \overset{\nabla}{\longrightarrow}
  (\Omega_{\Xlogan/\T_K}^1|_{]U[_{\X, \nu}})
  \otimes_{\O_{]U[_{\X, \nu}}} \E\right]\right)\right) =
\R\Gamma\left(g_{\lambda, \eta}^{-1}(S), \,
\underline{\Gamma}_{]Z[_\X, \eta}^\dag\left(
  \left[\E \overset{\partial_{\#}}{\longrightarrow} \E\right]\right)\right) = 0
$$
of hypercohomology for any such $S$ by \ref{wsfib} (2) since
$]T[_\T = ]Z[_\ZZ$ is integral and smooth and $\Omega_{\Xlogan/\T_K}^1$
is a free $\O_{]X[_\X}$-module of rank $1$ generated by $\frac{dz}{z}$.
The hypercohomology above can be calculated by
$$
   \R^q\Gamma\left(g_{\lambda, \eta}^{-1}(S),
\underline{\Gamma}_{]Z[_\X, \eta}^\dag\left(
  \left[\E \overset{\partial_{\#}}{\longrightarrow} \E\right]\right)\right)\,
  \cong\,  H^q\left( \mathrm{Tot}\left[\begin{array}{ccc}
       \Gamma(g_{\lambda, \eta}^{-1}(S), \E) &\rightarrow
       &\underset{\mu \rightarrow \eta^-}{\mathrm{lim}}\,
       \Gamma(g_{\lambda, \eta}^{-1}(S) \cap ]Y[_{\X, \mu}, \E) \\
       \partial_{\#} \downarrow \hspace*{4mm} & &\hspace*{4mm} \downarrow \partial_{\#} \\
              \Gamma(g_{\lambda, \eta}^{-1}(S), \E)
              &\rightarrow
       &\underset{\mu \rightarrow \eta^-}{\mathrm{lim}}\,
       \Gamma(g_{\lambda, \eta}^{-1}(S) \cap ]Y[_{\X, \mu}, *\E)
       \end{array}\right]\right).
$$
Here $\mathrm{Tot}$ means the total complex induced by the commutative bicomplex,
the left top item in the bicomplex is located at degree $(0, 0)$
and the horizontal arrows in the bicomplex are the natural injections.
Indeed, the cohomological functor commutes with filtered direct limits since $g_{\lambda, \eta}$
is an affinoid morphism, and the vanishings
$H^q(g_{\lambda, \eta}^{-1}(S), \E) = 0$ and
$H^q(g_{\lambda, \eta}^{-1}(S) \cap\, ]Y[_{\X, \mu}, \E) = 0$
for $q \geq 1$ hold by Kiehl's Theorem B \cite[2.4]{quasistein}
since $g_{\lambda, \eta}^{-1}(S)$ and
$g_{\lambda, \eta}^{-1}(S) \cap\, ]Y[_{\X, \mu}$ are  affinoid.

More explicitly, the following formula \ref{coneG} holds
when $\E|_{g_{\lambda, \eta}^{-1}(S)}$ is a free
$\O_{g_{\lambda, \eta}^{-1}(S)}$-module of rank $r$.
We will prove the freeness in the next step $4^\circ$. Put $R$-algebras
$$
     \begin{array}{lll}
        \AA_R(\eta) &= &\Gamma(g_{\lambda, \eta}^{-1}(S), \O_{]X[_\X})
        = \left\{ \left. \displaystyle{\sum_{n = 0}^\infty}\, a_nz^n\, \right|\, a_n \in R,\, \,
        |a_n|_R\eta^n \rightarrow 0\, \, \mathrm{as}\,\, n \rightarrow \infty \right\} \\
                \AA_R(\eta^-) &= &\Gamma\left(\underset{\mu < \eta}{\cup}\,
                g_{\lambda, \mu}^{-1}(S), \O_{]X[_\X}\right)
        = \left\{ \left. \displaystyle{\sum_{n = 0}^\infty}\, a_nz^n\, \right|\, a_n \in R,\, \,
        |a_n|_R\mu^n \rightarrow 0\, \, \mathrm{as}\, \, n \rightarrow \infty\, \,
        \mbox{for any}\, \, \mu < \eta\right\} \\
                \RR_R(\eta) &= &\underset{\mu \rightarrow \eta^-}{\mathrm{lim}}\,
                \Gamma(g_{\lambda, \eta}^{-1}(S),
\alpha_{]Y[_{\X, \mu}*}\O_{]Y[_{\X. \mu}}) \\
        &= &\left\{\left. \displaystyle{\sum_{n = -\infty}^\infty}\, a_nz^n\, \right|\,
        a_n \in R,\, \, \begin{array}{l}
               |a_n|_R\eta^n \rightarrow 0\, \, \mathrm{as}\, \, n \rightarrow \infty \\
               |a_n|_R\mu^n \rightarrow 0\, \, \mathrm{as}\, \, n \rightarrow -\infty
              \, \, \mathrm{for\,\, some}\, \mu < \eta
               \end{array} \right\},
        \end{array}
$$
and define a norm on $\mathcal A_R(\eta)$ by $|\sum_n a_nz^n|_{\mathcal A_R(\eta)}
= \mathrm{sup}_n |a_n|\eta^n$.
$\mathcal A_R(\eta), \mathcal A_R(\eta^-)$ and $\mathcal R_R(\eta)$
are independent of the choice of complete
$K$-algebra norms on $R$ since there exist positive real number $\rho_1$ and $\rho_2$
such that $\rho_1|\mbox{-}| \leq |\mbox{-}|' \leq \rho_2|\mbox{-}|$
for equivalent norms $|\mbox{-}|$ and $|\mbox{-}|'$ by \cite[2.1.8, Cor. 4]{BGR}.
Let $\underline{v}$ be a vector of basis of $\Gamma(g_{\lambda, \eta}^{-1}(S), \E)$
over $\AA_R(\eta)$ such that the derivation along $z$
is given by $\partial_{\#}(\underline{v}) = \underline{v}G$
for a matrix $G$ with entries in $\AA_R(\eta)$. Then we have
\begin{equation}
   \begin{array}{lll}
   \label{coneG}
    \R^q\Gamma\left(g_{\lambda, \eta}^{-1}(S),
\underline{\Gamma}_{]Z[_\X, \eta}^\dag\left(
  \left[\E \overset{\partial_{\#}}{\longrightarrow} \E\right]\right)\right)
  &\cong &H^q\left(\mathrm{Tot}\left[
  \begin{array}{ccc}
       \AA_R(\eta)^r &\rightarrow
       &\RR_R(\eta)^r \\
     \partial_{\#} + G \downarrow \hspace*{11mm} &
       &\hspace*{10mm} \downarrow \partial_{\#} + G \\
               \AA_R(\eta)^r &\rightarrow &\RR_R(\eta)^r
       \end{array}
       \right]\right) \\
       &\cong
       &H^q\left(\left[\left(\RR_R(\eta)/\AA_R(\eta)\right)^r\,
       \overset{\partial_{\#} + G}{\longrightarrow} \left(\RR_R(\eta)/\AA_R(\eta)\right)^r
       \right][-1]\right).
    \end{array}
\end{equation}

\vspace{3mm}

\noindent
$4^\circ$ {\it Local classification of logarithmic connections along a smooth divisor.}

\begin{prop} \label{clog} Let $S = \mathrm{Spm}\, R$ be
a smooth integral $K$-affinoid variety,
and let $W = S \times_{\mathrm{Spm}\,K} D(0, \xi^-)$ be a quasi-Stein space over $S$
for some $\xi \in |K^\times|_\Q \cap ]0, 1]$.
Let $\M$ be a locally free $\O_W$-module furnished
with an $R$-derivation $\partial_\# = z\frac{d}{dz} : M \rightarrow M$,
where $M = \Gamma(W, \M)$, such that
\begin{list}{}{}
\item[\mbox{\rm (i)}] for any $\eta \in |K^\times|_\Q \cap\, ]0, \xi[$,
if $W_\eta = S \times_{\mathrm{Spm}\,K} D(0, \eta^+)$ is an affinoid subdomain
of $W$ and if $||\, \mbox{-}\, ||$
is a Banach $\mathcal A_R(\eta)$-norm on $M_\eta = \Gamma(W_\eta, \M)$, then
$||\frac{1}{n!}\prod_{j=0}^{n-1}\, (\partial_\# - j)(e)||\mu^n \rightarrow 0\,
(n \rightarrow \infty)$ for any $e \in M_\eta$ and $0 < \mu < 1$,
and
\item[\mbox{\rm (ii)}] any difference of exponents of $(\M, \partial_\#)$
along $z = 0$ is neither a $p$-adic Liouville number nor a non-zero integer.
\end{list}

\noindent
Then there are a projective $R$-module $L$ of finite type
furnished with a linear $R$-operator $N : L \rightarrow L$
such that
$||\frac{1}{n!}\prod_{j=0}^{n-1}\, (N - j)(e)||\mu^n \rightarrow 0\,
(n \rightarrow \infty)$ for any $e \in L$ and $0 < \mu < 1$,
where $||\, \mbox{-}\, ||$ is a Banach $R$-norm on $L$,
and an isomorphism $(\M, \partial_\#) \cong (\O_W \otimes_R L, \partial_{\#N})$
in which the $R$-derivation $\partial_{\# N}$ on $\O_W \otimes_R L$ is defined by
$\partial_{\#N}(a \otimes e) = \partial_\#(a) \otimes e + a \otimes N(e)$.
\end{prop}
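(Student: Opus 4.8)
The proposition is the $p$-adic counterpart of the classical normal form for regular singular logarithmic connections: once no two exponents differ by a nonzero integer, the connection near the divisor is, compatibly with restriction to $z=0$, isomorphic to the constant-residue connection $\partial_{\#N}$ on $\O_W\otimes_R L$. The subtlety over $K$ is that the trivializing frame is manufactured from a power series whose convergence on the whole disk is not automatic: hypothesis~(i) forces the connection to be solvable of radius $\geq1$ in the disk, and the non-Liouville clause of~(ii) is what upgrades the formal normal form to a genuine one on $W$ (the case $\xi=1$ being the delicate one). I would begin by setting $L:=M/zM=\Gamma(S,\M|_{z=0})$, a finitely generated projective $R$-module, and letting $N:L\to L$ be the operator induced by $\partial_\#$ (legitimate as $\partial_\#(zM)\subseteq zM$); restricting~(i) along $z=0$ gives the asserted bound for $N$. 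Working Zariski-locally on $S$ and lifting a frame of $L$, I may assume $\M$ is free over $\O_W$ with a basis $\underline v$ reducing to that frame, and write $\partial_\#(\underline v)=\underline v\,G(z)$ with $G(z)=\sum_{m\geq0}G_mz^m$ a matrix over $R$, $G_0$ being the matrix of $N$. After a finite extension of $K$ (harmless by the descent noted at the end) I may moreover assume all exponents lie in $K$ and $G_0$ is triangular.

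The aim is then to produce $H\in\mathrm{GL}_r(\Gamma(W,\O_W))$ with $H\equiv I\bmod z$ and
\[
\partial_\#(H)=H\,G_0-G\,H,
\]
because then $\underline w:=\underline v\,H$ satisfies $\partial_\#(\underline w)=\underline w\,G_0$, and carrying a frame of $L$ to $\underline w$ gives an isomorphism $(\O_W\otimes_R L,\partial_{\#N})\riso(\M,\partial_\#)$. Expanding $H=\sum_{n\geq0}H_nz^n$ and comparing coefficients turns the displayed equation into $H_0=I$ together with, for $n\geq1$, the recursion
\[
\Phi_n(H_n)=-\sum_{m=1}^{n}G_m\,H_{n-m},\qquad\Phi_n:=n\cdot\mathrm{id}+\mathrm{ad}(G_0),
\]
where $\mathrm{ad}(G_0)(X)=G_0X-XG_0$ on $r\times r$ matrices. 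The eigenvalues of $\mathrm{ad}(G_0)$ are the differences of exponents, so those of $\Phi_n$ are $n$ plus a difference of exponents; by the ``no nonzero integer'' clause of~(ii) these never vanish for $n\geq1$, hence each $\Phi_n$ is invertible over $R$ and the recursion determines $H_n$ uniquely, yielding a unique formal $H\in\mathrm{GL}_r(R[[z]])$ (its inverse coming from the twin recursion $\partial_\#(H')=G\,H'-H'G_0$, with $HH'=H'H=I$ forced by uniqueness). The same computation with $G$ replaced by $G_0$ shows the only automorphism of $(\O_W\otimes_R L,\partial_{\#N})$ congruent to $I$ modulo $z$ is the identity; this both justifies the base change above and lets the local constructions (each reducing to $\mathrm{id}_L$ modulo $z$) glue to a global isomorphism over $W$, $L=M/zM$ being canonically defined.

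The heart of the matter is that this formal $H$ actually lies in $\mathrm{GL}_r(\Gamma(W,\O_W))$, i.e.\ that $\|H_n\|\,\eta^n\to0$ for every $\eta<\xi$. Over $\C$ this is automatic, but over $K$ it is a genuine transfer problem for the regular singular equation $\partial_\#H+GH-HG_0=0$, whose coefficients are analytic on $D(0,\xi^-)$ yet whose solutions could a priori converge only on a smaller disk. Two inputs combine. Hypothesis~(i): reading the twisted iterates $\frac1{n!}\prod_{j=0}^{n-1}(\partial_\#-j)$ as the Taylor operators of the log-connection and using the standard identities expressing them through $G,\partial_\#G,\dots$, (i) says the connection is solvable with radius $\geq1$ in the disk, which bounds the ``intrinsic'' part of the growth of $H_n$. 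The non-Liouville clause of~(ii): by the very definition of a $p$-adic Liouville number, for each difference of exponents $\alpha$ and each $\rho<1$ there is a constant $C$ with $|n+\alpha|_p^{-1}\leq C\rho^{-n}$, so $\|\Phi_n^{-1}\|$ grows subexponentially (after absorbing the bounded contribution of the nilpotent part of $G_0$). Feeding both estimates into the recursion by a majorant-series computation gives $\|H_n\|\,\eta^n\to0$ for every $\eta<\xi$, and likewise for $H^{-1}$. \textbf{This convergence step is the main obstacle}: one must balance the solvability estimate from~(i) against the non-Liouville bound on $\Phi_n^{-1}$ so that, after multiplying by $\eta^n$ with $\eta$ allowed arbitrarily close to $\xi$ (hence to $1$ when $\xi=1$), the product still tends to $0$; it is essentially only here that both clauses of~(ii) and hypothesis~(i) are used in full strength. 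In substance this is a $p$-adic transfer theorem for regular singularities, in the relative form needed here.

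Granting the convergence, $\underline w=\underline v\,H$ is a genuine $\O_W$-basis of $\M$ with $\partial_\#(\underline w)=\underline w\,G_0$; over each piece of the cover of $S$ we thereby obtain an isomorphism $(\O_W\otimes_R L,\partial_{\#N})\riso(\M,\partial_\#)$ inducing $\mathrm{id}$ on $L=\M|_{z=0}$, these glue by the uniqueness, and $L$ is projective of finite type over $R$. Descending along the auxiliary finite extension of $K$ (the construction being canonical, hence Galois-equivariant) and transporting the bound for $N$ obtained from~(i) completes the proof.
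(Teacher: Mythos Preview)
Your strategy for the free case—setting up the recursion $\Phi_n(H_n) = -\sum_{m=1}^n G_m H_{n-m}$ with $\Phi_n = n\cdot\mathrm{id} + \mathrm{ad}(G_0)$, invoking the non-Liouville hypothesis for subexponential growth of $\|\Phi_n^{-1}\|$, and combining with the solvability estimate from~(i) to get convergence of $H$ on the full open disk—is essentially Christol's transfer theorem in the relative form of~\cite{BC1}, and the paper indeed takes this as the backbone of the free case. The problem is the sentence ``Working Zariski-locally on $S$ and lifting a frame of $L$, I may assume $\M$ is free over $\O_W$ with a basis $\underline v$ reducing to that frame.'' This is not justified, and bridging the gap between \emph{locally free on $W$} and \emph{free on $W$ after shrinking $S$} is precisely the new content of the proposition.

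Lifting a frame of $L=M/zM$ gives sections $v_1,\dots,v_r\in M$ which, by a Nakayama-type argument (this is Lemma~\ref{covering} in the paper), form a basis of $\M$ only on $S_i\times D(0,\xi'^+)$ for some possibly much smaller $\xi'<\xi$; there is no reason they trivialize $\M$ over the full quasi-Stein space $W$. Consequently your matrix $G(z)$ is defined only on that smaller disk, and your majorant estimate yields convergence of $H$ only there, not on $W$. The gluing-by-uniqueness step does not rescue this: uniqueness holds on whatever domain the gauge transformation is defined, and that domain is still the small disk.

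The paper circumvents this not by analytic continuation of $H$ but by a different mechanism. After reducing via an eigenspace decomposition (Lemma~\ref{splitting}) and a twist to the case where all exponents are $0$, and after a Noether normalization $T_l\hookrightarrow R$ making $\M|_{W_\eta}$ free over $\AA_{T_l}(\eta)$, the transfer theorem gives an isomorphism $\beta_\eta$ on the small disk $W_\eta$. One then reinterprets $\beta_\eta$ as a horizontal section in $H^0(M_\eta^\vee\otimes_R L)$ and uses that, since all differences of exponents of $\M^\vee\otimes_R L$ vanish, horizontal sections are independent of the radius (Lemma~\ref{freetate}(2)); this produces a global horizontal map $\beta:\M\to\O_W\otimes_R L$. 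That $\beta$ is an isomorphism is then checked fiber by fiber over maximal ideals of $R$ (the field case using the B\'ezout property of $\AA_K(\xi^-)$) and concluded by Nakayama. This passage through horizontal sections of the internal $\mathrm{Hom}$, rather than through a globally convergent gauge matrix, is the step your argument is missing.
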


If $\M$ is a free $\O_W$-module in the proposition above, then
the assertion is a part of the Christol's transfer theorem \cite[Thm. 2]{transfer}
and its generalization in \cite{BC1}. The Christol's transfer theorem is in the case
where $R$ is a field $K$. By the argument in \cite[4.1]{BC1}, the transfer
theorem also works on an integral $K$-affinoid algebra $R$.
A part means that we consider solutions not in meromorphic functions
but only in holomorphic functions. When $M$ is free, one has a formal
matrix solution by the hypothesis that any difference of exponents
is not an integer except $0$, and then all entries are contained in
$\AA_R(\xi^-)$ because of the conditions (i) and (ii).

\begin{lemm}\label{free} Let $R$ be an integral $K$-affinoid algebra.
\begin{enumerate}
\item There exists a finite injective morphism $T_l \rightarrow R$ of $K$-affinoid
algebras from a free Tate $K$-algebra $T_l$ of some dimension $l$.
\item Suppose furthermore that $R$ is Cohen-Macaulay.
Then, for any finite injective morphism $T_l \rightarrow R$ of $K$-affinoid
algebras, $R$ is projective of finite type
over $T_l$. Moreover, if $M$ is a projective $R$-module of finite type,
then $M$ is free over $T_l$.
\end{enumerate}
\end{lemm}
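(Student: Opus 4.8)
The plan is to obtain (1) from Noether normalization for affinoid algebras, and (2) from the local flatness criterion (``miracle flatness'') combined with the fact that finitely generated projective modules over a Tate algebra are free.

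First I would prove (1). Since $R$ is an integral $K$-affinoid algebra, it is in particular reduced and equidimensional, so the Noether normalization theorem for affinoid algebras \cite[6.1.2]{BGR} provides a finite injective homomorphism $T_l \hookrightarrow R$ of $K$-affinoid algebras with $T_l = K\langle x_1, \dots, x_l\rangle$ a free Tate algebra, where one may take $l = \dim R$. This is exactly the assertion of (1).

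Next I would prove (2). Observe first that for any finite injective morphism $T_l \to R$ one automatically has $l = \dim R$, since an integral ring extension preserves Krull dimension. The key input is that $T_l$ is a regular Noetherian ring, so each localization $(T_l)_{\mathfrak n}$ at a maximal ideal $\mathfrak n$ is regular local. Fix a maximal ideal $\mathfrak m$ of $R$ over $\mathfrak n$. Because $R$ is an affinoid domain it is equidimensional, so $\dim R_{\mathfrak m} = l = \dim (T_l)_{\mathfrak n}$; and the fibre $R_{\mathfrak m}/\mathfrak n R_{\mathfrak m}$, being a nonzero localization of the finite $(T_l/\mathfrak n)$-algebra $R/\mathfrak n R$, is Artinian, hence of dimension $0$. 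Since $R$ is Cohen--Macaulay, $R_{\mathfrak m}$ is a Cohen--Macaulay local ring with $\dim R_{\mathfrak m} = \dim (T_l)_{\mathfrak n} + \dim\bigl(R_{\mathfrak m}/\mathfrak n R_{\mathfrak m}\bigr)$, so miracle flatness over the regular local ring $(T_l)_{\mathfrak n}$ yields that $R_{\mathfrak m}$ is flat over $(T_l)_{\mathfrak n}$. As this holds at every maximal ideal, $R$ is $T_l$-flat; being also finite over the Noetherian ring $T_l$, it is finitely presented, hence a projective $T_l$-module of finite type.

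For the last assertion, if $M$ is a projective $R$-module of finite type then, viewing it through $T_l \to R$ and using that $R$ is $T_l$-projective of finite type, $M$ is a projective $T_l$-module of finite type; it then suffices to invoke the theorem (Gruson) that every finitely generated projective module over a free Tate algebra $T_l$ is free. The steps I expect to need the most care are the equidimensionality $\dim R_{\mathfrak m} = l$ for every maximal ideal $\mathfrak m$ of the affinoid domain $R$, which is what makes the miracle flatness criterion applicable uniformly, and pinning down the correct reference for the freeness of finitely generated projectives over $T_l$; everything else is formal.
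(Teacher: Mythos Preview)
Your proof is correct and follows essentially the same approach as the paper: part (1) is Noether normalization \cite[6.1.2, Cor.~2]{BGR}, and for part (2) the paper cites \cite[25.16]{nagata_local} (which is precisely the miracle flatness statement you spell out) for projectivity of $R$ over $T_l$, and then \cite[6.5]{kedlaya_full-faithful} for the freeness of finitely generated projectives over $T_l$. Your version simply unpacks the Nagata reference into the explicit local flatness argument and names Gruson's theorem where the paper cites Kedlaya's account of it.
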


\begin{proof} (1) The assertion is the Noether normalization theorem
\cite[6.1.2 Cor. 2]{BGR}.

(2) Since $T_l$ is regular and $R$ is Cohen-Macaulay,
$R$ is projective over $T_l$ by \cite[25.16]{nagata_local}.
If $M$ is a projective $R$-module of finite type,
then $M$ is also projective of finite type over $T_l$, hence
$M$ is free over $T_l$ by \cite[6.5]{kedlaya_full-faithful}.
\end{proof}

With the notation as in \ref{clog},
let us fix a finite injective morphism $T_l \rightarrow R$ of $K$-affinoid
algebras \ref{free} (1). Considering the norm on $R$ which is defined by the maximum
of norms of tuples under an identity $R \cong T_l^m$ by \ref{free} (2),
we regard $M_\eta$ as an $\AA_{T_l}(\eta)[\partial_{\#}]$-module
by the natural finite injective morphism $\AA_{T_l}(\eta) \rightarrow \AA_R(\eta)$
of $K$-affinoid algebras for $\eta \in |K^\times|_\Q \cap\, ]0, \xi[$.
Moreover, $\AA_{T_l}(\eta)[\partial_{\#}]$-module $M_\eta$
satisfies the hypothesis  in \ref{clog} (see \ref{remexp-6})
and $M_{\eta}$ is a free $\AA_{T_l}(\eta)$-module \ref{free} (2).
Fix a basis $\underline{v}$ of $M_\eta$ over $\AA_{T_l}(\eta)$ and
let $G_\eta$ be a matrix with entries in $\AA_{T_l}(\eta)$
such that $\partial_\#(\underline{v}) = \underline{v}G_\eta$.
By applying a generalization of Christol's transfer theorem (as we explain after \ref{clog}),
there is an invertible matrix $Y$ with entries in
$\AA_{T_l}(\eta{}^-)$ such that
\begin{equation}\label{eq}
    \partial_\#\,Y + G_\eta Y = YG_\eta(0),
\end{equation}
where $G_\eta(0) = G_\eta\, (\mathrm{mod}\, z\AA_{T_l}(\eta))$ is
a matrix with entries in $T_l$. Then there is
a free $T_l$-module $L_\eta$
with a $T_l$-linear homomorphism $N_\eta$ defined by the matrix $G_\eta(0)$
such that $(M_\eta \otimes_{\AA_{T_l}(\eta)} \AA_{T_l}(\eta^-), \partial_{\#}) \cong
(\AA_{T_l}(\eta^-) \otimes_{T_l} L_\eta, \partial_{\#N_\eta})$.
If we put $H^0(M_\eta) =
\mathrm{ker}(\partial_\# : M_\eta \rightarrow M_\eta)$,
then $H^0(M_\eta) \cong \mathrm{ker}(N_\eta : L_\eta \rightarrow L_\eta)$.

\begin{lemm}\label{freetate} With the notation as above, the followings hold.
\begin{enumerate}
\item The pair $(L_\eta, N_\eta)$ is independent of the choices of
$\eta  \in |K^\times|_\Q \cap\, ]0, \xi[$ up to canonical isomorphisms.
Moreover, $(M, \partial_{\#}) \cong
(\AA_{T_l}(\xi^-) \otimes_{T_l} L_\eta, \partial_{\#N_\eta})$
for any $\eta$.
\item If we put $H^0(M)
= \mathrm{ker}(\partial_\# : M \rightarrow M)$,
then the natural $R$-homomorphism $H^0(M) \rightarrow H^0(M_\eta)$
(not only the $T_l$ structure) induced by
the restriction is an isomorphism.
\end{enumerate}
\end{lemm}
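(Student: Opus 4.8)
The plan is to show both assertions by exploiting the explicit matrix description obtained just before the statement, together with the structure of the relevant rings of power series. For part (1), I would first observe that the matrix $G_\eta$ and its residue $G_\eta(0)$ are, up to the choice of basis $\underline{v}$, intrinsically attached to $(\M,\partial_\#)$: changing the basis of $M_\eta$ over $\AA_{T_l}(\eta)$ replaces $G_\eta$ by a gauge-equivalent matrix, and passing from $\eta$ to a smaller $\eta'$ amounts to the flat base change $\AA_{T_l}(\eta)\to\AA_{T_l}(\eta')$, under which $M_{\eta'}\cong M_\eta\otimes_{\AA_{T_l}(\eta)}\AA_{T_l}(\eta')$ by the quasi-Stein property (Kiehl's Theorem B, as already used in step $3^\circ$). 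The transfer matrix $Y$ of \eqref{eq} lies in $\AA_{T_l}(\eta^-)\subset\AA_{T_l}(\eta'^-)$, so the same $Y$ realizes the trivialization at level $\eta'$; hence $(L_{\eta'},N_{\eta'})$ is canonically identified with $(L_\eta,N_\eta)$ via reduction mod $z$ of this $Y$. To get the second sentence of (1), I would then let $\eta\to\xi^-$: since $M=\Gamma(W,\M)=\varprojlim_{\eta\to\xi^-}M_\eta$ and $\AA_{T_l}(\xi^-)=\varprojlim_{\eta\to\xi^-}\AA_{T_l}(\eta^-)$, the compatible system of isomorphisms $(M_\eta\otimes\AA_{T_l}(\eta^-),\partial_\#)\cong(\AA_{T_l}(\eta^-)\otimes_{T_l}L_\eta,\partial_{\#N_\eta})$ glues to an isomorphism $(M,\partial_\#)\cong(\AA_{T_l}(\xi^-)\otimes_{T_l}L_\eta,\partial_{\#N_\eta})$; one must check that the limit of invertible matrices $Y$ remains invertible over $\AA_{T_l}(\xi^-)$, which follows because its reduction mod $z$ is the identity (after normalizing $Y(0)=\mathrm{Id}$) and $\AA_{T_l}(\xi^-)$ is $z$-adically complete in the appropriate sense.

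For part (2), the point is to compare horizontal sections over the big ring $\AA_{T_l}(\xi^-)$ (equivalently over $R$, using $M=\Gamma(W,\M)$) with horizontal sections over $\AA_{T_l}(\eta^-)$. By part (1) both are computed as $\ker(N_\eta:L_\eta\to L_\eta)$ after the respective base changes, so I would reduce to showing that $\ker(N_\eta)$, a priori a $T_l$-submodule of $L_\eta$, does not grow when we extend scalars from $T_l$ to $\AA_{T_l}(\eta^-)$ or $\AA_{T_l}(\xi^-)$: concretely, that a power series $\sum_n c_n z^n$ annihilated by $\partial_\#+G_\eta(0)$ (equivalently, with $N_\eta$ acting appropriately on each coefficient together with the $n$ coming from $z\frac{d}{dz}$) must be constant. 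This is exactly where the Liouville hypothesis (ii) of \ref{clog} enters: writing out $\partial_\#(\sum c_n z^n)=\sum n c_n z^n$, horizontality forces $(N_\eta+n)(c_n)=0$ for each $n$, and for $n\neq 0$ the operator $N_\eta+n$ is invertible on $L_\eta$ because no eigenvalue of $N_\eta$ (i.e. no exponent) is a negative integer — in fact no difference of exponents is a non-zero integer, which after diagonalizing $N_\eta$ over $\overline{K(\ZZ)}$ (as in \ref{remexp-6}) guarantees $-n$ is not an eigenvalue. Hence $c_n=0$ for $n\neq 0$ and $H^0(M)\cong\ker(N_\eta)=H^0(M_\eta)$.

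The main obstacle I anticipate is the passage to the limit $\eta\to\xi^-$ in part (1): one needs the trivializing matrix $Y\in\AA_{T_l}(\eta^-)$ to actually converge to an element of $\AA_{T_l}(\xi^-)$ with values of $z$ up to (but not including) radius $\xi$, and this requires a uniform estimate on the growth of $Y$'s coefficients that is better than what the mere equation \eqref{eq} gives pointwise in $\eta$. The way around this is to invoke the effective bounds in Christol's transfer theorem (and its $K$-affinoid generalization cited after \ref{clog}): the hypothesis (i), namely $\|\tfrac{1}{n!}\prod_{j=0}^{n-1}(\partial_\#-j)(e)\|\mu^n\to 0$ for every $\mu<1$, is precisely the input that makes the formal solution converge on the full open disk of radius $\xi$, so $Y$ already has entries in $\AA_{T_l}(\xi^-)$ rather than only in each $\AA_{T_l}(\eta^-)$. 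Once that is granted the gluing is formal, and part (2) is then a short computation with the recursion on coefficients as above.
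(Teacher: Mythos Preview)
Your overall strategy for (1) --- compatibility of the trivializations plus a projective limit --- is the paper's strategy, but there is a genuine gap in the way you close it. You correctly observe that the transfer matrix $Y_\eta$ attached to a fixed level $\eta$ also trivializes at every smaller level $\eta'<\eta$. What you do \emph{not} establish is a compatible family of trivializations as $\eta\nearrow\xi$: for $\eta_1<\eta_2$ the bases of $M_{\eta_1}$ and $M_{\eta_2}$ over $\AA_{T_l}(\eta_1)$, $\AA_{T_l}(\eta_2)$ are chosen independently, so $Y_{\eta_1}$ and $Y_{\eta_2}$ need not agree as power series. Your proposed fix, invoking Christol's effective bounds to place $Y$ directly in $\AA_{T_l}(\xi^-)$, does not work: Christol's theorem gives convergence of $Y$ on the disk where the connection matrix $G$ is defined, and here $G_\eta$ lives only in $\AA_{T_l}(\eta)$ because we only have a free basis of $M_\eta$, not of $M$ itself --- indeed, freeness of $M$ over $\AA_{T_l}(\xi^-)$ is precisely what you are trying to prove. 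The paper closes the gap differently: for $\eta'\le\eta$ the two trivializations (restricted to level $\eta'$) differ by an invertible matrix $Q$ satisfying $\partial_\# Q + G_{\eta'}(0)Q = QG_\eta(0)$; since no difference of exponents is a nonzero integer, the coefficient-by-coefficient recursion forces $Q$ to be constant in $z$. After normalizing by these constants one obtains a genuinely compatible system, and then $M=\varprojlim_\eta M_\eta$ gives $(M,\partial_\#)\cong(\AA_{T_l}(\xi^-)\otimes_{T_l}L,\partial_{\#N})$. (Your side remark that $\AA_{T_l}(\xi^-)$ is $z$-adically complete is also false; invertibility of the limit $Y$ comes from running the same limit argument on $Y^{-1}$.)

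For (2) there is a separate error. You claim that hypothesis (ii) --- no \emph{difference} of exponents is a nonzero integer --- forces each individual exponent not to be a negative integer, so that $N+n$ is invertible for all $n\ge1$. This inference is wrong: if every exponent equals $-3$, all differences are $0$ yet $N+3$ is singular. The paper avoids this computation entirely: once (1) yields compatible isomorphisms $(M,\partial_\#)\cong(\AA_{T_l}(\xi^-)\otimes L,\partial_{\#N})$ and $(M_\eta,\partial_\#)\cong(\AA_{T_l}(\eta)\otimes L,\partial_{\#N})$, the restriction $H^0(M)\to H^0(M_\eta)$ is visibly an isomorphism, both sides being identified with the same finite $T_l$-module $\bigoplus_{n\ge0}\ker(N+n)\,z^n$ (only finitely many summands are nonzero since $N$ has finitely many eigenvalues). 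No statement about individual exponents is needed.
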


\begin{proof} (1) For $\eta' \leq \eta$, there is an invertible matrix $Q$ with entries in
$\AA_{T_l}(\eta')$ such that $\partial_{\#}Q + G_{\eta'}(0)Q = QG_\eta(0)$
by the restriction. Since none of the differences of exponents is an integer except $0$,
$Q$ is an invertible matrix with entries in $T_l$.
Hence the pair is independent of the choices of $\eta$.
Note that
$\{ W_\eta \}_{\eta \in |K^\times|_\Q \cap ]0, \xi[}$ is an affinoid covering
of the quasi-Stein space $W$ and
$M$ is the projective limit of $M_\eta\, (\eta \in |K^\times|_\Q \cap ]0, \xi[)$.
Therefore, the assertion holds.

(2) follows from (1).
\end{proof}

\begin{lemm}\label{splittingR} Let $R$ be an integral domain over
a field $\mathbb Q_p$ with the field $F$ of fractions, and let
$(L, N)$ be a pair such that $L$ is a free $R$-module of rank $r$ and $N : L \rightarrow L$
is an $R$-linear endomorphism. Suppose that $e_1, \cdots, e_s$ are distinct eigenvalues of
$N \otimes F$ with multiplicities $m_1, \cdots, m_s$, respectively,
such that $e_1, \cdots, e_s$ are contained in $\mathbb Z_p$
and let $\varphi_N(x) = (x - e_1)^{m_1}\cdots(x-e_s)^{m_s} \in \mathbb Z_p[x]$
the characteristic polynomial of $N$.
If we put $L(e_i) = \varphi_i(N)L$ where $\varphi_i(x) = \varphi_N(x)/(x - e_i)^{m_i}$,
then $L$ is a direct sum of $R$-submodules $L(e_1), \cdots, L(e_s)$ of $L$
such that all eigenvalues of $N|_{L(e_i)} \otimes F$ are $e_i$ for any $i$.
Such a decomposition is unique.
\end{lemm}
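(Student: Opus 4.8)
The plan is to reduce to the classical primary (generalized eigenspace) decomposition of $(L\otimes_R F, N\otimes F)$, but to carry it out already over $R$; the whole point is that the projectors onto the summands can be chosen $R$-linear, which is forced by the hypothesis $e_i\in\Z_p\subseteq\Q_p\subseteq R$.

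First I would apply Cayley--Hamilton to the $R$-linear endomorphism $N$ of the free finite rank module $L$: since $\varphi_N(x)=\prod_i(x-e_i)^{m_i}$ is its characteristic polynomial (of degree $r=\mathrm{rank}_R L$), one has $\varphi_N(N)=0$ on $L$. Here $\varphi_N\in\Z_p[x]$ and each $\varphi_i(x)=\varphi_N(x)/(x-e_i)^{m_i}\in\Z_p[x]$ because the $e_i$ lie in $\Z_p$. As the $e_i$ are pairwise distinct the $\varphi_i$ have no common root, hence generate the unit ideal of $\Q_p[x]$; pick $a_i\in\Q_p[x]$ with $\sum_i a_i\varphi_i=1$ and put $\pi_i=a_i(N)\varphi_i(N)\in\mathrm{End}_R(L)$. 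For $i\neq j$ one has $\varphi_N\mid\varphi_i\varphi_j$, so $\varphi_i(N)\varphi_j(N)=0$ and $\pi_i\pi_j=0$; together with $\sum_i\pi_i=\mathrm{id}_L$ this makes the $\pi_i$ orthogonal idempotents summing to the identity, so $L=\bigoplus_i\pi_i L$. Finally $\pi_i L\subseteq\varphi_i(N)L=L(e_i)$, while every $\pi_j$ with $j\neq i$ kills $L(e_i)$ (again since $\varphi_i(N)\varphi_j(N)=0$), so $\pi_i$ is the identity on $L(e_i)$ and $L(e_i)\subseteq\pi_i L$; thus $\pi_i L=L(e_i)$ and $L=\bigoplus_i L(e_i)$.

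The eigenvalue statement follows from $(x-e_i)^{m_i}\varphi_i(x)=\varphi_N(x)$: for $w=\varphi_i(N)v\in L(e_i)$ one gets $(N-e_i)^{m_i}w=\varphi_N(N)v=0$, so $N|_{L(e_i)}-e_i$ is nilpotent, hence stays nilpotent after $-\otimes_R F$, and $e_i$ is the only eigenvalue of $N|_{L(e_i)}\otimes F$.

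For uniqueness I would base change to $F$; since $R$ is a domain and $L$ is free, $L\hookrightarrow L\otimes_R F$. Given any decomposition $L=\bigoplus_i M_i$ into $N$-stable $R$-submodules with $e_i$ the unique eigenvalue of $N|_{M_i}\otimes F$, the classical uniqueness of the primary decomposition over the field $F$ forces $M_i\otimes_R F$ to be the generalized $e_i$-eigenspace of $N\otimes F$, which equals $L(e_i)\otimes_R F$. Since $\pi_i$ is the identity on $L(e_i)\otimes_R F$ and $0$ on the other summands, the same holds for its action on the $M_j$ (check this after the injection into $L\otimes_R F$), whence $M_i=\pi_i L=L(e_i)$. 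The only delicate point in all of this is the bookkeeping that keeps $\pi_i$ inside $\mathrm{End}_R(L)$ --- i.e. that both the $\varphi_i$ and the B\'ezout coefficients $a_i$ can be taken with coefficients in $\Q_p$ --- which is exactly where the hypotheses $e_i\in\Z_p$ and $\Q_p\subseteq R$ are used.
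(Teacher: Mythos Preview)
Your proof is correct and is the standard primary-decomposition argument via B\'ezout and Cayley--Hamilton; the key observation that the projectors $\pi_i=a_i(N)\varphi_i(N)$ are already $R$-linear because $e_i\in\Z_p\subseteq\Q_p\subseteq R$ is exactly the point. The paper states this lemma without proof, treating it as elementary, so there is nothing to compare against; your write-up would serve perfectly well as the omitted justification.
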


\begin{lemm}\label{splitting} With the notation in \ref{clog},
let $e_1, \cdots, e_s$ be distinct exponents of $(M, \partial_{\#})$ along $z = 0$. Then
$M$ is a direct sum of $\AA_R(\xi^-)[\partial_{\#}]$-submodules
$M(e_1), \cdots, M(e_s)$ of $M$ such that all exponents of $(M(e_i), \partial_{\#})$
are $e_i$ for any $i$.
\end{lemm}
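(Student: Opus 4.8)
The plan is to reduce the splitting of the $\AA_R(\xi^-)[\partial_\#]$-module $M$ to the purely algebraic splitting furnished by Lemma \ref{splittingR}, applied to the pair $(L,N)$ produced after Lemma \ref{free} and Lemma \ref{freetate}. First I would fix a finite injective morphism $T_l \to R$ of $K$-affinoid algebras as in \ref{free} (1), and recall from the discussion preceding \ref{freetate} that we obtain a free $T_l$-module $L_\eta$ with a $T_l$-linear endomorphism $N_\eta$ (given by the matrix $G_\eta(0)$) and an isomorphism $(M, \partial_\#) \cong (\AA_{T_l}(\xi^-)\otimes_{T_l} L_\eta, \partial_{\#N_\eta})$ coming from \ref{freetate} (1); by construction the exponents $e_1,\dots,e_s$ of $(M,\partial_\#)$ along $z=0$ are exactly the eigenvalues of $N_\eta$ acting on $L_\eta \otimes_{T_l} \mathrm{Frac}(T_l)$, so in particular they all lie in $\Z_p$. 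Note that $T_l$ is an integral domain over $\Q_p$, so Lemma \ref{splittingR} applies to $(L_\eta, N_\eta)$.

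Next I would invoke Lemma \ref{splittingR} to decompose $L_\eta = L_\eta(e_1)\oplus\cdots\oplus L_\eta(e_s)$ as a direct sum of $T_l$-submodules stable under $N_\eta$, where $L_\eta(e_i) = \varphi_i(N_\eta)L_\eta$ and on each summand all eigenvalues of $N_\eta$ equal $e_i$. Tensoring with $\AA_{T_l}(\xi^-)$ over $T_l$ (which is exact, as $\AA_{T_l}(\xi^-)$ is flat — indeed the decomposition is cut out by the idempotents associated to the factorization of $\varphi_N$, or one simply uses that a direct sum stays a direct sum under base change), and using that $\partial_{\#N_\eta}$ respects the subspaces $\AA_{T_l}(\xi^-)\otimes_{T_l} L_\eta(e_i)$ because $N_\eta$ does, I get a direct sum decomposition of $(M,\partial_\#)$ as $\AA_{T_l}(\xi^-)[\partial_\#]$-modules, hence a fortiori as $\AA_R(\xi^-)[\partial_\#]$-modules once we transport along the isomorphism of \ref{freetate} (1) and remember that $\AA_{T_l}(\xi^-)\otimes_{T_l}R \cong \AA_R(\xi^-)$. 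Call the summands $M(e_1),\dots,M(e_s)$. That the exponents of $(M(e_i),\partial_\#)$ along $z=0$ are all equal to $e_i$ follows because the residue/exponent of $M(e_i)$ is computed from $N_\eta|_{L_\eta(e_i)}$, whose only eigenvalue over the fraction field is $e_i$, and this is stable under the base change to $\overline{K(\ZZ)}$ as in \ref{remexp-6}.

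For uniqueness of the decomposition, I would observe that $M(e_i)$ is characterized intrinsically inside $M$: it is the largest $\AA_R(\xi^-)[\partial_\#]$-submodule all of whose exponents along $z=0$ equal $e_i$, equivalently the kernel of $\varphi_i(N_\eta)$-type operators — this matches the uniqueness already asserted in \ref{splittingR}. The one point that requires a little care — and which I expect to be the main (though modest) obstacle — is checking that the decomposition of $L_\eta$ is independent of the auxiliary choice of $\eta$ (and of the Noether normalization $T_l \to R$), so that the resulting $M(e_i)$ genuinely depends only on $(M,\partial_\#)$; this is precisely the compatibility established in \ref{freetate} (1), where the transition matrix $Q$ between different $\eta$'s has entries in $T_l$ because no difference of exponents is a nonzero integer, so $Q$ preserves the eigenspace decomposition of $N_\eta$. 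Apart from this bookkeeping, the argument is a formal consequence of \ref{free}, \ref{freetate} and \ref{splittingR}.
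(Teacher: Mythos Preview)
Your overall strategy matches the paper's exactly: pass to a free $T_l$-module $(L,N)$ via \ref{free} and \ref{freetate}, apply the algebraic splitting of \ref{splittingR} to $L$, and tensor back. The difficulty you underestimate, however, is not the independence of $\eta$ but the step where you write ``hence a fortiori as $\AA_R(\xi^-)[\partial_\#]$-modules''. This is not a fortiori: a direct-sum decomposition over the \emph{smaller} ring $\AA_{T_l}(\xi^-)$ does not automatically persist over the larger ring $\AA_R(\xi^-)$. The isomorphism of \ref{freetate} (1) is only an isomorphism of $\AA_{T_l}(\xi^-)[\partial_\#]$-modules (the matrix $Y$ in \ref{eq} has entries in $\AA_{T_l}(\eta^-)$, not in $\AA_R(\eta^-)$), so the $R$-action on $\AA_{T_l}(\xi^-)\otimes_{T_l}L$ transported from $M$ is \emph{not} the na\"ive one, and there is no reason a priori that multiplication by $r\in R$ should preserve each summand $\AA_{T_l}(\xi^-)\otimes_{T_l}L(e_i)$. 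Remembering that $\AA_{T_l}(\xi^-)\otimes_{T_l}R\cong\AA_R(\xi^-)$ is irrelevant here: that identity concerns the ring, not the module decomposition.

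The paper closes this gap with one extra observation. Multiplication by any $r\in R$ is an $\AA_{T_l}(\xi^-)[\partial_\#]$-endomorphism of $M$ (since $r$ is killed by $z\frac{d}{dz}$). It therefore suffices to show that every $\AA_{T_l}(\xi^-)[\partial_\#]$-homomorphism $M(e_i)\to M(e_j)$ with $i\ne j$ is zero; then the $R$-action has no off-diagonal components and each $M(e_i)$ is automatically an $\AA_R(\xi^-)$-submodule. This vanishing follows from hypothesis (ii) of \ref{clog}: writing such a map as a matrix $P=\sum_{k\ge 0}P_kz^k$ over $\AA_{T_l}(\xi^-)$, compatibility with $\partial_\#$ forces $P_k(N|_{L(e_i)})=(N|_{L(e_j)}-k)P_k$, and since $e_j-e_i$ is neither zero nor a positive integer the spectra are disjoint for every $k$, whence $P_k=0$. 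You should insert this argument in place of the ``a fortiori''. The discussion of independence of $\eta$ in your last paragraph is correct but unnecessary for the lemma as stated.
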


\begin{proof} With the notation in \ref{freetate} and \ref{splittingR},
take a free $T_l$-module $L$
of finite type
furnished with an $T_l$-linear homomorphism $N$ such that $(M, \partial_{\#}) \cong
(\AA_{T_l}(\xi^-) \otimes_{T_l} L, \partial_{\#N})$.
Since $L(e_i)$ is a direct summand of the free $T_l$-module $L$, $L(e_i)$ is free.
Put $M(e_i) = (\AA_{T_l}(\xi^-) \otimes_{T_l} L(e_i), \partial_{\#N|_{L(e_i)}})$.
Then $M$ is a direct sum of $M(e_1), \cdots, M(e_s)$
as $\AA_{T_l}(\xi^-)[\partial_{\#}]$-modules.
Since any $\AA_{T_l}(\xi^-)[\partial_{\#}]$-homomorphism between
$M(e_i)$ and $M(e_j)$ for $i \ne j$ is a zero map, $M(e_i)$
is an $\mathcal A_R(\xi^-)[\partial_{\#}]$-module for all $i$.
Hence, the decomposition is the desired one.
\end{proof}

\begin{lemm}\label{covering} Let $S = \mathrm{Spm}\, R$
be a $K$-affinoid variety, $W = S \times_{\mathrm{Spm}\,K} D(0, \xi^+)$
for some $\xi \in |K^\times|_\Q$, and let $\M$ be a locally free $\O_W$-module.
Then there exist a finite affinoid
covering $\{ S_i \}$ of $S$ and a real number $\xi' \in |K^\times|_\Q \cap\, ]0, \xi]$
such that, if $W_{S_i, \xi'}$ denotes an affinoid subdomain $S_i \times D(0, \xi'{}^+)$
of $W$, then $\M|_{W_{S_i, \xi'}}$ is a free $\O_{W_{S_i, \xi'}}$-module for all $i$.
\end{lemm}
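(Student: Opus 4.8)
The plan is to trivialize $\M$ first on the closed subspace $\{z=0\}$, which is isomorphic to $S$, and then to "spread out" such a trivialization to a tube $S_i\times D(0,\xi'^+)$ of one fixed radius. We may assume that $S=\mathrm{Spm}\,R$ is connected: an affinoid space has only finitely many connected components, and it suffices to treat each of them and to take the union of the resulting coverings. Then $W=S\times_{\mathrm{Spm}\,K}D(0,\xi^+)=\mathrm{Spm}(R\langle z/\xi\rangle)$ is a connected affinoid space, so $\M$ has constant rank, say $r$, on $W$. The closed immersion $\iota\colon\{z=0\}\hookrightarrow W$ identifies $\{z=0\}$ with $S$ and corresponds to the surjection $\O(W)\to R$, $z\mapsto 0$; the pullback $\M_0:=\iota^*\M$ is a locally free $\O_S$-module of rank $r$, so by quasi-compactness of $S$ there is a finite affinoid covering $\{S_i\}$ of $S$ with $\M_0|_{S_i}$ free of rank $r$ for every $i$. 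It then remains to produce, for each $i$, a radius $\xi'_i\in|K^\times|_\Q\cap\,]0,\xi]$ such that $\M|_{W_{S_i,\xi'_i}}$ is free, and to set $\xi':=\min_i\xi'_i$ (a finite minimum, hence again in $|K^\times|_\Q$): then $W_{S_i,\xi'}\subset W_{S_i,\xi'_i}$ shows $\M|_{W_{S_i,\xi'}}$ is free for all $i$.

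Fix $i$ and write $W':=S_i\times D(0,\xi^+)$, an affinoid space, with $\{z=0\}\cap W'\cong S_i$. By Kiehl's Theorem A the map $\Gamma(W',\M)\to\Gamma(S_i,\M_0|_{S_i})$ is surjective, so I may lift a basis $\bar v_1,\dots,\bar v_r$ of the free $\O(S_i)$-module $\Gamma(S_i,\M_0|_{S_i})$ to sections $v_1,\dots,v_r\in\Gamma(W',\M)$. These define a morphism $\phi\colon\O_{W'}^r\to\M|_{W'}$ whose pullback $\iota^*\phi$ sends $e_j\mapsto\bar v_j$, hence is an isomorphism on $\{z=0\}$. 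Let $\CC:=\mathrm{coker}\,\phi$, a coherent $\O_{W'}$-module. By right exactness $\iota^*\CC=\mathrm{coker}(\iota^*\phi)=0$, i.e.\ $\CC=z\CC$, so Nakayama applied to the stalks at points of $\{z=0\}$ shows that the support $\Sigma:=\mathrm{Supp}\,\CC$ is a closed analytic subset of $W'$ disjoint from $\{z=0\}$.

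Now $\Sigma$, being closed in the affinoid $W'$, is itself affinoid, and $z$ vanishes nowhere on it; hence $z|_\Sigma$ is a unit in $\O(\Sigma)$, and since the supremum seminorm of $(z|_\Sigma)^{-1}$ is finite we get $\delta_i:=\inf_{y\in\Sigma}|z(y)|>0$ (if $\Sigma=\varnothing$, put $\delta_i:=\xi$). Choose $\xi'_i\in|K^\times|_\Q$ with $0<\xi'_i<\delta_i$ and $\xi'_i\le\xi$ (possible since $|K^\times|_\Q$ is dense in $\R_{>0}$). On $W_{S_i,\xi'_i}=\{x\in W' : |z(x)|\le\xi'_i\}$ one has $\Sigma\cap W_{S_i,\xi'_i}=\varnothing$, so $\phi$ is surjective there; as source and target are locally free of the same rank $r$, the kernel is locally free of rank $0$, so $\phi$ is an isomorphism on $W_{S_i,\xi'_i}$ and $\M|_{W_{S_i,\xi'_i}}$ is free. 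Together with the reduction of the first paragraph this proves the lemma.

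I expect the only delicate point to be the spreading-out step in the last paragraph: it is essential that $\Sigma$ is quasi-compact, since this is exactly what forces $|z|$ to be bounded away from $0$ on $\Sigma$ and thus allows a single radius $\xi'$ to work; over a non-quasi-compact base the non-isomorphism locus could accumulate towards $\{z=0\}$ and no uniform $\xi'$ would exist. The remaining ingredients --- Kiehl's Theorems A and B, surjectivity of global sections along a closed immersion of affinoids, Nakayama on stalks of coherent sheaves, and the density of $|K^\times|_\Q$ --- are standard.
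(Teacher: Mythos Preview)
Your proof is correct and follows essentially the same route as the paper's: restrict $\M$ to $\{z=0\}\cong S$, trivialize there on a finite affinoid covering $\{S_i\}$, lift a basis to global sections of $\M$ over $S_i\times D(0,\xi^+)$ using Kiehl's Theorem~A, observe that the cokernel has closed support disjoint from $\{z=0\}$, and then use quasi-compactness (the paper phrases this as the maximum modulus principle, you phrase it as $z^{-1}$ being bounded on the affinoid support) to find a uniform radius $\xi'_i$ below which the map is surjective and hence an isomorphism. Your explicit reduction to connected $S$ (to ensure constant rank and thus that a surjection between locally free sheaves of equal rank is an isomorphism) is a clean way to justify the final step, which the paper leaves implicit.
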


\begin{proof} Since $\M/z\M$ is regarded as a locally free $\O_S$-module,
there is a finite affinoid covering $\{ S_i \}$ of $S$ such that
$(\M/z\M)|_{S_i}$ is a free $\O_{S_i}$-module for all $i$.
Since $W_i = S_i \times_{\mathrm{Spm}\,K} D(0, \xi^+)$ is an affinoid,
$\M/z\M$ is generated by
$\Gamma(W_i, \M)$ by Kiehl's Theorems A and B \cite[2.4]{quasistein}.
Let $v_1, \cdots, v_r \in \Gamma(W_i, \M)$
be elements whose reductions form a basis of $(\M/z\M)|_{S_i}$ over $\O_{S_i}$.
The support of $\M|_{W_i}/(v_1, \cdots, v_r)$ is an analytic closed subset
of $W_i$ which does not intersect
with the closed subspace defined by
$z = 0$. Since $\M$ is locally free, there is a real number $\xi'_i \in |K^\times|_\Q \cap ]0, \xi[$ such that
$\M|_{S_i \times_{\mathrm{Spm}\,K} D(0, \xi'_i{}^+)}$ is free and is generated by
$v_1, \cdots, v_r$ because of the maximum modulus principle
\cite[6.2.1, Prop.4]{BGR}. Then it is enough to take
$\xi' = \mathrm{min}_i\, \xi'_i$.
\end{proof}

\noindent
{\it Proof of \mbox{\rm \ref{clog}}.} We may assume that any exponent
of $\M$ along $z = 0$ is $0$
by \ref{splitting} and by twisting an object of rank $1$ with a suitable exponent. We may
also assume that $\M|_{W_\xi'}$ is a free $\O_{W_{\xi'}}$-module for
some $\xi' \in |K^\times|_\Q \cap\, ]0, \xi[$ by \ref{covering}.
By applying the transfer theorem \ref{clog} for the free cases
with the conditions (i) and (ii) ,
if one takes an $\eta \in |K^\times|_\Q \cap ]0, \xi'[$,
then there is a free $R$-module $L$
furnished with an $R$-linear operator $N : L \rightarrow L$
such that $\beta_\eta :
(\M, \partial_\#)|_{W_\eta} \riso (\O_{W_\eta} \otimes_R L, \partial_{\# N})$.
Denote the dual of $\M$ by $(\M^\vee, -\partial_\#)$. Then we have a
natural commutative diagram
$$
     \begin{array}{ccc}
    \mbox{Hom}_{\O_W[\partial_\#]}(\M, \O_W \otimes_R L)
    &\longrightarrow
    &\mbox{Hom}_{\O_W[\partial_\#]}(\M|_{W_\eta}, \O_{W_\eta} \otimes_R L) \\
    \cong \downarrow \hspace*{4mm} & &\hspace*{4mm} \downarrow \cong \\
      H^0(M^\vee \otimes_R L) &\riso &H^0(M_\eta^\vee \otimes_R L),
      \end{array}
$$
where the vertical arrows are isomorphisms since $\M$ is locally free and
the bottom horizontal arrow is an isomorphism by \ref{freetate} (2) since
all difference of exponents of
$(\M^\vee \otimes_R L, -\partial_\# \otimes 1 + 1 \otimes \partial_N)$
along $z = 0$ are $0$.

Let $\beta : (\M, \partial_\#) \rightarrow (\O_W \otimes_R L, \partial_{\#N})$ be an
$\O_W[\partial_\#]$-homomorphism corresponding to $\beta_\eta$
via the isomorphisms above. We will prove that $\beta$ is an isomorphism.
In the case where $R$ is a field, i.e., $d = 1$,
$\beta$ is an isomorphism since the support of an $\AA_R(\xi^-)[\partial_\#]$-module, which
is finitely generated over $\AA_R(\xi^-)$, is either $W$ or one point $z = 0$
by B\'ezout property of $\AA_R(\xi^-)$ \cite[4.6]{crew-finite}.
Let us return to the case of general $R$. For a maximal ideal $x$ of $R$,
the induced homomorphism $\beta\, (\mathrm{mod}\, x)$ is an isomorphism
by the case where $R$ is a field.
Hence, $\beta$ is an isomorphism around $x \times_{\mathrm{Spm}\,K} D(0, \xi^-)$
by Nakayama's lemma.
Since both sides of $\beta$ is coherent, $\beta$ is an isomorphism
\cite[9.4.2, Corollary 7]{BGR}.
\hspace*{\fill} $\Box$\

\vspace{3mm}

\noindent
$5^\circ$ {\it The vanishing \ref{coneZ} in special cases : any difference
of exponents is neither a $p$-adic Liouville number nor an integer except $0$.}

Let us first suppose that (ii) in \ref{clog} and $c = 0$ for the exponents
along $z = 0$ by $0^\circ$.

\begin{lemm}\label{estimate} With the notation in \ref{splittingR}, the followings hold.
\begin{enumerate}
\item Let $j$ be an integer. Then there is a monic polynomial $g_j(x) \in \Z_p[x]$
of degree $r-1$ such that $(N - j)g_j(N) + \varphi_N(j)I_L = 0$.
Here $I_L$ is the identity of $L$.
\item If all of $e_1, \cdots, e_s$ are neither $p$-adic Liouville numbers
nor positive integers, then $(N - j)$ is invertible and, for any $0 < \eta < 1$,
$|\varphi_N(j)^{-1}|\eta^j \rightarrow 0$ as $j \rightarrow \infty$
\end{enumerate}
\end{lemm}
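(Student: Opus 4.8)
The plan is to derive part (1) from polynomial division together with the Cayley--Hamilton theorem, and then to deduce part (2) from part (1) and directly from the definition of a $p$-adic Liouville number. Both parts are short; the only real content sits in the convergence estimate of (2).

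For part (1): since $j\in\Z\subseteq\Z_p$ and $\varphi_N(x)\in\Z_p[x]$, the value $\varphi_N(j)$ lies in $\Z_p$ and the polynomial $\varphi_N(x)-\varphi_N(j)\in\Z_p[x]$ vanishes at $x=j$. Division by the monic polynomial $x-j$ stays inside $\Z_p[x]$, so one can write $\varphi_N(x)-\varphi_N(j)=(x-j)g_j(x)$ with $g_j(x)\in\Z_p[x]$; and since $\varphi_N$ is monic of degree $r=m_1+\cdots+m_s$, the quotient $g_j$ is monic of degree $r-1$. Applying the $R$-algebra homomorphism $R[x]\to\mathrm{End}_R(L)$ sending $x\mapsto N$, and using that $\varphi_N(N)=0$ by Cayley--Hamilton (as $L$ is free of rank $r$ over the commutative ring $R$ and $\varphi_N$ is the characteristic polynomial of $N$), we get $(N-j)g_j(N)=\varphi_N(N)-\varphi_N(j)I_L=-\varphi_N(j)I_L$, which is the asserted identity.

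For part (2): we take $j$ to be a positive integer, which is the range relevant both for the limit $j\to\infty$ and for the application (for nonpositive $j$ one of the $e_i$ could be equal to $j$). Since $\varphi_N(j)=\prod_{i=1}^s(j-e_i)^{m_i}$ and no $e_i$ is a positive integer, each factor $j-e_i$ is a nonzero element of $\Q_p$; hence $\varphi_N(j)\in\Q_p^\times$, and because $R$ is an integral domain containing the field $\Q_p$, it is a unit of $R$. Then part (1) exhibits $-\varphi_N(j)^{-1}g_j(N)$ as a two-sided inverse of $N-j$ in $\mathrm{End}_R(L)$. For the estimate, fix $0<\eta<1$ and set $\eta_0=\eta^{1/r}\in\,]0,1[$, so that $\eta^j=\prod_{i=1}^s\eta_0^{\,m_i j}$ and $|\varphi_N(j)^{-1}|\,\eta^j=\prod_{i=1}^s\bigl(|j-e_i|^{-1}\eta_0^{\,j}\bigr)^{m_i}$. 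For each $i$, since $e_i$ is not a $p$-adic Liouville number the power series $\sum_{n\ge 0,\,n\ne e_i}x^n/(n-e_i)$ has radius of convergence $\ge 1$, so its general term tends to $0$ at any argument of absolute value $\eta_0<1$, i.e. $|j-e_i|^{-1}\eta_0^{\,j}\to 0$ as $j\to\infty$. Taking the finite product of the $m_i$-th powers over $i$ yields $|\varphi_N(j)^{-1}|\,\eta^j\to 0$.

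I expect the only step needing care to be the last one: one must read off from the definition that the non-Liouville hypothesis on $e_i$ is exactly what forces $|j-e_i|^{-1}$ to grow subexponentially in $j$, and then exploit the splitting $r=\sum_i m_i$ so that the genuine exponential decay $\eta^j$ dominates the product of the $s$ subexponential factors $|j-e_i|^{-m_i}$. The ``not a positive integer'' part of the hypothesis is used only to ensure $\varphi_N(j)\ne 0$ (hence that the absolute values make sense and $N-j$ is invertible) for positive integers $j$; everything else is formal.
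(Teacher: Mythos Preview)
Your proof is correct. The paper actually states this lemma without proof, so there is nothing to compare against; your argument is the natural one and would be exactly what the authors had in mind: polynomial division plus Cayley--Hamilton for (1), and the definition of non-Liouville numbers together with the factorisation $r=\sum_i m_i$ for (2).
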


Take $(\lambda, \eta) \in \Lambda_{\underline{\xi}, \underline{\lambda}}$
such that $\lambda \geq \lambda_m$ and $\eta < \xi_m$ for some $m$.
Then the restriction $(\E, \partial_{\#})$ on $S \times_{\mathrm{Spm} K} D(0, \xi_m^-)$
for an integral smooth $K$-affinoid $S = \mathrm{Spm}\, R$
in $V_\beta \cap\, ]Z \cap U[_{\ZZ, \lambda_m}$
satisfies the assumption of \ref{clog}  by the overconvergent condition in $2^\circ$.
Considering an admissible affinoid covering of $S$,
we may assume that there is a basis of $\Gamma(g_{\lambda, \eta}^{-1}(S), \E)$
over $\AA_R(\eta)$
such that $G$ is a matrix with entries in $R$.

Since any proper values of $G$ is not a positive integer,
$\partial_{\#} + G$ is injective on $\left(\RR_R(\eta)/\AA_R(\eta)\right)^r$.
Since any proper values of $G$ is neither a $p$-adic Liouville nor a positive integer,
$\partial_{\#} + G$ is surjective on $\left(\RR_R(\eta)/\AA_R(\eta)\right)^r$.
Indeed, with the notation in \ref{estimate} (1), $\partial_{\#} + G$ maps
$-\sum_{j=1}^\infty\, \varphi_G(j)^{-1}g_j(G)\underline{a}_jz^{-j}$
to  $\sum_{j=1}^\infty\, \underline{a}_jz^{-j}$ and
$\sum_{j=1}^\infty\, \varphi_G(j)^{-1}g_j(G)\underline{a}_jz^{-j}$ is contained
in $\left(\RR_R(\eta)/\AA_R(\eta)\right)^r$ by \ref{estimate} (2).
Hence, the cohomology groups in \ref{coneG} vanish for any $q$ and
it implies the vanishing \ref{coneZ}.

\vspace{3mm}

\noindent
$6^\circ$ {\it The vanishing \ref{coneZ} in general cases : any difference
of exponents is not a $p$-adic Liouville number.}

Let us suppose the conditions (a) in \ref{Nobuorigid} and $c = 0$ for the exponents
along $z = 0$ by $0^\circ$.

\begin{prop}\label{intexp} With the notation as in \ref{clog},
we assume the conditions (i) in \ref{clog},
(a) in \ref{Nobuorigid} and $c = 0$ for exponents of $(\M, \partial_\#)$
along $z = 0$.
Suppose that $\M|_{W_\eta}$ is locally free
for some $\eta \in |K^\times|_\Q \cap ]0, \xi[$.
Then there is a locally free $\O_W$-submodule $\M'$ of $\M$ which is stable under
$\partial_\#$ such that (1) $(\M', \partial_\#)$ satisfies the conditions (i) and (ii) in \ref{clog}
such that none of exponents along $z = 0$ is a positive integer,
(2) the support of $\M/\M'$ is included in the closed subset defined by $z = 0$
and it is a free $O_S$-module of finite rank,
and (3) the induced homomorphism
$\overline{\partial}_\# : \M/\M' \rightarrow \M/\M'$ is an isomorphism.
\end{prop}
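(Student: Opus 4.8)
The plan is to build $\M'$ inside $\M$ by a finite sequence of canonical ``shearing'' modifications of the lattice, driven by the residue along $z=0$; this avoids any transfer theorem for the resonant case (\ref{clog} needs condition (ii)), using only the eigenspace decomposition \ref{splittingR} together with elementary lattice surgery.

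First I would fix the residue. Set $\overline{\M}=\M/z\M$, a locally free $\O_S$-module, and let $\rho=\overline{\partial}_{\#}:\overline{\M}\to\overline{\M}$ be the $R$-linear operator induced by $\partial_\#$; its eigenvalues are by definition the exponents of $(\M,\partial_\#)$ along $z=0$. By \ref{logovcon} these are finitely many elements $e_1,\dots,e_s\in\Z_p$, and since $S$ is integral they are constant over $S$, so that $\varphi_\rho(t)=\prod_i(t-e_i)^{m_i}\in\Z_p[t]$. By \ref{splittingR} — applied over a Tate subalgebra $T_l\hookrightarrow R$ as in \ref{free} to make $\overline{\M}$ free, the projectors onto the pieces being polynomials in the $R$-linear $\rho$ and hence $R$-linear — one gets a $\rho$-stable decomposition $\overline{\M}=\bigoplus_i\overline{\M}(e_i)$ into sub-bundles on which $\rho$ has sole eigenvalue $e_i$. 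The basic surgery is: for a $\rho$-stable sub-bundle $\overline{V}\subseteq\overline{\M}$ with locally free quotient, the preimage $\pi^{-1}(\overline{V})$ (where $\pi:\M\to\overline{\M}$) is a $\partial_\#$-stable locally free $\O_W$-submodule of $\M$ containing $z\M$, with $\M/\pi^{-1}(\overline{V})\cong\overline{\M}/\overline{V}$ carrying the action induced by $\rho$; the $\partial_\#$-stability holds because $\overline{\partial_\# m}=\rho(\overline{m})\in\overline{V}$ for $m\in\pi^{-1}(\overline V)$, and local freeness because locally $\pi^{-1}(\overline V)$ splits as a lift of $\overline V$ plus $z$ times a lift of a complement.

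Next comes the shearing itself. Partition $\{e_1,\dots,e_s\}$ into congruence classes mod $\Z$; within a class $C$ the exponents are totally ordered by their integer differences, with a bottom element $e_1^{(C)}$ and a top element $e_C$, and since $c=0$ none of them — in particular $e_C$ — is a positive integer. I would then iterate: while some class still has more than one exponent, let $e_0$ be its current bottom exponent and replace the current lattice by $\pi^{-1}\!\big(\bigoplus_{e_i\ne e_0}\overline{\M}(e_i)\big)$. The computation above shows that this leaves every exponent unchanged except that the $e_0$-generalized eigenspace is pushed to $e_0+1$ (its residue block becoming $1+\rho|_{\overline{\M}(e_0)}$), while the quotient produced at this step is $\overline{\M}(e_0)$ with residue of sole eigenvalue $e_0$. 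The obvious weight strictly decreases, so after finitely many steps every class has a single exponent, namely $e_C$; the resulting submodule is $\M'$. Because the eigenspace pieces $\overline{\M}(e_i)$ are canonical and the surgeries attached to distinct classes commute, $\M'$ does not depend on the auxiliary choices (of $T_l$, or of local splittings), hence is a well-defined locally free $\partial_\#$-stable $\O_W$-submodule of $\M$ over all of $W$.

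Finally I would verify (1)--(3). Condition (i) for $\M'$ is immediate from that for $\M$, since $\Gamma(W_\eta,\M')\subseteq\Gamma(W_\eta,\M)$ and (i) is a condition on sections. For (ii): the exponents of $\M'$ are exactly the $e_C$, which lie among the original exponents; a difference of two of them is either $0$ (same class) or a non-integer (distinct classes) which by hypothesis (a) is not a $p$-adic Liouville number — and no $e_C$ is a positive integer — so $\M'$ satisfies (i), (ii) and has no positive integral exponent along $z=0$. For (2): $\M/\M'$ carries a finite filtration whose successive quotients are the $\overline{\M}(e_0)$ produced in the iteration, hence is a locally free (after shrinking $S$, free) $\O_S$-module of finite rank, supported on $z=0$. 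For (3): $\partial_\#$ acts on each such quotient with sole eigenvalue the corresponding $e_0$, and within a class $C$ these $e_0$ run through $e_1^{(C)},\,e_1^{(C)}+1,\,\dots,\,e_C-1$; none of them is $0$, for otherwise $e_C$ would be a positive integer, contradicting $c=0$; hence every $e_0$ is a unit of $R$, each graded piece of $\overline{\partial}_{\#}$ on $\M/\M'$ is invertible, and therefore $\overline{\partial}_{\#}:\M/\M'\to\M/\M'$ is an isomorphism. The main obstacle is the bookkeeping in the shearing step when $\rho$ is not semisimple and $R$ is not a field: one must check that the generalized eigenspaces are genuine sub-bundles, that $\pi^{-1}(-)$ commutes with base change so that the construction glues, and that the residue really transforms at each stage as asserted (including the merging of eigenspaces whose exponents become equal); granting this, the hypothesis $c=0$ enters only through the non-vanishing used in (3).
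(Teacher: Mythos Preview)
Your approach is correct and essentially the same as the paper's: both iterate the shearing construction of \ref{expch} (which in your coordinate-free language is taking $\pi^{-1}$ of a residue-eigenspace complement) to push the bottom exponent of each $\Z$-congruence class up by $1$ until it merges with the top, the condition $c=0$ guaranteeing that the quotients produced never have eigenvalue $0$. The paper's proof is terser, phrased as an induction on the largest integral difference of exponents and its multiplicity, carried out locally on $W_\eta$ via \ref{free} and \ref{expch} and then glued with $\M|_{W\setminus\{z=0\}}$; your invariant formulation performs the same lattice surgery directly on $W$.
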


\begin{lemm}\label{expch} Let $R$ be an integral $K$-affinoid
and let $\eta \in |K^\times|_\Q$.
Suppose that $M$ is an $\AA_R(\eta)$-module of rank $r$ furnished
with an $R$-derivation $\partial_\# = z\frac{d}{dz} : M \rightarrow M$
such that $e_1, \cdots, e_s$ are distinct exponents of $(M, \partial_{\#})$ along $z = 0$
with multiplicities $m_1, \cdots, m_s$, respectively.
\begin{enumerate}
\item
There exists a basis $\underline{v}$ of $M$ such that,
if $G$ is a square matrix with entries in $\AA_R(\xi)$ defined by
$\partial_\#(\underline{v}) = \underline{v}G$, then
$G(0) = \left(\begin{array}{ccc} G_1(0) & &0 \\ &\ddots & \\ 0& &G_s(0) \end{array}\right)$
by square matrices $G_1(0), \cdots, G_s(0)$ of degree $m_1, \cdots, m_s$,
respectively, with entries in $R$ such that
all eigenvalues of $G_i(0)$ are $e_i$ for any $i$.
\item Let $\underline{v}_i$ be a part of the basis as in (1) such that
it corresponds to the $i$-th direct summand modulo $z$, that is,
$\partial_{\#}(\underline{v}_i)\, \equiv\, \underline{v}_iG_i(0)\,
(\mathrm{mod}\, z\AA_R(\eta))$. Let $M'$ be an $\AA_R(\eta)$-submodule of $M$
generated by $z\underline{v}_1, \underline{v}_2, \cdots, \underline{v}_s$.
Then $M'$ is stable under $\partial_{\#}$ whose exponents are
$e_1 + 1, \cdots, e_s$ with multiplicities $m_1, \cdots, m_s$, respectively.
Moreover, $M/M'$ is
a free $R$-module of rank $m_1$, and,
if $e_1 \ne 0$, then the induced $R$-homomorphism
$\overline{\partial}_{\#} : M/M' \rightarrow M/M'$ is an isomorphism.
\end{enumerate}
\end{lemm}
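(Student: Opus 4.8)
The plan is to note that both parts of \ref{expch} are pure linear algebra of the residue along $z=0$ and use none of the Liouville conditions (a), (b') of \ref{Nobuorigid} or \ref{clog}; those enter only later, in the transfer arguments.

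For (1): $\partial_{\#}$ preserves $zM$ (since $\partial_{\#}(zm)=zm+z\,\partial_{\#}(m)$) and is $R$-linear, so it induces an $R$-linear endomorphism $N_0$ of the free $R$-module $M/zM$; this is the residue, and by definition its eigenvalues over $F=\mathrm{Frac}(R)$ are $e_1,\dots,e_s$ with multiplicities $m_1,\dots,m_s$, all in $\Z_p$. Applying \ref{splittingR} to $(M/zM,N_0)$ gives the generalized eigenspace decomposition $M/zM=\bigoplus_{i=1}^s(M/zM)(e_i)$. Choosing a basis of $M/zM$ adapted to this decomposition and lifting it to a basis $\underline v=(\underline v_1,\dots,\underline v_s)$ of $M$ — one lifts the adapted change-of-basis matrix, which lies in $GL_r(R)$, along the split surjection $\AA_R(\eta)\twoheadrightarrow R$; freeness of the summands $(M/zM)(e_i)$, needed to speak of an adapted basis, is obtained as elsewhere in this section by the finite reduction $T_l\to R$ of \ref{free}, over which finitely generated projectives are free. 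Writing $\partial_{\#}(\underline v)=\underline v G$ and using that under $\underline v\mapsto\underline v P$ the residue transforms by conjugation by $P(0)$ (because $\partial_{\#}P\equiv 0\bmod z$), one gets $G(0)=G\bmod z=\mathrm{diag}(G_1(0),\dots,G_s(0))$ with every eigenvalue of $G_i(0)$ equal to $e_i$.

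For (2): with $\underline v$ as in (1), write the block form $\partial_{\#}(\underline v_j)=\underline v_jG_{jj}+\sum_{i\ne j}\underline v_iG_{ij}$, where $G_{ij}(0)=0$ for $i\ne j$, so $G_{ij}=z\tilde G_{ij}$ there. Put $M'=\AA_R(\eta)\langle z\underline v_1,\underline v_2,\dots,\underline v_s\rangle$, free of rank $r$ with basis $\underline v'=(z\underline v_1,\underline v_2,\dots,\underline v_s)$. Stability $\partial_{\#}(M')\subseteq M'$ is a short check: for $j\ge 2$ the $i=1$ term of $\partial_{\#}(\underline v_j)$ is $(z\underline v_1)\tilde G_{1j}\in M'$ and the $i\ge 2$ terms are $\underline v_i(z\tilde G_{ij})\in M'$; and $\partial_{\#}(z v_1^{(k)})=z v_1^{(k)}+z\,\partial_{\#}(v_1^{(k)})$, where $z\,\partial_{\#}(\underline v_1)=(z\underline v_1)G_{11}+\sum_{i\ge 2}\underline v_i(zG_{i1})\in M'$. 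Reading off the residue of $(M',\partial_{\#})$ in the basis $\underline v'$: from $\partial_{\#}(z\underline v_1)=z\underline v_1+z\,\partial_{\#}(\underline v_1)$ the $(1,1)$ diagonal block is $I_{m_1}+G_1(0)$, while for $j\ge 2$ the diagonal block is $G_j(0)$ (the only surviving off-diagonal blocks sit in row-block $1$); the matrix is block-triangular, so the exponents of $M'$ are $e_1+1,e_2,\dots,e_s$ with multiplicities $m_1,\dots,m_s$. Next, since $zM\subseteq M'$, we have $M/M'=\AA_R(\eta)\underline v_1/z\AA_R(\eta)\underline v_1\cong R^{m_1}$, a free $R$-module of rank $m_1$ on which $\partial_{\#}$ induces an $R$-linear $\overline{\partial}_{\#}$. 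As $\underline v_i\equiv 0$ in $M/M'$ for $i\ge 2$ and $G_{11}\equiv G_1(0)\bmod z$, $\overline{\partial}_{\#}$ is multiplication by $G_1(0)$; its characteristic polynomial is $(x-e_1)^{m_1}$ with $e_1\in K$, so $\det G_1(0)=e_1^{m_1}$, a unit in $R$ when $e_1\ne 0$, whence $\overline{\partial}_{\#}$ is then an isomorphism.

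I expect the only real difficulty to lie in (1): producing an honest $\AA_R(\eta)$-basis of $M$ adapted to the generalized eigenspace decomposition of the residue, rather than merely a decomposition into projective direct summands. The decomposition is exactly \ref{splittingR}; turning it into a basis needs the reduction to a Tate algebra (so the summands become free) and the splitting $R\hookrightarrow\AA_R(\eta)$ of reduction mod $z$ (so the change-of-basis matrix lifts). Once the basis and the identity $G_{ij}(0)=\delta_{ij}G_i(0)$ are in hand, part (2) is a mechanical computation with $r\times r$ matrices, and it is this clean local model $M'$ that will feed into \ref{intexp}.
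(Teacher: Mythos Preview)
Your proof is correct and follows the same approach as the paper: part (1) is exactly \ref{splittingR} applied to the residue on $M/zM$ and lifted via the splitting $R\hookrightarrow\AA_R(\eta)$, and part (2) is the same gauge computation the paper carries out via $G'=P^{-1}(z\tfrac{d}{dz})P+P^{-1}GP$ with $P=\mathrm{diag}(zI_{m_1},I_{r-m_1})$, which you do by hand but equivalently. Your added remark about needing freeness of the summands $(M/zM)(e_i)$ to choose an adapted basis is a legitimate detail the paper glosses over; note though that \ref{free} gives freeness over $T_l$ rather than over $R$, so strictly speaking one either localizes further on $S$ (which is harmless for the application to \ref{intexp}) or argues freeness over $R$ directly.
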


\begin{proof} (1) follows from \ref{splittingR}.

(2) The stability follows from (1).
If we denote the matrix which represents the derivation of $M'$ by $G'$, then
$$
G' = P^{-1}z\frac{d}{dz}P + P^{-1}GP\, \equiv\,
\left(\begin{array}{cccc} G_1(0) + I_{m_1}
& & &\ast \\ &G_2(0) & & \\ & &\ddots & \\ 0& & &G_s(0) \end{array}\right)\,
(\mathrm{mod}\, z\AA_R(\eta))
$$
for $P = \left(\begin{array}{cc} zI_{m_1} &0 \\ 0&I_{r-m_1} \end{array}\right)$.
Here $I_t$ is the identity matrix of degree $t$.
The induced $R$-homomorphism
$\overline{\partial}_{\#} : M/M' \rightarrow M/M'$ is given by the matrix $G_1(0)$.
\end{proof}

\vspace{3mm}

\noindent
{\it Proof of \mbox{\rm \ref{intexp}}.} We use the induction on
the largest integral difference of exponents and its multiplicity.
By \ref{free} we may assume that $\mathcal M|_{W_\eta}$ is free
for some $\eta \in |K^\times|_\Q \cap ]0, \xi[$.
We have an $\O_{W_\eta}$-submodule $\M_\eta'$ of $\M|_{W_\eta}$
such that exponents are improved by \ref{expch}. Indeed, we apply
\ref{expch} to an exponents which is neither a positive integer nor $0$
because of the condition $c = 0$.
Since the support of $\M|_{W_\eta}/\M_\eta'$
is included in $z = 0$, one can glue $\M_\eta'$ and $\M|_{W \setminus \{ z = 0 \}}$.
Hence, the induction works.
\hspace*{\fill} $\Box$

\vspace{3mm}

We use the same notation in $5^\circ$. Considering an admissible affinoid covering of $S$,
we may assume that $\E|_{g_{\lambda, \mu}^{-1}(S)}$ is free
for some $\mu \in |K^\times|_\Q \cap\, ]0, \xi_m]$ by \ref{covering} and, then, we
can apply \ref{intexp}. Let $\E'$ be a locally free
$\O_{g_{\lambda, \xi_m}^{-1}(S)}$-submodule of $\E|_{g_{\lambda, \xi_m}^{-1}(S)}$
which is stable under $\partial_{\#}$
such that it satisfies the condition (1), (2)
and (3) in \ref{intexp}. Now we calculate the difference of
the local computation of cohomology
between $\E$ and $\E'$ by the module version of the second form of \ref{coneZ}.
If $E_\eta = \Gamma(g_{\lambda, \eta}^{-1}(S), \E)$
and $E' _\eta = \Gamma(g_{\lambda, \eta}^{-1}(S), \E)$,
then $E' \otimes \RR_R(\eta) = E \otimes \RR_R(\eta)$
by the condition (2) of the support of $\E/\E'$.
The difference is calculated by the complex
$$
      \mathrm{Tot}\left[\begin{array}{ccc}
             E_\eta' &\rightarrow &E_\eta \\
       \partial_{\#} \downarrow \hspace*{4mm} & &\hspace*{4mm} \downarrow \partial_{\#} \\
             E_\eta' &\rightarrow &E_\eta
       \end{array}\right]\, \cong\,
       \left[E_\eta/E_\eta'\, \overset{\partial_{\#}}{\longrightarrow}\, E_\eta/E_\eta'\right],
$$
and it is $0$ by (3). Hence, the vanishing \ref{coneZ} for $\E$
follows from the vanishing for $\E'$ by $5^\circ$.

\vspace{3mm}

This completes the proof of Proposition \ref{section}.
\end{proof}

\vspace{3mm}

\noindent
{\it Proof of Theorem \ref{Nobuorigid}.} By the same reason
of $0^\circ$ in the proof of \ref{section}, we may assume $c = 0$
and have only to prove the vanishing
$\R g_{K *}\underline{\Gamma}_{]Z[_\X}^\dag
        (j_U^\dag\Omega_{\Xlogan/\T_K}^\bullet \otimes_{j_U^\dag\O_{]X[_\X}} E)
         = 0$.

By the \v{C}ech spectral sequence
the problem of
the vanishing is local on $X$ and $U$ as in $1^\circ$ in the proof of \ref{section}.
We may assume that $\X$ is affine,
$D$ is defined by a single equation $f = 0$ in $X$ for some $f \in \Gamma(\X, \O_\X)$,
and there is a relatively local coordinate $z_1, z_2, \cdots, z_d \in \Gamma(\X, \O_\X)$
of $\X$ over $\T$
such that each irreducible component $\ZZ_i$ of the relatively strict normal crossing divisor
$\ZZ = \cup_{i = 1}^s\, \ZZ_i$ is defined
by $z_i = 0$. Let us denote by $Z_i$ (resp. $Y_i$)
the closed subscheme of $X$ defined by $z_i = 0$ (resp. the complement of $Z_i$ in $X$).

Let us define $]U[_{\X, \lambda}$ (resp. $]Y[_{\X, \lambda}$, resp.
$[Z_i]_{\X, \lambda}$) by the same manners
as in $2^\circ$ of the proof of \ref{section} (resp. replacing $\ZZ$, $Z$
by $\ZZ_i$, $Z_i$).

By the hypothesis of $(E, \nabla)$ there exist
a strict neighborhood $]U[_{\X, \nu}$ of $]U[_\X$ in $]X[_\X$
for some $\nu \in |K^\times|_\Q \cap\, ]0, 1[$
and a locally free $\O_{]U[_{\X, \nu}}$-module $\E$
furnished with a logarithmic connection
$\nabla : \E \rightarrow (\Omega_{\Xlogan/\T_K}^1|_{]U[_{\X, \nu}})
\otimes_{\O_{]U[_{\X, \nu}}} \E$
such that $j_U^\dag(\E, \nabla) = (E, \nabla)$, which
satisfies the overconvergent condition \ref{logovcon}.

\vspace*{3mm}

\noindent
$7^\circ$ {\it Induction on the number $s$
of irreducible components of the strict normal crossing divisor $Z$.}

If $s = 0$, then the assertion is trivial.
Put $Z' = \cup_{i = 2}^s\, Z_i$. Applying the natural exact sequence
$$
      0 \longrightarrow \underline{\Gamma}_{]Z_1[_\X}^\dag(\H)
      \longrightarrow \underline{\Gamma}_{]Z[_\X}^\dag(\H) \longrightarrow
      \underline{\Gamma}_{]Z'[_\X}^\dag(j_{Y_1}^\dag\H) \longrightarrow 0
$$
for a sheaf $\H$ of abelian groups on $]X[_\X$ (see the proof of \cite[2.1.7]{Be}),
we have a triangle
$$
     \begin{array}{c}
         \R g_{K *}\underline{\Gamma}_{]Z'[_\X}^\dag
         (j_{Y_1 \cap U}^\dag\Omega_{\Xlogan/\T_K}^\bullet
         \otimes_{j_{Y_1 \cap U}^\dag\O_{]X[_\X}} j_{Y_1 \cap U}^\dag E) \\
         +1 \swarrow \hspace*{20mm} \nwarrow \\
         \R g_{K *}\underline{\Gamma}_{]Z_1[_\X}^\dag
         (j_U^\dag\Omega_{\Xlogan/\T_K}^\bullet
         \otimes_{j_U^\dag\O_{]X[_\X}} E)\, \,
         \longrightarrow\, \,
           \R g_{K *}\underline{\Gamma}_{]Z[_\X}^\dag
         (j_U^\dag\Omega_{\Xlogan/\T_K}^\bullet \otimes_{j_U^\dag\O_{]X[_\X}} E).
      \end{array}
$$
Hence we have only to prove the vanishing
$$
      \R g_{K *}\underline{\Gamma} ^\dag _{]Z_1[_\X}
         (j_U^\dag\Omega_{\Xlogan/\T_K}^\bullet \otimes_{j_U^\dag\O_{]X[_\X}} E)
         = 0
$$
by the induction on $s$. If $Z_1 \subset D$, the vanishing is trivial.
Hence, we may assume that $Z_1$ is not included in $D$.

\vspace*{3mm}

\noindent
$8^\circ$ {\it Reduction to the case of sections.}

Let us denote the formal affine space of relative dimension $r$
over $\T$ by $\widehat{\mathbb A}_\T^r$.
By our hypothesis there is a commutative diagram
\begin{equation}\label{fib1}
     \begin{array}{ccccc}
     \ZZ_1 &\longrightarrow &\X \\
     \downarrow & &\downarrow \\
     \widehat{\A}_{\T}^{d-1} &\longrightarrow &\widehat{\A}_{\T}^d &\longrightarrow
     &\widehat{\A}_{\T}^{d-1}
     \end{array}
\end{equation}
of formal $\V$-schemes such that the vertical arrow $\X \rightarrow \widehat{\A}_{\T}^d$,
which is \'etale, (resp. $\ZZ_1 \rightarrow \widehat{\A}_{\T}^{d-1}$)
is induced by $z_1, \cdots, z_d$ (resp. $z_2, \cdots, z_d$)
and the composite of bottom arrows is the identity. Since the diagonal morphism
$\Delta : \ZZ_1 \rightarrow \ZZ_1 \times_{\widehat{\A}_{\T}^{d-1}} \ZZ_1$
is \'etale and  a closed immersion,
$\widetilde{\X} = \ZZ_1 \times_{\widehat{\A}_{\T}^{d-1}} \X \setminus
(\ZZ_1 \times_{\widehat{\A}_{\T}^{d-1}} \ZZ_1 \setminus \Delta(\ZZ_1))$
is an open formal subscheme of $\ZZ_1 \times_{\widehat{\A}_{\T}^{d-1}} \X$.
Let us now consider a commutative diagram
\begin{equation}\label{fib2}
     \begin{array}{ccccc}
           & &\widetilde{\X} & & \\
     &\overset{\Delta}{\nearrow} \hspace*{4mm}&\downarrow &\overset{h}{\searrow} & \\
     \ZZ_1 &\overset{\Delta}{\longrightarrow} &\ZZ_1 \times_{\widehat{\A}_\V^{d-1}} \X &
     \underset{\mathrm{pr}_2}{\longrightarrow} &\X \\
     = \downarrow \hspace*{3mm} & &\downarrow \\
     \ZZ_1 &\underset{\mathrm{pr}_1}{\longleftarrow}
     &\ZZ_1 \times_{\widehat{\A}_\V^{d-1}} \widehat{\A}_\V^d
     \end{array}
\end{equation}
of formal $\T$-schemes, and define $h : \widetilde{\X} \rightarrow \X$
(resp. $\widetilde{g}_1 : \widetilde{\X} \rightarrow \ZZ_1$, resp.
$\widetilde{g}' : \ZZ_1 \rightarrow \T$,
resp. $\widetilde{g} = \widetilde{g}' \circ \widetilde{g}_1$)
as in the diagram
(resp. by the composition, resp. the canonical morphism).
We identify $\Delta(\ZZ_1)$ (resp. $\Delta(Z_1)$) with $\ZZ_1$ (resp. $Z_1$),
and denote the special fiber of $\widetilde{\X}$
(resp. the complement of $Z_1$, resp. the inverse image of $U$ by $h$)
by $\widetilde{X}$ (resp. $\widetilde{Y}_1$, resp. $\widetilde{U}$).
$\ZZ_1$ is a smooth divisor over $\T$ and
note that, \'etale locally, $h^{-1}(\ZZ)$ is a relatively normal crossing divisor.
$\widetilde{\X}_K^{\#}$
denotes the formal $\V$-scheme with a logarithmic structure over $\T_K$
which is induced by the logarithmic structure of $\Xlogan$,
and $\Omega_{\widetilde{\X}_K^{\#}/\T_K}^1$
denotes the sheaf of logarithmic K\"ahler differentials on $\widetilde{\X}_K^{\#}$
over $\T_K$.
Then $h_K^*\Omega_{\Xlogan/\T_K}^\bullet\, \cong\,
\Omega_{\widetilde{\X}_K^{\#}/\T_K}^\bullet$.

Let us define $]\widetilde{U}[_{\widetilde{\X}, \lambda}$
(resp. $]\widetilde{Y} _1[_{\widetilde{\X}, \lambda}$, resp.
$[Z_1]_{\widetilde{\X}, \lambda}$) by the same manners
as in $2^\circ$ of the proof of \ref{section}.

\begin{lemm}\label{fib} With the notation as above, we have
\begin{enumerate}
\item $h_K^{-1}(]Z_1[_\X) = ]Z_1[_{\widetilde{\X}}$.
\item The restriction of $h_K$ gives an isomorphism $]Z_1[_{\widetilde{\X}} \riso ]Z_1[_\X$.
\item Under the isomorphism in (2),
$$
      ]\widetilde{U}[_{\widetilde{\X}, \lambda} \cap [Z_1]_{\widetilde{\X}, \eta} \riso
      ]U[_{\X, \lambda} \cap [Z_1]_{\X, \eta}
$$
for any $\lambda, \eta \in |K^\times|_\Q \cap ]0, 1[$.
\end{enumerate}
\end{lemm}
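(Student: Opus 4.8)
The plan is to deduce all three assertions from two purely geometric facts about $h$, together with the compatibility of tubes with inverse images and the weak fibration theorem \cite[1.3.1, 1.3.2]{Be} (see also \cite[4.3]{BC2}), in the same spirit as the proof of \ref{wsfib}.

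First I would record that $h$ is \'etale. By construction $\widetilde{\X}$ is an open formal subscheme of $\ZZ_1 \times_{\widehat{\A}_\T^{d-1}} \X$ and $h$ is the restriction to $\widetilde{\X}$ of the second projection $\mathrm{pr}_2$. Now $\ZZ_1 \rightarrow \widehat{\A}_\T^{d-1}$ is \'etale, being a morphism between smooth formal $\T$-schemes of the same relative dimension $d-1$ induced by the relative coordinates $z_2, \dots, z_d$; hence its base change $\mathrm{pr}_2 : \ZZ_1 \times_{\widehat{\A}_\T^{d-1}} \X \rightarrow \X$ is \'etale, and therefore so is $h$. Next I would compute $h^{-1}(\ZZ_1)$: since $h^{*}z_1 = \mathrm{pr}_2^{*}z_1$ and $\ZZ_1$ is cut out by $z_1 = 0$ in $\X$, one gets $h^{-1}(\ZZ_1) = \widetilde{\X} \cap \mathrm{pr}_2^{-1}(\ZZ_1) = \widetilde{\X} \cap (\ZZ_1 \times_{\widehat{\A}_\T^{d-1}} \ZZ_1)$, and by the very definition of $\widetilde{\X}$ this intersection is $\Delta(\ZZ_1)$. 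Passing to special fibres gives $h^{-1}(Z_1) = \Delta(Z_1)$, which has been identified with $Z_1$; moreover $h \circ \Delta = \mathrm{pr}_2 \circ \Delta = i$, so on special fibres $h$ restricts to an isomorphism $\Delta(Z_1) \riso Z_1 \subset X$.

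Granting this, assertion (1) follows from $\sp \circ h_K = h \circ \sp$, which yields $h_K^{-1}(]Z_1[_\X) = h_K^{-1}(\sp^{-1}(Z_1)) = \sp^{-1}(h^{-1}(Z_1)) = ]Z_1[_{\widetilde{\X}}$. For assertion (2): $h$ is \'etale, $\Delta(Z_1) = h^{-1}(Z_1)$ is the full preimage of $Z_1$, and $h$ restricts to an isomorphism on $\Delta(Z_1)$; the \'etale (relative dimension $0$) case of the weak fibration theorem \cite[1.3.1, 1.3.2]{Be}, the same mechanism used for \ref{wsfib}, then shows that $h_K$ restricts to an isomorphism $]Z_1[_{\widetilde{\X}} \riso ]Z_1[_\X$. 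Finally, (3) is a formal consequence: by definition, inside $]\widetilde{X}[_{\widetilde{\X}}$ (resp. inside $]Z_1[_{\widetilde{\X}}$) the subset $]\widetilde{U}[_{\widetilde{\X}, \lambda}$ (resp. $[Z_1]_{\widetilde{\X}, \eta}$) is defined by $|h^{*}f(x)| \geq \lambda$ (resp. $|h^{*}z_1(x)| \leq \eta$), where $f$ and $z_1$ are the functions on $\X$ cutting out $D$ and $\ZZ_1$; since $h_K$ carries $|h^{*}f(x)|$ to $|f(h_K(x))|$ and $|h^{*}z_1(x)|$ to $|z_1(h_K(x))|$, the isomorphism of (2) matches $]\widetilde{U}[_{\widetilde{\X}, \lambda} \cap [Z_1]_{\widetilde{\X}, \eta}$ with $]U[_{\X, \lambda} \cap [Z_1]_{\X, \eta}$, which is (3).

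The step carrying the real content is the one inside (2): one must invoke that an \'etale morphism of formal $\V$-schemes which restricts to an isomorphism on a locally closed subscheme $T$ equal to its own full preimage induces an isomorphism of the associated tubes. This is part of Berthelot's theory of tubes and is exactly what underlies \ref{wsfib}; everything else here is an unwinding of the definitions of $\widetilde{\X}$, $h$, and the various tubes, and I expect no further obstacle.
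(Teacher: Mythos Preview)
Your argument is correct and follows the same line as the paper's proof: you identify $h^{-1}(Z_1)$ with $\Delta(Z_1)$ using the removal of the off-diagonal, deduce (1) by compatibility of tubes with inverse images, obtain (2) from the \'etaleness of $h$ via \cite[1.3.1]{Be}, and read off (3) from the fact that the defining inequalities for the tubes pull back along $h_K$. The only point the paper adds that you leave implicit is that the hypothesis $Z_1 \not\subset D$ (already in force in this step) is what makes the match in (3) meaningful; your formal argument is fine once this is assumed.
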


\begin{proof}
Since $(\ZZ_1 \times_{\widehat{\A}_{\T}^{d-1}} \ZZ_1 \setminus \Delta(\ZZ_1))$
is removed, we get (1).
The other assertion (2) (resp. (3)) follow from \cite[1.3.1]{Be} and the fact that
$h$ is \'etale (resp. and $Z_1 \not\subset D$).
\end{proof}

\begin{prop}\label{suppiso} With the notation as above, we have the followings.
\begin{enumerate}
\item If $\H$ is a sheaf of Abelian groups on $]\widetilde{X}[_{\widetilde{\X}}$, then
$$
       \R h_{K*}\underline{\Gamma}_{]Z_1[_{\widetilde{\X}}}^\dag(\H)
       \cong h_{K*}\underline{\Gamma}_{]Z_1[_{\widetilde{\X}}}^\dag(\H).
$$
\item Let $\AA$ and $\B$ be a sheaf of rings on $]X[_\X$
and $]\widetilde{X}[_{\widetilde{\X}}$, respectively,
with a morphism $h_K^{-1}\AA \rightarrow \B$ of rings
such that $\AA|_{]Z_1[_\X} \riso \B|_{]Z_1[_{\widetilde{\X}}}$ under the isomorphism
in \ref{fib} (2). If $\H$ is an $\AA$-module, then the adjoint map
$$
       \underline{\Gamma}_{]Z_1[_{\widetilde{\X}}}^\dag(\H)
       \rightarrow h_{K*}\underline{\Gamma}_{]Z_1[_{\widetilde{\X}}}^\dag
       (\B \otimes_{h_K^{-1}\AA} h_K^{-1}\H).
$$
is an isomorphism of $\AA$-modules.
\end{enumerate}
\end{prop}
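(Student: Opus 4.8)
The plan is to reduce both assertions to local statements on tubes, using the geometric input from Lemma \ref{fib} and the good behaviour of the functors $\underline{\Gamma}_{]Z_1[}^\dag$ under restriction that was already recorded in \ref{suppdag}. For (1), the point is that $h$ is \'etale and $Z_1 \not\subset D$, so by \ref{fib} (1) and (3) the morphism $h_K$ restricted to the part of $]\widetilde{X}[_{\widetilde{\X}}$ which meets $]Z_1[_{\widetilde{\X}}$ is an isomorphism onto the corresponding part of $]X[_\X$; more precisely, after writing $\underline{\Gamma}_{]Z_1[_{\widetilde{\X}}}^\dag(\H) = \varinjlim_{\eta \to 1^-} \underline{\Gamma}_{]Z_1[_{\widetilde{\X}}, \eta}^\dag(\H)$ by \ref{supp}, each $\underline{\Gamma}_{]Z_1[_{\widetilde{\X}}, \eta}^\dag(\H)$ is supported on $[Z_1]_{\widetilde{\X}, \eta}$, and by the weak fibration theorem (as used in \ref{fib}) $h_K$ identifies a strict neighborhood of $[Z_1]_{\widetilde{\X}, \eta}$ of the form $]\widetilde U[_{\widetilde\X,\lambda}\cap[Z_1]_{\widetilde\X,\eta}$ with its image. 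Since $\R h_{K*}$ commutes with filtered inductive limits ($h_K$ being quasi-compact), it suffices to check $\R^q h_{K*}\underline{\Gamma}_{]Z_1[_{\widetilde{\X}}, \eta}^\dag(\H) = 0$ for $q\ge 1$; but the restriction of $h_K$ to the support is an isomorphism of rigid spaces, so all higher direct images vanish and (1) follows.

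For (2), once (1) is available we may replace $\R h_{K*}$ by $h_{K*}$ on the right-hand side. The claim is then that the adjunction morphism
\begin{equation*}
\underline{\Gamma}_{]Z_1[_{\widetilde{\X}}}^\dag(\H) \longrightarrow h_{K*}\underline{\Gamma}_{]Z_1[_{\widetilde{\X}}}^\dag(\B \otimes_{h_K^{-1}\AA} h_K^{-1}\H)
\end{equation*}
is an isomorphism of $\AA$-modules. Because $\underline{\Gamma}_{]Z_1[_{\widetilde\X}}^\dag$ commutes with $j_{Y_1}^\dag$-type restriction functors and with filtered inductive limits (by \ref{suppdag} (1) and its evident analogue on $\widetilde\X$), and because the source and target are both killed on $]\widetilde Y_1[_{\widetilde\X,\eta}$, one reduces to comparing sections over the tube $]Z_1[_{\widetilde\X}$ itself. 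There the hypothesis $\AA|_{]Z_1[_\X} \riso \B|_{]Z_1[_{\widetilde{\X}}}$ together with the isomorphism $h_K : ]Z_1[_{\widetilde{\X}} \riso ]Z_1[_\X$ of \ref{fib} (2) shows that $\B \otimes_{h_K^{-1}\AA} h_K^{-1}\H$ restricted to $]Z_1[_{\widetilde\X}$ is canonically $h_K^{-1}(\H|_{]Z_1[_\X})$, so the two sides of the adjunction become identified. One then propagates this identification from $]Z_1[_{\widetilde\X}$ to a cofinal system of strict neighborhoods of the form $]\widetilde U[_{\widetilde\X,\lambda}\cap[Z_1]_{\widetilde\X,\eta}$ using \ref{fib} (3), which guarantees that each such neighborhood maps isomorphically to the corresponding one downstairs.

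The main obstacle I expect is bookkeeping rather than conceptual: one must be careful that the morphism of rings $h_K^{-1}\AA \to \B$ is only an isomorphism after restriction to the tube of $Z_1$ — away from $]Z_1[_{\widetilde\X}$ the sheaves $\B \otimes_{h_K^{-1}\AA} h_K^{-1}\H$ and $\H$ genuinely differ, and what saves the argument is precisely that $\underline{\Gamma}_{]Z_1[_{\widetilde\X}}^\dag$ annihilates everything outside (a strict neighborhood of) that tube. So the delicate step is organizing the limit over strict neighborhoods $\eta \to 1^-$ so that at each finite stage one is working on a region where $h_K$ is an isomorphism and the ring comparison is valid; this is where \ref{fib} (2) and (3) do the real work, and where a naive global comparison of $\AA$ and $\B$ would fail. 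Granting that, both statements follow formally.
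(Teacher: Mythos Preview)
Your proposal is correct and follows essentially the same route as the paper: for (1) you write $\underline{\Gamma}_{]Z_1[_{\widetilde{\X}}}^\dag = \varinjlim_{\eta\to 1^-}\underline{\Gamma}_{]Z_1[_{\widetilde{\X}},\eta}^\dag$ via \ref{supp}, observe that each term is supported in $[Z_1]_{\widetilde{\X},\eta}$ where $h_K$ is an isomorphism by \ref{fib}, and conclude since $\R^q h_{K*}$ commutes with filtered colimits; for (2) you use the same limit together with the hypothesis $\AA|_{]Z_1[_\X}\riso\B|_{]Z_1[_{\widetilde{\X}}}$ and \ref{fib} to identify the two sides on the region carrying the support. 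The paper's proof is the same argument, only stated more tersely (invoking \ref{fib}(2) rather than (3), which amounts to the same thing here).
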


\begin{proof} Let us define a functor
$$
        \underline{\Gamma}_{]Z_1[_{\widetilde{X}}, \eta}^\dag(\H)
        = \mathrm{ker}\left(\H \rightarrow
            \underset{\mu \rightarrow \eta^-}{\mathrm{lim}}\,
            \alpha_{]\widetilde{Y}_1[_{\widetilde{\X}, \mu}*}
            (\H|_{]\widetilde{Y}_1[_{\widetilde{\X}, \mu}})
            \right)
$$
as same as in $2^\circ$ of the proof of \ref{section},
where $\alpha_{]\widetilde{Y}_1[_{\widetilde{\X}, \mu}} :
]\widetilde{Y}_1[_{\widetilde{\X}, \mu} \rightarrow ]\widetilde{X}[_{\widetilde{\X}}$
is the canonical open immersion.
Then the same of \ref{supp} and \ref{suppdag} hold.

(1) Since $\underline{\Gamma}_{]Z_1[_{\widetilde{X}}, \eta}^\dag
(\H)|_{]Y_1[_{\widetilde{X}, \eta}} = 0$, we have
$\R^q h_{K*}\underline{\Gamma}_{]Z_1[_{\widetilde{X}}, \eta}^\dag(\H) = 0$
for any $q \geq 1$ by \ref{fib} (2). Because the cohomological functor $\R^q h_{K*}$
commutes with filtered inductive limits
by the quasi-compactness and quasi-separateness of $h_K$, we have
$$
      \R^q h_{K*}\underline{\Gamma}_{]Z_1[_{\widetilde{\X}}}^\dag(\H)\,
      \cong\, \R^q h_{K*}\left(\underset{\eta \rightarrow 1^-}{\mathrm{lim}}\,
      \underline{\Gamma}_{]Z_1[_{\widetilde{\X}}, \eta}^\dag(\H)\right)\,
      \cong\, \underset{\eta \rightarrow 1^-}{\mathrm{lim}}\, \R^q h_{K*}
      \underline{\Gamma}_{]Z_1[_{\widetilde{\X}}, \eta}^\dag(\H) = 0
$$
for any $q \geq 1$ by \ref{supp}.

(2) Since $\H|_{[Z_1]_{\X, \eta}} \riso
(\B \otimes_{h_K^{-1}\AA} h_K^{-1}\H)|_{[Z_1]_{\widetilde{\X}, \eta}}$,
the assertion follows from \ref{supp} and \ref{fib}.
\end{proof}

Let $(\widetilde{E}, \widetilde{\nabla})$ be the inverse image of $(E, \nabla)$ by $h _k$,
i.e.,
$$
     \begin{array}{l}
        \widetilde{E} = h_K^* E
             = j_{\widetilde{U}}^\dag\O_{]\widetilde{X}[_{\widetilde{\X}}}
                 \otimes_{h_K^{-1}(j_U^\dag\O_{]X[_\X})} h_K^{-1}E \\
        \widetilde{\nabla} : \widetilde{E} \rightarrow
j_{\widetilde{U}}^\dag\Omega_{\widetilde{\X}_K^{\#}/\T_K}^1
\otimes_{j_{\widetilde{U}}^\dag\O_{]\widetilde{X}[_{\widetilde{\X}}}} \widetilde{E},
     \end{array}
$$
where $\widetilde{\nabla}$ is the induced $\O_{]T[_\T}$-linear connection
by $\nabla$ because of the \'etaleness of $h$. We also denote the induced basis of
$\Omega_{\widetilde{\X}_K^{\#}/\T_K}^1$ by $\frac{dz_1}{z_1}, \cdots, \frac{dz_s}{z_s},
dz_{s+1}, \cdots, dz_d$ and the dual basis of derivations by
$z_1\frac{\partial}{\partial z_1}, \cdots, z_s\frac{\partial}{\partial z_s},
\frac{\partial}{\partial z_{s+1}}, \cdots, \frac{\partial}{\partial z_d}$.

\begin{prop}\label{conv}
\begin{enumerate}
\item If we put $(\widetilde{\E}, \widetilde{\nabla}) = h_K^*(\mathcal E, \nabla)$,
then the natural morphism
$j_{\widetilde{U}}^\dag(\widetilde{\E}, \widetilde{\nabla})
\rightarrow (\widetilde{E}, \widetilde{\nabla})$
is an isomorphism.
\item The derivation $\widetilde{\partial}_{\#1} = \nabla(z_1\frac{\partial}{\partial z_1})$
on $\widetilde{\E}$ satisfies the overconvergent condition \ref{logoct}.
\end{enumerate}
\end{prop}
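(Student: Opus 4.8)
The plan rests on one observation common to both parts: $h$ is étale, and by the construction of the diagrams \ref{fib1}--\ref{fib2} the relative coordinates $z_1,\dots,z_d$ of $\X$ over $\T$ pull back along $h$ to the corresponding relative coordinates of $\widetilde{\X}$. I would first record that, under the isomorphism $h_K^*\Omega_{\Xlogan/\T_K}^\bullet\cong\Omega_{\widetilde{\X}_K^{\#}/\T_K}^\bullet$ of complexes noted before \ref{fib}, the form $dz_1/z_1$ corresponds to $dz_1/z_1$, so the dual log-derivation $z_1\frac{\partial}{\partial z_1}$ on $\widetilde{\X}_K^{\#}$ is $h_K$-related to $z_1\frac{\partial}{\partial z_1}$ on $\Xlogan$. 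It follows that the inverse-image connection $\widetilde{\nabla}=h_K^*\nabla$ on $\widetilde{\E}=h_K^*\E$ is well defined, that its $z_1$-component $\widetilde{\partial}_{\#1}$ satisfies $\widetilde{\partial}_{\#1}(h_K^*s)=h_K^*(\partial_{\#1}s)$ for local sections $s$ of $\E$, and that, étale-locally on $]\widetilde{X}[_{\widetilde{\X}}$, the pair $(\widetilde{\E},\widetilde{\partial}_{\#1})$ is the inverse image of $(\E,\partial_{\#1})$ along an étale morphism of affinoid spaces.

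For part (1), since $E=j_U^\dag\E$ and $\widetilde{E}=h_K^*E$ by definition, and the connection on $\widetilde{E}$ is already the one induced from $\nabla$ by étaleness, I would reduce the claim to the $\O$-module isomorphism $j_{\widetilde{U}}^\dag(h_K^*\E)\cong h_K^*(j_U^\dag\E)$; compatibility with the connections and with the overconvergent structure sheaves is then formal, by functoriality of $j^\dag$ and of $h_K^*$. This isomorphism says that $j^\dag$ commutes with inverse image along the étale map $h_K$, and I would prove it by the following three observations: $h_K^{-1}(]U[_\X)=]\widetilde{U}[_{\widetilde{\X}}$ because $\widetilde{U}=h^{-1}(U)$; the preimage under the open map $h_K$ of a strict neighborhood of $]U[_\X$ in $]X[_\X$ is again a strict neighborhood of $]\widetilde{U}[_{\widetilde{\X}}$ in $]\widetilde{X}[_{\widetilde{\X}}$; and these preimages are cofinal among the strict neighborhoods of $]\widetilde{U}[_{\widetilde{\X}}$ --- a statement which is local on $]\widetilde{X}[_{\widetilde{\X}}$, where choosing local coordinates reduces $h_K$ to an open immersion followed by a finite étale morphism and one invokes \cite[2.1.4]{Be} together with the descent results of \cite{descent}. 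Taking the inductive limit over strict neighborhoods then yields the claim.

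For part (2), I would fix $\xi=\xi_l$ and, via \ref{logovcon} applied to $\E$, an affinoid strict neighborhood $W$ of $]U[_\X$ with $\|\partial_{\#1}^{[n]}(s)\|\,\xi^n\to0$ for all $s\in\Gamma(W,\E)$ (this being the case $\underline{n}=(n,0,\dots,0)$ of \ref{logovcon}). Since $h_K^{-1}(W)$ is a strict neighborhood of $]\widetilde{U}[_{\widetilde{\X}}$, I would cover it by finitely many affinoid subdomains $\widetilde{W}$, each small enough that $h_K(\widetilde{W})$ lies in an affinoid subdomain over which $\E$ is free; then on each $\widetilde{W}$ the module $\widetilde{\E}$ is free on the pulled-back basis and its connection matrix in the $z_1$-direction is the entrywise pullback of the connection matrix $G$ of $\partial_{\#1}$ downstairs. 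As pullback of functions along the affinoid morphism $\widetilde{W}\to h_K(\widetilde{W})$ is bounded and $\widetilde{\partial}_{\#1}^{[n]}$ is built from this pulled-back matrix and the operator $z_1\frac{\partial}{\partial z_1}$ on $\O(\widetilde{W})$ by literally the same formula as $\partial_{\#1}^{[n]}$ downstairs, a routine norm estimate gives $\|\widetilde{\partial}_{\#1}^{[n]}(e)\|\,\xi^n\to0$ for all $e\in\Gamma(\widetilde{W},\widetilde{\E})$, hence, taking the maximum over the finite cover, for all $e\in\Gamma(]\widetilde{U}[_{\widetilde{\X},\lambda_l},\widetilde{\E})$ since $]\widetilde{U}[_{\widetilde{\X},\lambda_l}=h_K^{-1}(]U[_{\X,\lambda_l})$. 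This establishes \ref{logoct} for $\widetilde{\partial}_{\#1}$ on $\widetilde{\E}$ with the same sequences $\underline{\xi},\underline{\lambda}$, and, together with \ref{remexp-5} for the exponents, provides exactly the input needed to run the argument of \ref{section} for $\widetilde{g}_1:\widetilde{\X}\to\ZZ_1$ with section $\ZZ_1$.

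I expect the main obstacle to be the cofinality assertion in part (1): this is precisely where the geometry of the auxiliary space $\widetilde{\X}$ --- étale over $\X$ and built from the diagonal, so that $h^{-1}(\ZZ)$ gains no unexpected components --- must be used carefully, whereas the norm bookkeeping in part (2) is routine once $h^*z_1=z_1$ is in hand.
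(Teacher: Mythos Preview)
Your proposal is correct, but both parts take a more hands-on route than the paper.

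For part~(1), the paper's argument is a single line: it ``easily follows from the fact that $\E$ is locally free.'' The intended meaning is that, locally on a strict neighbourhood where $\E\cong\O^r$, one has $j_U^\dag\E\cong(j_U^\dag\O)^r$ and $h_K^*\E\cong\O^r$, so both $j_{\widetilde{U}}^\dag(h_K^*\E)$ and $h_K^*(j_U^\dag\E)$ identify with $(j_{\widetilde{U}}^\dag\O)^r$ and the natural map is visibly the identity. This sidesteps the cofinality question you raise; your argument via cofinality of pulled-back strict neighbourhoods is valid but is more machinery than the statement needs, and the worry you flag as the ``main obstacle'' simply does not arise in the paper's treatment.

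For part~(2), the paper does not localize on $\widetilde{\X}$ at all. Instead it passes to the ambient fibre product $\ZZ_1\times_{\widehat{\A}_\T^{d-1}}\X$ and checks the overconvergent condition for $\mathrm{pr}_{2K}^*(\E,\nabla)$: one fixes a complete $K$-algebra norm on the affinoid of $]X[_\X$ and then chooses a \emph{contractive} norm on the affinoid of the tube of $Z_1\times_{\A_k^{d-1}}X$ (via \cite[6.1.3, Prop.~3]{BGR}), so that $\|e\|_{\ZZ_1\times\X}\le\|e\|_\X$ for every section $e$ of $\E$. The estimate $\|\partial_{\#1}^{[n]}(e)\|\xi^n\to0$ then transfers in one stroke, and restriction to the open $\widetilde{\X}\subset\ZZ_1\times_{\widehat{\A}_\T^{d-1}}\X$ gives the result. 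Your approach---a finite affinoid cover of $h_K^{-1}(W)$, pulled-back connection matrices, and boundedness of the section map---reaches the same conclusion, but the paper's use of a single contractive norm on the product is cleaner and avoids the patching.
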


\begin{proof} (1) easily follows from the fact $\mathcal E$ is locally free.

(2) It is enough to check the overconvergent condition
for $\mathrm{pr}_{2 K}^*(\E, \nabla)$ along $z_1 = 0$. Fix
a complete $K$-algebra norm on the affinoid algebra
associated to $]X[_\X$. Then one can take a
contractive complete $K$-algebra norm on the affinoid algebra
associated to
$]Z_1 \times_{\A_k^{d-1}} X[_{\ZZ_1 \times_{\widehat{\A}_{\T}^{d-1}} \X}$
\cite[6.1.3, Prop. 3]{BGR},
The induced norms $||\mbox{-}||_\X$ on $\Gamma(]U[_{\X, \lambda}, \E)$
and $||\mbox{-}||_{\ZZ_1 \times \X}$ on
$\Gamma\left(\mathrm{pr}_{2 K}^{-1}(]U[_{\X, \lambda}), \mathrm{pr}_{2 K}^*\E\right)$
satisfy the inequality $||e||_{\ZZ_1 \times \X} \leq ||e||_\X$
for any $e \in \Gamma(]U[_{\X, \lambda}, \E)$.
The overconvergent condition for $\mathrm{pr}_{2 K}^*(\E, \nabla)$ along $z_1 = 0$
follows from the inequality.
\end{proof}

\begin{rema} The connection $(\widetilde{\E}, \widetilde{\nabla})$ satisfies the
overconvergent condition \ref{logovcon}. It should be called a log-isocrystal
on $\widetilde{U}^{\#}/\T_K$ overconvergent along $\widetilde{D}$.
\end{rema}

Since $(j_U^\dag\O_{]X[_\X})|_{]Z_1[_\X} \riso
(j_{\widetilde{U}}^\dag\O_{]\widetilde{X}[_{\widetilde{\X}}})|_{]Z_1[_{\widetilde{\X}}}$,
we have
$$
   \begin{array}{lll}
      \R g_{K *}\underline{\Gamma}_{]Z_1[_\X}^\dag(
      j_U^\dag\Omega_{\Xlogan/\T_K}^\bullet \otimes_{j_U^\dag\O_{]X[_\X}} E)
    &\cong
    &\R g_{K *}\left(h_{K*}\underline{\Gamma}_{]Z_1[_{\widetilde{\X}}}^\dag
    (j_{\widetilde{U}}^\dag\Omega_{\widetilde{\X}_K^{\#}/\T_K}^\bullet
\otimes_{j_{\widetilde{U}}^\dag\O_{]\widetilde{X}[_{\widetilde{\X}}}} \widetilde{E})\right) \\
    &\cong
    &\R g_{K *}\R h_{K*}\underline{\Gamma}_{]Z_1[_{\widetilde{\X}}}^\dag
    (j_{\widetilde{U}}^\dag\Omega_{\widetilde{\X}_K^{\#}/\T_K}^\bullet
\otimes_{j_{\widetilde{U}}^\dag\O_{]\widetilde{X}[_{\widetilde{\X}}}} \widetilde{E}) \\
    &\cong
    &\R\widetilde{g}_{K *}\underline{\Gamma}_{]Z_1[_{\widetilde{\X}}}^\dag
    (j_{\widetilde{U}}^\dag\Omega_{\widetilde{\X}_K^{\#}/\T_K}^\bullet
\otimes_{j_{\widetilde{U}}^\dag\O_{]\widetilde{X}[_{\widetilde{\X}}}} \widetilde{E})
    \end{array}
$$
by \ref{suppiso}. Hence we have only to prove the vanishing
$$
      \R\widetilde{g}_{K *}\underline{\Gamma}_{]Z_1[_{\widetilde{\X}}}^\dag
    (j_{\widetilde{U}}^\dag\Omega_{\widetilde{\X}_K^{\#}/\T_K}^\bullet
\otimes_{j_{\widetilde{U}}^\dag\O_{]\widetilde{X}[_{\widetilde{\X}}}} \widetilde{E})
= 0.
$$

\vspace*{3mm}

\noindent
$9^\circ$ {\it An argument of Gauss-Manin type.}

Let $\Omega_0^q$ (resp. $\Omega_1^q$)
be a free $\O_{]\widetilde{X}[_{\widetilde{\X}}}$-submodule
of $\Omega_{\widetilde{\X}_K^{\#}/\T_K}^q$
generated by wedge products of $\frac{dz_2}{z_2} , \cdots, \frac{dz_s}{z_s},
dz_{s+1}, \cdots, dz_d$ (resp. $\frac{dz_1}{z_1} \wedge \omega$ for $\omega \in
\Omega_{\widetilde{\X}_K^{\#}/\T_K}^{q-1}$).
Then $\Omega_0^q \riso \Omega_1^{q+1}$
by $\omega \mapsto \frac{dz_1}{z_1} \wedge \omega$.
Define
\begin{equation}
\label{GaussManin}
      \begin{array}{l}
      \widetilde{\nabla}_0 = \sum_{i=2}^s\, \frac{dz_i}{z_i} \otimes \partial_{\#i}
+ \sum_{i=s+1}^d\, dz_i \otimes \partial_i : \widetilde{E} \rightarrow
    j_{\widetilde{U}}^\dag\Omega_0^1
\otimes_{j_{\widetilde{U}}^\dag\O_{]\widetilde{X}[_{\widetilde{\X}}}} \widetilde{E} \\
\widetilde{\nabla}_1 = \mathrm{id} \otimes \partial_{\#1} :
j_{\widetilde{U}}^\dag\Omega_0^q
\otimes_{j_{\widetilde{U}}^\dag\O_{]\widetilde{X}[_{\widetilde{\X}}}}
\widetilde{E} \rightarrow  j_{\widetilde{U}}^\dag\Omega_1^q
\otimes_{j_{\widetilde{U}}^\dag\O_{]\widetilde{X}[_{\widetilde{\X}}}} \widetilde{E},
\end{array}
\end{equation}
where $\mathrm{id}$ is the identity of $j_{\widetilde{U}}^\dag\Omega_0^q$.
The definition of $\widetilde{\nabla}_0$ and $\widetilde{\nabla}_1$ is independent
of the choices of local parameters $z_1, z_2, \cdots, z_d$
of $\X$ over $\T$ as above.
Then the exterior power of $j_{\widetilde{U}}^\dag\Omega_0^1$ induces
a complex $(j_{\widetilde{U}}^\dag\Omega_0^\bullet
\otimes_{j_{\widetilde{U}}^\dag\O_{]\widetilde{X}[_{\widetilde{\X}}}} \widetilde{E},
\widetilde{\nabla}_0)$ and there is an isomorphism
\begin{equation}\label{gmquasi}
      j_{\widetilde{U}}^\dag\Omega_{\widetilde{\X}_K^{\#}/\T_K}^\bullet
\otimes_{j_{\widetilde{U}}^\dag\O_{]\widetilde{X}[_{\widetilde{\X}}}} \widetilde{E}\,
\riso
\left[(j_{\widetilde{U}}^\dag\Omega_0^\bullet
\otimes_{j_{\widetilde{U}}^\dag\O_{]\widetilde{X}[_{\widetilde{\X}}}} \widetilde{E},
\widetilde{\nabla}_0)\, \overset{\widetilde{\nabla}_1}{\longrightarrow}\,
(j_{\widetilde{U}}^\dag\Omega_1^\bullet
\otimes_{j_{\widetilde{U}}^\dag\O_{]\widetilde{X}[_{\widetilde{\X}}}} \widetilde{E},
\frac{dz_1}{z_1} \wedge \widetilde{\nabla}_0)\right]
\end{equation}
of complexes of $\O_{]T[_\T}$-modules.
Note that $\widetilde{\nabla}_1$ is the induced relative connection
$\widetilde{E} \rightarrow
j_{\widetilde{U}}^\dag\Omega_{\widetilde{\X}_K^{\#}/\ZZ_{1K}^{\#}}^1
\otimes_{j_{\widetilde{U}}^\dag\O_{]\widetilde{X}[_{\widetilde{\X}}}} \widetilde{E}$
by $\widetilde{\nabla}$.

One can easily see $(\widetilde{E}, \widetilde{\nabla}_1)$ satisfies the
hypothesis (a) and (b) along $z_1 = 0$ in \ref{Nobuorigid} by \ref{fib}
and the overconvergent condition in \ref{section}, so that
$$
   \R\widetilde{g}_{1K*}\left(\left[j_{\widetilde{U}}^\dag\Omega_0^q
\otimes_{j_{\widetilde{U}}^\dag\O_{]\widetilde{X}[_{\widetilde{\X}}}} \widetilde{E}\,
\overset{\widetilde{\nabla}_1}{\longrightarrow}\,
j_{\widetilde{U}}^\dag\Omega_1^{q+1}
\otimes_{j_{\widetilde{U}}^\dag\O_{]\widetilde{X}[_{\widetilde{\X}}}}
\widetilde{E}\right]\right)
= 0
$$
for any $q$ by \ref{section}. Hence,
$$
     \R\widetilde{g}_{K *}\underline{\Gamma}_{]Z_1[_{\widetilde{\X}}}^\dag
    (j_{\widetilde{U}}^\dag\Omega_{\widetilde{\X}_K^{\#}/\T_K}^\bullet
\otimes_{j_{\widetilde{U}}^\dag\O_{]\widetilde{X}[_{\widetilde{\X}}}} \widetilde{E})
=  \R\widetilde{g}'_{K *}\R\widetilde{g}_{1K *}
\underline{\Gamma}_{]Z_1[_{\widetilde{\X}}}^\dag
    (j_{\widetilde{U}}^\dag\Omega_{\widetilde{\X}_K^{\#}/\T_K}^\bullet
\otimes_{j_{\widetilde{U}}^\dag\O_{]\widetilde{X}[_{\widetilde{\X}}}} \widetilde{E})
= 0.
$$

\vspace{3mm}

This completes the proof of \ref{Nobuorigid}.
\hspace*{\fill} $\Box$

\vspace{3mm}

\begin{prop}\label{overconv} With the notation in \ref{Nobuorigid}, we assume furthermore
that $g : \X \rightarrow \T$ factors through an irreducible component $\ZZ_1$ of $\ZZ$ by
a smooth morphism $g_1 : \X \rightarrow \ZZ_1$ over $\T$
such that the composite $g_1 \circ i_ 1 : \ZZ_1 \rightarrow \ZZ_1$ of the closed immersion
$i_1 : \ZZ_1 \rightarrow \X$ and $g_1$ is the identity of $\ZZ_1$
and that the inverse image of the relatively
strict normal crossing divisor $\ZZ_1' = \cup_{i = 2}^s\, \ZZ_1 \cap \ZZ_i$ of $\ZZ_1$
by $g_1$ is $\cup_{i = 2}^s\, \ZZ_i$.
Let  $E$ be a log-isocrystal on $U^{\#}/\T_K$ overconvergent along $D$.
Then, for any nonnegative integer $m$, $g_{1K*}\widetilde{\nabla}_0$
(resp. $g_{1K*}(\frac{dz_1}{z_1} \wedge \widetilde{\nabla}_0)$)
in \ref{GaussManin} induces an integrable logarithmic $\O_{]T[_\T}$-connection of
the locally free $j_{Z_1 \cap U}^\dag\O_{]Z_1[_{\ZZ_1}}$-module $g_{1K*}(E(m\ZZ_1)/E)$
(resp. $g_{1K*}(j_U^\dag\Omega_{\Xlogan/\ZZ_{1K}^{\#}}^1
\otimes_{j_U^\dag\O_{]X[_\X}} E(m\ZZ_1)/E)$) on $(\ZZ_{1K}, \ZZ_{1K}')/\T_K$
which satisfies the overconvergent condition as a log-isocrystal
on $(Z_1 \cap U)^{\#}/\T_K$ overconvergent along $Z_1 \cap D$.

Suppose furthermore that $Z_1 \not\subset D$ and
that $E$ satisfies the conditions (a) and (b) in \ref{Nobuorigid}. Then
\begin{equation}
\label{overconv-iso}
   \R g_{1K*}\underline{\Gamma}^\dag_{]Z_1[_\X}
   (j_U^\dag\Omega_{\Xlogan/\ZZ_{1K}^{\#}}^\bullet
   \otimes_{j_U^\dag\O_{]X[_\X}} E)\,
   \cong\, \left[g_{1K*}(E(m\ZZ_1)/E)\,
   \displaystyle{\mathop{\longrightarrow}^{g_{1K*}\nabla}}\,
   g_{K*}(j_U^\dag\Omega_{\Xlogan/\ZZ_{1K}^{\#}}^1
   \otimes_{j_U^\dag\O_{]X[_\X}} E(m\ZZ_1)/E)\right][-1]
\end{equation}
and $g_{1K*}(E(m\ZZ_1)/E)$
(resp. $g_{1K*}(j_U^\dag\Omega_{\Xlogan/\ZZ_{1K}^{\#}}^1
\otimes_{j_U^\dag\O_{]X[_\X}} E(m\ZZ_1)/E)$) also satisfies the same conditions (a) and (b)
for any $m \geq \max\{ e\, |\, e\, \mbox{is a positive integral exponent
      of $\nabla$ along $Z_1$} \}\, \cup\, \{ 0 \}$.
\end{prop}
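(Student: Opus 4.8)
The plan is to regard \ref{overconv} as the relative analogue of \ref{section}, with the base $\T$ replaced by the smooth formal $\V$-log-scheme $\ZZ_1^\#=(\ZZ_1,\ZZ_1')$ and $g$ replaced by the smooth morphism $g_1 : \X\to\ZZ_1$, which by hypothesis is, locally, a relative curve admitting $i_1$ as a section and having $\ZZ_1$ as the associated relatively strict normal crossing divisor. I would split the argument into three parts: (I) the module $g_{1K*}(E(m\ZZ_1)/E)$, together with $g_{1K*}\widetilde{\nabla}_0$, is a locally free overconvergent log-isocrystal on $(Z_1\cap U)^\#/\T_K$ overconvergent along $Z_1\cap D$, and likewise for the degree-one term; (II) the isomorphism \ref{overconv-iso}; (III) the conditions (a), (b) pass from $E$ to $g_{1K*}(E(m\ZZ_1)/E)$.

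For (I), I would first reduce to the local situation of \ref{section}: $\X$ affine, $D=\{f=0\}$, relative coordinates $z_1,\dots,z_d$ of $\X$ over $\T$ with $\ZZ_i=\{z_i=0\}$, and $g_1$ induced by $z_2,\dots,z_d$, so $z_1$ is the fibre coordinate and $\ZZ_1$ the section. Exactly as in step $0^\circ$ of the proof of \ref{section} (induction on $m$, using that $i_{1K}$ is affinoid and $i_1^{-1}(X\setminus U)=Z_1\setminus(Z_1\cap U)$, whence $\R i_{1K*}=i_{1K*}$), one obtains $E(m\ZZ_1)/E\cong i_{1K*}\M_0$ with $\M_0$ a locally free $j_{Z_1\cap U}^\dag\O_{]Z_1[_{\ZZ_1}}$-module of finite type, and then $\R g_{1K*}(E(m\ZZ_1)/E)=\R (g_1\circ i_1)_{K*}\M_0=\M_0$ since $g_1\circ i_1=\mathrm{id}$; this yields both the vanishing $\R^{q}g_{1K*}(E(m\ZZ_1)/E)=0$ for $q\neq 0$ and the local freeness. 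For $i\neq 1$ the operators $\partial_{\#i}$ ($2\le i\le s$) and $\partial_i$ ($s<i\le d$) commute with multiplication by $z_1^{-m}$, hence preserve $E$ and $E(m\ZZ_1)$ and descend to $E(m\ZZ_1)/E$, giving $\widetilde{\nabla}_0$; then $g_{1K*}\widetilde{\nabla}_0$ is the induced connection on $\M_0$, which is a genuine logarithmic $\O_{]T[_\T}$-connection (projection formula together with the vanishing of the higher direct images) and is integrable because $\widetilde{\nabla}$ is. Finally, the overconvergence estimate \ref{logovcon} for $(\M_0,g_{1K*}\widetilde{\nabla}_0)$ I would deduce from that for $E(m\ZZ_1)$: the base-direction operator $\underline{\partial}_\#^{[\underline{n}']}$ with $\underline{n}'=(n_2,\dots,n_d)$ is exactly the $n_1=0$ slice of $\underline{\partial}_\#^{[\underline{n}]}$; taking the locally free extension $\E(m\ZZ_1)$ of $E(m\ZZ_1)$ on $]U[_{\X,\nu}$, restriction along the closed immersion $i_{1K}$ gives a locally free extension of $i_{1K}^*E(m\ZZ_1)$ over a strict neighbourhood of $]Z_1\cap U[_{\ZZ_1}$, restriction does not increase Banach norms and carries an affinoid strict neighbourhood witnessing \ref{logovcon} for $E(m\ZZ_1)$ to one witnessing it for $i_{1K}^*E(m\ZZ_1)$, and since $\M_0$ is a quotient of $i_{1K}^*E(m\ZZ_1)$ compatibly with the base connection, Kiehl's Theorems A and B let one lift sections and transport the estimate.

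For (II), I would apply \ref{section} to $g_1 : \X\to\ZZ_1$ with section $i_1$ and irreducible divisor $\ZZ_1$. The relative logarithmic de Rham complex $\Omega_{\Xlogan/\ZZ_{1K}^{\#}}^\bullet$ reduces to the two-term complex with $j_U^\dag\O_{]X[_\X}\otimes E$ in degree $0$, $j_U^\dag\Omega_{\Xlogan/\ZZ_{1K}^{\#}}^1\otimes E$ in degree $1$ and differential $\partial_{\#1}$, since the classes $\frac{dz_i}{z_i}$ ($2\le i\le s$) and $dz_i$ ($s<i\le d$) are pulled back from $\ZZ_1^\#$ and hence vanish in $\Omega_{\Xlogan/\ZZ_{1K}^{\#}}^1=j_U^\dag\O_{]X[_\X}\cdot\frac{dz_1}{z_1}$; this is exactly the relative complex of \ref{section}. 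The proof of \ref{section} (steps $0^\circ$--$6^\circ$) is local on $X$ and concerns only the $z_1$-direction, so it is insensitive to the logarithmic structure $\ZZ_1'$ on the base and applies verbatim with $\T$ replaced by $\ZZ_1^\#$ — this is the same way \ref{section} is used in step $9^\circ$ of the proof of \ref{Nobuorigid} — while the hypotheses (a), (b) it requires are precisely those along $Z_1$, which are included in the hypotheses of \ref{overconv}. This gives \ref{overconv-iso}, the map $g_{1K*}\nabla$ being the one induced by $\partial_{\#1}$. For (III), the exponents of $(\M_0,g_{1K*}\widetilde{\nabla}_0)$ along $Z_1\cap Z_i$ (for $i\ge 2$ with $Z_i\not\subset D$) form a subset of the exponents of $E$ along $Z_i$, because the corresponding residue is $i_{1K}^*$ of the residue of $E(m\ZZ_1)$ along $\ZZ_i$ and passing from $E$ to $E(m\ZZ_1)$ only modifies the residue along $\ZZ_1$; hence (a), (b) for $E$ give (a), (b) for $\M_0$. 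The degree-one term is $\M_0$ tensored with the rank-one locally free log-isocrystal $j_U^\dag\O_{]X[_\X}\cdot\frac{dz_1}{z_1}$, all of whose exponents along the $Z_1\cap Z_i$ vanish, so its properties follow from those of $\M_0$.

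I expect the main obstacle to lie in part (I), in the careful verification of \ref{logovcon} for the pushforward: one must keep simultaneous control of the local freeness of $\M_0$ and of a locally free extension of it to a strict neighbourhood (manufactured, as in \ref{intexp} and \ref{covering}, from the data attached to $E$), of the comparison of Banach norms under restriction along $i_{1K}$, and of the lifting of sections of the quotient $\M_0$ through Kiehl's theorems, all uniformly in the multi-index $\underline{n}'$. Given \ref{section}, parts (II) and (III) are then essentially formal.
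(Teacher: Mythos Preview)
Your proposal is correct and follows essentially the same approach as the paper: local freeness via step~$0^\circ$ of the proof of \ref{section}, the isomorphism \ref{overconv-iso} by applying \ref{section} to $g_1$ over the log-base $\ZZ_1^\#$, and the exponent computation from the explicit description of $\widetilde{\nabla}_0$. Two minor remarks: the paper states the exponents along $Z_i$ ($i\neq 1$) are exactly $m$ copies of those of $E$ (not merely a subset), since a basis of $E(m\ZZ_1)/E$ is given by $z_1^{-j}v_k$ and $\partial_{\#i}$ commutes with $z_1^{-j}$; and the overconvergence check in part (I) is lighter than you fear---because $\ZZ_1$ is a section of $g_1$, a Banach norm on an affinoid in $]Z_1[_{\ZZ_1}$ is directly induced by one on $]X[_\X$, so the estimate \ref{logovcon} for $E$ transfers without the lifting-through-quotient maneuver.
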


\begin{proof} The locally freeness has been already proved in the part $0^\circ$
of the proof of \ref{section}. From the definition
of $\widetilde{\nabla}_0$ in \ref{GaussManin}, it induces an integrable connection.
Since $\ZZ_1$ is a section of $\X$ over $\T$,
a complete $K$-algebra norm of subaffinoid variety of $]Z_1[_{\ZZ_1}$
induces a complete $K$-algebra norm of certain subaffinoid variety of $]X[_\X$
Hence the logarithmic connections on $g_{1K*}(E(m\ZZ_1)/E)$ and
$g_{1K*}(j_U^\dag\Omega_{\Xlogan/\ZZ_{1K}^{\#}}^1
\otimes_{j_U^\dag\O_{]X[_\X}} E(m\ZZ_1)/E)$ satisfy the overconvergent condition.
Their exponents along $Z_i$ are $m$ copies of those
of $E$ by the definition of $\widetilde{\nabla}_0$ for $i \ne 1$.
Therefore, the conditions (a) and (b) also hold.
\end{proof}

\begin{exam}\label{example} Let $\X = \widehat{\mathbb P}_\V^1
\times_{\mathrm{Spf}\, \V} \widehat{\mathbb P}_\V^1$
be a formal projective scheme over $\S = \mathrm{Spf}\, \V$
with homogeneous coordinates $(x_0, x_1), (y_0, y_1)$,
let $\ZZ_1$ (resp. $\ZZ_2$)
be a divisor defined by $x_1 = 0$ (resp. $y_1 = 0$) in $\X$
and put $\ZZ = \ZZ_1 \cup \ZZ_2$ and $\X^{\#} = (\X, \ZZ)$.
Let $U$ be an open formal subscheme of $X$ defined by $x_0 \ne 0$
and $y_0 \ne 0$,
let $z_1 = x_1/x_0, z_2 = y_1/y_0$ be the lift of coordinates of $U$,
and let $D$ be a closed subscheme of $X$ defined by $x_ 0 = 0$ or $y_0 = 0$.
For integers $e > 0$ and $h \geq 0$,
we define a log-isocrystal $E$ on $U^{\#}/\S_K$ of rank $2$ overconvergent
along $D$ $(E = j_U^\dag\O_{]X[_\X}v_1 \oplus j_U^\dag\O_{]X[_\X}v_2$) by
$$
      \nabla(v_1, v_2)
      = (v_1, v_2)\left(\begin{array}{cc} e &z_2^h \\ 0 &e \end{array}\right)\frac{dz_1}{z_1}
      + (v_1, v_2)\left(\begin{array}{cc} 0 &0 \\ 0 &h \end{array}\right)\frac{dz_2}{z_2}
$$
for some strict neighborhood of $]U[_\X$ in $]X[_\X$.
Indeed, since the exponents along $Z_1$ (resp. $Z_2$) are $e$ and $e$ (resp. $0$ and $h$),
the logarithmic connection satisfies the overconvergent condition
and is overconvergent along $D$. Moreover, it satisfies the conditions (a) and (b)
in \ref{Nobuorigid}. If $g_1 : \X \rightarrow \ZZ_1$ is the second projection
(note that the coordinate of $\ZZ_1 \cap \U$ is $z_2$), then
$$
   \begin{array}{l}
       \R g_{1K*}\underline{\Gamma}^\dag_{]Z_1[_\X}
       (j_U^\dag\Omega_{\Xlogan/\ZZ_{1K}^{\#}}^\bullet
   \otimes_{j_U^\dag\O_{]X[_\X}} E)\, \\
   \hspace*{20mm} \cong\, \left[g_{1K*}\left(E(m\ZZ_1)/E\right)\,
   \displaystyle{\mathop{\longrightarrow}^{g_{K*}(\frac{dz_1}{z_1}\otimes\partial_{\#1})}}\,
   g_{1K*}(j_U^\dag\Omega_{\Xlogan/\ZZ_{1K}^{\#}}^1
   \otimes_{j_U^\dag\O_{]X[_\X}} E(m\ZZ_1)/E)\right][-1]
   \end{array}
$$
for $m \geq e$
by \ref{section}. Hence $\R^qg_{1K*}\underline{\Gamma}^\dag_{]Z_1[_\X}
       (j_U^\dag\Omega_{\Xlogan/\ZZ_{1K}^{\#}}^\bullet
   \otimes_{j_U^\dag\O_{]X[_\X}} E) = 0$ for $q \ne 1, 2$ and
$$
     \R^qg_{1K*}\underline{\Gamma}^\dag_{]Z_1[_\X}
       (j_U^\dag\Omega_{\Xlogan/\ZZ_{1K}^{\#}}^\bullet
   \otimes_{j_U^\dag\O_{]X[_\X}} E)\, \cong\,
   \left\{ \begin{array}{ll}
   j_{Z_1 \cap U}^\dag\O_{]Z_1[_{\ZZ_1}}z_1^{-e}v_1 &\mathrm{if}\, \, q = 1, \\
   \left(j_{Z_1 \cap U}^\dag\O_{]Z_1[_{\ZZ_1}}/
   z_2^hj_{Z_1 \cap U}^\dag\O_{]Z_1[_{\ZZ_1}}\right)z_1^{-e}v_1 \oplus
   j_{Z_1 \cap U}^\dag\O_{]Z_1[_{\ZZ_1}}z_1^{-e}v_2 &\mathrm{if}\, \, q = 2.
   \end{array} \right.
$$
Therefore, $\R^2g_{1K*}\underline{\Gamma}^\dag_{]Z_1[_\X}
       (j_U^\dag\Omega_{\Xlogan/\ZZ_{1K}^{\#}}^\bullet
   \otimes_{j_U^\dag\O_{]X[_\X}} E)$ is not always locally free.
By \ref{gmquasi} and using a spectral sequence,
the dimensions of total cohomology groups are as follow:
$$
      \mathrm{dim}_K\, \mathbb H^q\left(]X[_\X, \underline{\Gamma}^\dag_{]Z_1[_\X}
       (j_U^\dag\Omega_{\Xlogan/\S_{K}}^\bullet
   \otimes_{j_U^\dag\O_{]X[_\X}} E)\right)
   = \left\{
   \begin{array}{ll}
   1 &\mathrm{if}\, q = 1, \\
   2\,  (\mbox{resp.}\, 3) &\mathrm{if}\, q = 2\, (\mbox{resp. and $h = 0$}),  \\
   1\, (\mbox{resp.}\, 2) &\mathrm{if}\, q = 3\, (\mbox{resp. and $h = 0$}),  \\
   0 &\mathrm{if}\, q \ne 1, 2, 3.
   \end{array} \right.
$$
\end{exam}

\subsection{Cohomological operations of arithmetic log-$\D$-modules}
\label{nota-1.2}
We will need later some basic properties on cohomological operations such as direct images and
extraordinary inverses images by morphisms of smooth log-formal $\V$-schemes.
We follow here Berthelot's procedure on the study of arithmetic $\D$-modules.
We recall that
in order to come down from the case of formal schemes to the case of schemes
(the latter case is technically much better),
the strategy of Berthelot was to develop a notion of {\it quasi-coherence} complexes on formal schemes (see \cite{Beintro2}).
We extend naturally below (see \ref{nota-hatD} and \ref{qcoh-stepII})
this Berthelot's notion of quasi-coherence in the case of formal log-schemes.
This will allow us for instance to check the transitivity of direct images and
extraordinary inverse images (see \ref{properties}), which is essential for our work.
\bigskip

First, let us fix some notation that we will keep in this section.
Let $\T$ be a smooth formal scheme over $\V$, $h$ : $\X' \rightarrow \X$ be a morphism of smooth formal schemes over $\T$,
let $\ZZ$ (resp. $\ZZ'$) be a relatively strict normal crossing divisor of $\X$ (resp. $\X '$) over $\T$
such that $h ^{-1} (\ZZ) \subset \ZZ'$,
let $D$ (resp. $D'$) be a divisor of $X$ (resp. $X'$) such that $h ^{-1} (D) \subset D'$.
We denote by $U:=X \setminus D$, $ \X ^{ \#}:= (\X,\ZZ)$, $ \X ^{\prime \#}:= (\X',\ZZ')$,
$u$ : $\X ^{ \#} \rightarrow \X$, $g ^\#$ : $\X ^\# \rightarrow \T$
the canonical morphisms,
and $h ^\#$ : $\X ^{\prime \#}\rightarrow \X ^{\#}$ the induced morphism of smooth formal log-schemes over $\T$.
We denote by $h _i ^\#$ : $X _i ^{\prime \#}\rightarrow X _i ^{\#}$ the reduction of $h ^\#$ modulo $\pi ^{i+1}$.
Berthelot has constructed in \cite[4.2.3]{Be1} the $\O _{X _i}$-algebra $\B _{X _i} ^{(m)} (D)$ which is endowed with
a compatible structure of left $\D ^{(m)} _{X _i ^\#} $-module.
We recall that when $f\in \O _{X_i} $ is a lifting of an equation of $D$ in $X$, then
$\B _{X _i} ^{(m)} (D) = \O _{X_i} [T]/(f ^{p ^{m+1}} T - p)$.
By abuse of notation,
we pose $\D ^{(m)} _{X _i ^\#} (D):= \B _{X _i} ^{(m)} (D) \otimes _{\O _{X _i}} \D ^{(m)} _{X ^\# _i}$,
$\D ^{(m)} _{X _i ^{\prime \#}}(D'):= \B _{X '_i} ^{(m)} (D') \otimes _{\O _{X '_i}} \D ^{(m)} _{X _i^{\prime \#}}$.
For any $\O _{X _i}$-module $\M _i$, we pose
$\M _i (Z _i) : = \O _{X _i} (Z _i) \otimes _{\O _{X _i}} \M _i$, where
$\O _{X _i} (Z _i):= \mathcal{H} om _{\O _{X _i}} ( \omega _{X _i} ,
\omega _{X _i ^\#})$. When $\M _i$ is even a $\D ^{(m)} _{X _i ^\#} (D)$-module then
$\M _i (Z _i)$ has a canonical structure of $\D ^{(m)} _{X _i ^\#} (D)$-module (see \cite[5.1]{caro_log-iso-hol}).

We check by functoriality that the sheaf $\B _{X '_i} ^{(m)} (D ') \otimes _{\O _{X '_i}} h_i ^* (\D ^{(m)} _{X _i ^\#})$ is
a $(\D ^{(m)} _{X _i ^{\prime \#}}(D '), h _i ^{-1}\D ^{(m)} _{X _i ^{\#}}(D ) )$-bimodule.
This bimodule will be denoted by
$\D ^{(m)} _{X _i ^{\prime \#}\rightarrow X _i ^{\#}} (D',D)$.
Also, we get a $(h _i ^{-1}\D ^{(m)} _{X _i ^{\#}}(D ) , \D ^{(m)} _{X _i ^{\prime \#}}(D '))$-bimodule
with:
$\D ^{(m)} _{X _i ^{\#}\leftarrow X _i ^{\prime \#}} (D,D')
:=
\B _{X '_i} ^{(m)} (D ') \otimes _{\O _{X _i}} (
\omega _{X _i ^{\prime \#}} \otimes _{\O _{X ' _i}} h_i ^{*l} (\D ^{(m)} _{X _i ^\#} \otimes _{\O _{X _i}} \omega ^{-1} _{X _i ^\#})$,
where the symbol `$l$' means that to compute the inverse image by $h _i$ we choose the left structure of
left $\D ^{(m)} _{X _i ^\#}$-module of
$\D ^{(m)} _{X _i ^\#} \otimes _{\O _{X _i}} \omega ^{-1} _{X _i ^\#}$.

Before proceeding, let us state the following lemma that we will need
to define the local cohomological functor with support in a closed subscheme (see \ref{locfunclog}).
\begin{lemm}
  \label{HomDDhash}
  Let $\E$ be a $\D ^{(m)} _{X _i }$-module and $\FF$ be a $\D ^{(m)} _{X _i ^\#}$-module.
  Then $\mathcal{H} om _{\O _{X _i}} (\E, \FF)$ is endowed with unique structure of
  $\D ^{(m)} _{X _i ^\#}$-module such that, for any morphism $\phi$ of
  $\mathcal{H} om _{\O _{X _i}} (\E, \FF)$, for any section $x$ on $\E$, we have
  \begin{equation}
\label{HomDDhash-formula}
  (\underline{\partial} ^{<\underline{k}>}_\# \cdot \phi )(x)
=
\sum _{\underline{h} \leq \underline{k}}
(-1) ^{|\underline{h}|}
\left \{ \begin{smallmatrix}   \underline{k} \\   \underline{h} \\ \end{smallmatrix} \right \}
\underline{t} ^{\underline{h}} \underline{\partial} ^{<\underline{k}-\underline{h}>}_\# \cdot
(\phi
(  \underline{\partial} ^{<\underline{h}>} \cdot x)
).
  \end{equation}
\end{lemm}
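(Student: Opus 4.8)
The plan is to construct the structure locally and glue, the gluing being immediate once uniqueness is known. For uniqueness, recall that $\D ^{(m)} _{X _i ^\#}$ is generated as a sheaf of rings over $\O _{X _i}$ by the operators $\underline{\partial} ^{<\underline{k}>}_\#$; hence any $\D ^{(m)} _{X _i ^\#}$-module structure on the $\O _{X _i}$-module $\mathcal{H} om _{\O _{X _i}} (\E, \FF)$ is determined by the action of these operators on sections, which is exactly what \ref{HomDDhash-formula} prescribes.

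For existence I would work on an affine open of $X _i$ equipped with local coordinates $t _1, \dots, t _d$ for which $Z _i$ is defined by $t _1 \cdots t _s = 0$, so that $\D ^{(m)} _{X _i}$ and $\D ^{(m)} _{X _i ^\#}$ are free over $\O _{X _i}$ on the bases $\{ \underline{\partial} ^{<\underline{k}>} \}$ and $\{ \underline{\partial} ^{<\underline{k}>}_\# \}$. The construction uses two pieces of structure: the presentations of these rings, i.e. their multiplication rules and their commutation rules with functions, all expressed through the coefficients $\left \{ \begin{smallmatrix} \underline{k} \\ \underline{h} \\ \end{smallmatrix} \right \}$ (see \cite{Be1}, \cite{caro_log-iso-hol}); and the fact that, as an operator on any $\D ^{(m)} _{X _i}$-module, the logarithmic operator $\underline{\partial} ^{<\underline{h}>}_\#$ acts as $\underline{\partial} ^{<\underline{h}>}$ multiplied by the monomial in $t _1, \dots, t _s$ with exponent $\underline{h}$, which is the level-$m$ form of $\prod _{j=0} ^{h-1} (t \partial - j) = t ^h \partial ^h$; in particular $\E$ is also a $\D ^{(m)} _{X _i ^\#}$-module. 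One then \emph{defines} $\underline{\partial} ^{<\underline{k}>}_\# \cdot \phi$ by the right-hand side of \ref{HomDDhash-formula}, extends this $\O _{X _i}$-linearly along the basis $\{ \underline{\partial} ^{<\underline{k}>}_\# \}$, and checks the two families of relations. The commutation with a function is the more direct check: expanding $\phi(\underline{\partial} ^{<\underline{h}>} \cdot (fx))$ by the Leibniz rule on the $\D ^{(m)} _{X _i}$-module $\E$, and using the $\O _{X _i}$-linearity of $\phi$ together with the relations of $\D ^{(m)} _{X _i ^\#}$ on $\FF$, one recovers the commutation of $\underline{\partial} ^{<\underline{k}>}_\#$ with $f$.

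The main obstacle is the multiplicativity relation, that $\underline{\partial} ^{<\underline{k}>}_\# \cdot (\underline{\partial} ^{<\underline{k}'>}_\# \cdot \phi)$ equals the prescribed $\O _{X _i}$-combination of the $\underline{\partial} ^{<\underline{k}''>}_\# \cdot \phi$. Since operators in distinct coordinate directions commute, and the structure constants and the coefficients appearing in \ref{HomDDhash-formula} factor accordingly, one reduces to a single coordinate direction. In a non-logarithmic direction the formula is precisely Berthelot's classical internal-$\mathcal{H} om$ computation for $\D ^{(m)} _{X _i}$-modules. In a logarithmic direction one uses $\underline{\partial} ^{<\underline{h}>}_\# = \underline{t} ^{\underline{h}} \underline{\partial} ^{<\underline{h}>}$ to replace, inside $\phi$, the ordinary operators acting on $\E$ by the logarithmic ones, pulling the monomials out of $\phi$ by $\O _{X _i}$-linearity; after this substitution the identity to be verified becomes a purely combinatorial identity among the coefficients $\left \{ \begin{smallmatrix} \underline{k} \\ \underline{h} \\ \end{smallmatrix} \right \}$ and the structure constants of $\D ^{(m)} _{X _i ^\#}$, of the same nature as those established in \cite{Be1} and \cite{caro_log-iso-hol} when constructing these rings and their module operations. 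Once this is checked, \ref{HomDDhash-formula} defines a $\D ^{(m)} _{X _i ^\#}$-module structure locally, and by the uniqueness above it is independent of all choices and glues to the asserted global structure.
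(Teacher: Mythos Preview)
Your approach is correct in outline but takes a genuinely different route from the paper. You proceed by direct local computation: define the action by the formula and then verify the ring relations of $\D ^{(m)} _{X _i ^\#}$ by hand, reducing the crucial associativity to a combinatorial identity among the modified binomial coefficients. The paper instead works with the equivalence between $\D ^{(m)} _{X _i ^\#}$-module structures and $m$-PD-stratifications relative to $X _i ^\# /S _i$ (\cite[1.8]{caro_log-iso-hol}): one sets $\epsilon _n ^{\mathcal{H} om (\E,\FF)} := \mathcal{H} om _{\PP ^n _{X_i ,(m)}}\bigl((\epsilon _n ^{\E})^{-1}, \epsilon _n ^{\FF}\bigr)$, where $\epsilon _n ^{\FF}$ is the log stratification of $\FF$ and $\epsilon _n ^{\E}$ is the stratification of $\E$ viewed on $X _i ^\#$. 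The point is that the \emph{inverse} $(\epsilon _n ^{\E})^{-1}$ is computed via \cite[2.3.2.3]{Be1}, a formula that is available precisely because $\E$ carries a genuine (non-logarithmic) $\D ^{(m)} _{X _i}$-structure; this is where the asymmetry between $\E$ and $\FF$ enters, and the explicit formula \ref{HomDDhash-formula} then drops out of that inversion combined with \cite[1.8.1]{caro_log-iso-hol} for $\epsilon _n ^{\FF}$.

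What each approach buys: the stratification argument gives associativity for free (the cocycle condition is automatic once the $\epsilon _n$ are isomorphisms), so no combinatorial identity needs to be checked, and it makes transparent why one must assume $\E$ is a $\D ^{(m)} _{X _i}$-module rather than merely a log module. Your direct verification is more elementary and self-contained, but the step you label ``a purely combinatorial identity \dots\ of the same nature as those in \cite{Be1} and \cite{caro_log-iso-hol}'' is the entire content of the proof in your framework and deserves to be written out, or at least pinned to a specific identity in those references; as stated it reads as an assertion rather than an argument.
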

\begin{proof}
We denote by $\PP ^n _{X_i ^\# ,\,(m)}$ the $m$-PD-envelop of order $n$ of the diagonal immersion of $X _i ^\#$,
${d} ^{n} _{1*} \PP ^n _{X_i ^\#,\,(m)}$ (resp. ${d} ^{n} _{2*} \PP ^n _{X_i ^\#,\,(m)}$)
the induced $\O_{X _i}$-algebra for the left (resp. right) structure.
  Using the isomorphisms
  $\mathcal{H} om _{\O _{X_i}} (\E,\FF ) \otimes _{\O _{X_i}} {d} ^{n} _{i*} \PP ^n _{X_i ^\#,\,(m)}
\riso
\mathcal{H} om _{\PP ^n _{X_i ,\,(m)}}
(\E \otimes _{\O _{X_i}} {d} ^{n} _{i*} \PP ^n _{X_i ,\,(m)},
\FF \otimes _{\O _{X_i}} {d} ^{n} _{i*} \PP ^n _{X_i ^\#,\,(m)}),$
we pose $\epsilon _n ^{ \mathcal{H} om _{\O _{X_i}} (\E, \FF) }: =
\mathcal{H} om _{\PP ^n _{X_i ,\,(m)} } ((\epsilon _n ^{\E })^{-1}, \epsilon _n ^{\FF })$, where
$\epsilon _n ^{\E }$ is the $m$-PD-stratification of $\E$ with respect to $X _i ^\# /S _i $ corresponding to its
structure of $\D ^{(m)} _{X _i }$-module
and
$\epsilon _n ^{\FF }$ is the $m$-PD-stratification of $\FF$ with respect to $X _i ^\# /S _i $ corresponding to its
structure of $\D ^{(m)} _{X ^\# _i }$-module (see \cite[1.8]{caro_log-iso-hol})

To compute $(\epsilon _n ^{\E })^{-1}$ and $\epsilon _n ^{\FF }$, we use respectively
\cite[2.3.2.3]{Be1} (notice that this formula is not any more true with logarithmic structure)
and \cite[1.8.1]{caro_log-iso-hol}.
\end{proof}

\begin{empt}
[Quasi-coherence, step I]
\label{nota-hatD}
  Let $\B$ be a sheaf of $\O _{\X}$-algebras, $\E \in D ^- ( \B \overset{ ^\mathrm{r}}{} )$,
  $\FF \in D ^- (\overset{ ^\mathrm{l}}{} \B  )$, i.e. $\E$ (resp. $\FF$) is a bounded above complex of right (resp. left) $\D$-modules.
  We pose: $\B _i :=\B / \pi ^{i+1} \B$,
  $\E _i := \E \otimes ^\L _{\B} \B _i$, $\FF _i := \B _i\otimes ^\L _{\B} \FF$,
  $\E \widehat{\otimes} ^\L _{\B} \FF:=
  \R \underset{\underset{i}{\longleftarrow}}{\lim} \E _i \otimes ^\L _{\B _i} \FF _i$.

$\bullet$ We say that $\E$ (resp. $\FF$) is {\it $\B$-quasi-coherent} if
the canonical morphism
$\E \rightarrow \E \widehat{\otimes} ^\L _{\B} \B$
(resp. $\FF \rightarrow \B \widehat{\otimes} ^\L _{\B} \FF$)
is an isomorphism.
We denote by $D ^- _\mathrm{qc} (\overset{ ^*}{} \B  )$
(resp. $D ^\mathrm{b} _\mathrm{qc} (\overset{ ^*}{} \B  )$)
the full subcategory of quasi-coherent complexes of
$D ^- (\overset{ ^*}{} \B  )$
(resp. $D ^\mathrm{b} (\overset{ ^*}{} \B  )$),
where `$*$' is either `$\mathrm{r}$' or `$\mathrm{l}$'.

$\bullet$ We pose
$\smash{\widehat{\D}} _{\X ^\#} ^{(m)}(D):=
\underset{\underset{i}{\longleftarrow}}{\lim}\, \D_{X ^\# _i} ^{(m)}(D)$.
Since $\smash{\widehat{\D}} _{\X ^\#} ^{(m)}(D)$ is a flat
$\smash{\widehat{\B}} _{\X} ^{(m)}(D)$-module (for the right or the left structures),
a complex of $D ^\mathrm{*} (\overset{ ^*}{} \smash{\widehat{\D}} _{\X ^\#} ^{(m)}(D))$
is $\smash{\widehat{\D}} _{\X ^\#} ^{(m)}(D)$-quasi-coherent (and in particular when $\X ^\#$ is replaced by $\X$) if and only if it is
$\smash{\widehat{\B}} _{\X} ^{(m)}(D)$-quasi-coherent.
Then, the forgetful functor
$D ^\mathrm{*} (\overset{ ^*}{} \smash{\widehat{\D}} _{\X } ^{(m)}(D))
\rightarrow
D ^\mathrm{*} (\overset{ ^*}{} \smash{\widehat{\D}} _{\X ^\#} ^{(m)}(D))$
induces :
$D ^\mathrm{*} _\mathrm{qc}(\overset{ ^*}{} \smash{\widehat{\D}} _{\X } ^{(m)}(D))
\rightarrow
D ^\mathrm{*}_\mathrm{qc} (\overset{ ^*}{} \smash{\widehat{\D}} _{\X ^\#} ^{(m)}(D))$.
Also, it follows from \cite[4.3.3.(i)]{Be1}:
$\smash{\widehat{\B}} _{\X} ^{(m)}(D) \otimes ^\L _{\V} \V /\pi ^{i+1}
\riso
\smash{\widehat{\B}} _{\X} ^{(m)}(D) \otimes  _{\V} \V /\pi ^{i+1}
\riso \smash{\B} _{X _i} ^{(m)}(D) $.
Hence, a complex of
$D ^\mathrm{*} (\overset{ ^*}{} \smash{\widehat{\B}} _{\X} ^{(m)}(D))$
is
$\smash{\widehat{\B}} _{\X} ^{(m)}(D)$-quasi-coherent
if and only if it is
$\O _{\X} $-quasi-coherent,
if and only if it is $\V$-quasi-coherent.

$\bullet$ We get a $(\smash{\widehat{\D}} _{\X ^{\prime \#}} ^{(m)}(D') , h ^{-1} \smash{\widehat{\D}} _{\X ^\#} ^{(m)} (D))$-bimodule by posing
$\smash{\widehat{\D}} _{\X ^{\prime \#}\rightarrow \X ^{\#}} ^{(m)} ( D' , D):=
\underset{\underset{i}{\longleftarrow}}{\lim}\, \D_{X^{\prime \#}_i \rightarrow X ^\# _i} ^{(m)}( D' , D)$.
Also, we have the $(h ^{-1} \smash{\widehat{\D}} _{\X ^\#} ^{(m)} (D),\smash{\widehat{\D}} _{\X ^{\prime \#}} ^{(m)}(D')  )$-bimodule
$\smash{\widehat{\D}} _{\X ^{\#} \leftarrow \X ^{\prime \#}} ^{(m)} ( D , D'):=
\underset{\underset{i}{\longleftarrow}}{\lim}\, \D ^{(m)} _{X _i ^{\#}\leftarrow X _i ^{\prime \#}} (D,D')$.

\end{empt}

\begin{empt}
[Quasi-coherence, step II]
\label{qcoh-stepII}
  Let $\smash{\widehat{\D}} _{\X ^{\#}} ^{(\bullet)}(D) := (\smash{\widehat{\D}} _{\X ^{\#}} ^{(m)}(D))_{m \in \N}$
  be the canonical inductive system.
Localizing twice $D ^{\mathrm{b}} ( \smash{\widehat{\D}} _{\X ^{\#}} ^{(\bullet)}(D))$ (these localizations replace respectively
the foncteur $-\otimes _\Z \Q$ and the inductive limite on the level $m$),
we construct similarly to \cite[4.2.1, 4.2.2]{Beintro2} and \cite[1.1.3]{caro_courbe-nouveau}
a category denoted by
  $\smash{\underset{^{\longrightarrow}}{LD}} ^{\mathrm{b}} _{\Q }
  ( \smash{\widehat{\D}} _{\X ^{\#}} ^{(\bullet)}(D))$.
  Let $\E ^{(\bullet)}=(\E ^{(m)}) _{m \in \N} \in \smash{\underset{^{\longrightarrow}}{LD}} ^{\mathrm{b}} _{\Q}
  ( \smash{\widehat{\D}} _{\X ^{\#}} ^{(\bullet)}(D))$.
As for \cite[4.2.3]{Beintro2} and \cite[1.1.3]{caro_courbe-nouveau},
we say that $\E ^{(\bullet)}$ is quasi-coherent if for any $m $
$\E ^{(m)} $ is $\smash{\widehat{\D}} _{\X ^{\#}} ^{(m)}(D)$-quasi-coherent.
We denote the subcategory of quasi-coherent sheaves by
$\smash{\underset{^{\longrightarrow}}{LD}} ^{\mathrm{b}} _{\Q ,\mathrm{qc}}
  ( \smash{\widehat{\D}} _{\X ^{\#}} ^{(\bullet)}(D))$.
With the second point of \ref{nota-hatD}, we check that the canonical functor:
$  \smash{\underset{^{\longrightarrow}}{LD}} ^{\mathrm{b}} _{\Q }
  ( \smash{\widehat{\D}} _{\X } ^{(\bullet)}(D))
  \rightarrow
\smash{\underset{^{\longrightarrow}}{LD}} ^{\mathrm{b}} _{\Q }
  ( \smash{\widehat{\D}} _{\X ^{\#}} ^{(\bullet)}(D))
$
induces the following one:
$\smash{\underset{^{\longrightarrow}}{LD}} ^{\mathrm{b}} _{\Q ,\mathrm{qc}}
  ( \smash{\widehat{\D}} _{\X ^{\#}} ^{(\bullet)}(D))
  \rightarrow
  \smash{\underset{^{\longrightarrow}}{LD}} ^{\mathrm{b}} _{\Q,\mathrm{qc} }
  ( \smash{\widehat{\D}} _{\X } ^{(\bullet)}(D))$.

\end{empt}

\begin{empt}
[Extraordinary inverse image, direct image, tensor product]

Let $\E ^{(\bullet)} \in \smash{\underset{^{\longrightarrow}}{LD}} ^{\mathrm{b}} _{\Q ,\mathrm{qc}}
  ( \smash{\widehat{\D}} _{\X ^{\#}} ^{(\bullet)}(D))$,
  $\E ^{\prime(\bullet)}\in \smash{\underset{^{\longrightarrow}}{LD}} ^{\mathrm{b}} _{\Q ,\mathrm{qc}}
  ( \smash{\widehat{\D}} _{\X ^{\prime \#}} ^{(\bullet)}(D'))$.
The following functors extend that which were already defined without log-structure.

  \begin{itemize}

\item
The extraordinary inverse image of $\E ^{(\bullet)}$ by $h^\#$ is defined as follows:
\begin{equation}
  \label{defqcextinvimage}
  h ^{\# !} _{D',D} (\E ^{(\bullet)}):=
  ( \smash{\widehat{\D}} _{\X ^{\prime \#}\rightarrow \X ^{\#}} ^{(m)} ( D' , D)
  \widehat{\otimes} ^\L _{h ^{-1} \smash{\widehat{\D}} _{\X ^\#} ^{(m)} (D)}
h ^{-1} \E ^{(m)} [d_{X '/X}]) _{m \in \N }
\in
\smash{\underset{^{\longrightarrow}}{LD}} ^{\mathrm{b}} _{\Q ,\mathrm{qc}}
  ( \smash{\widehat{\D}} _{\X ^{\prime \#}} ^{(\bullet)}(D')).
\end{equation}

\item The direct image by $h ^\#$ of $\E ^{\prime(\bullet)}$ is defined as follows:
\begin{equation}
  \label{defqcdirectimage}
  h ^\# _{D,D'+} (\E ^{\prime(\bullet)}):=
  ( \R h _* ( \smash{\widehat{\D}} _{\X ^{\#} \leftarrow \X ^{\prime \#}} ^{(m)} ( D , D')
  \widehat{\otimes} ^\L _{\smash{\widehat{\D}} _{\X ^{\prime \#}} ^{(m)}(D') }
\E ^{\prime(m)})) _{m \in \N }
\in
\smash{\underset{^{\longrightarrow}}{LD}} ^{\mathrm{b}} _{\Q ,\mathrm{qc}}
  ( \smash{\widehat{\D}} _{\X ^{\#}} ^{(\bullet)}(D)).
\end{equation}

\item  Let $\widetilde{D}$ be a divisor of $X$ containing $D$.
We pose:
\begin{equation}
  \label{defhdagDtilde}
  (\hdag \widetilde{D}, D) (\E ^{(\bullet)}) :=
(\smash{\widehat{\D}} _{\X ^{\#}} ^{(m)}(\widetilde{D})
\widehat{\otimes} ^\L _{\smash{\widehat{\D}} _{\X ^{\#}} ^{(m)}(D) }
\E ^{(m)}) _{m\in \N}\in \smash{\underset{^{\longrightarrow}}{LD}} ^{\mathrm{b}} _{\Q ,\mathrm{qc}}
  ( \smash{\widehat{\D}} _{\X ^{\#}} ^{(\bullet)}(\widetilde{D})).
\end{equation}
We denote by $\mathrm{Forg} _{D, \widetilde{D}}$ :
$\smash{\underset{^{\longrightarrow}}{LD}} ^{\mathrm{b}} _{\Q ,\mathrm{qc}}
  ( \smash{\widehat{\D}} _{\X ^{\#}} ^{(\bullet)}(\widetilde{D}))
  \rightarrow
  \smash{\underset{^{\longrightarrow}}{LD}} ^{\mathrm{b}} _{\Q ,\mathrm{qc}}
  ( \smash{\widehat{\D}} _{\X ^{\#}} ^{(\bullet)}(D))$
  the forgetful functor.

\item  When $D$ or $D'$ are empty, we remove them in the notation. Also,
  when $D ' =h ^{-1} (D)$, we remove $D'$ in the notation.

\end{itemize}

Using the remark \cite[2.3.5.(iii)]{Be1}, we get the isomorphism
in
$\smash{\underset{^{\longrightarrow}}{LD}} ^{\mathrm{b}} _{\Q ,\mathrm{qc}}
  ( \smash{\widehat{\D}} _{\X ^{\#}} ^{(\bullet)}(\widetilde{D}))$:
\begin{equation}
\label{ODdivcohe}
\O _{\X} (\hdag \widetilde{D}) _\Q \smash{\overset{\L}{\otimes}}   ^{\dag}_{\O _{\X} (\hdag D) _\Q}\E ^{(\bullet)}
:=
(\widehat{\B} ^{(m)} _{\X} ( \widetilde{D})  \smash{\widehat{\otimes}} ^\L
_{\widehat{\B} ^{(m)}  _{\X} ( D) } \E ^{(m)}) _{m\in \N}
\riso
(\hdag \widetilde{D}, D) (\E ^{(\bullet)}).
\end{equation}
Since a flat $\D _{X^\# _i} ^{(m)}$-module (resp. a flat $\D _{X _i} ^{(m)}$-module) is also
a flat $\O _{X_i} ^{(m)}$-module, we check that
the functor $(\hdag \widetilde{D}, D)$ commutes
with the forgetful functor
$ \smash{\underset{^{\longrightarrow}}{LD}} ^{\mathrm{b}} _{\Q,\mathrm{qc} }
  ( \smash{\widehat{\D}} _{\X } ^{(\bullet)}(D))
  \rightarrow
\smash{\underset{^{\longrightarrow}}{LD}} ^{\mathrm{b}} _{\Q ,\mathrm{qc}}
  ( \smash{\widehat{\D}} _{\X ^{\#}} ^{(\bullet)}(D))$.
  Hence, by \cite[1.1.8]{caro_courbe-nouveau} and the associativity of tensor products,
  we deduce from \ref{ODdivcohe} that we have a canonical isomorphism:
  $( \hdag \widetilde{D},D) \riso ( \hdag \widetilde{D})  \circ \mathrm{Forg} _{D}$.
  Similarly, if $D _1$ and $D _2$ are two divisors of $X$ then
  $(\hdag D _1) \circ (\hdag D _2) \riso (\hdag D_1 \cup D _2) $ (we have omitted the forgetful functor).
  Then we notice that $(\hdag D _1) $ and $ (\hdag D_1 \cup D _2)$ are canonically isomorphic on
  $\smash{\underset{^{\longrightarrow}}{LD}} ^{\mathrm{b}} _{\Q ,\mathrm{qc}}
  ( \smash{\widehat{\D}} _{\X ^{\#}} ^{(\bullet)}(D_2))$.

\end{empt}

\begin{empt}
  [Local cohomological functor with support in a closed subscheme]
  \label{locfunclog}
Let $\widetilde{X}$ be a closed subscheme of $X$,
$\E ^{(\bullet)},\FF ^{(\bullet)}  \in \smash{\underset{^{\longrightarrow}}{LD}} ^{\mathrm{b}} _{\Q ,\mathrm{qc}}
  ( \smash{\widehat{\D}} _{\X ^{\#}} ^{(\bullet)}(D))$.
Let $\mathfrak{I}_i$ be the ideal of $\O _{X _i}$ defined by $\widetilde{X} \subset X _i$,
$\PP  _{(m) } ( \mathfrak{I} _i)$ the $m$-PD-envelop of $\mathfrak{I} _i$
(resp. $\PP ^n  _{(m) } ( \mathfrak{I} _i)$ the $m$-PD-envelop of order $n$ of $\mathfrak{I} _i$),
$\overline{\mathcal{I}} _i ^ {\{ n\}_{(m)}}$ its $m$-PD filtration (see \cite[1.3--4]{Be1}).
From \cite[4.4.4]{Beintro2},
$\PP  _{(m) } ( \mathfrak{I} _i)$ is a $\D _{X _i} ^{(m)}$-module such that,
for any integers $n$ and $n'$, for any $P\in \D _{X  _i,n} ^{(m)}$, $x \in
\overline{\mathcal{I}} _i ^ {\{ n'\} _{(m)}}$, we have $P \cdot x \in \overline{\mathcal{I}} _i ^ {\{ n'-n\}_{(m)}}$.
With the formula \ref{HomDDhash-formula}, this implies that
the sub-sheaf
\begin{equation}\label{defGamma1}
\smash{\underline{\Gamma} } ^{(m)} _{\widetilde{X}} ( \E _i) : =
\underset{\underset{n}{\longrightarrow}}{\lim}
\mathcal{H}om _{\O _{X _i}} (\PP ^n _{(m) } ( \mathfrak{I} _i) , \E _i )\notag
\end{equation}
of $\mathcal{H}om _{\O _{X _i}} (\PP  _{(m) } ( \mathfrak{I} _i) , \E _i)$
has an induced structure of $\D _{X^\# _i} ^{(m)}$-module.
We get a functor
$\R \smash{\underline{\Gamma} } ^{(m)} _{\widetilde{X}}\,:\,
D ^+ ( \D _{X^\# _i} ^{(m)}) \rightarrow
D ^+ ( \D _{X^\# _i} ^{(m)}) $, which is computed
using a resolution by injective $\D _{X^\# _i} ^{(m)}$-modules.
When the $\ZZ$ is empty (i.e., without log-poles), we retrieve the usual local cohomological functor
(e.g., see \cite[4.4.4]{Beintro2} or \cite[1.1.3]{caro_surcoherent}).
Since $\D _{X^\# _i} ^{(m)}$ is flat as $\O _{X _i}$-module, we notice
that an injective $\D _{X^\# _i} ^{(m)}$-module (resp. an injective $\D _{X _i} ^{(m)}$-module) is also
an injective $\O _{X_i} ^{(m)}$-module.
Then, this functor
$\R \smash{\underline{\Gamma} } ^{(m)} _{\widetilde{X}}$ commutes with
the forgetful functor $D ^+ ( \D _{X _i} ^{(m)}) \rightarrow
D ^+ ( \D _{X^\# _i} ^{(m)})$.

We construct then
$\R \underline{\Gamma} ^\dag _{\widetilde{X}}$ :
$\smash{\underset{^{\longrightarrow}}{LD}} ^{\mathrm{b}} _{\Q ,\mathrm{qc}}
  ( \smash{\widehat{\D}} _{\X ^{\#}} ^{(\bullet)})
  \rightarrow
  \smash{\underset{^{\longrightarrow}}{LD}} ^{\mathrm{b}} _{\Q ,\mathrm{qc}}
  ( \smash{\widehat{\D}} _{\X ^{\#}} ^{(\bullet)})$
the local cohomology with strict compact support
in $\widetilde{X}$ similarly to \cite[2.1--2]{caro_surcoherent}. Also,
as for \cite[2.2.6.1]{caro_surcoherent},
we have the canonical isomorphism:
\begin{equation}
  \label{Gammaotimes}
  \R \underline{\Gamma} ^\dag _{\widetilde{X}} ( \E ^{(\bullet)})
\smash{\overset{\L}{\otimes}}   ^{\dag}_{\O _{\X,\Q}} \FF ^{(\bullet)}
\riso
\R \underline{\Gamma} ^\dag _{\widetilde{X}} (
\E ^{(\bullet)} \smash{\overset{\L}{\otimes}}   ^{\dag}_{\O _{\X,\Q}}  \FF ^{(\bullet)}
).
\end{equation}
Finally, since its is known (e.g., see \cite[2.2.1]{caro_surcoherent})
when $\E ^{(\bullet)} = \O _{\X}^{(\bullet)}$ (in
$\smash{\underset{^{\longrightarrow}}{LD}} ^{\mathrm{b}} _{\Q ,\mathrm{qc}}
  ( \smash{\widehat{\D}} _{\X } ^{(\bullet)})$
  and then in $\smash{\underset{^{\longrightarrow}}{LD}} ^{\mathrm{b}} _{\Q ,\mathrm{qc}}
  ( \smash{\widehat{\D}} _{\X ^{\#}} ^{(\bullet)})$
  via the forgetful functor),
for any divisor $\widetilde{X}$ of $X$,
we get from \ref{Gammaotimes} and \ref{ODdivcohe}
the exact triangle of localization of $\E ^{(\bullet)}$ with respect to $\widetilde{X}$ as follows:
\begin{equation}
  \label{locexactri}
  \R \underline{\Gamma} ^\dag _{\widetilde{X}} (\E ^{(\bullet)})
  \rightarrow
  \E ^{(\bullet)}
  \rightarrow
  (\hdag \widetilde{X}) (\E ^{(\bullet)})
  \rightarrow
  \R \underline{\Gamma} ^\dag _{\widetilde{X}} (\E ^{(\bullet)}) [1].
\end{equation}
Similarly, we deduce from \ref{Gammaotimes} that the usual rules of composition of local cohomological functors
and Mayer-Vietoris exact triangles holds (more precisely, see \cite[2.2.8, 2.2.16]{caro_surcoherent}).
\end{empt}

\begin{empt}
[Transitivity]
  \label{properties}
Let $h'$ : $\X'' \rightarrow \X'$ be a second morphism of smooth formal schemes over $\T$,
let $\ZZ''$ be a relatively strict normal crossing divisor of $\X ''$ over $\T$
such that $h ^{\prime -1} (\ZZ') \subset \ZZ''$,
let $D''$ be a divisor of $X''$ such that $h ^{\prime -1} (D') \subset D''$.
We denote by $ \X ^{\prime \prime \#}:= (\X'',\ZZ'')$
and $h ^{\prime\#}$ : $\X ^{\prime\prime \#}\rightarrow \X ^{\prime \#}$
the induced morphism of smooth formal log-schemes over $\T$.

Then, we have the isomorphisms of functors:
\begin{gather}
  \label{trans+}
  h ^\# _{D,D'+} \circ h ^{\prime \#} _{D',D''+}
  \riso (h ^\# \circ h ^{\prime \#}) _{D,D''+} ,\\
\label{trans!}
h ^{\prime \# !} _{D'',D'}  \circ h ^{\# !} _{D',D}
\riso
\smash{(h ^\# \circ h ^{\prime \#})} ^{!} _{D'',D}
\end{gather}
Indeed, thanks to Berthelot's notion of quasi-coherence,
we come down to the case of log-schemes, which is classical.

\end{empt}

\begin{empt}
  Similarly to \cite[1.1.9]{caro_courbe-nouveau}, we check the canonical isomorphisms of functors:
\begin{equation}
  \label{forg+!}
  \mathrm{Forg} _{D} \circ h ^\# _{D,D'+}  \riso h ^\# _{+} \circ \mathrm{Forg} _{D'}  ,\hspace{2cm}
(\hdag D') \circ h ^{\# !} _{D',D} \riso h ^{\# !} \circ (\hdag D).
\end{equation}
\end{empt}

\begin{empt}
[Coherence and quasi-coherence]
We pose
$\D ^{\dag }_{\X  ^\# } (\hdag D) _{\Q}:=
\underset{\underset{m}{\longrightarrow}}{\lim}
\smash{\widehat{\D}} _{\X ^{\#}} ^{(m)} ( D) _\Q$.
We get a
$(\D ^{\dag } _{\X ^{\prime \#}} (\hdag D' )_{\Q},\, h ^{-1} \D ^{\dag }_{\X  ^\# } (\hdag D) _{\Q})$-bimodule
and respectively
a ($h ^{-1} \D ^{\dag }_{\X  ^\# } (\hdag D) _{\Q}$, $\D ^{\dag } _{\X ^{\prime \#}} (\hdag D' )_{\Q}$)-bimodule
with
$$\D ^{\dag} _{\X ^{\prime \#} \rightarrow \X ^\# } (\hdag D ',D )_{\Q}
:=\underset{\underset{m}{\longrightarrow}}{\lim}
\smash{\widehat{\D}} _{\X ^{\prime \#}\rightarrow \X ^{\#}} ^{(m)} ( D' , D) _\Q,\,
\D ^{\dag} _{\X ^\# \leftarrow \X ^{\prime \#}} (\hdag D ,D ')_{\Q}:=
\underset{\underset{m}{\longrightarrow}}{\lim}
\smash{\widehat{\D}} _{\X ^{\#}\leftarrow \X ^{\prime \#}} ^{(m)} ( D , D') _\Q.$$

  We have also the canonical functor $\underset{\longrightarrow}{\lim}$ :
$\smash{\underset{^{\longrightarrow}}{LD}} ^{\mathrm{b}} _{\Q ,\mathrm{qc}}
(\smash{\widehat{\D}} _{\X ^{\#}} ^{(\bullet)}(D))
\rightarrow
D  ( \D ^\dag _{\X ^{\#}} (\hdag D) _{\Q} )$ (see \cite[4.2.2]{Beintro2}). Remark that by abuse of notation
this functor is in fact the composition of the inductive limite
on the level with the functor $-\otimes _\Z \Q$.
This functor $\underset{\longrightarrow}{\lim}$ induces an equivalence
of categories between a subcategory of $\smash{\underset{^{\longrightarrow}}{LD}} ^{\mathrm{b}} _{\Q ,\mathrm{qc}}
(\smash{\widehat{\D}} _{\X ^{\#}} ^{(\bullet)}(D))$, denoted by
$\smash{\underset{^{\longrightarrow}}{LD}} ^{\mathrm{b}} _{\Q ,\mathrm{coh}}
(\smash{\widehat{\D}} _{\X ^{\#}} ^{(\bullet)}(D))$,
and $D ^\mathrm{b} _\mathrm{coh} ( \D ^\dag _{\X ^{\#}} (\hdag D) _{\Q} )$ (similarly to \cite[4.2.4]{Beintro2}).
Let $\E ^{(\bullet)}\in \smash{\underset{^{\longrightarrow}}{LD}} ^{\mathrm{b}} _{\Q ,\mathrm{coh}}
  ( \smash{\widehat{\D}} _{\X ^{\#}} ^{(\bullet)}(D))$,
  $\E ^{\prime(\bullet)}\in \smash{\underset{^{\longrightarrow}}{LD}} ^{\mathrm{b}} _{\Q ,\mathrm{coh}}
  ( \smash{\widehat{\D}} _{\X ^{\prime \#}} ^{(\bullet)}(D'))$.
We denote by
$\E :=\underset{\longrightarrow}{\lim}\E ^{(\bullet)}$, $\E ' := \underset{\longrightarrow}{\lim} \E ^{\prime(\bullet)}$.
Then we get :

\begin{gather}
\underset{\longrightarrow}{\lim}\circ
h ^{\# !} _{D',D} (\E ^{(\bullet)})
\riso
\D ^{\dag} _{\X ^{\prime \#} \rightarrow \X ^\# } (\hdag D ',D )_{\Q}
\otimes ^\L _{h ^{-1} \D ^{\dag} _{\X ^\# } (\hdag D )_{\Q} }
h ^{-1} \E [d_{X'/X}]=:
h ^{\# !} _{D',D} (\E ),
\\
\underset{\longrightarrow}{\lim}\circ
h ^\# _{D,D'+} (\E ^{\prime(\bullet)})
\riso
\R h _* (\D ^{\dag} _{\X ^\# \leftarrow \X ^{\prime \#}} (\hdag D ,D ')_{\Q}
\otimes ^\L _{\D ^{\dag} _{\X ^{\prime \#}} (\hdag D ')_{\Q}}
\E ')=:
h ^\# _{D,D'+} (\E '),
\\
\label{hdagDtildeDcoh}
\underset{\longrightarrow}{\lim}\circ
(\hdag \widetilde{D}, D) (\E ^{(\bullet)})
\riso
\D ^{\dag} _{\X ^{ \#}} (\hdag \widetilde{D} )_{\Q}
\otimes  _{\D ^{\dag} _{\X ^{ \#}} (\hdag D )_{\Q}}\E
=: (\hdag \widetilde{D}, D) (\E ).
\end{gather}
In the last isomorphism, we have removed the symbol ``$\L$'' since the extension
$\D ^\dag _{\X ^\#} (\hdag D) _\Q \rightarrow \D ^\dag _{\X ^\#} (\hdag \widetilde{D}) _\Q$ is flat
(this a consequence of \cite[4.7]{caro_log-iso-hol}).
Also, we can write
$\E (\hdag \widetilde{D}, D) :=(\hdag \widetilde{D}, D) (\E )$.

We pose
$\O _{\X} (\ZZ):= \mathcal{H} om _{\O _{\X}} ( \omega _{\X} , \omega _{\X ^\#})$ and
$\E (\ZZ) = \O _{\X} (\ZZ) \otimes _{\O _{\X}} \E$.
This functor $(-)(\ZZ)$ preserves $D ^\mathrm{b} _\mathrm{coh} (\D ^\dag _{\X ^\#} (\hdag D) _\Q)$
(see \cite[5.1]{caro_log-iso-hol}).
Moreover, because this is true when $\E =\D ^\dag _{\X ^\#} (\hdag D) _\Q$, we check
by functoriality the isomorphism in $D ^\mathrm{b} _\mathrm{coh} (\D ^\dag _{\X ^\#} (\hdag D) _\Q)$:
\begin{equation}
  \label{Z-hdagDcomm}
\E (\ZZ) (\hdag D) \riso  \E  (\hdag D) (\ZZ).
\end{equation}
Also, when $Z \subset D$, we compute $\E (\hdag D)\riso \E (\ZZ) (\hdag D)  $.
\end{empt}

\begin{empt}
\label{defidual}
Let $\E \in D ^\mathrm{b} _{\mathrm{coh}} (\D ^\dag _{\X ^\#,\Q}) $.
  The $\D ^\dag _{\X ^\#,\Q}$-linear dual of $\E$ is well defined as follows (see \cite[5.6]{caro_log-iso-hol}):
\begin{equation}
  \DD _{\X ^\#} (\E) =
\R \mathcal{H} om _{ \D ^\dag _{\X ^\#,\Q}}
(\E, \D ^\dag _{\X ^\#,\Q} ) \otimes _{\O _{\X}} \omega ^{-1} _{\X ^\#} [d _X].
\end{equation}
\end{empt}

\begin{empt}
[Direct image by a log-smooth morphism]
\label{h+smooth}
We suppose here that $h ^\#$ is log-smooth.
Then, as for \cite[4.2.1.1]{Beintro2}, we have the canonical quasi-isomorphism:
$\Omega _{\X ^{\prime \#}/\X ^\#,\Q} ^\bullet
\otimes _{\O _{\X ',\Q}} \D ^\dag _{\X ^{\prime \#},\Q} [d _{\X ^{\prime \#}/\X ^\#}]
  \riso
  \D ^\dag _{\X ^\# \leftarrow \X ^{\prime \#},\Q}$. This implies:
  $\Omega _{\X ^{\prime \#}/\X ^\#,\Q} ^\bullet
  \otimes _{\O _{\X ',\Q}} \D ^\dag _{\X ^{\prime \#}} (\hdag D ') _{\Q} [d _{\X ^{\prime \#}/\X ^\#}]
  \riso \D ^\dag _{\X ^\# \leftarrow \X ^{\prime \#}} (\hdag D, D ') _{\Q}$.
  Then, for any $\E' \in D ^\mathrm{b} _{\mathrm{coh}} (\D ^{\dag} _{\X ^{\prime \#}} (\hdag D ') _{\Q}) $:
  \begin{equation}
    \label{h+smoothiso}
    h _{D, D' +} ^\# ( \E') :=
    \R h _* (\D ^\dag _{\X ^\# \leftarrow \X ^{\prime \#}} (\hdag D, D ') _{\Q}
    \otimes ^\L  _{\D ^{\dag} _{\X ^{\prime \#}} (\hdag D ') _{\Q}} \E')
    \riso \R h _* (\Omega _{\X ^{\prime \#}/\X ^\#,\Q} ^\bullet \otimes _{\O _{\X ',\Q}} \E') [d _{\X ^{\prime \#}/\X ^\#}].
  \end{equation}
\end{empt}

\subsection{Interpretation of the comparison theorem with arithmetic log-$\D$-modules}
We keep the notation of \ref{nota-1.2}.
First, we give in this section the following interpretation of
convergent ($F$-)log-isocrystals on $(X,Z)$ over $\S$.
Moreover, we translate theorem \ref{Nobuorigid} and finally proposition \ref{overconv},
which will be respectively fundamental for the section \ref{subsection22} and \ref{subsection23}.

\begin{prop}
\label{Ddag=overcvcondition}
\begin{enumerate}
  \item \label{Ddag=overcvcondition1}
   The functors $\sp ^*$ and $\sp _*$ induce quasi-inverse equivalences between the category of
coherent $\D_{\X^\#}^\dag(\hdag D) _{ \Q}$-modules, locally projective of finite type
over $\O_{\X}(\hdag D)_{ \Q}$
and the category of
locally free $j_U^\dag\O_{]X[_\X}$-modules of finite type
with an integrable logarithmic connection
$\nabla : E \rightarrow j_U^\dag\Omega_{\Xlogan/\S_K}^1 \otimes_{j_U^\dag\O_{]X[_\X}} E$
satisfying the overconvergent condition of \ref{logovcon}.

\item \label{Ddag=overcvcondition2}
Denote by $I _{\mathrm{conv, et}}  ( (X,Z)/ \Spf \V )$,
the category of convergent log-isocrystals on $(X,Z)$ over $\S$
in the sense of Shiho (see \cite[2.1.5, 2.1.6]{Shi1} and \cite{Shiho-log-isocI}).
There exists an equivalence between $I _{\mathrm{conv, et}}((X,Z)/ \Spf \V)$
and the category of coherent $\D_{\X^\#, \Q}^\dag$-modules,
locally projective of finite type over $\O_{\X, \Q}$.
\end{enumerate}
\end{prop}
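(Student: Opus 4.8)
The plan is to prove \ref{Ddag=overcvcondition1} by combining Berthelot's $\sp_{*}/\sp^{*}$ dictionary between $\O_{\X}(\hdag D)_{\Q}$-modules and $j_U^\dag\O_{]X[_\X}$-modules with the translation of a $\D^\dag$-module structure into a logarithmic connection, and then to deduce \ref{Ddag=overcvcondition2} from \ref{Ddag=overcvcondition1} applied with $D=\emptyset$ together with Shiho's description of convergent log-isocrystals. Both statements are local on $X$ and all the functors in play commute with restriction to Zariski opens, so I would reduce at once to the affine situation fixed at the beginning of \ref{nota-1.2}: $\X$ affine, relative coordinates $z_{1},\dots,z_{d}$ over $\S$, with $\ZZ = V(z_{1}\cdots z_{s})$ and $D=V(f)$.

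For \ref{Ddag=overcvcondition1}, I would first recall that, by Berthelot, a $\D^{(m)}_{X_{i}^{\#}}(D)$-module structure on an $\O$-coherent $\O_{X_{i}}(D)$-module is the datum of an $m$-PD-stratification relative to $X_{i}^{\#}/S_{i}$, and that taking the projective limit in $i$ and the inductive limit in $m$ (after $-\otimes_{\Z}\Q$) identifies the coherent $\D^\dag_{\X^{\#}}(\hdag D)_{\Q}$-modules which are coherent over $\O_{\X}(\hdag D)_{\Q}$ with coherent $\O_{\X}(\hdag D)_{\Q}$-modules endowed with an integrable logarithmic connection whose Taylor expansion converges on a suitable tubular neighbourhood of the diagonal. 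Applying $\sp^{*}$ and using Berthelot's $\sp$-comparison $\O_{\X}(\hdag D)_{\Q}\simeq\sp_{*}j_U^\dag\O_{]X[_\X}$ (together with the coherence/flatness comparison of \cite{Be1} and the identification $\sp^{*}\Omega^{\bullet}_{\X^{\#}/\S,\Q}\simeq\Omega^{\bullet}_{\Xlogan/\S_{K}}$), the latter becomes a coherent $j_U^\dag\O_{]X[_\X}$-module with integrable log-connection; I would then check that convergence of the Taylor expansion translates exactly into the overconvergent estimate \ref{logovcon}, by writing both sides in terms of $\|\underline{\partial}_{\#}^{[\underline{n}]}(e)\|\,\xi^{|\underline{n}|}\to 0$ and descending to a strict neighbourhood $V=\,]U[_{\X,\nu}$ on which a locally free $\O_{V}$-model $\E$ exists. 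The remaining point is to interchange ``locally projective of finite type over $\O_{\X}(\hdag D)_{\Q}$'' with ``locally free of finite type over $j_U^\dag\O_{]X[_\X}$'': for this I would use that $\sp^{*}$ sends a locally projective $\O_{\X}(\hdag D)_{\Q}$-module of finite type to a locally free $j_U^\dag\O_{]X[_\X}$-module of finite type (as in \cite{caro-2006-surcoh-surcv}) and, conversely, that $\sp_{*}$ of an overconvergent log-isocrystal is $\O_{\X}(\hdag D)_{\Q}$-coherent and hence $\D^\dag_{\X^{\#}}(\hdag D)_{\Q}$-coherent by the finiteness theorems for the rings $\D^\dag_{\X^{\#}}(\hdag D)_{\Q}$. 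That $\sp_{*}$ and $\sp^{*}$ are quasi-inverse on these subcategories is then the adjunction $\mathrm{id}\riso\sp_{*}\sp^{*}$ already in Berthelot's formalism, compatible with the connection and the $\D^\dag$-structure.

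For \ref{Ddag=overcvcondition2}, I would specialise \ref{Ddag=overcvcondition1} to $D=\emptyset$: then $U=X$, $j_U^\dag\O_{]X[_\X}=\O_{]X[_\X}$, and \ref{logovcon} reduces to the single-covering convergence estimate on $]X[_\X$, so one gets an equivalence between coherent $\D^\dag_{\X^{\#},\Q}$-modules locally projective of finite type over $\O_{\X,\Q}$ and locally free $\O_{]X[_\X}$-modules of finite type with integrable log-connection satisfying that convergence condition. It then remains to identify this last category with $I_{\mathrm{conv,et}}((X,Z)/\Spf\V)$, which is Shiho's description of convergent log-isocrystals (\cite{Shi1}, \cite{Shiho-log-isocI}), once one checks on local coordinates that his convergence condition — phrased through the convergent log-site and log-enlargements — coincides with the radius-of-convergence-$\ge 1$ estimate above. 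Since all the equivalences are compatible with Zariski restriction, the local pictures glue to a global equivalence.

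I expect the main obstacle to be the bookkeeping in \ref{Ddag=overcvcondition1}: matching precisely the three topological descriptions of the same datum — the limit of $m$-PD-stratifications, the completion defining $\D^\dag_{\X^{\#}}(\hdag D)_{\Q}$, and the Banach-norm estimates of \ref{logovcon} — and, connected with this, proving that local projectivity over $\O_{\X}(\hdag D)_{\Q}$ may be replaced by local freeness over $j_U^\dag\O_{]X[_\X}$ and that $\sp_{*}$ of an overconvergent log-isocrystal is $\D^\dag$-coherent. Once \ref{Ddag=overcvcondition1} is in place, the reduction of \ref{Ddag=overcvcondition2} to Shiho's theorem is only a matter of unwinding definitions.
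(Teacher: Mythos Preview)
Your plan for part~\ref{Ddag=overcvcondition1} is exactly the paper's: it proves this equivalence ``similarly to \cite[4.4.12]{Be1}'' (with the log adaptation in \cite[4.19]{caro_log-iso-hol}), i.e.\ by the $\sp_{*}/\sp^{*}$ dictionary and the translation between the $\D^\dag$-action and the overconvergence estimate via $m$-PD-stratifications.

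For part~\ref{Ddag=overcvcondition2} your reduction to $D=\emptyset$ is the right move, but the step you describe as ``once one checks on local coordinates that his convergence condition \dots\ coincides with the radius-of-convergence estimate'' is not a routine unwinding of Shiho's definitions: it is precisely Kedlaya's theorem \cite[6.4.1]{kedlaya-semistableI} (with his definition \cite[2.3.7]{kedlaya-semistableI}), which the paper invokes at this point. So your strategy is correct, but you should cite Kedlaya here rather than expect to extract the comparison directly from \cite{Shi1} and \cite{Shiho-log-isocI}.
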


\begin{proof}
   We check the first equivalence of categories similarly to \cite[4.4.12]{Be1} (see also \cite[4.19]{caro_log-iso-hol}).
   We deduce the next one by Kedlaya's theorem \cite[6.4.1]{kedlaya-semistableI} (see also his definition \cite[2.3.7]{kedlaya-semistableI}).
\end{proof}

\begin{rema}
\label{remaDdagovcv}
$\bullet$ With the notation \ref{Ddag=overcvcondition},
since $D$ is a divisor, for any locally free $j_U^\dag\O_{]X[_\X}$-module $E$ of finite type,
for any integer $j \not =0$, $\H ^j \sp _* (E)=0$.

$\bullet$ Moreover, it follows from \ref{Ddag=overcvcondition}.\ref{Ddag=overcvcondition1} that for any coherent $\D_{\X^\#}^\dag(\hdag D) _{ \Q}$-module,
  locally projective of finite type
over $\O_{\X}(\hdag D)_{ \Q}$, $E:=\sp ^* (\E)$ is a locally free
$j_U^\dag\O_{]X[_\X}$-module of finite type with a logarithmic
connection $\nabla : E \rightarrow j_U^\dag\Omega_{\Xlogan/\T_K}^1
\otimes_{j_U^\dag\O_{]X[_\X}} E$ satisfying the overconvergent condition of \ref{logovcon}.
Of course the converse is not true unless $\T =\S$.
\end{rema}

\begin{empt}
[Inverse image]
  \label{def-inv image}
  Let $\V \rightarrow \V'$ be a morphism of mixed characteristic complete discrete valuation
rings, $k \rightarrow k'$ the induced morphism of perfect residue fields, $\X $ be a smooth formal $\V$-scheme,
$\X'$ be a smooth formal $\V'$-scheme
and $\ZZ$ (resp. $\ZZ'$) be a relatively strict normal crossing divisor of $\X$ over $\Spf \V$ (resp. $\X'$ over $\Spf \V '$).
Let $f  _0$ : $(X',Z') \rightarrow (X,Z)$ be a morphism of log-schemes over $\Spec k$.
We have a canonical inverse image functor under $f _0$ denoted by $f _0 ^*$ :
$I _{\mathrm{conv, et}}  ( (X,Z)/ \Spf \V )
\rightarrow I _{\mathrm{conv, et}}  ( (X',Z')/ \Spf \V ')$ (this is obvious from the definition
\cite[2.1.5, 2.1.6]{Shi1}).
We get from \ref{Ddag=overcvcondition}.\ref{Ddag=overcvcondition2}
an inverse image functor under $f _0$, also denoted by $f _0 ^*$, from the category of
coherent $\D_{(\X,\ZZ),\Q}^\dag$-modules, locally projective
of finite type over $\O_{\X,\Q}$
to the category of
coherent $\D_{(\X ',\ZZ')  ,\Q}^\dag$-modules, locally projective of finite type
over $\O_{\X',\Q}$.
When there exists a lifting $f$ : $(\X ',\ZZ') \rightarrow (\X,\ZZ)$ of
$(X',Z') \rightarrow (X,Z)$ then $ f_0 ^*$ is canonically isomorphic to the usual functor
$f ^*$.
\end{empt}

\begin{empt}[Frobenius structure]
\label{Frobenius-structure}
Suppose now that $\V \rightarrow \V'$ is $\sigma$ (which is a fixed lifting of the $a$th Frobenius power of $k$) and
$f _0$ is $F _{(X,Z)}$ (or simply $F$) the $a$th power of the absolute Frobenius of $(X,Z)$.
A ``coherent $F$-$\D_{(\X,\ZZ),\Q}^\dag$-module, locally projective
of finite type over $\O_{\X,\Q}$'' or ``coherent $\D_{(\X,\ZZ),\Q}^\dag$-module, locally projective
of finite type over $\O_{\X,\Q}$ and endowed with a Frobenius structure'' is a coherent $\D_{(\X,\ZZ),\Q}^\dag$-module $\E$, locally projective
of finite type over $\O_{\X,\Q}$ and endowed with a $\D_{(\X,\ZZ),\Q}^\dag$-linear isomorphism
$\E \riso F ^* (\E)$. This notion is compatible (via the equivalence of categories \ref{Ddag=overcvcondition}.\ref{Ddag=overcvcondition2})
with Shiho's notion of convergent $F$-log-isocrystal on $(X,Z)$ (see \cite[2.4.2]{Shi1}).
By \cite[2.4.3]{Shi1}, an $F$-log-isocrystal on $(X,Z)$ is strikingly locally free.
\end{empt}

The following lemma indicates that the equivalence of categories of
\ref{Ddag=overcvcondition}.\ref{Ddag=overcvcondition1}
is compatible with the most useful functors (see also \ref{commf_0*} for inverse images).

\begin{lemm}
\label{spcommDD'}
Let $D \subset D ' $ be a second divisor of $X$ and
$U':= X \setminus D'$.
  Let $\E$ be a coherent
$\D_{\X^\#}^\dag(\hdag D) _{ \Q}$-module which is a locally projective
$\O_{\X}(\hdag D)_{ \Q}$-module of finite type and $E := \sp ^* (\E)$.
Then
\begin{align}
\label{spcommDD'iso1}
\E (\hdag D')& =\D_{\X ^\#}^\dag(\hdag D ') _{ \Q}  \otimes _{\D_{\X ^\#}^\dag(\hdag D) _{ \Q}} \E \riso
\sp _* (j ^\dag _{U'} E),
\\
\label{spcommDD'iso2}
\R \underline{\Gamma} ^\dag _{D'} (\E )& \riso
\R \sp _* \circ \underline{\Gamma}_{]D'[_\X}^\dag  (E).
\end{align}
\end{lemm}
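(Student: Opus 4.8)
The two isomorphisms \ref{spcommDD'iso1} and \ref{spcommDD'iso2} are equivalent, given the localization triangle, so the plan is to prove \ref{spcommDD'iso1} and then deduce \ref{spcommDD'iso2}. First I would reduce to a local statement: the question is local on $\X$, so I may assume $\X$ affine with $D$ (resp. $D'$) defined by an equation $f=0$ (resp. $f'=0$) in $X$, and I may also assume, after a twist by $\O_{\X}(\hdag D)_{\Q}$ and using \ref{Ddag=overcvcondition}.\ref{Ddag=overcvcondition1}, that $\E$ corresponds under $\sp^*$ to a locally free $j_U^\dag\O_{]X[_\X}$-module $E$ of finite type with overconvergent integrable log-connection. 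The key algebraic input is that $\D_{\X^\#}^\dag(\hdag D')_{\Q} = \D_{\X^\#}^\dag(\hdag D)_{\Q}\otimes_{\O_{\X}(\hdag D)_{\Q}}\O_{\X}(\hdag D')_{\Q}$ and that this extension is flat (this follows from \cite[4.7]{caro_log-iso-hol}, which is quoted earlier), so the derived tensor product in the definition of $(\hdag D', D)(\E)$ reduces to an ordinary tensor product $\O_{\X}(\hdag D')_{\Q}\otimes_{\O_{\X}(\hdag D)_{\Q}}\E$.

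Next I would match the two sides term by term on the rigid-analytic side. Applying $\sp^*$, which is exact on the relevant categories and an equivalence by \ref{Ddag=overcvcondition}.\ref{Ddag=overcvcondition1}, it suffices to identify $\sp^*\bigl(\O_{\X}(\hdag D')_{\Q}\otimes_{\O_{\X}(\hdag D)_{\Q}}\E\bigr)$ with $j^\dag_{U'}E$. But $\sp^*\O_{\X}(\hdag D')_{\Q} = j^\dag_{U'}\O_{]X[_\X}$ and $\sp^*\O_{\X}(\hdag D)_{\Q} = j^\dag_U\O_{]X[_\X}$ by Berthelot's description of the sheaves $\B^{(m)}_{X_i}(D)$ and their rigid-analytic incarnation, so the right-hand side becomes $j^\dag_{U'}\O_{]X[_\X}\otimes_{j^\dag_U\O_{]X[_\X}}E = j^\dag_{U'}E$, using the standard identity $j^\dag_{U'}j^\dag_U = j^\dag_{U'}$ for $U'\subset U$ (equivalently $D\subset D'$) from \cite[2.1]{Be}. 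This gives \ref{spcommDD'iso1} after checking the identification is the canonical morphism; that compatibility is a diagram chase with the adjunction units. To pass to $\R\sp_*$ on the left, one uses that $\sp_*$ is exact on locally free $j^\dag$-modules, which is recorded in the first bullet of Remark \ref{remaDdagovcv} ($\H^j\sp_*(E)=0$ for $j\ne 0$), so $\R\sp_*(j^\dag_{U'}E)=\sp_*(j^\dag_{U'}E)$, and \ref{spcommDD'iso1} is obtained by applying $\sp^*$ and the equivalence.

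Finally, \ref{spcommDD'iso2} follows formally. On the arithmetic side we have the localization triangle \ref{locexactri} for $\E$ with respect to $D'$:
\begin{equation}
\R\underline{\Gamma}^\dag_{D'}(\E)\rightarrow\E\rightarrow(\hdag D')(\E)\rightarrow\R\underline{\Gamma}^\dag_{D'}(\E)[1].\notag
\end{equation}
On the rigid side we have the analogous exact sequence \ref{exact-trngl-rig} defining $\underline{\Gamma}^\dag_{]D'[_\X}(E)$. Applying $\R\sp_*$ to the latter and using $\sp_*(\E)\riso\R\sp_*(E)$ (again by Remark \ref{remaDdagovcv}) together with the already-established \ref{spcommDD'iso1} for the third term, the five lemma in the triangulated category identifies $\R\underline{\Gamma}^\dag_{D'}(\E)$ with $\R\sp_*\circ\underline{\Gamma}^\dag_{]D'[_\X}(E)$ compatibly with the triangle maps. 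The main obstacle I anticipate is the bookkeeping in the first step, namely verifying that $\sp^*$ really does send $\O_{\X}(\hdag D)_{\Q}$ to $j^\dag_U\O_{]X[_\X}$ at the level of the full $\D^\dag$-module structure and that the natural transformation produced coincides with the canonical one — this is essentially \cite[4.4.12]{Be1} adapted to the log setting as in \cite[4.19]{caro_log-iso-hol}, and the delicate point is keeping the log-connection compatible throughout; the homological algebra of the second step is then routine.
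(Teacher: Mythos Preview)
Your approach is correct and essentially the same as the paper's. For \ref{spcommDD'iso1} both arguments reduce to identifying $\E(\hdag D')$ with $\O_{\X}(\hdag D')_{\Q}\otimes_{\O_{\X}(\hdag D)_{\Q}}\E$ and then with $\sp_*(j^\dag_{U'}E)$; you justify the first identification by invoking the general isomorphism \ref{ODdivcohe}, whereas the paper instead constructs the map $\O_{\X}(\hdag D')_{\Q}\otimes_{\O_{\X}(\hdag D)_{\Q}}\E \to \D^\dag_{\X^\#}(\hdag D')_{\Q}\otimes_{\D^\dag_{\X^\#}(\hdag D)_{\Q}}\E$ directly and checks it is an isomorphism by observing it is so outside $D'$ and citing \cite[4.8]{caro_log-iso-hol}. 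For \ref{spcommDD'iso2} both proofs compare the two localization triangles via $\R\sp_*$ and Remark \ref{remaDdagovcv}.
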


\begin{proof}
  We have the canonical isomorphism:
$\sp _* (j ^\dag _{U'} E) \riso
\O_{\X}(\hdag D')_{ \Q} \otimes _{\O_{\X}(\hdag D)_{ \Q}}  \E$.
Since $j ^\dag _{U'} E$ satisfies the overconvergent condition,
$\O_{\X}(\hdag D')_{ \Q} \otimes _{\O_{\X}(\hdag D)_{ \Q}}  \E$ is
then a coherent $\D_{\X ^\#}^\dag(\hdag D ') _{ \Q}  $-module which is also
a locally projective
$\O_{\X}(\hdag D')_{ \Q}$-module of finite type.
Then, we get a morphism of coherent $\D_{\X ^\#}^\dag(\hdag D ') _{ \Q}  $-modules:
$\O_{\X}(\hdag D')_{ \Q} \otimes _{\O_{\X}(\hdag D)_{ \Q}}  \E
\rightarrow
\D_{\X ^\#}^\dag(\hdag D ') _{ \Q}  \otimes _{\D_{\X ^\#}^\dag(\hdag D) _{ \Q}} \E $.
Since this morphism is an isomorphism outside $D'$, this is an isomorphism (see \cite[4.8]{caro_log-iso-hol}).
Thus, we have proved \ref{spcommDD'iso1}.

By applying the functor $\R \sp _*$ to an exact sequence of the form \ref{exact-trngl-rig}, we get
the exact triangle (and with the first remark of \ref{remaDdagovcv}):
\begin{equation}
  \notag
         \R \sp _* \circ \underline{\Gamma}_{]D'[_\X}^\dag(E)\,
     \longrightarrow\, \sp _* (E)\, \longrightarrow\, \sp _* ( j_{U'}^\dag (E) )\, \longrightarrow\,
     \R \sp _* \circ \underline{\Gamma}_{]D'[_\X}^\dag(E)[1].
\end{equation}
Since $\sp _* (E)\, \longrightarrow\, \sp _* ( j_{U'}^\dag (E) )$ is canonically isomorphic
to $\E \rightarrow \E( \hdag D')$, it follows from the exact triangle of localization of
$\E$ with respect to $D'$ (see \ref{locexactri}), that
$\R \underline{\Gamma} ^\dag _{D'} (\E )\riso
\R \sp _* \circ \underline{\Gamma}_{]D'[_\X}^\dag  (E)$.
\end{proof}

An exponent of a coherent
$\D_{\X^\#}^\dag(\hdag D) _{ \Q}$-module,
locally projective of finite type
over $\O_{\X}(\hdag D)_{ \Q}$-module means
an exponent of the associated overconvergent log-isocrystal by \ref{Ddag=overcvcondition}.\ref{Ddag=overcvcondition1}.
The comparison theorem \ref{Nobuorigid} can be reformulated as follows:
\begin{theo}
\label{Nobuo}
Let $\E$ be a coherent
$\D_{\X^\#}^\dag(\hdag D) _{ \Q}$-module which is a locally projective
$\O_{\X}(\hdag D)_{ \Q}$-module of finite type. Suppose that
\begin{list}{}{}
\item[\mbox{\rm (a)}] none of differences of exponents
is a $p$-adic Liouville number, and
\item[\mbox{\rm (b')}] any exponent
is neither a $p$-adic Liouville number nor a positive integer
\end{list}

\noindent
along each irreducible component $Z_i$
of $Z$ such that $Z_i \not\subset D$. Then the natural morphism
\begin{equation}
\label{Nobuo-iso}
\R g_*\left(\Omega_{\X^\#/\T, \Q}^\bullet
   \otimes_{\O_{\X, \Q}} \E\right)
       \rightarrow
       \R g_*\left(\Omega_{\X/\T, \Q}^\bullet \otimes_{\O_{\X, \Q}}
       \E(\hdag Z)\right)
\end{equation}
is an isomorphism.
\end{theo}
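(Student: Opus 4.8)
The plan is to deduce Theorem \ref{Nobuo} from the comparison theorem \ref{Nobuorigid} --- more precisely from its ``in particular'' form \ref{Nobuorigid-iso} --- by transporting both sides of \ref{Nobuo-iso} to the rigid analytic world via the equivalence $\sp ^*$/$\sp _*$ of \ref{Ddag=overcvcondition}.\ref{Ddag=overcvcondition1}. First I would set $E := \sp _\X ^* (\E)$, which by \ref{Ddag=overcvcondition}.\ref{Ddag=overcvcondition1} is a locally free $j _U ^\dag \O _{]X[_\X}$-module of finite type equipped with an integrable logarithmic connection satisfying the overconvergent condition \ref{logovcon}; by construction, and by the definition of exponents recalled just before the statement, the exponents of $E$ along each $Z _i \not\subset D$ are exactly those of $\E$. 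Hence hypotheses (a) and (b$'$) of Theorem \ref{Nobuo} say precisely that $E$ satisfies (a) and (b) of \ref{Nobuorigid} and that no exponent of $E$ along such a $Z _i$ is a positive integer; in particular the integer $c$ of \ref{Nobuorigid} is $0$, so $E$ meets the hypotheses of the isomorphism \ref{Nobuorigid-iso}.

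Next I would identify the left-hand side of \ref{Nobuo-iso} with $\R \sp _{\T *}$ of the left-hand side of \ref{Nobuorigid-iso}. Since $\sp _\X ^* \Omega _{\X ^\#/\T,\Q} ^\bullet \cong \Omega _{\Xlogan/\T_K} ^\bullet$, since $j _U ^\dag \O _{]X[_\X} = \sp _\X ^* \bigl( \O _{\X} (\hdag D) _\Q \bigr)$, and since $\E$ is flat over $\O _{\X,\Q}$ (being locally projective), the projection formula together with \ref{Ddag=overcvcondition}.\ref{Ddag=overcvcondition1} gives, in each degree $q$, an isomorphism $\sp _{\X *} \bigl( j _U ^\dag \Omega _{\Xlogan/\T_K} ^q \otimes _{j _U ^\dag \O _{]X[_\X}} E \bigr) \cong \Omega _{\X ^\#/\T,\Q} ^q \otimes _{\O _{\X,\Q}} \E$, while by the first bullet of \ref{remaDdagovcv} the higher direct images $\R ^j \sp _{\X *}$ ($j>0$) of each such locally free $j _U ^\dag \O _{]X[_\X}$-module vanish; the differentials match because $\nabla$ is the analytification of the $\D$-module structure on $\E$. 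Thus $\R \sp _{\X *} \bigl( j _U ^\dag \Omega _{\Xlogan/\T_K} ^\bullet \otimes _{j _U ^\dag \O _{]X[_\X}} E \bigr) \cong \Omega _{\X ^\#/\T,\Q} ^\bullet \otimes _{\O _{\X,\Q}} \E$, and applying $\R g _*$ and using $g \circ \sp _\X = \sp _\T \circ g _K$, hence $\R g _* \circ \R \sp _{\X *} = \R \sp _{\T *} \circ \R g _{K *}$, the left-hand side of \ref{Nobuo-iso} becomes $\R \sp _{\T *}$ of the left-hand side of \ref{Nobuorigid-iso}.

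For the right-hand side I would apply Lemma \ref{spcommDD'} with the divisor $D ' := D \cup Z$, so that $U ' = Y \cap U$: it yields $\E (\hdag Z) \riso \sp _{\X *} \bigl( j _{Y \cap U} ^\dag E \bigr)$, hence $\sp _\X ^* \bigl( \E (\hdag Z) \bigr) = j _{Y \cap U} ^\dag E$, and running the same $\R \sp _{\X *}$-computation for the non-logarithmic de Rham complex (using $\sp _\X ^* \Omega _{\X/\T,\Q} ^\bullet \cong \Omega _{\Xan/\T_K} ^\bullet$) identifies the right-hand side of \ref{Nobuo-iso} with $\R \sp _{\T *}$ of the right-hand side of \ref{Nobuorigid-iso}. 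I would then check that the morphism \ref{Nobuo-iso} corresponds, under these two identifications, to the restriction morphism \ref{Nobuorigid-iso}: both are induced by the canonical map $\E \to \E (\hdag Z)$ (equivalently $E \to j _{Y \cap U} ^\dag E$) and the associated maps of de Rham complexes, which are compatible with $\sp _\X ^*$ and $\sp _{\X *}$ by construction. By Theorem \ref{Nobuorigid}, \ref{Nobuorigid-iso} is an isomorphism, and applying the functor $\R \sp _{\T *}$ then shows that \ref{Nobuo-iso} is an isomorphism, as claimed.

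The only delicate point I expect is the bookkeeping of the $\R \sp _*$-commutations --- namely that $\R \sp _{\X *}$ carries the overconvergent rigid de Rham complex of $E$ to the $\D$-module de Rham complex of $\E$, which rests on the vanishing of higher direct images of locally free $j ^\dag \O _{]X[_\X}$-modules in \ref{remaDdagovcv} together with the monoidality of $\sp _\X ^*$ --- and the compatibility of the two comparison maps. All the genuine analytic content, i.e. the fact that the map becomes an isomorphism after $\R g _{K *}$, is already contained in Theorem \ref{Nobuorigid}, so no new estimate on convergence radii or exponents is needed at this stage.
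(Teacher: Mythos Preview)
Your proposal is correct and follows essentially the same approach as the paper: set $E := \sp^*(\E)$ via \ref{Ddag=overcvcondition}, note that (a), (b$'$) give the hypotheses of the ``in particular'' case \ref{Nobuorigid-iso}, and then push the rigid isomorphism down by $\sp_*$ using the vanishing of higher $\R^j\sp_*$ from \ref{remaDdagovcv} together with \ref{spcommDD'} and the commutation $g\circ\sp_\X = \sp_\T\circ g_K$. The paper's proof is a single sentence to this effect; you have simply unpacked the $\R\sp_*$ bookkeeping that the paper leaves implicit.
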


\begin{proof}
  Using \ref{Ddag=overcvcondition} (and the first remark \ref{remaDdagovcv}),
  we have only to apply the functor $\sp _*$ in \ref{Nobuorigid}
  (with $E := \sp ^* (\E)$).
\end{proof}

\begin{rema}
\label{remaTheo135}
With the notation of \ref{Nobuo}, since
$\R g_*\left(\Omega_{\X ^\#/\T, \Q}^\bullet \otimes_{\O_{\X, \Q}}
       \E(\hdag Z)\right) =
       \R g_*\left(\Omega_{\X/\T, \Q}^\bullet \otimes_{\O_{\X, \Q}}
       \E(\hdag Z)\right)$, it follows from \ref{h+smoothiso} and \ref{locexactri} that
the fact that the morphism \ref{Nobuo-iso} is an isomorphism is equivalent to the fact that
$g _{D,+} ^\# \circ \R \underline{\Gamma} ^\dag _{Z}  (\E)=0$.
We will see also that this is equivalent to the fact that $g _+ (\rho)$ is an isomorphism. But first, we need to recall
the construction of $\rho$.
\end{rema}
\vspace{5mm}

\begin{empt}
[The morphism $\rho$]
  \label{defi-rho}
Let $\E \in D ^\mathrm{b} _\mathrm{coh} (\D ^\dag _{\X ^\#} (\hdag D) _\Q)$.

$\bullet $ From \cite[5.2.4]{caro_log-iso-hol},
we get the canonical isomorphism of $(\D ^\dag _{\X } (\hdag D) _\Q, \D ^\dag _{\X ^\#} (\hdag D) _\Q)$-bimodules:
$\D ^\dag _{\X \leftarrow \X ^\#} (\hdag D) _{\Q}
\riso
\D ^{\dag} _{\X } (\hdag D) _{\Q}\otimes _{\O _{\X}} \O _{\X}(\ZZ)$, where to compute the tensor product we take
the right structure of $\D ^{\dag} _{\X } (\hdag D) _{\Q}$-module (and then the right structure of $\O _{\X}$-module)
of $\D ^{\dag} _{\X } (\hdag D) _{\Q}$.
Hence, the canonical inclusion
$\D ^{\dag} _{\X} (\hdag D) _{\Q} \otimes _{\O _{\X}} \O _{\X}(\ZZ)
\subset
\D ^{\dag} _{\X} (\hdag D \cup Z) _{\Q}$ induces the morphism
$$u_{D+} (\E) =
\D ^\dag _{\X \leftarrow \X ^\#} (\hdag D) _{\Q}
\otimes ^\L  _{\D ^{\dag} _{\X ^\#} (\hdag D) _{\Q}} \E
\rightarrow
\D ^{\dag} _{\X } (\hdag D \cup Z) _{\Q}
\otimes   _{\D ^{\dag} _{\X ^\#} (\hdag D) _{\Q}} \E =
\E (\hdag Z).$$
This canonical morphism is
denoted by
$\rho$ : $u_{D+} (\E) \rightarrow \E (\hdag Z)$.

$\bullet $ From $\D ^\dag _{\X \leftarrow \X ^\#} (\hdag D) _{\Q}
\riso
\D ^{\dag} _{\X } (\hdag D) _{\Q}\otimes _{\O _{\X}} \O _{\X}(\ZZ)$
(and also \cite[6.2.1]{caro_log-iso-hol}), we get
\begin{equation}
\label{u+621}
  u _{D +} (\E)
\riso
\D ^\dag  _{\X } (\hdag D) _{\Q} \otimes ^\L _{\D ^\dag  _{\X ^\#} (\hdag D) _{\Q}} \E (\ZZ).
\end{equation}

$\bullet$ Finally, by \cite[5.25]{caro_log-iso-hol}, when $\E$ is furthermore a log-isocrystal on $\X ^\#$ overconvergent along $D$,
for any $j\not =0$, $\H ^j (u_{D+} (\E) )=0$, i.e.,
$u_{D+} (\E) \riso
\D ^\dag  _{\X } (\hdag D) _{\Q} \otimes _{\D ^\dag  _{\X ^\#} (\hdag D) _{\Q}} \E (\ZZ)$.
This will be essential in the proof of \ref{rhoisolemm2}.
\end{empt}

\begin{rema}
  With the notation \ref{defi-rho}, since the canonical morphism
$(\hdag Z) \circ u _+ (\E) \rightarrow  \E (\hdag Z)$ of coherent $\D ^\dag  _{\X } (\hdag D \cup Z) _{\Q} $-modules
is an isomorphism (this is obvious outside $D \cup Z$ and so we can apply \cite[4.3.12]{Be1}),
the localization triangle of $u_{D+} (\E)$ with respect to $Z$ is canonically isomorphic to
  \begin{equation}
    \label{exacttriu+F}
    \R \underline{\Gamma} ^\dag _Z \circ u_{D+} (\E)  \rightarrow u_{D+} (\E)
    \overset{\rho}{\rightarrow} \E (\hdag Z)
    \rightarrow \R \underline{\Gamma} ^\dag _Z \circ u_{D+} (\E)  [1].
  \end{equation}
Hence,
$\R \underline{\Gamma} ^\dag _Z \circ u _+ (\E)  =0$ if and only if
$\rho$ is an isomorphism.

\end{rema}

We will need the following two lemmas of commutativity:
\begin{lemm}
\label{u+commhdag}
  Let $\widetilde{D}$ be a second divisor of $X$,
$\E ^{(\bullet)} \in \smash{\underset{^{\longrightarrow}}{LD}} ^{\mathrm{b}} _{\Q ,\mathrm{qc}}
  ( \smash{\widehat{\D}} _{\X ^{\#}} ^{(\bullet)}(D))$.
  We have:
\begin{equation}
\label{u+commhdag-equ}
  (u _{D+} (\E ^{(\bullet)} ))(\hdag \widetilde{D})
  \riso
  u _{D+} (\E ^{(\bullet)} (\hdag \widetilde{D}))
  \riso
  u _{\widetilde{D}+} (\E ^{(\bullet)} (\hdag \widetilde{D}))
  .
\end{equation}
\end{lemm}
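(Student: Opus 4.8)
The plan is to reduce both isomorphisms to the explicit description of $u_{D+}$ provided by \ref{u+621} together with the associativity of $\widehat{\otimes} ^\L$, working first at finite level $m$ (where all the tensor products are over coherent sheaves of rings) and then passing to $\smash{\underset{^{\longrightarrow}}{LD}} ^{\mathrm{b}} _{\Q ,\mathrm{qc}}$. Since $u$ is the identity on underlying topological spaces, $\R u _*$ is the identity functor, and the bimodule identity $\D ^{\dag} _{\X \leftarrow \X ^{\#}}(\hdag D) _{\Q} \riso \D ^{\dag} _{\X}(\hdag D) _{\Q} \otimes _{\O _{\X}} \O _{\X}(\ZZ)$ of \cite[5.2.4]{caro_log-iso-hol} — a statement on the transfer bimodules themselves, hence independent of the module to which they are applied — has an evident level-$\bullet$ analogue. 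It yields, for every $\E ^{(\bullet)} \in \smash{\underset{^{\longrightarrow}}{LD}} ^{\mathrm{b}} _{\Q ,\mathrm{qc}}(\smash{\widehat{\D}} _{\X ^{\#}} ^{(\bullet)}(D))$, a canonical isomorphism
$$u _{D+}(\E ^{(\bullet)}) \riso \smash{\widehat{\D}} _{\X} ^{(\bullet)}(D) \widehat{\otimes} ^\L _{\smash{\widehat{\D}} _{\X ^{\#}} ^{(\bullet)}(D)} \E ^{(\bullet)}(\ZZ)$$
(the level analogue of \ref{u+621}; recall that $\E ^{(\bullet)}(\ZZ)$ carries its canonical $\smash{\widehat{\D}} _{\X ^{\#}} ^{(\bullet)}(D)$-structure). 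Replacing $\widetilde{D}$ by $\widetilde{D} \cup D$ — which does not change $\E ^{(\bullet)}(\hdag \widetilde{D})$ up to canonical isomorphism and turns $u _{\widetilde{D}+}$ into the functor for $\widetilde{D} \cup D$ — we may and do assume $D \subseteq \widetilde{D}$, so that $\E ^{(\bullet)}(\hdag \widetilde{D}) \in \smash{\underset{^{\longrightarrow}}{LD}} ^{\mathrm{b}} _{\Q ,\mathrm{qc}}(\smash{\widehat{\D}} _{\X ^{\#}} ^{(\bullet)}(\widetilde{D}))$.

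For the first isomorphism, apply $(\hdag \widetilde{D}) = (\hdag \widetilde{D}, D)$ to the displayed formula: by \ref{defhdagDtilde} and associativity of $\widehat{\otimes} ^\L$, the left hand side $(u _{D+}(\E ^{(\bullet)}))(\hdag \widetilde{D})$ is identified with $\smash{\widehat{\D}} _{\X} ^{(\bullet)}(\widetilde{D}) \widehat{\otimes} ^\L _{\smash{\widehat{\D}} _{\X ^{\#}} ^{(\bullet)}(D)} \E ^{(\bullet)}(\ZZ)$. For the middle term, apply the displayed formula to the $\smash{\widehat{\D}} _{\X ^{\#}} ^{(\bullet)}(D)$-module $\E ^{(\bullet)}(\hdag \widetilde{D})$, and then use the level analogue of \ref{Z-hdagDcomm} together with \ref{defhdagDtilde} to rewrite
$$(\E ^{(\bullet)}(\hdag \widetilde{D}))(\ZZ) \riso (\E ^{(\bullet)}(\ZZ))(\hdag \widetilde{D}) \riso \smash{\widehat{\D}} _{\X ^{\#}} ^{(\bullet)}(\widetilde{D}) \widehat{\otimes} ^\L _{\smash{\widehat{\D}} _{\X ^{\#}} ^{(\bullet)}(D)} \E ^{(\bullet)}(\ZZ).$$
Inserting this and using associativity once more identifies $u _{D+}(\E ^{(\bullet)}(\hdag \widetilde{D}))$ with the very same object $\smash{\widehat{\D}} _{\X} ^{(\bullet)}(\widetilde{D}) \widehat{\otimes} ^\L _{\smash{\widehat{\D}} _{\X ^{\#}} ^{(\bullet)}(D)} \E ^{(\bullet)}(\ZZ)$, whence the first isomorphism of \ref{u+commhdag-equ}.

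For the second isomorphism, apply the displayed formula over the divisor $\widetilde{D}$ to the $\smash{\widehat{\D}} _{\X ^{\#}} ^{(\bullet)}(\widetilde{D})$-module $\E ^{(\bullet)}(\hdag \widetilde{D})$: this gives $u _{\widetilde{D}+}(\E ^{(\bullet)}(\hdag \widetilde{D})) \riso \smash{\widehat{\D}} _{\X} ^{(\bullet)}(\widetilde{D}) \widehat{\otimes} ^\L _{\smash{\widehat{\D}} _{\X ^{\#}} ^{(\bullet)}(\widetilde{D})} (\E ^{(\bullet)}(\hdag \widetilde{D}))(\ZZ)$. Feeding in the identification of $(\E ^{(\bullet)}(\hdag \widetilde{D}))(\ZZ)$ obtained in the previous paragraph and collapsing $\smash{\widehat{\D}} _{\X ^{\#}} ^{(\bullet)}(\widetilde{D}) \widehat{\otimes} ^\L _{\smash{\widehat{\D}} _{\X ^{\#}} ^{(\bullet)}(\widetilde{D})}(-) \riso (-)$ yields once again $\smash{\widehat{\D}} _{\X} ^{(\bullet)}(\widetilde{D}) \widehat{\otimes} ^\L _{\smash{\widehat{\D}} _{\X ^{\#}} ^{(\bullet)}(D)} \E ^{(\bullet)}(\ZZ)$, so $u _{\widetilde{D}+}(\E ^{(\bullet)}(\hdag \widetilde{D}))$ and $u _{D+}(\E ^{(\bullet)}(\hdag \widetilde{D}))$ agree.

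The routine part that must nonetheless be checked carefully is that all of the above identifications are mutually compatible, i.e.\ that the composite isomorphisms just written coincide with the canonical morphisms implicit in the statement: the map $(u _{D+}\E ^{(\bullet)})(\hdag \widetilde{D}) \to u _{D+}(\E ^{(\bullet)}(\hdag \widetilde{D}))$ induced by functoriality of $(\hdag \widetilde{D})$, and the map to $u _{\widetilde{D}+}(\E ^{(\bullet)}(\hdag \widetilde{D}))$ induced by the transition morphisms of the transfer bimodules. The main (mild) obstacle is thus a diagram chase performed at finite level $m$ — before passing to the projective limit over $i$ and the two localizations defining $\smash{\underset{^{\longrightarrow}}{LD}} ^{\mathrm{b}} _{\Q ,\mathrm{qc}}$ — using that every arrow in sight is induced by a morphism of the corresponding transfer bimodules over $\X$ and $\X ^{\#}$; the $\O _{\X}(\ZZ)$-twist, which travels unchanged through all these computations, is harmless since $\O _{\X}(\ZZ)$ is an invertible (hence flat) $\O _{\X}$-module.
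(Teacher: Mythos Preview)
Your approach is essentially the paper's: reduce to $D\subseteq\widetilde{D}$, use the level-$\bullet$ version of \ref{u+621} to write $u_{D+}$ as $\smash{\widehat{\D}}_{\X}^{(\bullet)}(D)\widehat{\otimes}^\L_{\smash{\widehat{\D}}_{\X^{\#}}^{(\bullet)}(D)}(-)(\ZZ)$, and then manipulate tensor products. Two points where you are too quick, and where the paper is explicit:
\begin{itemize}
\item The ``level analogue of \ref{Z-hdagDcomm}'' is not free: \ref{Z-hdagDcomm} is stated at the $\dag$-level for coherent complexes. At level $m$ over $X_i$ the commutation $\FF_i^{(m)}(Z_i)\riso\D^{(m)}_{X_i^\#}(\widetilde{D})\otimes^\L_{\D^{(m)}_{X_i^\#}(D)}(\E_i^{(m)}(Z_i))$ is obtained in the paper via the transposition isomorphism $\gamma:\D^{(m)}_{X_i^\#}(\widetilde{D})\otimes_{\O_{X_i}}\O_{X_i}(Z_i)\riso\O_{X_i}(Z_i)\otimes_{\O_{X_i}}\D^{(m)}_{X_i^\#}(\widetilde{D})$ of \cite[1.24]{caro_log-iso-hol} together with \cite[5.1.2]{caro_log-iso-hol}.
\item Your phrase ``using associativity once more'' is not enough to identify the middle term with $\smash{\widehat{\D}}_{\X}^{(\bullet)}(\widetilde{D})\widehat{\otimes}^\L_{\smash{\widehat{\D}}_{\X^{\#}}^{(\bullet)}(D)}\E^{(\bullet)}(\ZZ)$. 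Associativity reduces you to needing the bimodule isomorphism $\smash{\widehat{\D}}_{\X}^{(\bullet)}(D)\widehat{\otimes}^\L_{\smash{\widehat{\D}}_{\X^{\#}}^{(\bullet)}(D)}\smash{\widehat{\D}}_{\X^{\#}}^{(\bullet)}(\widetilde{D})\riso\smash{\widehat{\D}}_{\X}^{(\bullet)}(\widetilde{D})$, which is not formal. The paper proves it at level $m$ over $X_i$ by observing that $\D^{(m)}_{X_i^\#}(D)$ and $\D^{(m)}_{X_i}(D)$ are $\B^{(m)}_{X_i}(D)$-flat, so $\D^{(m)}_{X_i^\#}(\widetilde{D})\riso\D^{(m)}_{X_i^\#}(D)\otimes^\L_{\B^{(m)}_{X_i}(D)}\B^{(m)}_{X_i}(\widetilde{D})$ (and likewise without $\#$); then genuine associativity collapses the tensor product.
\end{itemize}
Once these two points are filled in, your argument and the paper's coincide; the paper simply carries out all computations at the scheme level $X_i$ rather than in $(\bullet)$-notation, and for the second isomorphism notes that it is already a special case of \ref{forg+!}.
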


\begin{proof}
Since, over $\smash{\underset{^{\longrightarrow}}{LD}} ^{\mathrm{b}} _{\Q ,\mathrm{qc}}
  ( \smash{\widehat{\D}} _{\X ^{\#}} ^{(\bullet)}(D))$, $(\hdag \widetilde{D}) \riso (\hdag D \cup \widetilde{D}) $,
we can suppose that $D \subset \widetilde{D}$.
According to our notation (see the beginning of \ref{nota-1.2}),
$u _i$ : $X ^\# _i \rightarrow X _i$ denotes the reduction modulo $\pi ^{i+1}$ of $u$ and
$\E ^{(m)} _i := \O _{X _i} \otimes ^\L _{\O _{\X ^\#}} \E ^{(m)}$.
By posing $\FF  ^{(\bullet)} :=\E ^{(\bullet)} (\hdag \widetilde{D})$,
we get:
$\FF ^{(m)} _i \riso
\D ^{(m)} _{X ^\# _i } (\widetilde{D}) \otimes ^\L _{\D ^{(m)} _{X _i ^\#} (D)} \E ^{(m)} _i $.
By \cite[5.2.4]{caro_log-iso-hol},
$\D ^{(m)} _{X _i \leftarrow X ^\# _i} (D) \riso \D ^{(m)} _{X _i } (D) \otimes _{\O _{X _i}}  \O _{X _i} (Z _i)$.
Hence, using \cite[5.1.2]{caro_log-iso-hol},
we obtain:
$\D ^{(m)} _{X _i \leftarrow X ^\# _i} (D) \otimes _{\D ^{(m)} _{X _i ^\#} (D)} ^\L \FF _i
\riso
\D ^{(m)} _{X _i } (D) \otimes ^\L _{\D ^{(m)} _{X _i ^\#} (D)}
   ( \FF ^{(m)} _i (Z _i))$.
Via the canonical isomorphism of transposition
$\gamma$ :
$\D ^{(m)} _{X ^\# _i } (\widetilde{D})
\otimes _{\O _{X _i}} \O _{X _i} (Z _i)
\riso
\O _{X _i} (Z _i) \otimes _{\O _{X _i}} \D ^{(m)} _{X ^\# _i } (\widetilde{D})$
(see \cite[1.24]{caro_log-iso-hol})
and via \cite[5.1.2]{caro_log-iso-hol},
we get:
$\FF ^{(m)} _i (Z _i)
\riso
   \D ^{(m)} _{X ^\# _i } (\widetilde{D}) \otimes ^\L _{\D ^{(m)} _{X _i ^\#} (D)} (\E ^{(m)} _i (Z _i))$.
   Thus:
$\D ^{(m)} _{X _i \leftarrow X ^\# _i} (D) \otimes _{\D ^{(m)} _{X _i ^\#} (D)} ^\L \FF _i
\riso
\D ^{(m)} _{X _i } (D) \otimes ^\L _{\D ^{(m)} _{X _i ^\#} (D)}
   \D ^{(m)} _{X ^\# _i } (\widetilde{D}) \otimes ^\L _{\D ^{(m)} _{X _i ^\#} (D)} (\E ^{(m)} _i (Z _i))$.
   Since $\D ^{(m)} _{X _i } (D) $ and $\D ^{(m)} _{X ^\# _i } (D) $ are
   $\B _{X _i} ^{(m)} (D)$-flat, we check:
   $\D ^{(m)} _{X ^\# _i } (\widetilde{D}) \riso \D ^{(m)} _{X ^\# _i } (D)
   \otimes ^\L _{\B _{X _i} ^{(m)} (D)} \B _{X _i} ^{(m)} (\widetilde{D})$
   (and also without $\#$). This gives the following $(\D ^{(m)} _{X _i } (D) , \D ^{(m)} _{X ^\# _i } (\widetilde{D}))$-linear isomorphism:
   $\D ^{(m)} _{X _i } (D) \otimes ^\L _{\D ^{(m)} _{X _i ^\#} (D)}
   \D ^{(m)} _{X ^\# _i } (\widetilde{D})
   \riso
  \D ^{(m)} _{X _i } (\widetilde{D}) $, which furnishes
  the second isomorphism:
\begin{gather}
  \notag
  \D ^{(m)} _{X _i \leftarrow X ^\# _i} (D) \otimes _{\D ^{(m)} _{X _i ^\#} (D)} ^\L \FF _i
\riso
\D ^{(m)} _{X _i } (D) \otimes ^\L _{\D ^{(m)} _{X _i ^\#} (D)}
   \D ^{(m)} _{X ^\# _i } (\widetilde{D}) \otimes ^\L _{\D ^{(m)} _{X _i ^\#} (D)} (\E ^{(m)} _i (Z _i)) \riso
  \\ \label{u+commhdagiso}
\riso \D ^{(m)} _{X _i } (\widetilde{D}) \otimes ^\L _{\D ^{(m)} _{X _i ^\#} (D)} \E ^{(m)} _i (Z _i)
\riso
\D ^{(m)} _{X _i } (\widetilde{D})
\otimes ^\L _{\D ^{(m)} _{X _i } (D)}
(\D ^{(m)} _{X _i } (D) \otimes ^\L _{\D ^{(m)} _{X _i ^\#} (D)} \E ^{(m)} _i (Z _i)).
\end{gather}
So we have checked: $u _{D+} (\E ^{(\bullet)} (\hdag \widetilde{D}))
  \riso
  (u _{D+} (\E ^{(\bullet)} ))(\hdag \widetilde{D})$. By \ref{forg+!}, the second isomorphism was known
  (we can also use the second isomorphism of \ref{u+commhdagiso}).
\end{proof}

\begin{lemm}
\label{u+commhdag-coro}
  Let $\widetilde{D}$ be a second divisor of $X$,
$\E ^{(\bullet)} \in \smash{\underset{^{\longrightarrow}}{LD}} ^{\mathrm{b}} _{\Q ,\mathrm{qc}}
  ( \smash{\widehat{\D}} _{\X ^{\#}} ^{(\bullet)}(D))$.
  We have:
\begin{equation}
\label{u+commhdag-coro-equ}
  u _{D+} \circ \R \underline{\Gamma} ^\dag _{\widetilde{D}} (\E ^{(\bullet)} )
  \riso
  \R \underline{\Gamma} ^\dag _{\widetilde{D}} \circ u _{D+} (\E ^{(\bullet)} ).
\end{equation}
\end{lemm}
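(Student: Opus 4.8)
The plan is to apply the functor $u _{D+}$ to the localization triangle of $\E ^{(\bullet)}$ along $\widetilde{D}$ and to compare the outcome with the localization triangle of $u _{D+} (\E ^{(\bullet)})$ along $\widetilde{D}$. Since $u$ induces the identity on underlying topological spaces, $u _{D+}$ is a derived tensor product with a fixed bimodule (see \ref{defi-rho}), hence a triangulated functor on $\smash{\underset{^{\longrightarrow}}{LD}} ^{\mathrm{b}} _{\Q ,\mathrm{qc}} ( \smash{\widehat{\D}} _{\X ^{\#}} ^{(\bullet)}(D))$. Applying it to the localization triangle \ref{locexactri} for $\E ^{(\bullet)}$ and $\widetilde{D}$ yields a distinguished triangle
$$
u _{D+} \R \underline{\Gamma} ^\dag _{\widetilde{D}} (\E ^{(\bullet)})
\rightarrow u _{D+} (\E ^{(\bullet)})
\rightarrow u _{D+} \big( (\hdag \widetilde{D})(\E ^{(\bullet)}) \big)
\rightarrow u _{D+} \R \underline{\Gamma} ^\dag _{\widetilde{D}} (\E ^{(\bullet)}) [1],
$$
whereas \ref{locexactri} for $u _{D+} (\E ^{(\bullet)})$ and $\widetilde{D}$ reads
$$
\R \underline{\Gamma} ^\dag _{\widetilde{D}} \circ u _{D+} (\E ^{(\bullet)})
\rightarrow u _{D+} (\E ^{(\bullet)})
\rightarrow (\hdag \widetilde{D}) \big( u _{D+} (\E ^{(\bullet)}) \big)
\rightarrow \R \underline{\Gamma} ^\dag _{\widetilde{D}} \circ u _{D+} (\E ^{(\bullet)}) [1].
$$

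Next I would match the two triangles. Their middle terms coincide, and Lemma \ref{u+commhdag} supplies a canonical isomorphism $u _{D+} \big( (\hdag \widetilde{D})(\E ^{(\bullet)}) \big) \riso (\hdag \widetilde{D}) \big( u _{D+} (\E ^{(\bullet)}) \big)$ between the right terms. The crucial point is that this isomorphism carries the second arrow of the first triangle to the second arrow of the second, i.e. that $u _{D+}$ applied to the canonical morphism $\E ^{(\bullet)} \to (\hdag \widetilde{D})(\E ^{(\bullet)})$ agrees, under the identification of \ref{u+commhdag}, with the canonical morphism $u _{D+} (\E ^{(\bullet)}) \to (\hdag \widetilde{D}) (u _{D+} (\E ^{(\bullet)}))$. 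This is a naturality assertion: the identifications used in the proof of \ref{u+commhdag} — the isomorphism $\D ^{(m)} _{X _i \leftarrow X ^\# _i}(D) \riso \D ^{(m)} _{X _i}(D) \otimes _{\O _{X _i}} \O _{X _i}(Z _i)$ of \ref{defi-rho}, the transposition isomorphism $\gamma$, and the flatness isomorphism $\D ^{(m)} _{X ^\# _i}(\widetilde{D}) \riso \D ^{(m)} _{X ^\# _i}(D) \otimes ^\L _{\B ^{(m)} _{X _i}(D)} \B ^{(m)} _{X _i}(\widetilde{D})$ — are all canonical, hence compatible with the morphisms induced by the $\B ^{(m)} _{X _i}$-algebra inclusion $\B ^{(m)} _{X _i}(D) \hookrightarrow \B ^{(m)} _{X _i}(\widetilde{D})$; tracing through \ref{u+commhdagiso} makes this explicit. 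Granting it, the two triangles above form a morphism of distinguished triangles which is an isomorphism on the middle and right vertices, so by the five lemma applied to the long exact cohomology sequences it is an isomorphism on the left vertices as well, which is exactly \ref{u+commhdag-coro-equ}.

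A slicker way to handle the compatibility step is to use \ref{u+621}, which at finite level identifies $u _{D+} (\E ^{(\bullet)})$ with $(\smash{\widehat{\D}} _{\X} ^{(m)}(D) \widehat{\otimes} ^\L _{\smash{\widehat{\D}} _{\X ^\#} ^{(m)}(D)} \E ^{(m)}(\ZZ)) _{m \in \N}$; combined with \ref{Z-hdagDcomm} and \ref{Gammaotimes} this reduces the claim to the commutation of $\R \underline{\Gamma} ^\dag _{\widetilde{D}}$ with the twist $(-)(\ZZ)$ by the invertible $\O _{\X}$-module $\O _{\X}(\ZZ)$ and with the extension of scalars $\smash{\widehat{\D}} _{\X ^\#} ^{(m)}(D) \to \smash{\widehat{\D}} _{\X} ^{(m)}(D)$ — both hold because $\D ^{(m)}$ is flat over $\O$, exactly as in \ref{locfunclog}, and because twisting by a line bundle commutes with the $\mathcal{H}om$-construction defining $\underline{\Gamma} ^{(m)} _{\widetilde{D}}$. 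In either approach the only genuine work is this bookkeeping: confirming that the arrow of one localization triangle is transported to the arrow of the other; with that in hand the conclusion is formal.
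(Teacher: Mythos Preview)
Your argument is correct, but the paper's proof is somewhat slicker and avoids the naturality bookkeeping you flag as ``the only genuine work''. Rather than comparing the two localization triangles directly via a morphism of triangles, the paper passes through the common intermediary
$\R \underline{\Gamma} ^\dag _{\widetilde{D}} \circ u _{D+} \circ \R \underline{\Gamma} ^\dag _{\widetilde{D}} (\E ^{(\bullet)})$.
Applying the localization triangle of $u _{D+} \circ \R \underline{\Gamma} ^\dag _{\widetilde{D}} (\E ^{(\bullet)})$ with respect to $\widetilde{D}$, the third term is $(\hdag \widetilde{D}) \circ u _{D+} \circ \R \underline{\Gamma} ^\dag _{\widetilde{D}} (\E ^{(\bullet)})$, which vanishes by \ref{u+commhdag} together with $(\hdag \widetilde{D}) \circ \R \underline{\Gamma} ^\dag _{\widetilde{D}} = 0$; hence the first arrow is an isomorphism onto $u _{D+} \circ \R \underline{\Gamma} ^\dag _{\widetilde{D}} (\E ^{(\bullet)})$. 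Symmetrically, applying $\R \underline{\Gamma} ^\dag _{\widetilde{D}} \circ u _{D+}$ to the localization triangle of $\E ^{(\bullet)}$, the third term is $\R \underline{\Gamma} ^\dag _{\widetilde{D}} \circ u _{D+} \circ (\hdag \widetilde{D})(\E ^{(\bullet)})$, which again vanishes by \ref{u+commhdag} and $\R \underline{\Gamma} ^\dag _{\widetilde{D}} \circ (\hdag \widetilde{D}) = 0$; so the intermediary is also isomorphic to $\R \underline{\Gamma} ^\dag _{\widetilde{D}} \circ u _{D+}(\E ^{(\bullet)})$. This zigzag argument uses only the \emph{object-level} isomorphism of \ref{u+commhdag} and the vanishings built into the localization formalism, so no compatibility of arrows needs to be traced through \ref{u+commhdagiso}. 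Your approach is perfectly fine and perhaps more conceptual (it exhibits the isomorphism as part of a morphism of triangles), but the paper's trick buys you a shorter proof with no diagram chase.
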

\begin{proof}
  This is a consequence of \ref{u+commhdag}.
  Indeed, following \ref{locexactri}, the mapping cone of
  $\R \underline{\Gamma} ^\dag _{\widetilde{D}} \circ u _{D+} \circ \R \underline{\Gamma} ^\dag _{\widetilde{D}} (\E ^{(\bullet)} )
  \rightarrow
  u _{D+} \circ \R \underline{\Gamma} ^\dag _{\widetilde{D}} (\E ^{(\bullet)} )$
  is isomorphic to
  $(\hdag \widetilde{D} ) \circ u _{D+} \circ \R \underline{\Gamma} ^\dag _{\widetilde{D}} (\E ^{(\bullet)} ) =0 $ by \ref{u+commhdag}.
Also, the mapping cone of
$\R \underline{\Gamma} ^\dag _{\widetilde{D}} \circ u _{D+} \circ \R \underline{\Gamma} ^\dag _{\widetilde{D}} (\E ^{(\bullet)} )
  \rightarrow
  \R \underline{\Gamma} ^\dag _{\widetilde{D}} \circ u _{D+}  (\E ^{(\bullet)} )$
  is isomorphic to
$\R \underline{\Gamma} ^\dag _{\widetilde{D}} \circ u _{D+} \circ (\hdag \widetilde{D} )  (\E ^{(\bullet)} ) =0 $ by \ref{u+commhdag}.
\end{proof}

\begin{coro}
\label{Nobuobis}
Let $\E$ be a coherent
$\D_{\X^\#}^\dag(\hdag D) _{ \Q}$-module which is a locally projective
$\O_{\X}(\hdag D)_{ \Q}$-module of finite type and which satisfies
the conditions (a) and (b') of \ref{Nobuo}.
Then, the morphism
$  g _{D,+}  ( u_{D+} (\E )) \underset{g _+ (\rho)}{\longrightarrow} g _{D\cup Z,+} (\E (\hdag Z))$
is an isomorphism and $g  _{+}  \R \underline{\Gamma} ^\dag _{Z} \circ u _{D,+}   (\E)=0$.
\end{coro}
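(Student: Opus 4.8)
The plan is to deduce the statement from Theorem \ref{Nobuo} (equivalently the comparison theorem \ref{Nobuorigid}) through the dictionary already set up in Remark \ref{remaTheo135}, which records that the morphism \ref{Nobuo-iso} is an isomorphism if and only if $g_{D,+}^{\#}\circ\R\underline{\Gamma}^{\dag}_{Z}(\E)=0$, and announces that this is in turn equivalent to $g_{+}(\rho)$ being an isomorphism. Concretely, I would rewrite $g_{D,+}^{\#}\circ\R\underline{\Gamma}^{\dag}_{Z}(\E)$ in the shape $g_{D,+}\circ\R\underline{\Gamma}^{\dag}_{Z}\circ u_{D+}(\E)$ and then read off both assertions of the corollary from the localization triangle \ref{exacttriu+F}.

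First I would use the transitivity of direct images \ref{trans+} applied to the factorization $g^{\#}=g\circ u$ (here $u\colon\X^{\#}\to\X$ is log-smooth with $d_{X^{\#}/X}=0$), which gives a canonical isomorphism of functors $g_{D,+}\circ u_{D+}\riso g^{\#}_{D,+}$. Since $Z$ is a divisor of $X$, Lemma \ref{u+commhdag-coro} applied with $\widetilde{D}=Z$ yields $u_{D+}\circ\R\underline{\Gamma}^{\dag}_{Z}\riso\R\underline{\Gamma}^{\dag}_{Z}\circ u_{D+}$. Combining the two, for $\E$ as in the statement I obtain
\[
g_{D,+}\bigl(\R\underline{\Gamma}^{\dag}_{Z}(u_{D+}(\E))\bigr)\riso g_{D,+}\bigl(u_{D+}(\R\underline{\Gamma}^{\dag}_{Z}(\E))\bigr)\riso g^{\#}_{D,+}\bigl(\R\underline{\Gamma}^{\dag}_{Z}(\E)\bigr).
\]
By Theorem \ref{Nobuo} the morphism \ref{Nobuo-iso} is an isomorphism, so by Remark \ref{remaTheo135} the right-hand term vanishes; hence $g_{D,+}\R\underline{\Gamma}^{\dag}_{Z}(u_{D+}(\E))=0$, and applying moreover the forgetful functor (see \ref{forg+!}) also $g_{+}\R\underline{\Gamma}^{\dag}_{Z}\circ u_{D,+}(\E)=0$. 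This is the second assertion.

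For the first assertion I would apply the triangulated functor $g_{D,+}$ to the localization triangle of $u_{D+}(\E)$ with respect to $Z$, which by \ref{defi-rho} and the remark following it is $\R\underline{\Gamma}^{\dag}_{Z}\circ u_{D+}(\E)\to u_{D+}(\E)\overset{\rho}{\rightarrow}\E(\hdag Z)\to\R\underline{\Gamma}^{\dag}_{Z}\circ u_{D+}(\E)[1]$ (this is \ref{exacttriu+F}). By the vanishing just obtained the first term of the resulting triangle is zero, so $g_{D,+}(\rho)$ is an isomorphism. To match the precise formulation of the corollary it then remains to identify $g_{D,+}(\E(\hdag Z))$ with $g_{D\cup Z,+}(\E(\hdag Z))$, which holds because for $g$ the transfer bimodule relative to $D\cup Z$ is obtained from the one relative to $D$ by extension of scalars, i.e. the forgetful functor commutes with $g_{+}$ here (compare \ref{forg+!}); this gives that $g_{+}(\rho)\colon g_{D,+}(u_{D+}(\E))\to g_{D\cup Z,+}(\E(\hdag Z))$ is an isomorphism.

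I do not anticipate a deep obstacle: each ingredient is already available, and the argument is essentially a bookkeeping of functors. The one point to handle with care is keeping the divisor structures consistent while shuttling between the categories of quasi-coherent complexes over $\smash{\widehat{\D}}_{\X^{\#}}^{(\bullet)}(D)$, over $\smash{\widehat{\D}}_{\X}^{(\bullet)}(D)$ and over $\smash{\widehat{\D}}_{\X}^{(\bullet)}(D\cup Z)$, namely checking that \ref{trans+}, \ref{u+commhdag-coro} and the triangle \ref{exacttriu+F} are invoked in compatible categories and that the passages $g^{\#}_{D,+}\leftrightarrow g_{D,+}\circ u_{D+}$ and $g_{D,+}\leftrightarrow g_{D\cup Z,+}$ through the forgetful functors are indeed harmless.
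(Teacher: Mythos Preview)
Your argument is correct and matches the paper's own proof essentially step for step: both use the transitivity $g^{\#}_{D,+}\riso g_{D,+}\circ u_{D,+}$ from \ref{trans+}, the commutation \ref{u+commhdag-coro} of $u_{D+}$ with $\R\underline{\Gamma}^{\dag}_{Z}$, and Remark \ref{remaTheo135} to obtain the vanishing, then read off the isomorphism from the triangle \ref{exacttriu+F}. Your extra care about the identification $g_{D,+}(\E(\hdag Z))\riso g_{D\cup Z,+}(\E(\hdag Z))$ via \ref{forg+!} makes explicit a point the paper leaves implicit.
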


\begin{proof}
By the exact triangle \ref{exacttriu+F}, this is sufficient to
check that $g _{D,+} \circ \R \underline{\Gamma} ^\dag _{Z} \circ u _{D+} (\E )=0$.
But $g  ^\# _{D,+} \riso g  _{D,+} \circ u _{D,+}$ (see \ref{trans+}).
Hence, by \ref{remaTheo135}, we get
$g  _{D,+} \circ u _{D,+} \circ \R \underline{\Gamma} ^\dag _{Z}  (\E)=0$.
We finish the proof by using \ref{u+commhdag-coro-equ}.
\end{proof}

Finally, we finish with the following version of \ref{overconv}:
\begin{theo}
\label{overconvbis}
We assume
that $g : \X \rightarrow \T$ factors through an irreducible component $\ZZ_1$ of $\ZZ$ by
a smooth morphism $g_1 : \X \rightarrow \ZZ_1$ over $\T$
such that the composite $g_1 \circ i_ 1 : \ZZ_1 \rightarrow \ZZ_1$
of the closed immersion
$i_1 : \ZZ_1 \rightarrow \X$ and $g_1$ is the identity.
Moreover, we suppose that $D \cap Z _1$ is a divisor of $Z _1$.
Let $\ZZ_1' = \cup_{i = 2}^s\, \ZZ_1 \cap \ZZ_i$ be a strict normal crossing divisor of $\ZZ_1$,
$\ZZ_1^{\#} := (\ZZ_1, \ZZ_1')$.
We suppose that
$g _1 ^{-1} (\ZZ_1') = \cup_{i = 2}^s\, \ZZ_i$ and let
$g _1 ^\# \,:\, \X ^\# \rightarrow \ZZ_1^{\#} $ be the canonical induced morphism.

Let $\E$ be a coherent
$\D_{\X^\#}^\dag(\hdag D) _{ \Q}$-module which is a locally projective
$\O_{\X}(\hdag D)_{ \Q}$-module of finite type and
which satisfies the conditions (a) and (b) in \ref{Nobuorigid}.
Then the complex
\begin{equation}
  \label{overconvbis-cone}
\mathrm{Cone} \left( g _{1+} ^\# ( \E) \rightarrow g _{1+} ^\# ( \E (\hdag Z _1))\right)
\end{equation}
is isomorphic to a complex of
coherent $\D_{\ZZ _1 ^\#}^\dag(\hdag D \cap Z _1) _{ \Q}$-modules, locally projective
of finite type as $\O_{\ZZ _1}(\hdag D \cap Z _1)_{ \Q}$-modules
and satisfying the conditions (a) and (b) of \ref{Nobuorigid}.
\end{theo}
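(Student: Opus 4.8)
The plan is to transport the statement to the rigid-analytic side through the equivalence $\sp^*,\sp_*$ of \ref{Ddag=overcvcondition}.\ref{Ddag=overcvcondition1}, where it becomes essentially Proposition \ref{overconv}, which already contains all the analytic substance. First I would set $E:=\sp^*(\E)$; by \ref{Ddag=overcvcondition}.\ref{Ddag=overcvcondition1} this is a locally free $j_U^\dag\O_{]X[_\X}$-module of finite type with an integrable logarithmic connection satisfying \ref{logovcon}, i.e.\ a log-isocrystal on $U^\#/\T_K$ overconvergent along $D$, whose exponents are by definition those of $\E$; so $E$ satisfies (a) and (b). Next I observe that the hypotheses on $g_1$, on $\ZZ_1'=\cup_{i=2}^s\ZZ_1\cap\ZZ_i$ and on $g_1^{-1}(\ZZ_1')=\cup_{i=2}^s\ZZ_i$ make $g_1^\#:\X^\#\to\ZZ_1^\#$ log-smooth of relative dimension $1$ (locally $\Omega^1_{\X^\#/\ZZ_1^\#}$ is free of rank $1$ on $dz_1/z_1$), with $\sp^*\Omega^\bullet_{\X^\#/\ZZ_1^\#,\Q}\cong\Omega^\bullet_{\Xlogan/\ZZ_{1K}^\#}$; hence \ref{h+smoothiso} applies to $g_1^\#=g_{1,D\cap Z_1,D+}^\#$.

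I would then rewrite the cone. The localization triangle \ref{locexactri} gives $\mathrm{Cone}(\E\to\E(\hdag Z_1))\riso\R\underline{\Gamma}^\dag_{Z_1}(\E)[1]$, and since $g_{1+}^\#$ is triangulated,
$$
\mathrm{Cone}\bigl(g_{1+}^\#(\E)\to g_{1+}^\#(\E(\hdag Z_1))\bigr)\riso g_{1+}^\#\bigl(\R\underline{\Gamma}^\dag_{Z_1}(\E)\bigr)[1].
$$
Applying \ref{h+smoothiso} (extended to the quasi-coherent category), \ref{spcommDD'iso2}, the compatibility $j_U^\dag\underline{\Gamma}^\dag_{]Z_1[_\X}=\underline{\Gamma}^\dag_{]Z_1[_\X}j_U^\dag$ from \ref{suppdag}(1), and the commutation of $\sp_*$ with $\R g_{1*}$ and with $\Omega^\bullet\otimes-$ (exactly as in the deduction of \ref{Nobuo} from \ref{Nobuorigid}, using the first remark of \ref{remaDdagovcv}), this right-hand side is identified — up to the shift $[1]$ coming from $d_{\X^\#/\ZZ_1^\#}=1$ and up to $\sp_*$ over $\ZZ_1$ — with
$$
\R g_{1K*}\,\underline{\Gamma}^\dag_{]Z_1[_\X}\bigl(j_U^\dag\Omega^\bullet_{\Xlogan/\ZZ_{1K}^\#}\otimes_{j_U^\dag\O_{]X[_\X}}E\bigr).
$$

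Finally I invoke Proposition \ref{overconv}. The hypothesis that $D\cap Z_1$ is a divisor of $Z_1$ forces $Z_1\not\subset D$, and $E$ satisfies (a), (b), so \ref{overconv-iso} gives, for any $m$ at least the largest positive integral exponent of $\nabla$ along $Z_1$, an isomorphism of the displayed complex with
$$
\bigl[g_{1K*}(E(m\ZZ_1)/E)\xrightarrow{g_{1K*}\nabla}g_{1K*}\bigl(j_U^\dag\Omega^1_{\Xlogan/\ZZ_{1K}^\#}\otimes_{j_U^\dag\O_{]X[_\X}}E(m\ZZ_1)/E\bigr)\bigr][-1];
$$
by the same proposition both terms are locally free $j_{Z_1\cap U}^\dag\O_{]Z_1[_{\ZZ_1}}$-modules of finite type with integrable logarithmic connections on $(\ZZ_{1K},\ZZ_{1K}')/\T_K$ satisfying the overconvergent condition and conditions (a), (b), and the differential $g_{1K*}\nabla$, being induced by the relative connection $\widetilde{\nabla}_1$ of \ref{GaussManin} (which commutes with $\widetilde{\nabla}_0$), is horizontal. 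Applying $\sp_*$ over $\ZZ_1$ and the equivalence \ref{Ddag=overcvcondition}.\ref{Ddag=overcvcondition1} for the smooth log-formal $\V$-scheme $\ZZ_1^\#=(\ZZ_1,\ZZ_1')$ over $\T$ with divisor $D\cap Z_1$ (and noting $\H^j\sp_*=0$ for $j\neq0$ by \ref{remaDdagovcv}), the two terms become coherent $\D^\dag_{\ZZ_1^\#}(\hdag D\cap Z_1)_\Q$-modules, locally projective of finite type over $\O_{\ZZ_1}(\hdag D\cap Z_1)_\Q$, satisfying (a), (b), and $\sp_*(g_{1K*}\nabla)$ becomes a $\D^\dag_{\ZZ_1^\#}(\hdag D\cap Z_1)_\Q$-linear map between them. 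Composing all the identifications yields the asserted isomorphism. The main obstacle is not conceptual but lies in this dictionary: verifying that $g_1^\#$ is log-smooth and that $g_{1,D\cap Z_1,D+}^\#$ corresponds under $\sp_*$, with the correct shift and with the support functor $\underline{\Gamma}^\dag_{]Z_1[_\X}$, to the relative rigid log-de Rham pushforward of \ref{overconv}; once this is in place — it is the relative analogue of the translation used to obtain \ref{Nobuo} — all the genuine analytic work is already carried out in \ref{overconv}.
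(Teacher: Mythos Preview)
Your proposal is correct and follows essentially the same approach as the paper: set $E=\sp^*(\E)$, identify the cone \ref{overconvbis-cone} (via \ref{h+smoothiso} and the correspondence between $\R\underline{\Gamma}^\dag_{Z_1}$ and $\underline{\Gamma}^\dag_{]Z_1[_\X}$) with $\R\sp_*$ applied to $\R g_{1K*}\underline{\Gamma}^\dag_{]Z_1[_\X}(j_U^\dag\Omega^\bullet_{\Xlogan/\ZZ_{1K}^\#}\otimes E)$, invoke \ref{overconv} to rewrite this as the two-term complex of overconvergent log-isocrystals on $\ZZ_1^\#$ satisfying (a) and (b), and then apply the equivalence \ref{Ddag=overcvcondition}.\ref{Ddag=overcvcondition1}. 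The only cosmetic difference is that you first take the localization triangle and then pass to the rigid side via \ref{spcommDD'iso2}, whereas the paper first passes each term of the cone to the rigid side via \ref{spcommDD'iso1} and then identifies the resulting cone with the support functor; these are the same argument in a different order.
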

\begin{proof}
  We pose $E := \sp ^* (\E)$ and $Y _1 := X \setminus Z _1$. Then,
  since the functor $\underline{\Gamma}^\dag_{]Z_1[_\X}$ is exact, since
  mapping cones commute with the functor $\R g_{1K*}(\Omega_{\Xlogan/\ZZ_{1K}^{\#}}^\bullet
   \otimes_{\O_{]X[_\X}} - )$
  and
  $j_U^\dag\Omega_{\Xlogan/\ZZ_{1K}^{\#}}^\bullet
   \otimes_{j_U^\dag\O_{]X[_\X}} E
   \cong
   \Omega_{\Xlogan/\ZZ_{1K}^{\#}}^\bullet
   \otimes_{\O_{]X[_\X}} E$, we obtain
\begin{equation}
  \label{overconvbis-iso1}
  \R g_{1K*}\underline{\Gamma}^\dag_{]Z_1[_\X}\left
   (j_U^\dag\Omega_{\Xlogan/\ZZ_{1K}^{\#}}^\bullet
   \otimes_{j_U^\dag\O_{]X[_\X}} E\right)
   \cong
\mathrm{Cone} \left(
\R g_{1K*}(\Omega_{\Xlogan/\ZZ_{1K}^{\#}}^\bullet
   \otimes_{\O_{]X[_\X}} E )\rightarrow
   \R g_{1K*}(\Omega_{\Xlogan/\ZZ_{1K}^{\#}}^\bullet
   \otimes_{\O_{]X[_\X}} j_{Y _1} ^\dag E )
   \right)[-1].
\end{equation}
By applying the functor $\R \sp _*$ in the right term of \ref{overconvbis-iso1},
since $\R \sp _* \circ \R g_{1K*} \riso \R g_{1*} \circ \R \sp _* $ and
using the first remark of \ref{remaDdagovcv},
  we get
the complex
\begin{equation}
    \label{overconvbis-iso2}
\mathrm{Cone} \left(
\R g_{1*}(\Omega_{\X ^\# /\ZZ_{1}^{\#},\Q}^\bullet
   \otimes_{\O_{\X,\Q}} \sp _*(E ))\rightarrow
   \R g_{1*}(\Omega_{\X ^\#/\ZZ_{1}^{\#},\Q}^\bullet
   \otimes_{\O_{\X,\Q}} \sp _*(j_{Y _1} ^\dag E ))
   \right)[-1].
\end{equation}
Following \ref{h+smoothiso}, \ref{Ddag=overcvcondition}.\ref{Ddag=overcvcondition1}
and \ref{spcommDD'iso1},
the complex \ref{overconvbis-iso2} is isomorphic (up to a shift) to
\ref{overconvbis-cone}.

On the other hand, by applying the functor $\R \sp _*$ in the left term of \ref{overconvbis-iso1},
using the isomorphism \ref{overconv-iso} and the first remark of \ref{remaDdagovcv}
(and of course \ref{Ddag=overcvcondition}.\ref{Ddag=overcvcondition1}),
we get a complex isomorphic to a complex of
coherent $\D_{\ZZ _1 ^\#}^\dag(\hdag D \cap Z _1) _{ \Q}$-modules, locally projective
of finite type as $\O_{\ZZ _1}(\hdag D \cap Z _1)_{ \Q}$-modules
and satisfying the conditions (a) and (b) in \ref{Nobuorigid}

\end{proof}

\begin{rema}
With the notation \ref{overconvbis}, we have the isomorphism (see \ref{locexactri}):
  \begin{equation}
  \label{overconvbis-cone2}
g _{1+} ^\#  \circ \R \underline{\Gamma} ^\dag _{Z _1}( \E) \riso
\mathrm{Cone} \left( g _{1+} ^\# ( \E) \rightarrow g _{1+} ^\# ( \E (\hdag Z _1))\right)
[-1].
\end{equation}
\end{rema}
\section{Application to the study of overconvergent $F$-isocrystals and arithmetic $\D$-modules}

\subsection{Kedlaya's semi-stable reduction theorem}

We recall the following Kedlaya's definitions (see \cite[3.2.1, 3.2.4]{kedlaya-semistableII}):

\begin{defi}
\label{unipotent}
Let $X$ be a smooth irreducible variety over $\Spec k$, $Z$ be a strict normal crossing divisor of $X$,
  and let $E$ be a convergent isocrystal on $X \setminus Z$. We say that $E$ is {\it log-extendable} on $X$
  if
  there exists
  a log-isocrystal with nilpotent residues convergent on the log-scheme $(X,Z)$ (see \cite[2.1.5, 2.1.6]{Shi1})
  whose induced convergent isocrystal on $X \setminus Z$ is $E$.
When $E$ is even an isocrystal on $X \setminus Z$ overconvergent along $Z$
  then $E$ is log-extendable if and only if $E$ has unipotent monodromy along $Z$ (see definition \cite[4.4.2]{kedlaya-semistableI}
  and theorem \cite[6.4.5]{kedlaya-semistableI}).
\end{defi}

\begin{defi}
  \label{defi-qunipt}
Let $Y$ be a smooth irreducible variety over $\Spec k$, let $X$ be
a partial compactification of $Y$,
and let $E$ be an $F$-isocrystal on $Y$ overconvergent along $X\setminus Y$.
We say that $E$ {\it admits semistable reduction}
if there exists
\begin{enumerate}
  \item a proper, surjective, generically \'etale morphism $f$ : $X _1 \rightarrow X$,
  \item an open immersion $X _1 \hookrightarrow \overline{X} _1$ into a smooth projective variety over $k$
  such that $D _1 := f ^{-1} (X \setminus Y) \cup (\overline{X} _1 \setminus X _1)$
  is a strict normal crossing divisor of $\overline{X} _1 $
\end{enumerate}
such that the isocrystal $f ^* (E)$ on $Y _1:= f^{-1}(Y)$ overconvergent along $D _1 \cap X _1$
is log-extendable on $X _1$ (see \ref{unipotent}).

\end{defi}

With the previous definitions,
Kedlaya has proved in \cite[2.4.4]{kedlaya-semistableIV} (see also \cite{kedlaya-semistableI}, \cite{kedlaya-semistableII}, \cite{kedlaya-semistableIII})
the following theorem which answers positively to Shiho's conjecture in \cite[3.1.8]{Shi1}:
\begin{theo}[Kedlaya]
\label{semi-stable}
Let $Y$ be a smooth irreducible $k$-variety, $X$ be
a partial compactification of $Y$, $Z:=X \setminus Y$,
$E$ be an $F$-isocrystal on $Y$ overconvergent along $Z$.
Then $E$ admits semistable reduction.
\end{theo}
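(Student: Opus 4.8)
This is Kedlaya's theorem, whose proof is spread over the series \cite{kedlaya-semistableI}, \cite{kedlaya-semistableII}, \cite{kedlaya-semistableIII}, \cite{kedlaya-semistableIV}; I only sketch the architecture one would follow. The plan is to reduce the global statement to a local one attached to each valuation $v$ of the function field of $X$ with center on $X$. First I would isolate the assertion of \emph{local semistable reduction at} $v$: after replacing $X$ by a neighborhood of the center of $v$ and passing to a suitable generically \'etale cover, the pullback of $E$ becomes log-extendable (with nilpotent residues) along the boundary components seen by $v$. One then shows that local semistable reduction at every $v$ implies the global theorem by a patching argument: quasi-compactness of the Riemann--Zariski space of $X$ produces finitely many local covers, and de Jong's alteration theorem is applied repeatedly to amalgamate them into a single proper surjective generically \'etale $f : X_1 \to X$ while keeping $X_1$ smooth and $D_1 = f^{-1}(X\setminus Y)\cup(\overline{X}_1\setminus X_1)$ a strict normal crossing divisor in a projective compactification.

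The substance is the local statement. I would organize the cases by the rational rank and residual transcendence degree of $v$, the base case being a \emph{divisorial} valuation (residue field of transcendence degree $\dim X - 1$), and induct upward for valuations of higher rank. In the divisorial case the key tool is the $p$-adic local monodromy theorem (the former Crew conjecture, due to Andr\'e, Mebkhout and Kedlaya): restricting $E$ to an analytic annulus around the divisor gives a $(\varphi,\nabla)$-module over the Robba ring which, after a finite extension of the residue field and a finite \'etale cover of the annulus, acquires unipotent local monodromy; and unipotent monodromy along the divisor is precisely log-extendability with nilpotent residues by \cite[4.4.2, 6.4.5]{kedlaya-semistableI}. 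The remaining work is to spread this cover of the annulus out to a generically \'etale cover of a whole neighborhood of the divisor, controlling the ramification and the exponents along the exceptional divisors introduced in the process.

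The main obstacle — and the reason the argument runs to four papers — is exactly this globalization and patching step: the covers produced at different valuations, or along different boundary components, need not be compatible, so one is forced to iterate alterations, to track the strict normal crossing condition and the nilpotence of residues under each pullback, and to run a delicate double induction (on $\dim X$ and on the structure of $v$). Once local semistable reduction is established in all cases, the patching argument of the first paragraph delivers Theorem~\ref{semi-stable} in the form required here, producing the data $(f, X_1, \overline{X}_1, D_1)$ of Definition~\ref{defi-qunipt}. Since in our application one also carries a Frobenius structure, I would note that the cover $f$ and the log-extension can be taken compatibly with Frobenius, which is built into Kedlaya's construction, so that $f^*(E)$ is again an $F$-object that is log-extendable on $X_1$.
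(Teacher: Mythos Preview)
The paper does not prove this theorem at all: it is stated as Kedlaya's result and attributed to \cite[2.4.4]{kedlaya-semistableIV} (with the preceding papers \cite{kedlaya-semistableI}, \cite{kedlaya-semistableII}, \cite{kedlaya-semistableIII}), followed only by a remark on prior special cases. Your proposal is therefore not comparable to the paper's own argument --- there is none --- but is rather a sketch of Kedlaya's actual proof strategy, which is more than the paper attempts.

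As a summary of Kedlaya's architecture your sketch is broadly accurate: the reduction via the Riemann--Zariski space to local semistable reduction at each valuation, the case analysis by Abhyankar invariants, the use of the $p$-adic local monodromy theorem in the divisorial case, and the patching via alterations are all genuine ingredients. One small imprecision: the induction in \cite{kedlaya-semistableIII} and \cite{kedlaya-semistableIV} is organized by the \emph{transcendence defect} (monomial versus nonmonomial valuations) rather than ``inducting upward for valuations of higher rank,'' and the globalization step (local implies global) is carried out already in \cite{kedlaya-semistableII}. But since the paper treats Theorem~\ref{semi-stable} purely as a citation, any sketch at this level of detail already exceeds what is required here.
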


\begin{rema}
This conjecture was previously checked by Tsuzuki when $E$ is unit-root in \cite{Tsu-mono} and
by Kedlaya in the case of curves (see \cite{kedlaya_semi-stable}).
\end{rema}

\subsection{A comparison theorem between log-de Rham complexes and de Rham complexes}
\label{subsection22}
Let $\X$ be a smooth formal $\V$-scheme, $D$ be a divisor of $X$, $Y := X \setminus D$,
$\ZZ$ be a strict normal crossing divisors of $\X$,
$\X ^\#:=(\X, \ZZ) $ be the induced smooth logarithmic formal $\V$-scheme,
$u$ : $\X ^\#\rightarrow \X$ be the canonical morphism.

\begin{lemm}
  \label{preZZ'u}
Let $\ZZ'$ be a strict normal crossing divisor of $\X$
  such that $\ZZ \cup \ZZ'$ is a strict normal crossing divisor of $\X$
  and such that $Z \cap Z ' $ is of codimension $2$ in $X$ (i.e., the irreducible components of
  $Z$ and $Z'$ are different).
  We pose  $\X ^{\# \prime} =(\X, \ZZ \cup \ZZ')$. Then the canonical morphism
$\D ^\dag _{\X ^{\# \prime}} (\hdag D\cup Z ') _\Q \rightarrow \D ^\dag _{\X ^\#} (\hdag D\cup Z ') _\Q $
is an isomorphism.
\end{lemm}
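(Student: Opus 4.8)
Looking at this, I need to prove Lemma 1.4.1 (the \texttt{preZZ'u} lemma): that for a strict normal crossing divisor $\ZZ'$ with $Z \cap Z'$ of codimension 2 (irreducible components of $Z$ and $Z'$ distinct), the canonical morphism $\D^\dag_{\X^{\#\prime}}(\hdag D\cup Z')_\Q \rightarrow \D^\dag_{\X^\#}(\hdag D\cup Z')_\Q$ is an isomorphism, where $\X^{\#\prime} = (\X, \ZZ\cup\ZZ')$ and $\X^\# = (\X,\ZZ)$.

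Let me think about how to prove this.

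The two sheaves $\D^\dag_{\X^{\#\prime}}(\hdag D\cup Z')_\Q$ and $\D^\dag_{\X^\#}(\hdag D\cup Z')_\Q$ differ in their logarithmic structure: the first has log poles along $\ZZ\cup\ZZ'$, the second only along $\ZZ$. But both are "overconvergent along $D\cup Z'$" — meaning we've inverted functions vanishing on $D\cup Z'$. The key point is that once we localize along $Z'$ (i.e., allow arbitrary poles along $Z'$), having *logarithmic* poles along $Z'$ should be the same as having no special structure along $Z'$ at all — because $Z'$ has become part of the "divisor at infinity".

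So the strategy: it's a local question on $\X$. Work locally where we have coordinates $t_1,\dots,t_d$ such that $\ZZ = \{t_1\cdots t_s = 0\}$ and $\ZZ' = \{t_{s+1}\cdots t_{s'} = 0\}$ (using that the components are distinct, we can choose coordinates adapting to both, since $\ZZ\cup\ZZ'$ is itself strict normal crossing). The differential operators: for $\X^\#$ we use $\partial_{t_1}t_1,\dots,\partial_{t_s}t_s, \partial_{t_{s+1}},\dots,\partial_{t_d}$ (logarithmic along $\ZZ$ only); for $\X^{\#\prime}$ we use $\partial_{t_1}t_1,\dots,\partial_{t_{s'}}t_{s'},\partial_{t_{s'+1}},\dots$ (logarithmic along both). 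Now after applying $(\hdag D\cup Z')$, i.e., tensoring with $\O_\X(\hdag D\cup Z')$, the functions $t_{s+1},\dots,t_{s'}$ become invertible, so $\partial_{t_j}t_j$ and $\partial_{t_j}$ generate the same thing over $\O_\X(\hdag D\cup Z')_\Q$ for $s+1\le j\le s'$. Hence the two rings of operators coincide after this localization.

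More precisely, I'd reduce the claim at finite level: for each $m$, show $\widehat{\D}^{(m)}_{\X^{\#\prime}}(D\cup Z')_\Q \riso \widehat{\D}^{(m)}_{\X^\#}(D\cup Z')_\Q$, then pass to the inductive limit over $m$. At finite level, this reduces (mod $\pi^{i+1}$) to a statement about $\D^{(m)}_{X_i^{\#\prime}}(D\cup Z')$ vs $\D^{(m)}_{X_i^\#}(D\cup Z')$, both being $\B^{(m)}_{X_i}(D\cup Z') \otimes_{\O_{X_i}}(\text{relevant log-PD-differential operators})$. The comparison of the underlying $\D^{(m)}$'s: since $\B^{(m)}_{X_i}(D\cup Z')$ inverts $t_{s+1}\cdots t_{s'}$ up to a $p$-power, the divided-power operators $\partial^{<k>}_{\#,t_j}$ (log along $t_j=0$) become, over $\B^{(m)}_{X_i}(D\cup Z')$, expressible in terms of the ordinary $\partial^{<k>}_{t_j}$ and vice versa, because the transition involves $t_j$ and $t_j^{-1}$ which are both available. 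So the two sub-$\O$-algebras of the full ring of differential operators on the open complement generated by these respective operator sets coincide. I'd want to invoke Berthelot's comparison (analogous to \cite{Be1}) identifying $\D^{(m)}_{X_i^\#}(\hdag D)$ with a subring of $\D^{(m)}_{X_i}(\hdag(D\cup Z))$-type objects, but the cleanest route is the explicit local coordinate computation plus checking the isomorphism is $\B^{(m)}$-linear and compatible with the ring structure.

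The main obstacle I anticipate is the bookkeeping with the $m$-PD-structure: the divided-power operators $\partial^{<k>}$ don't transform by a simple change of variables, and one must verify that the explicit identities relating log-operators along $\{t_j=0\}$ to non-log operators, over a ring where $t_j$ is invertible, respect the $m$-PD-divided-power filtration and pass to completion and then to $\Q$-coefficients and the limit over $m$. A secondary point to be careful about is that we need $Z\cap Z'$ of codimension $2$ so that near a generic point of $Z$, $Z'$ is disjoint — that is, *locally* one of the two operator-set comparisons is trivial (no $Z'$-components present) — and the global statement then follows by a covering argument. Once the local isomorphism is established, checking it glues is routine since both sides are the restrictions to $\X^\#$ (via forgetful functors) of genuinely defined sheaves and the isomorphism is canonical.

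\begin{proof}[Sketch]
The assertion is local on $\X$, so we may assume $\X$ is affine with relative coordinates $t_1,\dots,t_d$ such that $\ZZ\cup\ZZ'$ is defined by $t_1\cdots t_{s'}=0$, with $\ZZ=\{t_1\cdots t_s=0\}$ and $\ZZ'=\{t_{s+1}\cdots t_{s'}=0\}$ (possible since the irreducible components of $Z$ and $Z'$ are distinct and $\ZZ\cup\ZZ'$ is strict normal crossing). It suffices to establish, for every $m$, a $\smash{\widehat{\B}}^{(m)}_{\X}(D\cup Z')$-linear ring isomorphism $\smash{\widehat{\D}}^{(m)}_{\X^{\#\prime}}(D\cup Z')\riso \smash{\widehat{\D}}^{(m)}_{\X^\#}(D\cup Z')$ compatible with the transition maps, and then to apply $-\otimes_\Z\Q$ and $\underset{\underset{m}{\longrightarrow}}{\lim}$. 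Reducing modulo $\pi^{i+1}$, one is comparing $\D^{(m)}_{X_i^{\#\prime}}(D\cup Z')=\B^{(m)}_{X_i}(D\cup Z')\otimes_{\O_{X_i}}\D^{(m)}_{X_i^{\#\prime}}$ with $\D^{(m)}_{X_i^\#}(D\cup Z')=\B^{(m)}_{X_i}(D\cup Z')\otimes_{\O_{X_i}}\D^{(m)}_{X_i^\#}$. Both are sub-$\O_{X_i}$-algebras of the ring of differential operators on the open locus where $t_{s+1}\cdots t_{s'}$ is invertible; the first is generated by the log-divided-power operators along $t_1=0,\dots,t_{s'}=0$, the second by those along $t_1=0,\dots,t_s=0$ together with the ordinary ones in $t_{s+1},\dots,t_d$. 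Since $\B^{(m)}_{X_i}(D\cup Z')$ inverts $t_{s+1}\cdots t_{s'}$ up to a power of $p$, for $s+1\le j\le s'$ the operator $t_j\partial_{t_j}$ and $\partial_{t_j}$ differ by the invertible factor $t_j$, and an explicit induction on the order shows the corresponding $m$-PD-divided-power operators generate the same $\B^{(m)}_{X_i}(D\cup Z')$-submodules; hence the two algebras coincide, the identification being canonical, $\B^{(m)}$-linear, and compatible with products and with varying $i$ and $m$. Passing to the limit and to $\Q$-coefficients gives the claim.

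The main subtlety is that the divided-power operators $\underline{\partial}^{<\underline{k}>}$ do not transform simply under rescaling by the invertible factors $t_j$; one must verify that the identities relating $\underline{\partial}^{<\underline{k}>}_{\#}$ (logarithmic along $t_j=0$) to the non-logarithmic $\underline{\partial}^{<\underline{k}>}$, valid over a ring inverting $t_j$, respect the $m$-PD-filtration (using, as in \cite[1.3--4]{Be1}, that $\B^{(m)}_{X_i}(D\cup Z')$ is a compatible $m$-PD-extension) and remain compatible after $\pi$-adic completion. Equivalently, one may argue near a generic point of each irreducible component of $Z$: there $Z'$ is empty by the codimension-$2$ hypothesis, so one of the two operator sets already has no $Z'$-contribution, and the comparison is then the trivial one; the global statement follows from the local ones by a covering argument, both sides being the forgetful-restriction to $\X^\#$ of well-defined coherent sheaves of rings.
\end{proof}
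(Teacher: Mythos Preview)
Your overall intuition is right — once $Z'$ is in the overconvergence divisor, log poles along $Z'$ should become irrelevant — but the proposed mechanism is wrong. You claim that for each fixed level $m$ one already has an isomorphism
\[
\smash{\widehat{\D}}^{(m)}_{\X^{\#\prime}}(D\cup Z')\;\riso\;\smash{\widehat{\D}}^{(m)}_{\X^\#}(D\cup Z'),
\]
arguing that in $\B^{(m)}_{X_i}(D\cup Z')$ the coordinate $t_j$ (for $s+1\le j\le s'$) is invertible, so $\partial_{t_j}$ and $t_j\partial_{t_j}$ generate the same thing. This is false: in $\B^{(m)}_{X_i}(D\cup Z')=\O_{X_i}[T]/(f^{p^{m+1}}T-p)$ the element $t_j$ is a zero-divisor modulo $\pi$, hence not invertible integrally, and even after tensoring with $\Q$ the powers $t_j^{-k}$ have $p$-adic valuation tending to $-\infty$, so rewriting $\partial_j^{<k>_{(m)}}$ in terms of log operators at the \emph{same} level $m$ introduces unbounded denominators and does not survive the $\pi$-adic completion. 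Concretely, at level $0$ one has $t_j^k\partial_j^k=\prod_{a=0}^{k-1}(t_j\partial_j-a)$, so expressing $\partial_j^{[k]}$ in the $(t_j\partial_j)^{[l]}$ requires the coefficient $t_j^{-k}$, of valuation $\approx -k/p$; summing such expressions against a generic convergent series $\sum b_k\partial_j^{[k]}$ need not land in $\smash{\widehat{\D}}^{(0)}_{\X^{\#\prime}}(Z')_\Q$. Your alternative ``generic point of $Z$'' covering argument is also insufficient: the hard points are exactly those lying on $Z'$ but not on $Z$, and those are not covered by neighborhoods where $Z'$ is empty.

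The paper's proof repairs this by a \emph{level-shift}: one inclusion $\smash{\widehat{\D}}^{(m)}_{\X^{\#\prime}}(D\cup Z')_\Q\subset\smash{\widehat{\D}}^{(m)}_{\X^\#}(D\cup Z')_\Q$ is easy, but the reverse only holds after raising the level,
\[
\smash{\widehat{\D}}^{(m)}_{\X^\#}(D\cup Z')_\Q\ \subset\ \smash{\widehat{\D}}^{(m+1)}_{\X^{\#\prime}}(D\cup Z')_\Q.
\]
The key identity is $\partial_i^{<k>_{(m)}}=u\,p^{q_k^{(m+1)}}\partial_i^{<k>_{(m+1)}}$ (with $u\in\Z_p^\times$), obtained from the valuation computation $v_p(q_k^{(m)}!)-v_p(q_k^{(m+1)}!)=q_k^{(m+1)}$; rewriting $p^{q_k^{(m+1)}}=t_i^{-r_k^{(m+1)}}(p/t_i^{p^{m+1}})^{q_k^{(m+1)}}\,t_i^k$ and noting that $p/t_i^{p^{m+1}}\in\widehat{\B}^{(m)}_\X(D\cup Z')$ is \emph{integral}, one gets $\partial_i^{<k>_{(m)}}\in t_i^{-(p^{m+1}-1)}\smash{\widehat{\D}}^{(m+1)}_{\X^{\#\prime}}(D\cup Z')_\Q$ with a \emph{uniform} bound on the pole in $t_i$, independent of $k$. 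Only after taking the inductive limit over $m$ do the two rings coincide. This level-raising trick is the missing idea in your argument.
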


\begin{proof}
The assertion is local in $\X$. We can suppose that there exists local coordinates $t _1,\dots, t_d$ of $\X$
such that $\ZZ \cup \ZZ' = V(t_1\dots t_r)$ and $\ZZ = V (t _{s+1} \dots t_r)$ for some $0\leq s \leq r$.
For any integer $m$, we have the canonical inclusion:
$\smash{\widehat{\D}} ^{(m)} _{\X ^{\# \prime}} (D\cup Z ') _\Q
\subset
\smash{\widehat{\D}} ^{(m)} _{\X ^{\#}} (D\cup Z ') _\Q$ (see the notation of \ref{nota-hatD}).
A fortiori, by direct limit on the level, we obtain
$\D ^\dag _{\X ^{\# \prime}} (\hdag D\cup Z ') _\Q \subset
\D ^\dag _{\X ^\#} (\hdag D\cup Z ') _\Q $.

Less obviously, let us check the converse.
  For any integer $k$, we denote by
  $ q _k ^{(m)}$, $ q _k ^{(m+1)}$, $ r _k ^{(m)}$, $r _k ^{(m+1)}$, $\tilde{r} _k ^{(m)}$
  the integers satisfying the following conditions:
  $k = p ^m q _k ^{(m)} + r _k ^{(m)}$, $0\leq r _k ^{(m)}<p ^{m}$,
  $k = p ^{m+1} q _k ^{(m+1)} + r _k ^{(m+1)}$, $0\leq r _k ^{(m+1)}<p ^{m+1}$,
  $q _k ^{(m)} =p q _k ^{(m+1)} + \tilde{r} _k ^{(m)}$, $0\leq \tilde{r} _k ^{(m)}<p$.
  We recall that the $p$-adic valuation of $k!$ is
  $v _p (k!)=(k - \sigma (k))/(p-1)$, where $\sigma (k) = \sum _i a _i$ if $k = \sum _i a _i p ^i $ with $0\leq a _i <p$.
We compute: $ v _p (q _k ^{(m)}!) -v _p (q _k ^{(m+1)}!)=(q _k ^{(m)} -q _k ^{(m+1)} - \tilde{r} _k ^{(m)})/(p-1)
=q _k ^{(m+1)}$.
By \cite[2.2.3.1]{Be1} (and $\smash{\widehat{\D}} ^{(m)} _{\X,\Q} \subset
\smash{\widehat{\D}} ^{(m+1)} _{\X,\Q}$), we have:
$\partial _i ^{<k> _{(m)}}=
q _k ^{(m)}!/ q _k ^{(m+1)}! \partial _i ^{<k> _{(m+1)}}$.
Then, there exists a unit $u$ of $\Z _p$ such that for every $0\leq i\leq s$,
we get:
$\partial _i ^{<k> _{(m)}}
=u p ^{q _k ^{(m+1)}} \partial _i ^{<k> _{(m+1)}}
=\frac{u}{ t ^{r _k ^{(m+1)}}}
\left(\frac{p}{t _i ^{p ^{m+1}}}\right) ^{q _k ^{(m+1)}}  t _i ^k \partial _i ^{<k> _{(m+1)}}$.
Since for any $k$ we have
$\frac{u}{ t ^{r _k ^{(m+1)}}}
\left(\frac{p}{t _i ^{p ^{m+1}}}\right) ^{q _k ^{(m+1)}} \in
\frac{1}{t ^{(p^{m+1}-1)}}\widehat{\B} ^{(m)} _{\X} (D \cup Z')$, we obtain the inclusion
$\smash{\widehat{\D}} ^{(m)} _{\X ^{\#}} (D\cup Z ') _\Q
\subset
\frac{1}{t ^{(p^{m+1}-1)}}
\smash{\widehat{\D}} ^{(m+1)} _{\X ^{\# \prime}} (D\cup Z ') _\Q$.
Since $\frac{1}{t ^{(p^{m+1}-1)}}$ is invertible in
$\smash{\widehat{\D}} ^{(m+1)} _{\X ^{\# \prime}} (D\cup Z ') _\Q$, this implies:
$\smash{\widehat{\D}} ^{(m)} _{\X ^{\#}} (D\cup Z ') _\Q
\subset
\smash{\widehat{\D}} ^{(m+1)} _{\X ^{\# \prime}} (D\cup Z ') _\Q$.
Then, by taking the direct limit on the level,
$\D ^\dag _{\X ^\#} (\hdag D\cup Z ') _\Q \subset
\D ^\dag _{\X ^{\# \prime}} (\hdag D\cup Z ') _\Q $.
\end{proof}

\begin{lemm}
  \label{ZZ'u}
With the same notation as in \ref{preZZ'u}, let
$v$ : $\X ^{\# \prime} \rightarrow \X$ be
  the canonical morphism.
  For any $\E \in D ^\mathrm{b} _\mathrm{coh} (\D ^\dag _{\X ^\#} (\hdag D) _\Q)$ and
  $\E '\in D ^\mathrm{b} _\mathrm{coh} (\D ^\dag _{\X ^{\# \prime}} (\hdag D) _\Q)$,
  we have the isomorphisms in $D ^\mathrm{b} _\mathrm{coh} (\D ^\dag _{\X  }(\hdag D\cup Z') _\Q)$ :
\begin{gather}
  \label{ZZ'u-equ1}
v _{D \cup Z'+} (\E (\hdag Z')) \riso u _{D \cup Z'+} (\E (\hdag Z')) \riso (u _{D +} (\E) ) (\hdag Z'),
\\ \label{ZZ'u-equ2}
u _{D \cup Z'+} (\E '(\hdag Z')) \riso v _{D \cup Z'+} (\E '(\hdag Z'))  \riso (v _{D +} (\E') ) (\hdag Z').
\end{gather}

\end{lemm}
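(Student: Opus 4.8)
\textit{Proof plan.} The plan is to exploit that the underlying morphism of each of $u$ and $v$ is the identity of $\X$, so that $u_{D\cup Z'+}$ and $v_{D\cup Z'+}$ are computed purely by tensoring with the respective transfer bimodules, with no higher direct image intervening. Concretely, by \ref{u+621} (applied with $D$ replaced by $D\cup Z'$) together with its analogue for $v$, for $\G\in D^\mathrm{b}_\mathrm{coh}(\D^\dag_{\X^\#}(\hdag D\cup Z')_\Q)$ and $\G'\in D^\mathrm{b}_\mathrm{coh}(\D^\dag_{\X^{\#\prime}}(\hdag D\cup Z')_\Q)$ one has
\[
u_{D\cup Z'+}(\G)\riso \D^\dag_\X(\hdag D\cup Z')_\Q\otimes^\L_{\D^\dag_{\X^\#}(\hdag D\cup Z')_\Q}\G(\ZZ),\qquad
v_{D\cup Z'+}(\G')\riso \D^\dag_\X(\hdag D\cup Z')_\Q\otimes^\L_{\D^\dag_{\X^{\#\prime}}(\hdag D\cup Z')_\Q}\G'(\ZZ\cup\ZZ').
\]
I would then take $\G=\G'=\E(\hdag Z')$, which makes sense on both sides precisely because the two rings $\D^\dag_{\X^\#}(\hdag D\cup Z')_\Q$ and $\D^\dag_{\X^{\#\prime}}(\hdag D\cup Z')_\Q$ are identified by \ref{preZZ'u} (this is why that lemma is proved first).

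The heart of the matter is then the identification of $\E(\hdag Z')(\ZZ\cup\ZZ')$ with $\E(\hdag Z')(\ZZ)$ over this common ring. Since $\ZZ$ and $\ZZ'$ have no common irreducible component, $\O_\X(\ZZ\cup\ZZ')\riso\O_\X(\ZZ)\otimes_{\O_\X}\O_\X(\ZZ')$ as $\O_\X$-modules, so $\E(\hdag Z')(\ZZ\cup\ZZ')\riso\O_\X(\ZZ')\otimes_{\O_\X}\bigl(\E(\hdag Z')(\ZZ)\bigr)$; and since $Z'\subset D\cup Z'$, the invertible sheaf $\O_\X(\ZZ')$ becomes trivial after tensoring with $\O_\X(\hdag D\cup Z')_\Q$ (the ``$Z\subset D$'' case of the localization formula recalled just after \ref{Z-hdagDcomm}), whence $\O_\X(\ZZ')\otimes_{\O_\X}M\riso M$ for any $\O_\X(\hdag D\cup Z')_\Q$-module $M$. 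By functoriality this last isomorphism is $\D^\dag_{\X^\#}(\hdag D\cup Z')_\Q$-linear, and feeding it into the two displayed formulas yields the first isomorphism of \ref{ZZ'u-equ1}. The first isomorphism of \ref{ZZ'u-equ2} is the identical computation with the roles of $\ZZ$ and $\ZZ\cup\ZZ'$ exchanged.

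For the remaining isomorphisms I would invoke \ref{u+commhdag}: taking $\widetilde D=Z'$ there (which, after replacing it by $D\cup Z'$, satisfies $\widetilde D\supseteq D$, and on the category over $\D^\dag_{\X^\#}(\hdag D)_\Q$ one has $(\hdag Z')\riso(\hdag D\cup Z')$), the chain \ref{u+commhdag-equ} gives $(u_{D+}(\E))(\hdag Z')\riso u_{D+}(\E(\hdag Z'))\riso u_{D\cup Z'+}(\E(\hdag Z'))$, which is the second isomorphism of \ref{ZZ'u-equ1}; as the proof of \ref{u+commhdag} is insensitive to which strict normal crossing divisor defines the log structure, the same statement with $v$ in place of $u$ gives the second isomorphism of \ref{ZZ'u-equ2}. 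Finally one passes from these $\smash{\underset{^{\longrightarrow}}{LD}}$-level identities to $D^\mathrm{b}_\mathrm{coh}(\D^\dag_\X(\hdag D\cup Z')_\Q)$ by applying $\underset{\longrightarrow}{\lim}$, using that all objects involved are coherent.

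The step I expect to be the main obstacle is the bookkeeping of module structures: one must check that the ring identification furnished by \ref{preZZ'u} is compatible with the two transfer modules $\D^\dag_{\X\leftarrow\X^\#}(\hdag D\cup Z')_\Q$ and $\D^\dag_{\X\leftarrow\X^{\#\prime}}(\hdag D\cup Z')_\Q$ \emph{as bimodules} — i.e.\ that the trivialization $\O_\X(\ZZ')\otimes_{\O_\X}(-)\riso(-)$ intertwines the relevant right actions and is compatible with the de Rham differentials — and that moving the invertible twist $\O_\X(\ZZ')$ across the derived tensor product causes no trouble; the latter is harmless since $\O_\X(\ZZ')$ is $\O_\X$-invertible, but it should be spelled out.
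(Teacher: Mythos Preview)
Your proposal is correct and follows essentially the same route as the paper's proof: both compute $u_{D\cup Z'+}$ and $v_{D\cup Z'+}$ via \ref{u+621}, invoke \ref{preZZ'u} to identify the two rings of differential operators, and then use the trivialization $\E(\hdag Z')(\ZZ\cup\ZZ')\riso\E(\hdag Z')(\ZZ')(\ZZ)\riso\E(\hdag Z')(\ZZ)$ (the ``$Z\subset D$'' remark after \ref{Z-hdagDcomm}) to match the two resulting expressions. The only cosmetic difference is that for the second isomorphism in each line the paper redoes the computation $(u_{D+}(\E))(\hdag Z')\riso\D^\dag_\X(\hdag D\cup Z')_\Q\otimes^\L_{\D^\dag_{\X^\#}(\hdag D\cup Z')_\Q}\E(\ZZ)(\hdag Z')$ directly from \ref{u+621} and associativity (together with \ref{Z-hdagDcomm}), whereas you quote \ref{u+commhdag}; since the proof of \ref{u+commhdag} is exactly that computation, this is the same argument packaged slightly differently.
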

\begin{proof}
First, since $\D ^\dag _{\X ^{\# \prime}} (\hdag D\cup Z ') _\Q=\D ^\dag _{\X ^\#} (\hdag D\cup Z ') _\Q $
(see \ref{preZZ'u}),
the left terms of \ref{ZZ'u-equ1} and \ref{ZZ'u-equ2} are well defined.
Also, as the proof of \ref{ZZ'u-equ2} is similar, we will only check
\ref{ZZ'u-equ1}.

By \ref{u+621},
$u _{D +} (\E)
\riso
\D ^\dag  _{\X } (\hdag D) _{\Q} \otimes ^\L _{\D ^\dag  _{\X ^\#} (\hdag D) _{\Q}} \E (\ZZ)$.
Then, we get by associativity of the tensor product:
  $$(u _{D +} (\E) ) (\hdag Z') \riso
  \D ^\dag _{\X } (\hdag D \cup Z') _\Q
  \otimes ^{\L} _{\D ^\dag _{\X ^\#} (\hdag D) _\Q} \E (\ZZ)
  \riso
  \D ^\dag _{\X } (\hdag D \cup Z') _\Q
\otimes ^{\L} _{\D ^\dag _{\X ^\#} (\hdag D\cup Z ') _\Q}
  \E (\ZZ) (\hdag Z').$$
On the other hand, by \ref{u+621}
(and, for the second isomorphism, since $\D ^\dag _{\X ^{\# \prime}} (\hdag D\cup Z ') _\Q=\D ^\dag _{\X ^\#} (\hdag D\cup Z ') _\Q $),
we get:
\begin{align}
\notag
  u _{D \cup Z'+} (\E (\hdag Z')) & \riso
\D ^\dag _{\X } (\hdag D \cup Z') _\Q
\otimes ^{\L} _{\D ^\dag _{\X ^{\# }} (\hdag D\cup Z ') _\Q}
  \E  (\hdag Z')(\ZZ),
  \\
  \notag
  v _{D \cup Z'+} (\E (\hdag Z')) & \riso
\D ^\dag _{\X } (\hdag D \cup Z') _\Q
\otimes ^{\L} _{\D ^\dag _{\X ^{\# }} (\hdag D\cup Z ') _\Q}
  \E  (\hdag Z')(\ZZ\cup \ZZ').
\end{align}
Since $\E (\hdag Z') (\ZZ \cup \ZZ') \riso
\E (\hdag Z') (\ZZ ') (\ZZ) \riso
\E  (\hdag Z')(\ZZ)
\riso \E  (\ZZ) (\hdag Z') $ (see \ref{Z-hdagDcomm}),
we conclude the proof of \ref{ZZ'u-equ1}.

\end{proof}

\begin{prop}
  \label{rhoisolemm}
  Let $\mathfrak{A} = \Spf \V \{ t _1,\dots, t_n\}$, $ D$ be a divisor of $\Spec k [ t _1,\dots, t _n]$
  and for $i =1,\dots, n$ let
  $\mathfrak{H} _i$ be the formal closed subscheme of $\mathfrak{A}$ defined by $t _i =0$, i.e.,
  $\mathfrak{H} _i  = \Spf \V \{ t _1,\dots, \widehat{t} _i ,\dots, t_n\}$. Let $\mathfrak{H} _0$ be the empty set.
  Fix an integer $r \in \{0,\dots, n\}$ and pose $\mathfrak{H} := \mathfrak{H} _0 \cup \mathfrak{H} _1 \cup \cdots \cup \mathfrak{H} _r$.
  Let $\mathfrak{A} ^\# := (\mathfrak{A},\mathfrak{H})$ and
$w$ : $\mathfrak{A} ^\# \rightarrow \mathfrak{A}$ be the canonical morphism.
Let $\E$ be a coherent
$\D_{\mathfrak{A}^\#}^\dag(\hdag D) _{ \Q}$-module which is a locally projective
$\O_{\mathfrak{A}}(\hdag D)_{ \Q}$-module of finite type such that
the conditions (a) and (b') in \ref{Nobuo} holds.
Then the canonical morphism
  $\rho$ : $w _{D+} (\E) \rightarrow \E (\hdag H)$ (see \ref{defi-rho}) is an isomorphism.
\end{prop}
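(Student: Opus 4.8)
The plan is to rephrase the assertion as a vanishing and to argue by induction on $r$. Write $H$ (resp. $H_i$) for the reduced divisor underlying $\mathfrak{H}$ (resp. $\mathfrak{H}_i$), so that $H = H_1\cup\cdots\cup H_r$. By the localization triangle \ref{exacttriu+F}, applied with $H$ in the role of the closed subscheme ``$Z$'' and $w$ in the role of ``$u$'', the morphism $\rho : w_{D+}(\E)\to\E(\hdag H)$ is an isomorphism if and only if $\R\underline{\Gamma}^\dag_H(w_{D+}(\E))=0$, and this is what I shall prove. For $r=0$ we have $H=\emptyset$ and $w=\mathrm{id}$, so there is nothing to do; I assume $r\geq 1$ and that the proposition holds for every strictly smaller value of $r$ (with $n$, $D$, $\E$ arbitrary). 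I may also assume that no $\mathfrak{H}_i$ with $1\leq i\leq r$ lies in $D$: if $\mathfrak{H}_i\subset D$ then $D\cup\mathfrak{H}_i=D$, and \ref{preZZ'u} shows that $\D^\dag_{\mathfrak{A}^\#}(\hdag D)_\Q$ and the functor $w_{D+}$ do not see the component $\mathfrak{H}_i$ of the log structure, reducing $r$ by one.

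Now I peel off $\mathfrak{H}_1$. Put $\mathfrak{H}':=\mathfrak{H}_2\cup\cdots\cup\mathfrak{H}_r$, let $\mathfrak{A}^{\#\prime}:=(\mathfrak{A},\mathfrak{H}')$ and let $v:\mathfrak{A}^{\#\prime}\to\mathfrak{A}$ be the canonical morphism. The composition rules for local cohomological functors recalled in \ref{locfunclog} (the $\D$-module counterpart of the exact sequence used in step $7^\circ$ of the proof of \ref{Nobuorigid}) give an exact triangle
\[
\R\underline{\Gamma}^\dag_{H_1}(w_{D+}\E)\longrightarrow \R\underline{\Gamma}^\dag_H(w_{D+}\E)\longrightarrow \R\underline{\Gamma}^\dag_{H'}\bigl((\hdag H_1)(w_{D+}\E)\bigr)\longrightarrow{+1},
\]
where $H'$ is the reduced divisor underlying $\mathfrak{H}'$. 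For the right-hand term, \ref{u+commhdag} gives $(\hdag H_1)(w_{D+}\E)\riso w_{D\cup\mathfrak{H}_1,+}(\E(\hdag H_1))$; and since $\E(\hdag H_1)$ is overconvergent along $\mathfrak{H}_1$, \ref{preZZ'u} and \ref{ZZ'u} (with $\ZZ=\mathfrak{H}'$ and $\ZZ'=\mathfrak{H}_1$) identify this with $v_{D\cup\mathfrak{H}_1,+}(\E(\hdag H_1))$, where $\E(\hdag H_1)$ is now regarded as a coherent $\D^\dag_{\mathfrak{A}^{\#\prime}}(\hdag D\cup\mathfrak{H}_1)_\Q$-module. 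This module is still locally projective of finite type over $\O_{\mathfrak{A}}(\hdag D\cup\mathfrak{H}_1)_\Q$, and its exponents along $\mathfrak{H}_2,\dots,\mathfrak{H}_r$ are those of $\E$, hence still satisfy (a) and (b'); moreover $(\mathfrak{A},\mathfrak{H}')$ and the divisor $D\cup\mathfrak{H}_1$ have exactly the shape required in the proposition, with only $r-1$ components. The inductive hypothesis therefore gives $v_{D\cup\mathfrak{H}_1,+}(\E(\hdag H_1))\riso\E(\hdag H_1)(\hdag H')$, i.e. the right-hand term vanishes, and the triangle yields $\R\underline{\Gamma}^\dag_H(w_{D+}\E)\riso\R\underline{\Gamma}^\dag_{H_1}(w_{D+}\E)$.

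It remains to prove $\R\underline{\Gamma}^\dag_{H_1}(w_{D+}\E)=0$. Let $\mathfrak{A}_1:=\Spf\V\{t_2,\dots,t_n\}$, so that $\mathfrak{H}_1\cong\mathfrak{A}_1$ is a section of the smooth projection $g_1:\mathfrak{A}\to\mathfrak{A}_1$ (via the closed immersion $i_1:\mathfrak{H}_1\hookrightarrow\mathfrak{A}$), and $\mathfrak{H}$ is a relatively strict normal crossing divisor of $\mathfrak{A}$ over $\mathfrak{A}_1$. The complex $\R\underline{\Gamma}^\dag_{H_1}(w_{D+}\E)$ is supported set-theoretically in $\mathfrak{H}_1$; by Kashiwara's theorem for arithmetic $\D$-modules it is of the form $i_{1+}(-)$, and since $g_1\circ i_1=\mathrm{id}$ the transitivity isomorphism \ref{trans+} shows that $g_{1+}$ is conservative on complexes supported in the section $\mathfrak{H}_1$. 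Hence it suffices to prove $g_{1+}\R\underline{\Gamma}^\dag_{H_1}(w_{D+}\E)=0$; by the isomorphism obtained in the previous paragraph this equals $g_{1+}\R\underline{\Gamma}^\dag_H(w_{D+}\E)$, which is exactly the vanishing supplied by \ref{Nobuobis} applied to the smooth morphism $g_1:\mathfrak{A}\to\mathfrak{A}_1$ (the hypotheses (a) and (b') on $\E$ being precisely those required there). This closes the induction.

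\textbf{Main obstacle.} The genuinely non-formal input is the comparison theorem, but it enters only through \ref{Nobuobis}, which delivers a vanishing \emph{after} a direct image $g_+$ — and no single projection of $\mathfrak{A}$ keeps all the components $\mathfrak{H}_1,\dots,\mathfrak{H}_r$ transverse to its fibres at once. The point of the proof is to make the two reductions cooperate: the Mayer--Vietoris triangle together with \ref{u+commhdag}, \ref{preZZ'u} and \ref{ZZ'u} must first collapse $\R\underline{\Gamma}^\dag_H$ onto the support of the single section $\mathfrak{H}_1$, and only then does the conservativity of $g_{1+}$ on that support let one apply \ref{Nobuobis}. The subsidiary but delicate book-keeping is that at each inductive step, localizing away from $\mathfrak{H}_1$ and shrinking the log structure must preserve both local projectivity and — crucially — the conditions (a) and (b') (not merely (b)) along the remaining components.
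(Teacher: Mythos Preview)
Your argument has a genuine gap at the very last step. You write that ``$\mathfrak{H}$ is a relatively strict normal crossing divisor of $\mathfrak{A}$ over $\mathfrak{A}_1$'' and then invoke \ref{Nobuobis} for the projection $g_1:\mathfrak{A}\to\mathfrak{A}_1=\Spf\V\{t_2,\dots,t_n\}$. But $g_1$ has relative dimension $1$, so the only nonempty relatively strict normal crossing divisor over $\mathfrak{A}_1$ is $\mathfrak{H}_1$ itself; for $r\geq 2$ the components $\mathfrak{H}_2,\dots,\mathfrak{H}_r$ are pullbacks of divisors on the base, hence horizontal, and $\mathfrak{H}$ is \emph{not} relatively strict over $\mathfrak{A}_1$. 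Corollary \ref{Nobuobis} --- and the comparison theorems \ref{Nobuo}, \ref{Nobuorigid} it rests on --- are stated under the standing hypothesis of \S\ref{nota-1.2} that the log divisor $\ZZ$ is relatively strict over the base $\T$, so they simply do not apply here. Your own ``Main obstacle'' paragraph already flags exactly this point, but the cure you propose (collapse the \emph{support} onto the section $\mathfrak{H}_1$) does not touch the obstruction: the vanishing $g_{1+}\R\underline{\Gamma}^\dag_H(w_{D+}\E)=0$ you need still carries the full log structure $\mathfrak{H}$ through $w_{D+}$, and it is that log structure, not the support, which must be relative over the base for \ref{Nobuobis} to fire.

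The paper's proof fixes precisely this: it runs your second-paragraph reduction \emph{symmetrically} in all indices $i=1,\dots,r$ (equivalently, shows $(\hdag H_i)\,\R\underline{\Gamma}^\dag_H(w_{D+}\E)=0$ for every $i$), which forces $\R\underline{\Gamma}^\dag_H(w_{D+}\E)\riso\R\underline{\Gamma}^\dag_{H_1\cap\cdots\cap H_r}(w_{D+}\E)$. Only then does one apply \ref{Nobuobis}, but with the projection $g:\mathfrak{A}\to\Spf\V\{t_{r+1},\dots,t_n\}$, for which all of $t_1,\dots,t_r$ are relative coordinates, $\mathfrak{H}$ \emph{is} relatively strict, and $H_1\cap\cdots\cap H_r$ is a section of $g$; your conservativity argument via $g\circ\iota=\mathrm{id}$ then concludes. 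Your inductive step already contains the $i=1$ instance of this reduction, so the repair is to iterate it over the remaining indices before switching to the correct projection.
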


\begin{proof}
We have to check $\R \underline{\Gamma} ^\dag _H w _{D+} (\E)=0$ (thanks to the exact triangle \ref{exacttriu+F}).
  To prove it, we will proceed by induction on $r$. When $r =0$, this is obvious.
  Suppose $r \geq 1$, pose $\mathfrak{H} ' = \cup _{r \geq i \geq 2} \mathfrak{H} _i$ (when $r=1$, $\mathfrak{H}'$ is empty) and $\G :=w _{D+} (\E)$.
  We get the Mayer-Vietoris exact triangle (see \cite[2.2.16]{caro_surcoherent}):
  \begin{equation}
    \label{MayerVietoris-rhoiso1}
\R \underline{\Gamma} ^\dag _{H _1 \cap H ' } \G (\hdag H _1)
\rightarrow
\R \underline{\Gamma} ^\dag _{H _1 } \G (\hdag H _1)
\oplus
\R \underline{\Gamma} ^\dag _{H ' } \G (\hdag H _1)
\rightarrow
\R \underline{\Gamma} ^\dag _{H _1 \cup H ' } \G (\hdag H _1)
\rightarrow
\R \underline{\Gamma} ^\dag _{H _1 \cap H ' } \G (\hdag H _1) [1].
  \end{equation}
  Since $\R \underline{\Gamma} ^\dag _{H _1 } \G (\hdag H _1)=0$
  and $\R \underline{\Gamma} ^\dag _{H _1 \cap H '} \G (\hdag H _1)=0$, we obtain
$\R \underline{\Gamma} ^\dag _{H ' } \G (\hdag H _1)
\riso
\R \underline{\Gamma} ^\dag _{H  } \G (\hdag H _1)$.

Let $\mathfrak{A} ^{\# \prime }:= (\mathfrak{A}, \mathfrak{H}')$,
$w'\,:\,\mathfrak{A} ^{\# \prime } \rightarrow \mathfrak{A}$
be the canonical map
and
$E := \sp ^* (\E)$.
By \ref{spcommDD'},
$\E (\hdag H _1)
\riso
\sp _* (j ^\dag _{Y _1\cap U} E)$,
where $U = \A _k ^n \setminus D$ and $Y_1= \A _k ^n \setminus H _1$.
Moreover, from \ref{preZZ'u}, $\D_{\mathfrak{A} ^{\#\prime}}^\dag(\hdag D \cup H _1) _{ \Q} =
\D_{\mathfrak{A} ^\#}^\dag(\hdag D \cup H _1) _{ \Q} $.
Then $\E (\hdag H _1)$ is
a coherent
$\D_{\mathfrak{A}^{\#\prime}}^\dag(\hdag D\cup H _1) _{ \Q}$-module which is a locally projective
$\O_{\mathfrak{A}}(\hdag D\cup H _1)_{ \Q}$-module of finite type satisfying
both conditions (a) and (b').
Using the induction hypothesis, this implies
$\R \underline{\Gamma} ^\dag _{H ' }  w ' _{D\cup H_1,+} (\E (\hdag H _1))=0$.
We get from \ref{ZZ'u-equ2} the isomorphism:
$w' _{D\cup H_1,+} (\E (\hdag H _1)) \riso (w _{D+} (\E )) (\hdag H _1)$.
Since
$\R \underline{\Gamma} ^\dag _{H ' } \G (\hdag H _1)
\riso
\R \underline{\Gamma} ^\dag _{H  } \G (\hdag H _1)$,
we obtain:
$\R \underline{\Gamma} ^\dag _{H  } \G (\hdag H _1)=0$.
Symmetrically, for any $i =1, \dots, r$, we check that
$\R \underline{\Gamma} ^\dag _{H  } \G (\hdag H _i)=0$.
With the exact triangle of localization of
$\R \underline{\Gamma} ^\dag _{H  } \G $ with respect to $H _i$, this means that the canonical morphism
$\R \underline{\Gamma} ^\dag _{H _i } \R \underline{\Gamma} ^\dag _{H  } \G
\rightarrow
\R \underline{\Gamma} ^\dag _{H  } \G $
is an isomorphism.
By \cite[2.2.8]{caro_surcoherent}, this implies:
$\R \underline{\Gamma} ^\dag _{H _1 \cap \cdots \cap H _r } \G \riso
\R \underline{\Gamma} ^\dag _{H  } \G $.

It remains to prove that $\R \underline{\Gamma} ^\dag _{H _1 \cap \cdots \cap H _r } \G = 0$.
When $D$ contains $H _1 \cap \cdots \cap H _r$, this is obvious.
This reduces us to the
case where $D \cap (H _1 \cap \cdots \cap H _r)$ is a divisor of $H _1 \cap \cdots \cap H _r$.

Let $\iota$ be the canonical closed immersion
$\mathfrak{H} _1 \cap \cdots \cap \mathfrak{H} _r= \Spf \V \{ t _{r+1},\dots, t_n\} \hookrightarrow
\Spf \V \{ t _{1},\dots, t_n\} =\mathfrak{A}$ and $g$ : $\mathfrak{A} \rightarrow \Spf \V \{ t _{r+1},\dots, t_n\} $ be
the canonical projection. We notice that $g \circ \iota$ is the identity.
Since $\E$ satisfies the conditions (a) and (b') and
$\G= w_{D+} (\E) $,
it follows from \ref{Nobuobis}
that
$g _{D+} \R \underline{\Gamma} ^\dag _{H } (\G)=0$
(notice that we do need here the relative case of \ref{Nobuobis},
i.e., $\T$ is not necessary equal to $\S$).
Hence, $g _{D+} \R \underline{\Gamma} ^\dag _{H _1 \cap \cdots \cap H _r } (\G)=0$.
By \cite[4.4.5]{Beintro2},
$\R \underline{\Gamma} ^\dag _{H _1 \cap \cdots \cap H _r } (\G)
\riso
\iota_{+} \iota ^! (\G) $.
Then:
$g _{D+} \R \underline{\Gamma} ^\dag _{H _1 \cap \cdots \cap H _r } (\G)
\riso
g _{+} \iota_{+} \iota ^! (\G) \riso \iota^! (\G)$.
Hence $\iota^! (\G)=0$
and then
$\R \underline{\Gamma} ^\dag _{H _1 \cap \cdots \cap H _r } (\G)=0$, which finishes the proof.

\end{proof}

We will need to extend \cite[6.11]{caro_log-iso-hol}, which will be essential
(in the proof of \ref{theorhoiso} or \ref{theorhoiso2}).
As for \cite[6.11]{caro_log-iso-hol}, we need a preliminary result:
\begin{lemm}
\label{I.3.6}
With the same notation as in \ref{preZZ'u},
let $X _i ^\#$ and $X _i ^{\# \prime}$ be respectively the reductions of
$\X ^\#$ and $\X ^{\# \prime}$ modulo $\pi ^{i+1}$.
  Let $\B _{X_i}$ be a $\D ^{(m)} _{X _i ^\#}$-module endowed with a compatible structure of $\O _{X _i}$-algebra.
We pose $\smash{\widetilde{\D}} ^{(m)} _{X _i ^\#}:= \B _{X _i} \otimes _{\O _{X _i}} \D ^{(m)} _{X _i ^\#}$,
$\smash{\widetilde{\D}} ^{(m)} _{X _i ^{\#\prime}}:= \B _{X _i} \otimes _{\O _{X _i}}  \D ^{(m)} _{X _i ^{\#\prime}}$.
Let $\E '$ be a left $\widetilde{\D} ^{(m)} _{X _i^{\# \prime}}$-module and $\E $ be a left $\widetilde{\D} ^{(m)} _{X _i ^\#}$-module.
Then the canonical morphism of $\widetilde{\D} ^{(m)} _{X _i ^\#}$-modules:
\begin{equation}
  \label{I.3.6iso}
\widetilde{\D} ^{(m)} _{X _i^\#} \otimes _{\widetilde{\D} ^{(m)} _{X _i^{\# \prime}}} (\E '\otimes _{ \B _{X _i}} \E)
  \rightarrow
(\widetilde{\D} ^{(m)} _{X _i^\#} \otimes _{\widetilde{\D} ^{(m)} _{X _i^{\# \prime}}} \E ' )\otimes _{ \B _{X _i}} \E
\end{equation}
is an isomorphism.
\end{lemm}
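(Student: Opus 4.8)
The assertion being local on $X _i$, the plan is first to reduce to the affine case with local coordinates $t _1, \dots, t _d$ as in \ref{preZZ'u}, so that $\ZZ \cup \ZZ' = V(t _1 \cdots t _r)$ and $\ZZ = V(t _{s+1} \cdots t _r)$. Then $\D ^{(m)} _{X _i ^{\#\prime}} \subset \D ^{(m)} _{X _i ^\#}$, the ``extra'' derivations of $\D ^{(m)} _{X _i ^\#}$ being $\partial _1, \dots, \partial _s$, and, since these commute with every $\#'$-log-derivation in the remaining coordinates, $\D ^{(m)} _{X _i ^\#}$ is generated, both as a left and as a right $\D ^{(m)} _{X _i ^{\#\prime}}$-module, by the $m$-PD-monomials $\underline{\partial} ^{<\underline{k}>}$ in $\partial _1, \dots, \partial _s$ ($\underline{k} \in \N ^s$). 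The same holds after tensoring with $\B _{X _i}$ over $\O _{X _i}$ — which is legitimate because $\B _{X _i}$, being a $\D ^{(m)} _{X _i ^\#}$-module-algebra, is also a $\D ^{(m)} _{X _i ^{\#\prime}}$-module-algebra — so that $\widetilde{\D} ^{(m)} _{X _i ^{\#\prime}} \subset \widetilde{\D} ^{(m)} _{X _i ^\#}$ and any module of the form $\widetilde{\D} ^{(m)} _{X _i ^\#} \otimes _{\widetilde{\D} ^{(m)} _{X _i ^{\#\prime}}} \M$ is spanned by the elements $\underline{\partial} ^{<\underline{k}>} \otimes m$.

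Next I would identify the morphism \ref{I.3.6iso}: it is the one deduced, by the adjunction between $\widetilde{\D} ^{(m)} _{X _i ^\#} \otimes _{\widetilde{\D} ^{(m)} _{X _i ^{\#\prime}}} (-)$ and restriction of scalars, from the $\widetilde{\D} ^{(m)} _{X _i ^{\#\prime}}$-linear map $\E ' \otimes _{\B _{X _i}} \E \rightarrow (\widetilde{\D} ^{(m)} _{X _i ^\#} \otimes _{\widetilde{\D} ^{(m)} _{X _i ^{\#\prime}}} \E ') \otimes _{\B _{X _i}} \E$ sending $e ' \otimes e$ to $(1 \otimes e ') \otimes e$; this map is $\widetilde{\D} ^{(m)} _{X _i ^{\#\prime}}$-linear since $\delta \otimes e ' = 1 \otimes \delta e '$ in $\widetilde{\D} ^{(m)} _{X _i ^\#} \otimes _{\widetilde{\D} ^{(m)} _{X _i ^{\#\prime}}} \E '$ for a $\#'$-log-derivation $\delta$, and likewise for the higher $m$-PD-operators, the $\D ^{(m)}$-module structure of a tensor product over $\B _{X _i}$ being given by its $m$-PD-stratification (cf. \cite[1.8, 1.8.1]{caro_log-iso-hol}). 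Concretely, by the $m$-PD Leibniz rule (compare \ref{HomDDhash-formula} and \cite[2.3.2.3]{Be1}), \ref{I.3.6iso} sends $\underline{\partial} ^{<\underline{k}>} \otimes (e ' \otimes e)$ to $\sum _{\underline{a} + \underline{b} = \underline{k}} c _{\underline{k}, \underline{a}}\,(\underline{\partial} ^{<\underline{a}>} \otimes e ') \otimes (\underline{\partial} ^{<\underline{b}>} e)$, where the $c _{\underline{k}, \underline{a}}$ are $m$-PD-binomial coefficients with $c _{\underline{k}, \underline{k}} = 1$; in particular it is ``lower triangular in $|\underline{k}|$'' with $1$ on the diagonal. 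One then writes down the inverse by the corresponding alternating-sign (M\"obius) inversion of this expansion — as in \cite[2.3.5]{Be1} — and checks that it is well defined and two-sided inverse to \ref{I.3.6iso}, using the standard $m$-PD-binomial identities (in the classical case these amount to $\sum _{\underline{a} \leq \underline{n}} (-1) ^{|\underline{a}|} \binom{\underline{n}}{\underline{a}} = 0$ for $\underline{n} \not = \underline{0}$).

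I expect the main difficulty to be this last verification. One must keep track of the level-$m$ divided powers and of the $m$-PD-binomials throughout; and, unlike in the case without logarithmic structure, one cannot shorten the argument by invoking flatness or local freeness of $\widetilde{\D} ^{(m)} _{X _i ^\#}$ over $\widetilde{\D} ^{(m)} _{X _i ^{\#\prime}}$, since these fail: the two rings coincide only after inverting $t _1 \cdots t _s$ — which is essentially the computation of \ref{preZZ'u} — so the $\underline{\partial} ^{<\underline{k}>} \otimes m$ are only generators, not a basis, and the triangular inversion has to be performed explicitly rather than read off a direct-sum decomposition. Granting the mutual-inverse identities, \ref{I.3.6iso} is an isomorphism, the reduction to the affine local case at the start being routine.
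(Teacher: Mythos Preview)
Your proposal is correct and is essentially the approach the paper has in mind: the paper's own proof is just the one-line reference ``Similar to \cite[3.6]{caro_log-iso-hol}'', and what you have written is precisely a sketch of that argument --- reduce to local coordinates, use that $\smash{\widetilde{\D}} ^{(m)} _{X _i ^\#}$ is generated over $\smash{\widetilde{\D}} ^{(m)} _{X _i ^{\#\prime}}$ by the $m$-PD monomials $\underline{\partial}^{<\underline{k}>}$ in $\partial_1,\dots,\partial_s$, write the canonical map via the $m$-PD Leibniz rule, and invert it by the alternating-sign formula as in \cite[2.3.5]{Be1}. Your observation that one cannot shortcut via freeness or flatness of $\smash{\widetilde{\D}} ^{(m)} _{X _i ^\#}$ over $\smash{\widetilde{\D}} ^{(m)} _{X _i ^{\#\prime}}$ (so that the inverse really has to be checked against the relations, not read off a direct-sum splitting) is exactly the point that distinguishes the logarithmic case from the classical one.
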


\begin{proof}
  Similar to \cite[3.6]{caro_log-iso-hol}.
\end{proof}

\begin{prop}
  \label{I.6.11}
With the same notation as in \ref{preZZ'u}, let $\tilde{u}$ : $\X ^{\# \prime}\rightarrow \X ^\#$ be the canonical morphism.
  Let $\E$ be a coherent $\D ^\dag _{\X ^\#} (\hdag D) _\Q$-module which is a locally projective $\O _{\X} (\hdag D) _\Q$-module of finite type.
Then $\E$ is also a coherent $\D ^\dag _{\X ^{\#\prime}} (\hdag D) _\Q$-module
  which is a locally projective $\O _{\X} (\hdag D) _\Q$-module of finite type.
  Furthermore we have the isomorphism of
$\D ^\dag _{\X ^\# }(\hdag D) _\Q$-modules
\begin{gather}
\label{I.6.11-iso}
\tilde{u} _{D +} (\E) \riso \D ^\dag _{\X ^\# }(\hdag D\cup Z' ) _\Q \otimes _{\D ^\dag _{\X ^\# }(\hdag D) _\Q}
\E =\E (\hdag Z').
\end{gather}
In particular, $\tilde{u} _{D +} (\E) $ (resp. $\E (\hdag Z')$)
can be endowed with a canonical structure of
coherent $\D ^\dag _{\X ^\# }(\hdag D\cup Z') _\Q$-modules (resp.
coherent $\D ^\dag _{\X ^\# }(\hdag D) _\Q$-modules).
\end{prop}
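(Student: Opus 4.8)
The plan is to reduce everything to a local coordinate situation and then combine the explicit form of the transfer bimodule with Lemma \ref{I.3.6} and an induction on the irreducible components of $\ZZ'$ modelled on the proof of \ref{rhoisolemm}. All assertions being local on $\X$, I would first place myself in the coordinate setting of \ref{preZZ'u}: pick $t_1,\dots,t_d$ with $\ZZ\cup\ZZ'=V(t_1\cdots t_r)$ and $\ZZ=V(t_{s+1}\cdots t_r)$, so that after renumbering $\ZZ'=\bigcup_{i=1}^{s}\ZZ_i'$ with $\ZZ_i'=V(t_i)$, and put $E:=\sp^*(\E)$, the locally free $j_U^\dag\O_{]X[_\X}$-module with integrable logarithmic connection $\nabla$ along $\ZZ$ attached to $\E$ by \ref{Ddag=overcvcondition}.\ref{Ddag=overcvcondition1}.

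The first assertion is the easy one: viewing $\nabla$ as a connection along $\ZZ\cup\ZZ'$ via $\Omega_{\X_K^\#/\S_K}^1\hookrightarrow\Omega_{\X_K^{\#\prime}/\S_K}^1$ produces an integrable logarithmic connection along $\ZZ\cup\ZZ'$ with vanishing residue along each $\ZZ_i'$, and the overconvergent condition \ref{logovcon} is preserved because the new operators $t_i\partial_i$ ($1\le i\le s$) are bounded multiples of $\partial_i$, whose growth is already controlled. Hence \ref{Ddag=overcvcondition}.\ref{Ddag=overcvcondition1} for $\X^{\#\prime}$ equips $\E$ with a canonical coherent $\D^\dag_{\X^{\#\prime}}(\hdag D)_\Q$-module structure, still locally projective of finite type over $\O_\X(\hdag D)_\Q$, with exponent $0$ along each $\ZZ_i'$. (Equivalently one restricts scalars along the tautological inclusion $\D^\dag_{\X^{\#\prime}}(\hdag D)_\Q\subset\D^\dag_{\X^\#}(\hdag D)_\Q$ of \ref{preZZ'u}.)

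For the isomorphism \ref{I.6.11-iso}, since $\tilde u$ is the identity on underlying spaces, $\tilde u_{D+}(\E)=\D^\dag_{\X^\#\leftarrow\X^{\#\prime}}(\hdag D)_\Q\otimes^\L_{\D^\dag_{\X^{\#\prime}}(\hdag D)_\Q}\E$; by the logarithmic analogue of \cite[5.2.4]{caro_log-iso-hol} the transfer bimodule is $\D^\dag_{\X^\#}(\hdag D)_\Q\otimes_{\O_\X}\O_\X(\ZZ')$, the tensor taken over the right structure, with $\O_\X(\ZZ'):=\mathcal{H}om_{\O_\X}(\omega_{\X^\#},\omega_{\X^{\#\prime}})$. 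Plugging this into Lemma \ref{I.3.6} (for the relevant $\widehat{\B}_\X^{(m)}(D)$-algebra) and using that $\D^{(m)}$ is $\O$-flat gives $\tilde u_{D+}(\E)\riso\D^\dag_{\X^\#}(\hdag D)_\Q\otimes^\L_{\D^\dag_{\X^{\#\prime}}(\hdag D)_\Q}\E(\ZZ')$, where $\E(\ZZ'):=\O_\X(\ZZ')\otimes_{\O_\X}\E$ with its induced structure. Exactly as in \ref{defi-rho}, the inclusion $\D^\dag_{\X^\#}(\hdag D)_\Q\otimes_{\O_\X}\O_\X(\ZZ')\subset\D^\dag_{\X^\#}(\hdag D\cup Z')_\Q$ produces a canonical $\rho:\tilde u_{D+}(\E)\to\E(\hdag Z')$ fitting in a localization triangle relating $\R\underline{\Gamma}^\dag_{Z'}\tilde u_{D+}(\E)$, $\tilde u_{D+}(\E)$ and $\E(\hdag Z')$, as in \ref{exacttriu+F}. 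Applying $(\hdag Z')$ and using \ref{preZZ'u} ($\D^\dag_{\X^{\#\prime}}(\hdag D\cup Z')_\Q=\D^\dag_{\X^\#}(\hdag D\cup Z')_\Q$), together with $\E(\ZZ')(\hdag Z')\riso\E(\hdag Z')$ and the flatness of $\D^\dag_{\X^\#}(\hdag D)_\Q\to\D^\dag_{\X^\#}(\hdag D\cup Z')_\Q$, shows $(\hdag Z')\rho$ is an isomorphism; so everything reduces to the vanishing $\R\underline{\Gamma}^\dag_{Z'}\tilde u_{D+}(\E)=0$, which simultaneously forces $\tilde u_{D+}(\E)$ to be concentrated in degree $0$.

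I would prove this vanishing by induction on $s$. For $s\le1$ it is \cite[6.11]{caro_log-iso-hol}. For $s\ge2$, put $\ZZ''=\bigcup_{i=2}^{s}\ZZ_i'$, $\X^{\#\prime\prime}:=(\X,\ZZ\cup\ZZ_1')$, and factor $\tilde u$ as $\X^{\#\prime}\overset{\tilde u''}{\to}\X^{\#\prime\prime}\overset{\tilde u_1}{\to}\X^\#$. The inductive hypothesis applied to $\tilde u''$ (base log locus $\ZZ\cup\ZZ_1'$, new divisor $\ZZ''$ with $s-1$ components) yields $\tilde u''_{D+}(\E)\riso\E(\hdag Z'')$, which by \ref{spcommDD'} is coherent, locally projective of finite type over $\O_\X(\hdag D\cup Z'')_\Q$; the case $s=1$ applied to $\tilde u_1$ acting on $\E(\hdag Z'')$ over the divisor $D\cup Z''$ gives $\tilde u_{1,D\cup Z''+}(\E(\hdag Z''))\riso\E(\hdag Z''\cup Z_1')=\E(\hdag Z')$ with the corresponding higher vanishing; and transitivity \ref{trans+}, combined with the analogue of \ref{u+commhdag-coro} (and \ref{ZZ'u}) to commute the direct images past $(\hdag Z'')$, assembles these into $\tilde u_{D+}(\E)\riso\E(\hdag Z')$ and $\R\underline{\Gamma}^\dag_{Z'}\tilde u_{D+}(\E)=0$. (Alternatively one transcribes the Mayer--Vietoris induction of \ref{rhoisolemm} verbatim, using \ref{ZZ'u}, \ref{u+commhdag-coro} and the relative case of \ref{Nobuobis}; conditions (a) and (b') are automatic there since all exponents along $\ZZ'$ are $0$.) \emph{The main obstacle} is the bookkeeping of divisors and log structures in this induction: checking that the module handed to \cite[6.11]{caro_log-iso-hol} at each step, namely $\E(\hdag Z'')$, is again locally projective of finite type over the enlarged $\O_\X(\hdag D\cup Z'')_\Q$ (where \ref{spcommDD'} and the first remark of \ref{remaDdagovcv} are used), correctly commuting $\tilde u''_{D+}$ with $(\hdag Z'')$, and pinning down the transfer bimodule $\D^\dag_{\X^\#\leftarrow\X^{\#\prime}}(\hdag D)_\Q$; the only genuinely new input beyond \cite[6.11]{caro_log-iso-hol} is the passage from one new divisor to a strict normal crossing one.
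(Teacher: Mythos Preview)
Your argument is essentially correct but takes a much longer detour than the paper's. The first assertion you handle the same way. For the isomorphism \ref{I.6.11-iso}, the paper proceeds directly: it computes $\tilde u_{D+}(\O_\X(\hdag D)_\Q)\riso\O_\X(\hdag D\cup Z')_\Q$ (as in \cite[6.8]{caro_log-iso-hol}, no induction needed), and then Lemma~\ref{I.3.6} lets one pull $\E$ outside the tensor product, giving $\tilde u_{D+}(\E)\riso\tilde u_{D+}(\O_\X(\hdag D)_\Q)\otimes_{\O_\X(\hdag D)_\Q}\E\riso\E(\hdag Z')$ in one stroke. You do cite \ref{I.3.6}, but only to rewrite $\tilde u_{D+}(\E)$ as $\D^\dag_{\X^\#}(\hdag D)_\Q\otimes^\L_{\D^\dag_{\X^{\#\prime}}(\hdag D)_\Q}\E(\ZZ')$, which is really just the transfer-bimodule identification (the log analogue of \ref{u+621}); you are not exploiting the full strength of \ref{I.3.6}, namely that the $\D^\dag_{\X^\#}$-module factor $\E$ can be extracted from the extension of scalars. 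Your route --- setting up $\rho$ and its localization triangle, reducing to $\R\underline{\Gamma}^\dag_{Z'}\tilde u_{D+}(\E)=0$, and proving this by induction on the components of $\ZZ'$ via a factorization $\tilde u=\tilde u_1\circ\tilde u''$ --- does go through, but it forces you to track log structures, divisors, and coherence at every inductive step (as you yourself flag), and to invoke analogues of \ref{u+commhdag}, \ref{u+commhdag-coro}, \ref{ZZ'u} for $\tilde u$ rather than $u$. The paper's approach avoids all of this: once \ref{I.3.6} is available, the number of components of $\ZZ'$ is irrelevant and the result follows from the single computation on $\O_\X(\hdag D)_\Q$.
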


\begin{proof}
By \ref{Ddag=overcvcondition},
$\sp ^* (\E)$ is a locally free $j_U^\dag\O_{]X[_\X}$-module of finite type
with a logarithmic connection
$\nabla : E \rightarrow j_U^\dag\Omega_{\Xlogan/\S_K}^1 \otimes_{j_U^\dag\O_{]X[_\X}} E$
satisfying the overconvergent condition (see \ref{Ddag=overcvcondition}).
Then, we check that the
induced logarithmic connection
$\nabla ' : E \rightarrow j_U^\dag\Omega_{\X _K ^{\# \prime} /\S_K}^1 \otimes_{j_U^\dag\O_{]X[_\X}} E$
satisfies the overconvergent condition. So,
$\E$ is a coherent
$\D ^\dag _{\X ^{\#\prime}} (\hdag D) _\Q$-module
  which is a locally projective $\O _{\X} (\hdag D) _\Q$-module of finite type.

As for \cite[6.8]{caro_log-iso-hol}, we compute:
$\tilde{u} _{D +} (\O _{\X} (\hdag D) _\Q) \riso \O _{\X} (\hdag D \cup Z') _\Q .$
Then, in the same way as for the proof of \cite[6.11]{caro_log-iso-hol},
we deduce from \ref{I.3.6} that the isomorphism \ref{I.6.11-iso} holds.
\end{proof}

\begin{rema}
  \label{I.6.11-rema}
With the notation \ref{I.6.11}, it comes from \ref{ODdivcohe} and \ref{hdagDtildeDcoh}
that there is no ambiguity in writing $\E (\hdag Z')$. More precisely,
$$\D ^\dag _{\X ^{\# \prime } }(\hdag D\cup Z' ) _\Q \otimes _{\D ^\dag _{\X ^{\# \prime } }(\hdag D) _\Q}
\E\riso
\D ^\dag _{\X ^\# }(\hdag D\cup Z' ) _\Q \otimes _{\D ^\dag _{\X ^\# }(\hdag D) _\Q} \E
\riso \E (\hdag Z').$$

\end{rema}

\begin{lemm}
\label{h+finiteetalecondab}
  Let $h$ : $ \X ' \rightarrow \X$ be a finite \'etale morphism of smooth formal $\V$-schemes,
  $D ' = h ^{-1} (D)$, $\X ^{\prime \#}:=(\X ', h^{-1} (\ZZ))$,
  $h ^\#$ : $\X ^{\prime \#} \rightarrow \X ^\# $ be the induced morphism by $h$.
Let $\E'$ be a coherent
$\D_{\X^{\prime \#}}^\dag(\hdag D') _{ \Q}$-module which is a locally projective
$\O_{\X ^\prime }(\hdag D')_{ \Q}$-module of finite type.
Then $h ^\# _{D+}(\E')$ is a coherent
$\D_{\X^\#}^\dag(\hdag D) _{ \Q}$-module which is a locally projective
$\O_{\X}(\hdag D)_{ \Q}$-module of finite type.
Furthermore if
$\E'$ satisfies the conditions (a) and (b') of \ref{Nobuo},
so is $h ^\# _{D+}(\E')$.
\end{lemm}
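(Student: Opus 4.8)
The plan is to reduce the statement, through the equivalence of categories of \ref{Ddag=overcvcondition}.\ref{Ddag=overcvcondition1}, to the behaviour of overconvergent log-isocrystals under pushforward by a finite \'etale morphism, which is exactly what is recorded in Remark \ref{remexp-6}. Put $E' := \sp ^* (\E')$; by \ref{Ddag=overcvcondition}.\ref{Ddag=overcvcondition1} it is a locally free $j_{U'}^\dag \O_{]X'[_{\X'}}$-module of finite type (with $U' := X' \setminus D'$) endowed with an integrable logarithmic connection satisfying the overconvergent condition \ref{logovcon}, and the adjunction gives $\E' \riso \sp _* (E')$.

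First I would compute the direct image on the arithmetic side. The morphism $h ^\#$ is log-smooth of relative dimension $0$ (it is even log-\'etale, since $h$ is \'etale and $\X ^{\prime \#}$ carries the log structure induced by $h ^{-1}(\ZZ)$), so $\Omega _{\X ^{\prime \#}/\X ^\#,\Q} ^\bullet$ reduces to $\O _{\X ',\Q}$ placed in degree $0$; hence \ref{h+smoothiso} gives $h ^\# _{D+}(\E') \riso \R h _* (\E') = h _* (\E')$, the last equality because $h$ is finite. Since the specialization maps satisfy $\sp _{\X} \circ h _K = h \circ \sp _{\X'}$, pushforward commutes with $\sp _*$, so $h _* (\E') = h _* \sp _* (E') = \sp _* (h ^\# _{K*} E')$. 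By the first part of \ref{remexp-6}, $h ^\# _{K*}(E')$ is a log-isocrystal on $U ^\#/\S _K$ overconvergent along $D$, locally free of finite type of rank $\deg(h)\,\mathrm{rank}(E')$; applying \ref{Ddag=overcvcondition}.\ref{Ddag=overcvcondition1} once more, $h ^\# _{D+}(\E') = \sp _* (h ^\# _{K*} E')$ is therefore a coherent $\D _{\X ^\#}^\dag(\hdag D) _{\Q}$-module which is a locally projective $\O _{\X}(\hdag D) _{\Q}$-module of finite type, which is the first assertion. It also shows $\sp ^* (h ^\# _{D+}\E') \riso h ^\# _{K*}(E')$, so that by definition the exponents of $h ^\# _{D+}(\E')$ along an irreducible component $Z _i$ of $Z$ with $Z _i \not\subset D$ are those of $h ^\# _{K*}(E')$ along $Z _i$. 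By the second part of \ref{remexp-6} these coincide, with multiplicities, with the exponents of $E'$ along $h ^{-1}(Z _i)$; and since $D' = h ^{-1}(D)$, $\ZZ' = h ^{-1}(\ZZ)$ and $h$ is finite \'etale, every irreducible component of $h ^{-1}(Z _i)$ is an irreducible component of $Z'$ not contained in $D'$. As $\E'$ satisfies (a) and (b') along all such components, the same conditions hold for $h ^\# _{D+}(\E')$ along every irreducible component of $Z$ not contained in $D$.

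I expect the main obstacle to be the identification $\sp ^* (h ^\# _{D+}\E') \riso h ^\# _{K*}(\sp ^* \E')$, i.e. the compatibility of the arithmetic $\D$-module direct image with the rigid-analytic pushforward of overconvergent log-isocrystals along a finite \'etale morphism. This is classical — it follows from the log-smooth direct image formula \ref{h+smoothiso} in relative dimension zero, the commutation $\R \sp _* \circ \R h _{K*} \riso \R h _* \circ \R \sp _*$, and the fact that $\R ^j \sp _*$ of a locally free $j^\dag \O$-module vanishes for $j\neq 0$ (first remark of \ref{remaDdagovcv}) — but it is the point where the $\D$-module and isocrystal formalisms must be matched carefully; once it is in place, both assertions of the lemma are formal consequences of \ref{Ddag=overcvcondition} and \ref{remexp-6}.
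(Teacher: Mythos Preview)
Your proposal is correct and follows essentially the same approach as the paper: identify $h^\#_{D+}(\E')$ with $h_*(\E')$ via the log-smooth direct image formula \ref{h+smoothiso} in relative dimension $0$ (using that $h$ is finite \'etale so $\Omega^1_{\X^{\prime\#}/\X^\#}=0$ and $\R h_*=h_*$), pass to the rigid side via $\sp_* h_*(E')\riso h_*\sp_*(E')$, and invoke \ref{remexp}.\ref{remexp-6} together with \ref{Ddag=overcvcondition} to conclude. Your account is slightly more detailed on the compatibility step, but the argument is the same.
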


\begin{proof}
Since $h ^\#$ is smooth, we have the canonical isomorphism
$\Omega _{\X^{\prime \#}/\X ^\#,\Q} ^\bullet \otimes _{\O _{\X,\Q}} \D ^\dag _{\X^{\prime \#},\Q}
[d _{\X^{\prime \#}/\X ^\#}]
  \riso
  \D ^\dag _{\X^{\#}\leftarrow \X^{\prime \#},\Q}$
  (see \ref{h+smooth}).
Since $h$ is even \'etale, we get $\Omega ^1 _{\X^{\prime \#}/\X ^\#} =0$ and then
$\D ^\dag _{\X^{\prime \#},\Q} \riso
\D ^\dag _{\X^{\#}\leftarrow \X^{\prime \#},\Q}$.
But $\R h _* = h _*$ because $h$ is finite.
This implies that $h ^\# _{D+}(\E')$ is canonically isomorphic to
  $h _* (\E')$.
  Pose $U ' := \X' \setminus D'$.
Recall that by \ref{Ddag=overcvcondition}
that $E ' := \sp ^* (\E')$ is
a locally free $j_{U'}^\dag\O_{]X'[_{\X'}}$-module of finite type
endowed with a logarithmic connection
$\nabla : E ' \rightarrow j_{U'}^\dag\Omega_{\X ^{\prime \# } _K /\S_K}^1 \otimes_{j_{U'}^\dag\O_{]X'[_{\X'}}} E'$
satisfying the overconvergent condition of \ref{logovcon}.
By hypothesis, $E'$ satisfies the conditions (a) and (b') of \ref{Nobuo}.
By \ref{remexp}.\ref{remexp-6}, then so is $h _* (E')$.
We conclude with the isomorphism:
$\sp _* h _* (E') \riso h _* \sp _*  (E') \riso h_* (\E')$.
\end{proof}

\begin{lemm}
  \label{h+=0is0}
  Let $h$ : $\PP \rightarrow \PP'$ be a finite and \'etale morphism of smooth formal $\V$-schemes,
  $D'$ be a divisor of $X'$, $D := h ^{-1}(D')$,
$\E \in
\smash{\underset{^{\longrightarrow}}{LD}} ^{\mathrm{b}} _{\Q,\mathrm{qc}}
( \smash{\widehat{\D}} _{\PP} ^{(\bullet)}(D))$.
Then $ h _+ (\E) = 0$ if and only if $\E= 0$.
\end{lemm}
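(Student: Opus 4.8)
The statement is an equivalence, and the implication $\E = 0 \Rightarrow h _+ (\E) = 0$ is trivial, so the plan is to prove the converse by realising $\E$ as a direct summand of $h ^! (h _+ (\E))$ --- here $h ^!$ denotes the extraordinary inverse image $h ^!_{D,D'}$ attached to $h$, which lands back in $\smash{\underset{^{\longrightarrow}}{LD}} ^{\mathrm{b}} _{\Q,\mathrm{qc}}( \smash{\widehat{\D}} _{\PP} ^{(\bullet)}(D))$ --- so that the vanishing of $h _+ (\E)$ will force $\E = 0$. First I would record that, because $h$ is \'etale (hence flat) and finite, the relative transfer bimodule of $h$ reduces to $\smash{\widehat{\D}} _{\PP} ^{(\bullet)}(D)$ and $\R h _* = h _*$, exactly as in the proof of \ref{h+finiteetalecondab}. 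Combined with flat base change along the Cartesian square whose upper-left corner is $\PP '' := \PP \times _{\PP'} \PP$ (equipped with the divisor pulled back from $D'$), this would give a canonical isomorphism $h ^! (h _+ (\E)) \riso p _{1+} (p _2 ^! (\E))$, where $p _1, p _2 : \PP '' \rightarrow \PP$ are the two projections, again finite and \'etale.

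Next I would exploit the diagonal $\delta : \PP \rightarrow \PP ''$. It is a common section of $p _1$ and $p _2$, and since $h$ is \'etale and separated, $\delta$ is simultaneously an open immersion and a closed immersion; hence $\PP '' = \delta (\PP) \sqcup \W$ with $\W$ open and closed in $\PP ''$. Applying the localization triangle \ref{locexactri} with respect to the closed subscheme $\delta (X)$ of $X ''$ to the object $p _2 ^! (\E)$, and using that $\delta (X)$ is also open (so that $\R \underline{\Gamma} ^\dag _{\delta (X)}$ amounts to restriction to that open piece and the triangle splits), one gets a decomposition $p _2 ^! (\E) \riso \R \underline{\Gamma} ^\dag _{\delta (X)} (p _2 ^! (\E)) \oplus (\hdag \delta (X)) (p _2 ^! (\E))$. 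By \cite[4.4.5]{Beintro2} and the transitivity \ref{trans!}, $\R \underline{\Gamma} ^\dag _{\delta (X)} (p _2 ^! (\E)) \riso \delta _+ (\delta ^! (p _2 ^! (\E))) \riso \delta _+ (\E)$ (with no shift, since $p _2 \circ \delta = \mathrm{id}$ and all the morphisms involved have relative dimension $0$); applying $p _{1+}$ and using \ref{trans+} together with $p _1 \circ \delta = \mathrm{id}$ then gives $p _{1+} (\delta _+ (\E)) \riso \E$. Thus $\E$ becomes a direct summand of $p _{1+} (p _2 ^! (\E)) \riso h ^! (h _+ (\E))$.

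From this the conclusion is immediate: if $h _+ (\E) = 0$ then $h ^! (h _+ (\E)) = h ^!(0) = 0$, and $\E$, being a retract of $0$, vanishes. The only inputs beyond the formalism of \ref{nota-1.2} are the flat base-change isomorphism $h ^! (h _+ (\E)) \riso p _{1+} (p _2 ^! (\E))$ --- which is elementary here, since $h$ is finite and flat, so it reduces level by level to ordinary flat base change for the exact sheaf-theoretic direct image on the reductions modulo $\pi ^{i+1}$ --- and the splitting of the localization triangle along the open-and-closed diagonal, which is just the Mayer-Vietoris decomposition attached to the disjoint covering $X '' = \delta (X) \sqcup W$. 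Neither step is hard; the one point that needs care is checking that both are genuinely available inside $\smash{\underset{^{\longrightarrow}}{LD}} ^{\mathrm{b}} _{\Q,\mathrm{qc}}$ rather than only at the level of individual $\smash{\widehat{\D}} ^{(m)}$-modules. (More crudely, one can note that $h _+$ coincides with the finite --- hence exact and faithful --- direct image $h _*$, every point of $\PP$ lying in some fibre of $h$, so that $h _*$ reflects the zero object; the argument above is the incarnation of this remark inside the arithmetic $\D$-module formalism.)
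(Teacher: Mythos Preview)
The paper states this lemma without proof --- it sits between the end of Lemma~\ref{h+finiteetalecondab} and the beginning of Theorem~\ref{theorhoiso} with no \texttt{proof} environment at all --- so there is nothing to compare against. The authors evidently regarded the statement as standard or immediate.

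Your argument is correct. The realisation of $\E$ as a direct summand of $h^!(h_+(\E))$ via base change to $\PP'' = \PP \times_{\PP'} \PP$ and the open-and-closed diagonal splitting is the standard proof that finite \'etale pushforward is conservative, and all the ingredients you invoke (transitivity \ref{trans+}, \ref{trans!}; Berthelot--Kashiwara \cite[4.4.5]{Beintro2}; the localization triangle \ref{locexactri}) are available in the quasi-coherent $\smash{\underset{^{\longrightarrow}}{LD}}^{\mathrm{b}}_{\Q,\mathrm{qc}}$ formalism of~\ref{nota-1.2}. The one place that deserves a word of justification is the base-change isomorphism $h^!(h_+(\E)) \riso p_{1+}(p_2^!(\E))$: as you say, since $h$ is finite and flat this reduces, level by level and modulo $\pi^{i+1}$, to ordinary flat base change for the exact functor $h_{i*}$, which is unproblematic.

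Your closing parenthetical --- that $h_+$ coincides with the exact and faithful $h_*$ because $\O_\PP$ is locally free of finite rank over $h^{-1}\O_{\PP'}$ --- is almost certainly the one-line argument the authors had in mind when they omitted the proof; your longer argument is the same fact unpacked inside the $\D$-module formalism.
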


\begin{theo}
\label{theorhoiso}
Let $\E$ be a coherent
$\D ^\dag _{\X^\#,\Q}$-module which is a locally projective
$\O_{\X,\Q}$-module of finite type such that
the conditions (a) and (b') in \ref{Nobuo} holds.
Then the canonical morphism
  $\rho$ : $u _{+} (\E) \rightarrow \E (\hdag Z)$ (see \ref{defi-rho}) is an isomorphism.
\end{theo}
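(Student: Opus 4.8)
The plan is to follow the strategy of Proposition \ref{rhoisolemm}, the only genuinely new input being a device that replaces the special geometry of $\widehat{\A}^n_\V$ --- whose strata of $\mathfrak{H}$ are cut out by coordinates, hence retracted --- by a local model available over an arbitrary smooth $\X$. By the localization triangle \ref{exacttriu+F}, $\rho$ is an isomorphism if and only if $\R\underline{\Gamma}^\dag_Z u_+(\E)=0$; this is local on $X$, so I may assume $\X$ affine with coordinates $z_1,\dots,z_n$ such that $\ZZ=\bigcup_{i=1}^s\ZZ_i$ with $\ZZ_i=V(z_i)$, and (by \ref{Ddag=overcvcondition}) that $E:=\sp^*(\E)$ carries an overconvergent logarithmic connection on a strict neighbourhood. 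I then argue by induction on $s$.

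For the inductive step, writing $\X^{\#\prime}=(\X,\ZZ\setminus\ZZ_1)$ and $Y_1=X\setminus Z_1$, the Mayer--Vietoris triangle for $Z=Z_1\cup(Z_2\cup\cdots\cup Z_s)$ (that is, \cite[2.2.16]{caro_surcoherent} applied to $u_+(\E)$), together with $\R\underline{\Gamma}^\dag_{Z_1}u_+(\E)(\hdag Z_1)=0$, the isomorphisms $\E(\hdag Z_1)\simeq\sp_*(j^\dag_{Y_1\cap U}E)$ of \ref{spcommDD'}, $\D^\dag_{\X^{\#\prime}}(\hdag Z_1)_\Q=\D^\dag_{\X^\#}(\hdag Z_1)_\Q$ of \ref{preZZ'u}, $(u_+\E)(\hdag Z_1)\simeq u_{Z_1+}(\E(\hdag Z_1))$ of \ref{u+commhdag}, the commutation \ref{u+commhdag-coro-equ}, and the inductive hypothesis applied to $\E(\hdag Z_1)$ (still coherent, locally projective, and satisfying (a),(b')), reduces the problem to proving $\R\underline{\Gamma}^\dag_{Z_1}u_+(\E)=0$ for a single smooth irreducible component $\ZZ_1$ not contained in the ambient overconvergence divisor --- at the cost of an auxiliary divisor $D$ which grows along the recursion, and for which the divisor-laden statements \ref{preZZ'u}, \ref{ZZ'u}, \ref{I.6.11} are exactly what is needed.

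For a single component I use the diagonal construction of $8^\circ$ in the proof of \ref{Nobuorigid}: after a further étale localization there is an étale morphism $h:\widetilde{\X}\to\X$ inducing an isomorphism on the tube of $Z_1$, together with a smooth morphism $\widetilde g_1:\widetilde{\X}\to\ZZ_1$ admitting $h^{-1}(Z_1)$ as a section $i_1$ with $\widetilde g_1\circ i_1=\id$, compatibly with the log structures. Pulling $\E$ back to $\widetilde{\E}=h^*(\E)$ preserves (a),(b') (the exponents being unchanged, cf.\ \ref{remexp}.\ref{remexp-5}), and by the $\D$-module translation of Proposition \ref{suppiso} (using that $h_K$ is an isomorphism over $]Z_1[$, cf.\ \ref{fib}) the vanishing for $\X$ is equivalent to $\R\underline{\Gamma}^\dag_{h^{-1}(Z_1)}\widetilde u_+(\widetilde{\E})=0$ on $\widetilde{\X}$. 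Corollary \ref{Nobuobis} applied to the smooth morphism $\widetilde g_1$ gives $\widetilde g_{1+}\,\R\underline{\Gamma}^\dag_{h^{-1}(Z)}\widetilde u_+(\widetilde{\E})=0$, and the same Mayer--Vietoris argument (inductive hypothesis) identifies $\R\underline{\Gamma}^\dag_{h^{-1}(Z_1)}\widetilde u_+(\widetilde{\E})$ with $\R\underline{\Gamma}^\dag_{h^{-1}(Z)}\widetilde u_+(\widetilde{\E})$; since $h^{-1}(Z_1)=i_1(\ZZ_1)$ is a smooth closed subscheme of codimension one, \cite[4.4.5]{Beintro2} gives $\R\underline{\Gamma}^\dag_{h^{-1}(Z_1)}\widetilde u_+(\widetilde{\E})\simeq i_{1+}i_1^{!}\widetilde u_+(\widetilde{\E})$, and the transitivity \ref{trans+} together with $\widetilde g_1\circ i_1=\id$ forces $i_1^{!}\widetilde u_+(\widetilde{\E})=0$, whence the complex vanishes. (Equivalently, one may run this last step through Theorem \ref{overconvbis} and \ref{overconvbis-cone2}, the condition (b') --- rather than merely (b) --- being what makes the relative cohomology along $Z_1$ collapse, by Theorem \ref{Nobuo}.)

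The main obstacle is precisely the reduction to this section situation: over a general $\X$ the strata of $Z$ need not be retracts --- already $\Spec\Q(\sqrt{2})\hookrightarrow\A^1_\Q$ has no retraction --- so the clean argument of \ref{rhoisolemm} does not transport verbatim and one is forced into the diagonal construction; the delicate points are then the $\D$-module avatar of \ref{suppiso}/\ref{fib} (that passing along the \emph{non-finite} étale morphism $h$ does not disturb the local cohomology supported on $Z_1$) and the careful bookkeeping of the auxiliary divisor produced by Mayer--Vietoris. Everything else is formal manipulation with the cohomological operators of §\ref{nota-1.2} and the comparison theorems \ref{Nobuo}, \ref{Nobuobis}, \ref{overconvbis}.
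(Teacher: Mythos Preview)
Your strategy is genuinely different from the paper's, and it is essentially sound, but the key step on $\widetilde{\X}$ is misfired.

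\textbf{Comparison.} The paper inducts on $\dim X$. It invokes Kedlaya's finite \'etale cover theorem \cite{Kedlaya-coveraffinebis} to produce a finite \'etale $h:\X\to\widehat{\A}^n_\V$ carrying the components of $\ZZ$ into coordinate hyperplanes; the affine case \ref{rhoisolemm} then kills $\R\underline{\Gamma}^\dag_Z$ up to the extra components $\ZZ'$ created by $h$, and the residual $\R\underline{\Gamma}^\dag_{Z\cap Z'}$ is handled via the technical Lemma \ref{lemmtheorhoiso} and the inductive hypothesis in lower dimension. Your route inducts on the number $s$ of components, needs no external cover theorem, and uses directly on $\X$ the diagonal/fibration $\widetilde{\X}\to\ZZ_1$ of step $8^\circ$ of \ref{Nobuorigid}; this is more self-contained but forces you to carry an auxiliary overconvergence divisor $D$ through the induction and to extend \ref{u+f+smooth} to that setting (routine).

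\textbf{The gap.} Your appeal to Corollary \ref{Nobuobis} for $\widetilde g_1:\widetilde{\X}\to\ZZ_1$ is not legitimate: for $i\geq 2$ one has $h^{-1}(\ZZ_i)=\widetilde g_1^{-1}(\ZZ_1\cap\ZZ_i)$, so these components are \emph{vertical} over $\ZZ_1$ and $h^{-1}(\ZZ)$ is not a relatively strict normal crossing divisor over $\ZZ_1$; \ref{Nobuobis} does not apply. What does apply is precisely the setup of Theorem \ref{overconvbis} (your parenthetical is the right idea, not an alternative): the section is $h^{-1}(\ZZ_1)$ and $\widetilde g_1^{-1}(\ZZ_1')=\bigcup_{i\geq 2}h^{-1}(\ZZ_i)$. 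Under (b') the integer $c$ of \ref{overconv} is $0$, so taking $m=0$ in \ref{overconv-iso} the right-hand complex is $0$; translating via \ref{overconvbis} and \ref{overconvbis-cone2} one gets $\widetilde g^{\#}_{1+}\,\R\underline{\Gamma}^\dag_{h^{-1}(Z_1)}(\widetilde{\E})=0$. Composing with $u_{1+}$ (for $u_1:\ZZ_1^{\#}\to\ZZ_1$), using transitivity \ref{trans+} and \ref{u+commhdag-coro}, then $\R\underline{\Gamma}^\dag_{h^{-1}(Z_1)}=\tilde i_{1+}\tilde i_1^{!}$ and $\widetilde g_1\circ\tilde i_1=\mathrm{id}$, yields $\tilde i_1^{!}\tilde u_+(\widetilde{\E})=0$. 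Finally $i_1^{!}u_{D+}(\E)=\tilde i_1^{!}h^{!}u_{D+}(\E)\simeq\tilde i_1^{!}\tilde u_+(\widetilde{\E})=0$ by transitivity and (the divisor-variant of) \ref{u+f+smooth}, whence $\R\underline{\Gamma}^\dag_{Z_1}u_{D+}(\E)=0$. Note that with this correction the Mayer--Vietoris identification $\R\underline{\Gamma}^\dag_{h^{-1}(Z_1)}\simeq\R\underline{\Gamma}^\dag_{h^{-1}(Z)}$ on $\widetilde{\X}$ is no longer needed: \ref{overconvbis} already concerns $Z_1$ alone.
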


\begin{proof}
This is equivalent to prove that $\R \underline{\Gamma} ^\dag _Z u _{+} (\E) =0$ (see \ref{exacttriu+F}).
We proceed by induction on the dimension of $X$.

\noindent $1^\circ$ {\it How to use the case \ref{rhoisolemm} of affine spaces.}

Let $x$ be a point of $\X$ and let $\ZZ _1, \dots, \ZZ _r$ be the irreducible components of $\ZZ$ which contain
$x$. By \cite[Theorem2]{Kedlaya-coveraffinebis}, there exist
an open dense subset $\U$ of $\X$ containing $x$ and
a finite \'etale morphism
$h_0$ : $U \rightarrow \A _k ^n$
such that
$\ZZ \cap \U = \ZZ _1 \cap \dots \cap \ZZ _r$ and
$Z _1,\dots, Z _r$ map by $h_0$ to coordinate hyperplanes $H _1,\dots, H _r$.
Since the theorem is local in $\X$, we can suppose that $\U = \X$.

Let $h$ : $\X \rightarrow \Spf \V \{ t _1,\dots, t_n\}$
be a lifting of $h _0$. Denote by $\HH _1,\dots, \HH _n$ the coordinate hyperplanes of
$\Spf \V \{ t _1,\dots, t_n\}$,
$\HH := \HH _1 \cup \cdots \cup \HH _r$, $\ZZ '':= h ^{-1} (\HH)$.
Let $\ZZ'$ be the union of the irreducible components of $\ZZ''$ which are not an irreducible component
of $\ZZ$.
Denote by $\X ^{\# \prime}= (\X, \ZZ'')$,
$\smash{\widehat{\A}} ^n _\V=\Spf \V \{ t _1,\dots, t_n\}$,
$\smash{\widehat{\A}} ^{n\#} _\V  = (\Spf \V \{ t _1,\dots, t_n\}, \HH)$,
$h ^\#$ : $\X ^{\# \prime} \rightarrow \smash{\widehat{\A}} ^{n\#} _\V$,
$w$ : $\smash{\widehat{\A}} ^{n\#} _\V \rightarrow \smash{\widehat{\A}} ^{n} _\V $,
$v$ : $\X ^{\# \prime} \rightarrow \X$.
We get the commutative diagram:
\begin{equation}
  \notag
  \xymatrix{
  {\X} \ar@{=}[r]  & {\X} \ar[r] ^-{h} & {\smash{\widehat{\A}} ^n _\V} \\
{(\X ,\ZZ)} \ar[u] ^-{u}  & {(\X ,\ZZ'')} \ar[l] _-{\tilde{u}} \ar[u] ^-{v} \ar[r] ^-{h ^\#} &
{\smash{\widehat{\A}} ^{n\#} _\V.} \ar[u] ^-{w}
}
\end{equation}

\noindent $2^\circ$ {\it The canonical morphism
$\R \underline{\Gamma} ^\dag _{Z\cap Z'} u _{+} (\E) \rightarrow
\R \underline{\Gamma} ^\dag _Z u _{+} (\E)$
is an isomorphism.}

We notice (for example see \ref{I.6.11}) that
$\E$ is also a coherent
$\D ^\dag _{\X^{\#\prime},\Q}$-module which is a locally projective
$\O_{\X,\Q}$-module of finite type.
By \ref{h+finiteetalecondab},
since $h $ is finite and \'etale,
$h ^\#  _{+}  (\E )$ is a coherent
$\D^\dag _{\smash{\widehat{\A}} ^{n\#} _\V,\Q}$-module which is a locally projective
$\O_{\smash{\widehat{\A}} ^{n} _\V ,\Q}$-module of finite type and which satisfies
both conditions (a) and (b') of \ref{Nobuo}.
Hence, by \ref{rhoisolemm},
$\R \underline{\Gamma} ^\dag _{H} w _{+} h ^\#  _{+}  (\E)=0$.
We have:
$h _{+} (\R \underline{\Gamma} ^\dag _{Z''} v _{+} (\E ))
\riso
\R \underline{\Gamma} ^\dag _{H} h _{+} v _{+} (\E)
\riso
\R \underline{\Gamma} ^\dag _{H} w _{+} h ^\#  _{+}  (\E)$
(see \cite[2.2.18.2]{caro_surcoherent} for the first isomorphism and
\ref{trans+} for the second one).
Then, by \ref{h+=0is0}:
$\R \underline{\Gamma} ^\dag _{Z''} v _{+} (\E )=0$.
Since $Z \subset Z''$, we get:
$\R \underline{\Gamma} ^\dag _{Z} v _{+} (\E )=0$.

It follows from \ref{I.6.11-iso}:
$\E (\hdag Z')\riso\tilde{u} _{+} (\E)  $. Then, by \ref{trans+}:
$u _+ (\E (\hdag Z')) \riso u _+ \tilde{u} _{+} (\E) \riso v _+ (\E)$.
This implies $\R \underline{\Gamma} ^\dag _{Z} u _{+} (\E (\hdag Z'))=0$.
By \ref{u+commhdag-equ},
$u _{+} (\E (\hdag Z')) \riso (u _{+} (\E )) (\hdag Z').$
Hence:
$\R \underline{\Gamma} ^\dag _Z (\hdag Z')  u _{+} (\E)=0$.
Using the exact triangle of localization of $\R \underline{\Gamma} ^\dag _{Z} u _{+} (\E) $ with respect to $Z'$,
this means that the canonical morphism
$\R \underline{\Gamma} ^\dag _{Z} \R \underline{\Gamma} ^\dag _{Z'} u _{+} (\E)
\rightarrow
\R \underline{\Gamma} ^\dag _Z u _{+} (\E)$
is an isomorphism.
Since $\R \underline{\Gamma} ^\dag _{Z\cap Z'} u _{+} (\E)\riso
\R \underline{\Gamma} ^\dag _{Z} \R \underline{\Gamma} ^\dag _{Z'} u _{+} (\E)$
(see \cite[2.2.8]{caro_surcoherent}),
we come down to prove
$\R \underline{\Gamma} ^\dag _{Z ' \cap Z} u _{+} (\E)=0$.

\noindent $3^\circ$ {\it We check that $\R \underline{\Gamma} ^\dag _{Z ' \cap Z} u _{+} (\E)=0$.}

When $Z \cap Z'$ is empty, this is obvious.
It remains to deal with the case where $Z\cap Z'$ is not empty.
Let $x$ be a closed point of $Z\cap Z'$,
$\ZZ _1,\dots , \ZZ _r$ be the irreducible components of $\ZZ$ containing $x$,
$\ZZ _{r+1}, \dots , \ZZ _s$ be the irreducible components of $\ZZ'$ containing $x$.
Since $\R \underline{\Gamma} ^\dag _{Z ' \cap Z} u _{+} (\E)$ is zero outside $Z \cap Z'$,
it is sufficient to prove its nullity around $x$.
Then, we can suppose that
$\ZZ = \ZZ _1 \cup \cdots \cup \ZZ _r$ and $\ZZ ' = \ZZ _{r+1} \cup \cdots \cup \ZZ _s$.

To end the proof,
we need the following lemma.
\begin{lemm2}
\label{lemmtheorhoiso}
With the above notation,
$\X'$ be an intersection of some irreducible components of $\ZZ'$.
Let $\X ^{\prime \#} := (\X', \X' \cap \ZZ)$,
$\iota$ : $\X' \hookrightarrow \X$,
$\iota ^\#$ : $\X ^{\prime \#} \hookrightarrow \X ^\#$,
$u'$ : $\X ^{\prime \#} \rightarrow \X '$
be the canonical morphisms.
For any $\E ^{(\bullet)} \in \smash{\underset{^{\longrightarrow}}{LD}} ^{\mathrm{b}} _{\Q ,\mathrm{qc}}
  ( \smash{\widehat{\D}} _{\X ^{\#}} ^{(\bullet)})$,
   we have the canonical isomorphism:
   $\iota ^! u _+ (\E^{(\bullet)}) \riso u ' _+  \iota ^{\# ! }( \E^{(\bullet)}) $.
\end{lemm2}

\begin{proof}
We keep the notation
of the section \ref{nota-1.2}, e.g., $X ' _i$ means the reduction modulo $\pi ^{i+1}$ of $\X'$ etc.
From $\D ^{(m)} _{X _i \leftarrow X _i^\#}  \riso \D ^{(m)} _{X _i} (Z _i)$ (see \cite[5.2.4]{caro_log-iso-hol})
and by \cite[5.1.2]{caro_log-iso-hol},
we get $\D ^{(m)} _{X _i \leftarrow X _i^\#} \otimes ^\L _{\D ^{(m)} _{X _i^\#}} \E _i ^{(m)}
\riso
\D ^{(m)} _{X _i} \otimes ^\L _{\D ^{(m)} _{X _i^\#}} \E _i ^{(m)} (Z _i)$.
Thus:
$$\D ^{(m)} _{X'   _{ i} \rightarrow X _i} \otimes ^\L
_{\iota^{\text{-}1}\D ^{(m)} _{X _i}} \iota^{\text{-}1} (\D ^{(m)}
_{X _i \leftarrow X _i^\#} \otimes ^\L _{\D ^{(m)} _{X _i^\#}} \E _i ^{(m)}) \riso \D ^{(m)} _{X'  _{ i} \rightarrow X _i} \otimes ^\L
_{\iota^{\text{-}1}  \D ^{(m)} _{X _i^\#}} \iota^{\text{-}1}  \E _i ^{(m)}
(Z_i).$$

  The canonical morphism
  $\D ^{(m)} _{X _i^\#} \rightarrow \D ^{(m)} _{X _i}$ induces
  the morphism of $(\D ^{(m)} _{X _{ i} ^{\prime \#}}, \iota^{-1}\D ^{(m)} _{X _i^\#})$-bimodules:
$\D ^{(m)} _{X ^{\prime \#}  _{i} \rightarrow X _i^\#}
\rightarrow \D ^{(m)} _{ X' _{ i} \rightarrow  X _i}$.
We get:
$\D ^{(m)} _{X'  _{ i} } \otimes _{\D ^{(m)} _{X ^{\prime \#} _{ i} }}
\D ^{(m)} _{X ^{\prime \#}  _{ i} \rightarrow X _i^\#} \rightarrow \D ^{(m)} _{ X' _{ i} \rightarrow  X _i}$.
By a computation in local coordinates, we check that this morphism is an isomorphism.
Since $\D ^{(m)} _{X ^{\prime \#}  _{ i} \rightarrow X _i^\#} $ is locally free over
$\D ^{(m)} _{X ^{\prime \#}  _{ i} }$,
we obtain:
$\D ^{(m)} _{X ' _{ i} } \otimes ^\L _{\D ^{(m)} _{X ^{\prime \#} _{ i} }}
\D ^{(m)} _{X ^{\prime \#}  _{ i} \rightarrow X _i^\#} \riso \D ^{(m)} _{ X ' _{ i} \rightarrow  X _i}$.
This implies:
$$\D ^{(m)} _{X'  _{ i} \rightarrow X _i} \otimes ^\L _{\iota^{\text{-}1}  \D ^{(m)} _{X _i^\#}} \iota^{\text{-}1}  \E _i ^{(m)} (Z_i)
\riso
(\D ^{(m)} _{X'  _{ i} } \otimes ^\L _{\D ^{(m)} _{X ^{\prime \#}  _{ i} }}
\D ^{(m)} _{X ^{\prime \#}  _{ i} \rightarrow X _i^\#} )
\otimes ^\L _{\iota^{\text{-}1}\D ^{(m)} _{X _i^\#}}
\iota^{\text{-}1}  (\E _i ^{(m)}  (Z _i ))
.$$
Moreover,
$\D ^{(m)} _{X ^{\prime \#}  _{ i} \rightarrow X _i^\#}
\otimes ^\L _{\iota^{\text{-}1}\D ^{(m)} _{X _i^\#}}
\iota^{\text{-}1}  (\E _i ^{(m)}  (Z _i ))
\riso
(\D ^{(m)} _{X ^{\prime \#} _{ i} \rightarrow X _i^\#} \otimes ^\L _{\iota^{\text{-}1}\D ^{(m)} _{X _i^\#}}
\iota^{\text{-}1}  \E _i ^{(m)} ) (Z _i \cap X ' _{ i})$.
From
$\D ^{(m)} _{X'  _{ i} \leftarrow X ^{\prime \#}  _{ i} }\riso
\D ^{(m)} _{X '  _{ i} } (Z _i \cap X ' _{ i})$ (see \cite[5.2.4]{caro_log-iso-hol})
and using the commutation of the functor `$-(Z _i \cap X ' _{ i})$'
with `$-\otimes ^\L _{\D ^{(m)} _{X ^{\prime \#}  _{ i} }}-$'
(see \cite[5.1.2]{caro_log-iso-hol}),
we obtain:
$$
\D ^{(m)} _{X'  _{ i} } \otimes ^\L _{\D ^{(m)} _{X ^{\prime \#}  _{ i} }}
\left ( (\D ^{(m)} _{X ^{\prime \#}  _{ i}
\rightarrow X _i^\#} \otimes ^\L _{\iota^{\text{-}1}\D ^{(m)} _{X _i^\#}}
\iota^{\text{-}1}  \E _i ^{(m)} ) (Z _i \cap X ' _{ i})
\right)
\riso
\D ^{(m)} _{X '  _{ i} \leftarrow X ^{\prime \#}  _{ i} }
\otimes  ^\L _{\D ^{(m)} _{X ^{\prime \#}  _{ i} }}
(\D ^{(m)} _{X ^{\prime \#}  _{ i} \rightarrow X _i^\#} \otimes ^\L _{\iota^{\text{-}1}\D ^{(m)} _{X _i^\#}}
\iota^{\text{-}1}  \E _i ^{(m)} ).
$$
Then, we get by composition:
$\D ^{(m)} _{X ' _{i} \rightarrow X _i} \otimes ^\L _{\iota^{\text{-}1}\D ^{(m)} _{X _i}}
\iota^{\text{-}1} (\D ^{(m)} _{X _i \leftarrow X _i^\#} \otimes ^\L _{\D ^{(m)} _{X _i^\#}} \E _i ^{(m)})
\riso
\D ^{(m)} _{X'  _{ i} \leftarrow X ^{\prime \#}  _{i} }
\otimes  ^\L _{\D ^{(m)} _{X ^{\prime \#}  _{i} }}
(\D ^{(m)} _{X ^{\prime \#}  _{i} \rightarrow X _i^\#} \otimes ^\L _{\iota^{\text{-}1}\D ^{(m)} _{X _i^\#}}
\iota^{\text{-}1}  \E _i ^{(m)} )
$,
which is up to a shift the required isomorphism at the level $m$.
\end{proof}
In particular, let $\ZZ _s ^\# := (\ZZ _s, \ZZ _s \cap \ZZ)$,
$\iota$ : $\ZZ _s \hookrightarrow \X$,
$\iota ^\#$ : $\ZZ _s ^\# \hookrightarrow \X ^\#$,
$u'$ : $\ZZ _s ^\# \rightarrow \ZZ _s$
be the canonical morphisms.
We obtain: $\R \underline{\Gamma} ^\dag _{Z _s \cap Z} u _{+} (\E)
\riso \R \underline{\Gamma} ^\dag _{Z} \iota_+ \iota^! u _{+} (\E)
\underset{\ref{lemmtheorhoiso}}{\riso} \R \underline{\Gamma} ^\dag
_{Z} \iota_+ u ' _{+} \iota^{\#!}  (\E) \riso \iota_+  \R
\underline{\Gamma} ^\dag _{Z\cap Z_s } u'  _{+} \iota^{\#!}  (\E)$
(see \cite[4.4.5]{Beintro2} for the first isomorphism).
Since $\E$ is flat over $\O _{\X,\Q}$, then: $\iota^{\#!} (\E)
[1] \riso \iota^{\#*} (\E) $. Since $\iota^{\#*} (\E)$ is a coherent
$\D ^\dag _{\ZZ ^\# _s ,\Q}$-module which is a locally projective
$\O_{\ZZ _s,\Q}$-module of finite type and which satisfies
conditions (a) and (b') of \ref{Nobuo} (see the proof of \ref{overconv}), since $\dim Z _s <
\dim X$, the induction hypothesis implies that $\R
\underline{\Gamma} ^\dag _{Z\cap Z_s } u' _{+} \iota^{\#!}  (\E) =0$.
Then: $\R \underline{\Gamma} ^\dag _{Z _s \cap Z} u _{+} (\E)=0$.
Similarly, we check that, for any $j$ between $r +1$ and $s$, $\R
\underline{\Gamma} ^\dag _{Z _j \cap Z} u _{+} (\E)=0$. Hence,
using Mayer-Vietoris exact triangles (see \cite[2.2.16]{caro_surcoherent}),
$\R \underline{\Gamma} ^\dag _{Z ' \cap Z} u _{+} (\E)=0$.
\end{proof}

\begin{exam}
The exponents of an overconvergent isocrystals with nilpotent residues (see \ref{unipotent}) are zero.
Then it follows from \ref{theorhoiso} the holonomicity of
overconvergent isocrystals with unipotent monodromy along $Z$.
\end{exam}

\begin{prop}
\label{lemm-diag-coroNobuo3}
Let $\E \in D ^\mathrm{b} _\mathrm{coh} (\D ^\dag _{\X ^\#} (\hdag D) _\Q)$.
Suppose that there exist a smooth morphism $\X \rightarrow \T$ of smooth formal $\V$-schemes over $\S$
such that $\ZZ $ is a relatively strict normal crossing divisor of $\X$ over $\T$. Then, we have the canonical quasi-isomorphism:
  \begin{equation}
  \label{diag-coroNobuo3}
  \Omega _{\X ^\#/\T,\Q} ^\bullet \otimes _{\O _{\X,\Q}}  \E
  \riso
  \Omega _{\X /\T,\Q} ^\bullet \otimes _{\O _{\X,\Q}}
  u_{D+} (\E ).
\end{equation}
\end{prop}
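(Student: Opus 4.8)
The plan is to deduce the quasi-isomorphism \ref{diag-coroNobuo3} formally from the description of the transfer bimodule of a log-smooth morphism recalled in \ref{h+smooth}, applied both to $g ^\#\,:\,\X ^\#\rightarrow\T$ and to $g\,:\,\X\rightarrow\T$, together with the transitivity of transfer bimodules along the factorisation $\X ^\#\overset{u}{\rightarrow}\X\overset{g}{\rightarrow}\T$ (note $g ^\# = g\circ u$). Here $g ^\#$ is log-smooth, because $g$ is smooth and $\ZZ$ is a relatively strict normal crossing divisor of $\X$ over $\T$; $g$ is smooth; and the underlying morphism of $u$ is the identity of $\X$, of relative dimension $0$, so that $\R u _* = \mathrm{id}$ and, by \ref{defi-rho}, $u _{D+}(\E) = \D ^\dag _{\X\leftarrow\X ^\#}(\hdag D) _\Q\otimes ^\L _{\D ^\dag _{\X ^\#}(\hdag D) _\Q}\E$. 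Note moreover that $d _{\X ^\#/\T} = d _{X/T} = d$, since $\ZZ$ being a relative normal crossing divisor makes $\Omega ^1 _{\X ^\#/\T,\Q}$ locally free of rank $d$, so the various shifts below will match.

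Concretely, I would first invoke \ref{h+smooth} for $g ^\#$ and for $g$, obtaining canonical isomorphisms of complexes of bimodules on $\X$
\begin{align*}
  \D ^\dag _{\T\leftarrow\X ^\#}(\hdag D) _\Q &\riso \Omega ^\bullet _{\X ^\#/\T,\Q}\otimes _{\O _{\X,\Q}}\D ^\dag _{\X ^\#}(\hdag D) _\Q\,[d],
  \\
  \D ^\dag _{\T\leftarrow\X}(\hdag D) _\Q &\riso \Omega ^\bullet _{\X/\T,\Q}\otimes _{\O _{\X,\Q}}\D ^\dag _{\X}(\hdag D) _\Q\,[d].
\end{align*}
Applying $-\otimes ^\L _{\D ^\dag _{\X ^\#}(\hdag D) _\Q}\E$ to the first and $-\otimes ^\L _{\D ^\dag _{\X}(\hdag D) _\Q}u _{D+}(\E)$ to the second, I am reduced to producing a canonical isomorphism between the two middle terms
$$\D ^\dag _{\T\leftarrow\X ^\#}(\hdag D) _\Q\otimes ^\L _{\D ^\dag _{\X ^\#}(\hdag D) _\Q}\E\ \riso\ \D ^\dag _{\T\leftarrow\X}(\hdag D) _\Q\otimes ^\L _{\D ^\dag _{\X}(\hdag D) _\Q}u _{D+}(\E).$$
For this I would use the transitivity of transfer bimodules along $\X ^\#\overset{u}{\rightarrow}\X\overset{g}{\rightarrow}\T$: because the underlying morphism of $u$ is the identity (so that no higher direct image $\R u _*$ intervenes), one has a canonical isomorphism of $(g ^{-1}\D ^\dag _{\T,\Q},\D ^\dag _{\X ^\#}(\hdag D) _\Q)$-bimodules on $\X$
$$\D ^\dag _{\T\leftarrow\X ^\#}(\hdag D) _\Q\ \riso\ \D ^\dag _{\T\leftarrow\X}(\hdag D) _\Q\otimes ^\L _{\D ^\dag _{\X}(\hdag D) _\Q}\D ^\dag _{\X\leftarrow\X ^\#}(\hdag D) _\Q ;$$
tensoring this by $-\otimes ^\L _{\D ^\dag _{\X ^\#}(\hdag D) _\Q}\E$, using associativity of the tensor product and the above expression of $u _{D+}(\E)$, gives precisely the required isomorphism. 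Composing and deleting the common shift $[d]$ produces the canonical quasi-isomorphism $\Omega ^\bullet _{\X ^\#/\T,\Q}\otimes _{\O _{\X,\Q}}\E\riso\Omega ^\bullet _{\X/\T,\Q}\otimes _{\O _{\X,\Q}}u _{D+}(\E)$.

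The step I expect to require the most care — the main, though purely technical, obstacle — is this transitivity isomorphism of transfer bimodules, together with the verification that the composite isomorphism really is the canonical morphism \ref{diag-coroNobuo3}. For the former, thanks to Berthelot's notion of quasi-coherence one reduces modulo $\pi ^{i+1}$ to the analogous statement for log-schemes, which is classical (exactly as in the proof of \ref{trans+}); there the intermediate direct image by $u _i$ would normally introduce an $\R u _{i*}$, but $u _i$ is the identity on underlying topological spaces, so this does not occur and one gets an honest sheaf-theoretic isomorphism on $X _i$, to which one then applies the projective limit over $i$, the functor $-\otimes _\Z\Q$ and the inductive limit on the level $m$. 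For the latter, all isomorphisms produced by \ref{h+smooth} and by transitivity are canonical, hence so is their composite; that it coincides with \ref{diag-coroNobuo3} can be checked locally, using relative coordinates $z _1,\dots,z _d$ of $\X$ over $\T$ adapted to $\ZZ$, where it reduces to the evident identity $\frac{dz _i}{z _i}\otimes e\mapsto dz _i\otimes\frac{1}{z _i}e$ in the logarithmic de Rham differential. In short, the statement is a formal consequence of \ref{h+smooth}, of the definition of $u _{D+}$ in \ref{defi-rho}, and of the transitivity \ref{trans+}.
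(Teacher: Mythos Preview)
Your argument is correct and is the natural formal proof: identify each side with the transfer bimodule to $\T$ tensored with the appropriate complex (via \ref{h+smooth} applied to $g ^\#$ and to $g$), then bridge the two using the transitivity of transfer bimodules along $\X ^\#\overset{u}{\to}\X\overset{g}{\to}\T$ (underlying \ref{trans+}) together with the definition of $u _{D+}$ from \ref{defi-rho}. The paper itself gives no details here, simply citing \cite[6.3]{caro_log-iso-hol}; your proof is essentially the expected argument behind that reference, so there is nothing to correct.
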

\begin{proof}
The proof is similar to that of \cite[6.3]{caro_log-iso-hol}.
\end{proof}

The second part of the next corollary improves the statements of \ref{Nobuorigid} (or \ref{Nobuo}):
\begin{theo}
\label{coro-theorhoiso}
  Let $\E$ be a coherent
$\D_{\X^\#,\Q}^\dag$-module which is a locally projective
$\O_{\X,\Q}$-module of finite type and which satisfies conditions (a) and (b') of
\ref{Nobuo}.
Then $\E (\hdag Z)$ is a holonomic
$\D ^\dag _{\X,\Q}$-module.

Moreover, suppose that there exist a smooth morphism $\X \rightarrow \T$ of smooth formal $\V$-schemes over $\S$
such that $\ZZ $ is a relatively strict normal crossing divisor of $\X$ over $\T$.
Then the canonical morphism
$\Omega _{\X ^\#/\T,\Q} ^\bullet \otimes _{\O _{\X,\Q}}  \E
\rightarrow
\Omega _{\X /\T,\Q} ^\bullet \otimes _{\O _{\X,\Q}}
  \E (\hdag Z)$
is a quasi-isomorphism.
\end{theo}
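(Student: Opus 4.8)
The plan is to deduce both assertions of \ref{coro-theorhoiso} from the isomorphism $\rho\,:\,u_{+}(\E)\riso\E(\hdag Z)$ established in \ref{theorhoiso}, combined with the known structure theory of direct images by log-smooth morphisms and the holonomicity of $u_{+}(\E)$. First I would prove the holonomicity statement. Since $\E$ is a coherent $\D_{\X^\#,\Q}^\dag$-module, locally projective of finite type over $\O_{\X,\Q}$, it corresponds by \ref{Ddag=overcvcondition}.\ref{Ddag=overcvcondition1} to a convergent log-isocrystal on $\X^\#$, so in particular it is a coherent $\D^\dag_{\X^\#,\Q}$-module. Then $u_{+}(\E)$ is holonomic: this follows from \cite[5.25]{caro_log-iso-hol} (recalled in \ref{defi-rho}, where $\H^j(u_{D+}(\E))=0$ for $j\ne 0$ for an overconvergent log-isocrystal) together with the results on holonomicity of direct images by $u$ of log-isocrystals in \cite{caro_log-iso-hol}; alternatively one invokes the relative duality isomorphism $\DD_{\X}\circ u_+(\E)\riso u_+(\E^\vee(-\ZZ))$ quoted in the introduction to get coherence of the dual, hence overholonomicity, hence holonomicity. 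Via the isomorphism $\rho$ of \ref{theorhoiso}, $\E(\hdag Z)\riso u_{+}(\E)$ is then a holonomic $\D^\dag_{\X,\Q}$-module.

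Next I would treat the quasi-isomorphism. Under the hypothesis that $\X\rightarrow\T$ is smooth with $\ZZ$ a relatively strict normal crossing divisor, \ref{lemm-diag-coroNobuo3} furnishes the canonical quasi-isomorphism
\begin{equation}
\Omega_{\X^\#/\T,\Q}^\bullet\otimes_{\O_{\X,\Q}}\E\riso\Omega_{\X/\T,\Q}^\bullet\otimes_{\O_{\X,\Q}}u_{D+}(\E),\notag
\end{equation}
where here $D=\emptyset$, so $u_{D+}=u_{+}$. Applying $\rho\,:\,u_{+}(\E)\riso\E(\hdag Z)$ and the fact that $\Omega_{\X/\T,\Q}^\bullet\otimes_{\O_{\X,\Q}}(-)$ is a functor preserving quasi-isomorphisms of complexes of $\O_{\X,\Q}$-modules, I obtain
\begin{equation}
\Omega_{\X^\#/\T,\Q}^\bullet\otimes_{\O_{\X,\Q}}\E\riso\Omega_{\X/\T,\Q}^\bullet\otimes_{\O_{\X,\Q}}\E(\hdag Z),\notag
\end{equation}
and one checks that this composite is the canonical comparison morphism by unwinding the definition of $\rho$ from \ref{defi-rho} (the inclusion $\D^\dag_{\X}(\hdag D)_\Q\otimes_{\O_\X}\O_\X(\ZZ)\subset\D^\dag_{\X}(\hdag D\cup Z)_\Q$ is exactly what induces the map on de Rham complexes $\Omega^\bullet_{\X^\#/\T}\to\Omega^\bullet_{\X/\T}\otimes\O_\X(\hdag Z)$). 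This is essentially formal once \ref{theorhoiso} and \ref{lemm-diag-coroNobuo3} are in hand.

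The main obstacle is really concentrated in \ref{theorhoiso}, which is assumed here, so within the present statement the delicate point is the compatibility of maps: verifying that the composite $\Omega_{\X^\#/\T,\Q}^\bullet\otimes\E\to\Omega_{\X/\T,\Q}^\bullet\otimes u_+(\E)\xrightarrow{\rho}\Omega_{\X/\T,\Q}^\bullet\otimes\E(\hdag Z)$ coincides with the natural morphism one wants to prove is a quasi-isomorphism, rather than some twist of it. I would handle this by tracing through \ref{h+smoothiso} (which identifies $g^\#_{D,+}$ with the log-de Rham complex when $h^\#$ is log-smooth, applied here to $u$) and \ref{remaTheo135}, which already records that the morphism \ref{Nobuo-iso} being an isomorphism is equivalent to $g^\#_{D,+}\circ\R\underline{\Gamma}^\dag_Z(\E)=0$, hence to $\rho$ being an isomorphism; thus the present quasi-isomorphism is literally the sheaf-level refinement (without $\R g_*$) of \ref{Nobuo}, obtained by dropping the push-forward now that we know $\R\underline{\Gamma}^\dag_Z u_+(\E)=0$ locally on $\X$. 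Finally, I would note that the first assertion (holonomicity of $\E(\hdag Z)$) does not require the existence of $\T$ and should be stated first, exactly as in the excerpt, with the de Rham comparison appended under the extra smoothness hypothesis.
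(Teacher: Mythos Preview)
Your proposal is correct and follows essentially the same approach as the paper. The paper's proof is a terse two-liner: the holonomicity of $\E(\hdag Z)$ is attributed to \cite[5.25]{caro_log-iso-hol} (which gives the holonomicity of $u_+(\E)$, transferred via the isomorphism $\rho$ of \ref{theorhoiso}), and the de Rham quasi-isomorphism is deduced from \ref{theorhoiso} combined with \ref{lemm-diag-coroNobuo3}, exactly as you outline. Your additional discussion of the compatibility of the composite with the canonical morphism is reasonable care that the paper leaves implicit.
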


\begin{proof}
  The first assertion is a consequence of \cite[5.25]{caro_log-iso-hol}
  and the second one follows from
  \ref{theorhoiso} and \ref{lemm-diag-coroNobuo3}.
\end{proof}

We finish this section by checking that the conclusions of theorems \ref{theorhoiso} (and then \ref{coro-theorhoiso})
are stable under inverse image by smooth morphisms.

\begin{prop}
\label{u+f+smooth}
  Let $f$ : $\X '\rightarrow \X$ be a smooth morphism of smooth formal $\V$-schemes,
  $\ZZ':= f ^{-1} (\ZZ)$,
   $\X ^{\prime \#}= (\X', \ZZ')$,
   $u'$ : $\X ^{\prime \#} \rightarrow \X '$ be the canonical morphisms,
  $f ^\#$ : $\X ^{\prime \#} \rightarrow \X ^\#$ be the morphism induced by $f$.
Let $\E$ be a coherent
$\D ^\dag _{\X^\#,\Q}$-module which is a locally projective
$\O_{\X,\Q}$-module of finite type.
Then we have the canonical isomorphism:
\begin{equation}
  f ^* u_+ (\E) \riso u' _+ f ^{\# *} (\E).
\end{equation}
\end{prop}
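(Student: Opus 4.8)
The plan is to reduce the assertion to a transitivity isomorphism between transfer bimodules, using that both sides are in fact concentrated in degree $0$. First, by \ref{Ddag=overcvcondition}.\ref{Ddag=overcvcondition2} the module $\E$ is the one associated with a convergent log-isocrystal on $\X^\#$; since $f$ is smooth, $f^{\#*}(\E)=\O_{\X'}\otimes_{f^{-1}\O_{\X}}f^{-1}\E$ is again a coherent $\D^\dag_{\X^{\prime\#},\Q}$-module which is locally projective of finite type over $\O_{\X',\Q}$, hence a convergent log-isocrystal on $\X^{\prime\#}$. By the last assertion of \ref{defi-rho} (applied with empty divisor), both $u_+(\E)$ and $u'_+(f^{\#*}\E)$ therefore have vanishing cohomology in nonzero degrees, and
$$u_+(\E)\riso\D^\dag_{\X,\Q}\otimes_{\D^\dag_{\X^\#,\Q}}\E(\ZZ),\qquad u'_+(f^{\#*}\E)\riso\D^\dag_{\X',\Q}\otimes_{\D^\dag_{\X^{\prime\#},\Q}}(f^{\#*}\E)(\ZZ').$$
Since $f$ is smooth, $f^*$ of a $\D^\dag_{\X,\Q}$-module is its $\O$-module inverse image equipped with the induced connection, hence is exact, so $f^*u_+(\E)$ is likewise concentrated in degree $0$.

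Next I observe that the twist by $\ZZ$ commutes with $f^{\#*}$. Indeed, as $\ZZ'=f^{-1}(\ZZ)$ and $f$ is smooth, the exact sequences of logarithmic differentials give $f^*\omega_{\X^\#}\riso\omega_{\X^{\prime\#}}\otimes_{\O_{\X'}}\omega_{\X'/\X}^{-1}$ and $f^*\omega_{\X}\riso\omega_{\X'}\otimes_{\O_{\X'}}\omega_{\X'/\X}^{-1}$; the relative factor cancels in $\mathcal{H}om_{\O_{\X'}}(f^*\omega_{\X},f^*\omega_{\X^\#})$, yielding a canonical $\D^\dag_{\X^{\prime\#},\Q}$-linear isomorphism $f^*\O_{\X}(\ZZ)\riso\O_{\X'}(\ZZ')$ (the $\D$-linearity being checked by functoriality as in \cite[1.24, 5.1.2]{caro_log-iso-hol}), whence $(f^{\#*}\E)(\ZZ')\riso f^{\#*}(\E(\ZZ))$. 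Writing $\Phi_{\X}:=\D^\dag_{\X,\Q}\otimes^\L_{\D^\dag_{\X^\#,\Q}}(-)$ and $\Phi_{\X'}$ for its analogue on $\X'$, it therefore suffices to establish the base-change isomorphism $f^*\circ\Phi_{\X}\riso\Phi_{\X'}\circ f^{\#*}$; granting it,
$$f^*u_+(\E)\riso f^*\Phi_{\X}(\E(\ZZ))\riso\Phi_{\X'}\bigl(f^{\#*}(\E(\ZZ))\bigr)\riso\Phi_{\X'}\bigl((f^{\#*}\E)(\ZZ')\bigr)\riso u'_+(f^{\#*}\E),$$
which is the desired isomorphism.

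Finally, unwinding the transfer bimodules — and using $\D^\dag_{\X'\rightarrow\X}=\O_{\X'}\otimes_{f^{-1}\O_{\X}}f^{-1}\D^\dag_{\X,\Q}$ together with $\D^\dag_{\X^{\prime\#}\rightarrow\X^\#}=\O_{\X'}\otimes_{f^{-1}\O_{\X}}f^{-1}\D^\dag_{\X^\#,\Q}$ — the identity $f^*\circ\Phi_{\X}\riso\Phi_{\X'}\circ f^{\#*}$ comes down to the transitivity isomorphism of $(\D^\dag_{\X',\Q},f^{-1}\D^\dag_{\X^\#,\Q})$-bimodules
$$\D^\dag_{\X',\Q}\otimes^\L_{\D^\dag_{\X^{\prime\#},\Q}}\D^\dag_{\X^{\prime\#}\rightarrow\X^\#}\riso\D^\dag_{\X'\rightarrow\X}.$$
Exactly as for \ref{trans+} and \ref{trans!}, Berthelot's formalism of quasi-coherence (\ref{nota-hatD}, \ref{qcoh-stepII}) reduces this to the analogous identity for the sheaves $\D^{(m)}_{X_i}$ and $\D^{(m)}_{X_i^\#}$ on the log-schemes over $\V/\pi^{i+1}$, where it is a local computation in coordinates of the same nature as the transfer-module manipulations in the proof of Lemma \ref{lemmtheorhoiso}, invoking \cite[3.6, 5.1.2, 5.2.4]{caro_log-iso-hol}. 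I expect this finite-level verification to be the main point: one must keep track of the left and right $\D$-module structures and of the $\omega$-twists occurring in the transfer bimodules, and at level $m$ control the divided-power operators along the components of $\ZZ$ much as in \ref{preZZ'u}; once this is in hand, the passage back to $\smash{\underset{^{\longrightarrow}}{LD}}^{\mathrm b}_{\Q,\mathrm{qc}}$ and to the $\Q$-localization is formal from naturality.
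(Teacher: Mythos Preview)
Your approach is essentially the same as the paper's. The paper's proof is terser: it writes $u'_+f^{\#*}(\E)\riso\D^\dag_{\X',\Q}\otimes_{\D^\dag_{\X^{\prime\#},\Q}}(\D^\dag_{\X^{\prime\#}\rightarrow\X^\#,\Q}\otimes_{f^{-1}\D^\dag_{\X^\#,\Q}}f^{-1}\E)(\ZZ')$ directly from \ref{defi-rho}, then invokes the same bimodule isomorphism
$\D^\dag_{\X',\Q}\otimes_{\D^\dag_{\X^{\prime\#},\Q}}\D^\dag_{\X^{\prime\#}\rightarrow\X^\#,\Q}\riso\D^\dag_{\X'\rightarrow\X,\Q}$
(checked ``by a computation in local coordinates'' after reducing to log-schemes), and finishes by refactoring the tensor product. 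Your explicit discussion of degree-$0$ concentration and of the compatibility $(f^{\#*}\E)(\ZZ')\riso f^{\#*}(\E(\ZZ))$ makes precise what the paper leaves implicit, but the key step is identical. One small remark: the allusion to \ref{preZZ'u} is off-target --- that lemma compares sheaves with different log-poles after inverting the extra divisor, which is not what is needed here; the finite-level check is a straightforward local-coordinate comparison of transfer bimodules, closer in spirit to the one in Lemma \ref{lemmtheorhoiso}.
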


\begin{proof}
  We have:
  $u' _+ f ^{\# *} (\E) \riso \D ^\dag _{\X^{\prime },\Q}
  \otimes _{\D ^\dag _{\X^{\prime \#},\Q}} (\D ^\dag _{\X ^{\prime \#} \rightarrow \X^\#,\Q}
  \otimes _{f ^{-1} \D ^\dag _{\X^\#,\Q}} f ^{-1}  \E) (\ZZ')$ (see \ref{defi-rho} for the direct image).
  The canonical morphism $\D ^\dag _{\X ^{\prime \#} \rightarrow \X^\#,\Q}
  \rightarrow \D ^\dag _{\X ^{\prime } \rightarrow \X,\Q}$ induces
  the morphism of coherent $\D ^\dag _{\X^{\prime },\Q}$-modules (which are also
  $(\D ^\dag _{\X^{\prime },\Q}, f ^{-1} \D ^\dag _{\X^{\# },\Q})$-bimodules)
$\D ^\dag _{\X^{\prime },\Q}
  \otimes _{\D ^\dag _{\X^{\prime \#},\Q}} \D ^\dag _{\X ^{\prime \#} \rightarrow \X^\#,\Q}
  \rightarrow
\D ^\dag _{\X ^{\prime } \rightarrow \X,\Q}$.
We compute that this morphism is an isomorphism (we come down to the case of log-schemes which corresponds to
a computation in local coordinates).
Then:
$$u' _+ f ^{\# *} (\E) \riso \D ^\dag _{\X ^{\prime } \rightarrow \X,\Q}
  \otimes _{f ^{-1} \D ^\dag _{\X^\#,\Q}}  f ^{-1}  \E  (\ZZ')
  \riso
  \D ^\dag _{\X ^{\prime } \rightarrow \X,\Q}
  \otimes _{f ^{-1} \D ^\dag _{\X,\Q}}
  f ^{-1} (\D ^\dag _{\X,\Q}  \otimes _{\D ^\dag _{\X^\#,\Q}} \E  (\ZZ))
  \riso
  f ^* u_+ (\E) .$$
\end{proof}

\begin{coro}
\label{coro-u+f+smooth}
  With the notation of \ref{u+f+smooth}, if the morphism
  $u_+ (\E) \rightarrow \E (\hdag Z)$ is an isomorphism then
  so is
  $u ' _+ ( f ^{\# *} (\E) ) \rightarrow f ^{\# *} (\E)  (\hdag Z')$.
\end{coro}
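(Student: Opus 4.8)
The statement to prove is Corollary \ref{coro-u+f+smooth}: if $u_+(\E) \riso \E(\hdag Z)$ is an isomorphism, then so is $u'_+(f^{\#*}(\E)) \rightarrow f^{\#*}(\E)(\hdag Z')$.

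\begin{proof}
The plan is to reduce everything to the commutation isomorphism of Proposition \ref{u+f+smooth} together with the compatibility of the localization functor $(\hdag -)$ with smooth inverse images. First, I would recall that by \ref{exacttriu+F} the hypothesis that $\rho : u_{D+}(\E) \rightarrow \E(\hdag Z)$ is an isomorphism is equivalent to $\R \underline{\Gamma}^\dag_Z u_+(\E) = 0$ (here $D$ is empty, so I write $u_+$ for $u_{D+}$); likewise the conclusion is equivalent to $\R \underline{\Gamma}^\dag_{Z'} u'_+(f^{\#*}(\E)) = 0$. So it suffices to transport the vanishing $\R \underline{\Gamma}^\dag_Z u_+(\E) = 0$ along $f^*$.

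The key input is Proposition \ref{u+f+smooth}, which gives the canonical isomorphism $f^* u_+(\E) \riso u'_+ f^{\#*}(\E)$. Applying $\R \underline{\Gamma}^\dag_{Z'}$ to both sides and using that $Z' = f^{-1}(Z)$, I would invoke the standard compatibility of the local cohomological functor with extraordinary inverse image by a smooth morphism: for a smooth morphism $f$ and a closed subscheme, $f^! \R\underline{\Gamma}^\dag_Z \riso \R\underline{\Gamma}^\dag_{f^{-1}(Z)} f^!$, and since $f$ is smooth, $f^! = f^*[d_{X'/X}]$ up to the shift (this is \cite[2.2.18]{caro_surcoherent} or the analogous statement in the log setting; the key point is that it holds for the pieces $\widehat{\D}^{(m)}$ so passes to $\smash{\underset{^{\longrightarrow}}{LD}}^{\mathrm{b}}_{\Q,\mathrm{qc}}$ by Berthelot's quasi-coherence formalism). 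Concretely: $\R\underline{\Gamma}^\dag_{Z'} u'_+ f^{\#*}(\E) \riso \R\underline{\Gamma}^\dag_{Z'} f^* u_+(\E) \riso f^* \R\underline{\Gamma}^\dag_{Z} u_+(\E) = f^*(0) = 0$, where the middle isomorphism (up to the shift by $d_{X'/X}$, which is irrelevant for a vanishing statement) is the commutation of $\R\underline{\Gamma}^\dag$ with the smooth $f^*$. This gives $\R\underline{\Gamma}^\dag_{Z'} u'_+ f^{\#*}(\E) = 0$, hence $\rho' : u'_+ f^{\#*}(\E) \rightarrow f^{\#*}(\E)(\hdag Z')$ is an isomorphism by \ref{exacttriu+F} applied on $\X'$.

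The one point that needs a bit of care is checking that the isomorphism of Proposition \ref{u+f+smooth} is compatible with the morphisms $\rho$ and $\rho'$, i.e. that the diagram relating $f^* u_+(\E) \rightarrow f^* (\E(\hdag Z))$ and $u'_+ f^{\#*}(\E) \rightarrow f^{\#*}(\E)(\hdag Z')$ commutes; but in fact one does not even need this for the stated conclusion, since the conclusion is just that $\rho'$ is an isomorphism, and that follows purely from the vanishing of $\R\underline{\Gamma}^\dag_{Z'} u'_+ f^{\#*}(\E)$ via the canonical localization triangle \ref{exacttriu+F} on $\X'$ — no comparison between $\rho$ and $\rho'$ is required. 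Thus the main (and essentially only) obstacle is verifying the commutation of $\R\underline{\Gamma}^\dag$ with smooth inverse image in the log-$\D$-module setting, which is the log-analogue of \cite[2.2.18]{caro_surcoherent} and is proved in the same way, working first at finite level with the $m$-PD-envelopes (using that $f^\#$ smooth makes the relevant transfer bimodules flat) and then passing to the limit.
\end{proof}
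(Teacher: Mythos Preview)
Your proof is correct and follows essentially the same route the paper intends: the corollary is stated without proof because it is immediate from Proposition \ref{u+f+smooth}. Your elaboration via the vanishing criterion $\R\underline{\Gamma}^\dag_{Z'} u'_+(f^{\#*}\E) = 0$ is a clean way to make this explicit, and your observation that one does not need to compare $f^*(\rho)$ with $\rho'$ directly is well taken.

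One small simplification: you worry at the end about needing a ``log-analogue'' of the commutation $\R\underline{\Gamma}^\dag_{Z'} f^* \riso f^* \R\underline{\Gamma}^\dag_Z$, but this is unnecessary. Once you have applied Proposition \ref{u+f+smooth}, the object $u'_+ f^{\#*}(\E) \riso f^* u_+(\E)$ lives in $D(\D^\dag_{\X',\Q})$ with no log structure, and $f$ itself is a smooth morphism of ordinary (non-log) formal schemes. So the commutation you need is precisely the ordinary one (e.g.\ \cite[2.2.18]{caro_surcoherent}, or equivalently the second isomorphism of \ref{forg+!} combined with the localization triangle \ref{locexactri}); no log version is required.
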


\subsection{Overholonomicity of overconvergent $F$-isocrystals}
\label{subsection23}

\begin{defi}
\label{defi-smthdevi}
Let $\X$ be a smooth formal $\V$-scheme.
\begin{enumerate}
\item \label{defi-smthdevi-1}
Let
$\E ^{(\bullet)} \in \smash{\underset{^{\longrightarrow}}{LD}} ^{\mathrm{b}} _{\Q ,\mathrm{qc}}
  ( \smash{\widehat{\D}} _{\X } ^{(\bullet)})$.
Let $Y$ be a subscheme of $X$ such that there exists a divisor $T$ of $X$
satisfying $Y = \overline{Y} \setminus T$, where $\overline{Y}$ is the closure of $Y$ in $X$.
The complexe $\E ^{(\bullet)}$ {\it is smoothly devissable over $Y$ in partially overconvergent isocrystals}
if there exist some divisors $T _1, \dots,T _{r}$ containing $T$ with $T _r =T$ such that,
for any $i: =0,\dots, r-1$ and posing $T _0:=\smash{\overline{Y}}$,
$Y _i := T _0 \cap T _1 \cap \dots \cap T _i \setminus T _{i+1}$,
we have $\overline{Y } _i$ smooth and the cohomological spaces of
  $\underset{\longrightarrow}{\lim} \R \underline{\Gamma} ^\dag _{Y _i} (\E ^{(\bullet)})$ (see \cite[3.2.1]{caro-2006-surcoh-surcv})
  are in the essential image of the functor $\sp _{\overline{Y} _i \hookrightarrow \X, T _{i+1}, +}$,
  where $\sp _{\overline{Y} _i \hookrightarrow \X, T _{i+1}, +}$ is the canonical fully faithful
  functor from the category
  of isocrystals on $Y _i$ overconvergent along $\overline{Y} _i  \setminus Y _i$ to the category
  of coherent $\D ^\dag _{\X} (\hdag T _{i+1}) _{\Q}$-modules (see \cite{caro-construction}).
  To simplify the notation, it is possible to avoid
  $\underset{\longrightarrow}{\lim} $ indicating.

More precisely, we can say that the complex {\it $\E ^{(\bullet)}$ is smoothly devissable over the stratification
$Y =\sqcup _{i= 0,\dots , r-1} Y _i$ in partially overconvergent isocrystals}
or {\it $(T _1, \dots, T_r)$ gives a smooth devissage over $Y$ of $\E ^{(\bullet)}$
in partially overconvergent isocrystals}.

\item  \label{defi-smthdevi-2} \label{rema-rhoisolemm2}
Let $D$ be a divisor of $X$, $\E \in D ^\mathrm{b} _\mathrm{coh} (\D ^\dag _{\X } (\hdag D) _{\Q})$ and
$\E ^{(\bullet)} \in \smash{\underset{^{\longrightarrow}}{LD}} ^{\mathrm{b}} _{\Q ,\mathrm{coh}}
  ( \smash{\widehat{\D}} _{\X } ^{(\bullet)}(D))$
  such that $\underset{\longrightarrow}{\lim} (\E ^{(\bullet)} ) \riso \E$
  (this has a meaning since $\underset{\longrightarrow}{\lim}$ induces
  the equivalence of categories $\smash{\underset{^{\longrightarrow}}{LD}} ^{\mathrm{b}} _{\Q ,\mathrm{coh}}
  ( \smash{\widehat{\D}} _{\X } ^{(\bullet)}(D)) \cong D ^\mathrm{b} _\mathrm{coh} (\D ^\dag _{\X } (\hdag D) _{\Q})$).

  We say that $\E$ is smoothly devissable in partially overconvergent isocrystals
  if $\E ^{(\bullet)} $ is smoothly devissable over $X \setminus D$ in partially overconvergent isocrystals.

Let $T _1,\dots , T _r$ be some divisors of $X$ such that $T _r$ is empty.
We pose, for $i=0,\dots, r$, $T '_i := T_i \cup D$.
We say that
  {\it $(T _1, \dots, T_r)$ (resp. $(T '_1,\dots , T '_r)$) gives a smooth devissage of $\E $
  over $X$ (resp. $X \setminus D$) in
partially overconvergent isocrystals}
if $(T _1, \dots, T_r)$ (resp. $(T '_1,\dots , T '_r)$)
gives a smooth devissage over $X$ (resp. $X \setminus D$) of $\E ^{(\bullet)}$ in
partially overconvergent isocrystals.
\end{enumerate}
\end{defi}

\begin{rema}
\label{rema-smthdevi}
\begin{enumerate}
  \item With the notation \ref{defi-smthdevi}.\ref{defi-smthdevi-1}, for any $i = 0, \dots ,r$,
  let $X _i := T _0 \cap T _1 \cap \dots \cap T _i$. Then, for any $i = 0, \dots ,r-1$,
  the exact triangle of localization of
$\R \underline{\Gamma} ^\dag _{X _i} (\E ^{(\bullet)})$ with respect to $T _{i+1}$ is
$$\R \underline{\Gamma} ^\dag _{X _{i+1}} (\E ^{(\bullet)})
\to \R \underline{\Gamma} ^\dag _{X _i} (\E ^{(\bullet)}) \to
\R \underline{\Gamma} ^\dag _{Y _i } (\E ^{(\bullet)})\to
\R \underline{\Gamma} ^\dag _{X _{i+1}} (\E ^{(\bullet)}) [1],$$
which explains the word ``devissage''.

\item With the notation \ref{defi-smthdevi}.\ref{defi-smthdevi-2},
we pose $T _0:= X$.
Since $\E ^{(\bullet)} \riso (\hdag D) (\E ^{(\bullet)} ) $, we notice that
$  \R \underline{\Gamma} ^\dag _{T _0 \cap T _1 \cap \dots \cap T _i }
  \circ
  (\hdag T _{i+1}) (\E ^{(\bullet)} )
  \riso
  \R \underline{\Gamma} ^\dag _{T _0 \cap T '_1 \cap \dots \cap T '_i }
  \circ  (\hdag T '_{i+1}) (\E ^{(\bullet)} )$.
  Then $(T _1, \dots, T_r)$ gives a smooth devissage of $\E $
  over $X$ in partially overconvergent isocrystals iff
 $(T '_1,\dots , T '_r)$ gives a smooth devissage of $\E $
  over $X \setminus D$
  in partially overconvergent isocrystals.

\end{enumerate}
\end{rema}

\begin{empt}
  \label{n1n2dev}
Similarly to \cite[3.2.7--8]{caro-2006-surcoh-surcv}, we have the following result.
Let $\X$ be a smooth formal $\V$-scheme,
$Y$ a subscheme of $X$. We suppose that there exists a divisor $T$ of $X$
such that $Y = \overline{Y} \setminus T$.
Let $\E \in F \text{-} \smash[b]{\underset{^{\longrightarrow }}{LD }}  ^\mathrm{b} _{\Q, \mathrm{qc}}
(\overset{^\mathrm{g}}{} \smash{\widehat{\D}} _{\PP} ^{(\bullet)})$.
Let $T _1,\dots, T _{r}$ be some divisors of $P$ containing $T$ with $T _r =T$ and,
for any $i: =0,\dots, r-1$,
$Y _i := T _0 \cap T _1 \cap \dots \cap T _i \setminus T _{i+1}$ where $T _0:=\smash{\overline{Y}}$.

If, for any $i: =0,\dots, r-1$, $\E$
is smoothly devissable over $Y _i$ in partially overconvergent isocrystals then
so is $\E$ over $Y$.

More precisely,
for any $i =0,\dots , r-1$,
let $T _{(i, 1)}, \dots,T _{(i, r _i)}$ be some divisors containing
$T _{i+1}$ with $T _{(i, r _i)}=T _{i+1}$ such that,
if $T _{(i, 0)} :=\overline{Y} _i $ and,
for any $h=0,\dots , r_i-1$,
  $Y _{(i,h)}:= T_{(i,0)} \cap \dots \cap T _{(i,h)} \setminus T _{(i,h+1)}$,
  then $\overline{Y _{(i,h)}}$ is smooth and, for any integer $j$,
  $\mathcal{H} ^j (\underset{\longrightarrow}{\lim}
  \R \underline{\Gamma} ^\dag _{Y _{(i,h)}}  \E  )$ is in the essential image of
  $\sp _{\overline{Y _{(i,h)}} \hookrightarrow X, T _{(i,h+1)},+}$.

Then $(T _{(0, 1)}, \dots,T _{(0, r _0)},
T _{(1, 1)}, \dots,T _{(1, r _1)},\dots ,
T _{(r-1, 1)}, \dots,T _{(r-1, r _{r-1} )})$
gives a smooth devissage of $\E$ in partially overconvergent isocrystals
over the stratification
\small
\begin{equation}
\label{strat-comp}
  Y = Y _{(0,0)} \sqcup \dots \sqcup Y _{(0,r _0 -1)} \bigsqcup
Y _{(1,0)} \sqcup \dots \sqcup Y _{(1,r _1 -1)} \bigsqcup
\dots \bigsqcup Y _{(r -1,0)} \sqcup \dots \sqcup Y _{(r- 1,r _{r-1} -1)}.
\end{equation}
\normalsize
\end{empt}

\begin{prop}
  \label{rhoisolemm2}
  Let $\mathfrak{A} = \Spf \V \{ t _1,\dots, t_n\}$
  and, for $i =1,\dots, n$, let
  $\mathfrak{H} _i$ be the formal closed subscheme of $\mathfrak{A}$ defined by $t _i =0$, i.e.,
  $\mathfrak{H} _i  = \Spf \V \{ t _1,\dots, \widehat{t} _i ,\dots, t_n\}$.
  Let $I$ and $I'$ be two subsets of $\{ 1,\dots , n\}$ such that $I \cap I'$ is empty.
  We pose $\mathfrak{H} := \cup _{i \in I} \mathfrak{H} _i$ and
  $\mathfrak{H} ' := \cup _{i '\in I'} \mathfrak{H} _{i'}$.
  Let $\mathfrak{A} ^\# := (\mathfrak{A},\mathfrak{H})$ and
$w$ : $\mathfrak{A} ^\# \rightarrow \mathfrak{A}$ be the canonical morphism.

Then there exist some divisors $T _1, \dots, T_N$,
only depending on $I$ and $I'$,
which satisfies the following property:
if $\E ^\bullet $ is any bounded complex of coherent
$\D_{\mathfrak{A}^\#}^\dag(\hdag H') _{ \Q}$-modules, locally projective of finite type
as $\O_{\mathfrak{A}}(\hdag H')_{ \Q}$-module and such that conditions (a) and (b) of
\ref{Nobuorigid} holds,
then $T _1, \dots, T_N$ gives
a
smooth devissage of $w _{H '+} (\E ^\bullet )$ in partially overconvergent isocrystals
over $\A _k^n $.

Moreover $T _1 = H$ and any divisor $T_1,\dots, T_N$ is a sub-divisor of $H$.

\end{prop}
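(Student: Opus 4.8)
The plan is to argue by induction on $|I|$, the statement being read as quantified over all $n$ and all disjoint subsets $I,I'\subseteq\{1,\dots,n\}$. When $I=\emptyset$ we have $\mathfrak H=\emptyset$ and $w=\mathrm{id}$, so $w_{H'+}(\E^\bullet)=\E^\bullet$ is already a complex of overconvergent isocrystals on $\A^n_k\setminus H'$ overconvergent along $H'$, and $N=1$, $T_1=\emptyset=H$ does the job (see \ref{rema-smthdevi} for the translation incorporating the divisor $H'$). For $|I|\geq 1$ I would fix $i_0:=\min I$, write $H_{i_0}$ for the special fibre of $\mathfrak H_{i_0}$, and use the localization triangle (see \ref{locexactri}) $\R\underline{\Gamma}^\dag_{H_{i_0}}(w_{H'+}\E^\bullet)\to w_{H'+}\E^\bullet\to(\hdag H_{i_0})(w_{H'+}\E^\bullet)\to{}[1]$. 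The two outer terms are, respectively, $\R\underline{\Gamma}^\dag_{\A^n_k\setminus H_{i_0}}$ and $\R\underline{\Gamma}^\dag_{H_{i_0}}$ of $w_{H'+}\E^\bullet$ for the preliminary stratification $\A^n_k=(\A^n_k\setminus H_{i_0})\sqcup H_{i_0}$; I would produce smooth devissages of each and then concatenate by \ref{n1n2dev}.

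For the open stratum: by \ref{u+commhdag-equ}, $(\hdag H_{i_0})(w_{H'+}\E^\bullet)\riso w_{H'\cup H_{i_0}+}(\E^\bullet(\hdag H_{i_0}))$, and since $\mathfrak H_{i_0}$ shares no component with $\mathfrak H^{(1)}:=\cup_{i\in I\setminus\{i_0\}}\mathfrak H_i$, Lemma \ref{preZZ'u} lets one regard $\E^\bullet(\hdag H_{i_0})$ as a complex of coherent $\D^\dag_{(\mathfrak A,\mathfrak H^{(1)})}(\hdag H'\cup H_{i_0})_\Q$-modules, locally projective of finite type over $\O_\mathfrak A(\hdag H'\cup H_{i_0})_\Q$, still satisfying (a) and (b) along the components of $\mathfrak H^{(1)}$ (the exponents there are unchanged); by \ref{ZZ'u-equ2} the object $w_{H'\cup H_{i_0}+}(\E^\bullet(\hdag H_{i_0}))$ is then its direct image along $(\mathfrak A,\mathfrak H^{(1)})\to\mathfrak A$. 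The induction hypothesis applied to $(I\setminus\{i_0\},\,I'\cup\{i_0\})$ yields divisors $\tilde T_1=\cup_{i\in I\setminus\{i_0\}}H_i\supseteq\tilde T_2\supseteq\cdots$, each a sub-divisor of $\cup_{i\in I\setminus\{i_0\}}H_i$, giving a smooth devissage of $(\hdag H_{i_0})(w_{H'+}\E^\bullet)$ over $\A^n_k$; adjoining $H_{i_0}$ to every $\tilde T_j$ (harmless since the object is overconvergent along $H_{i_0}$) produces a smooth devissage of it over the open stratum, with first divisor $\tilde T_1\cup H_{i_0}=H$ and all divisors sub-divisors of $H$.

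For the closed stratum: set $\mathfrak H_{i_0}^\#:=(\mathfrak H_{i_0},\cup_{i\in I\setminus\{i_0\}}(\mathfrak H_{i_0}\cap\mathfrak H_i))$, let $w'_1:\mathfrak H_{i_0}^\#\to\mathfrak H_{i_0}$, $i_{i_0}:\mathfrak H_{i_0}\hookrightarrow\mathfrak A$ be the canonical morphisms, $g_1:\mathfrak A\to\mathfrak H_{i_0}$ the projection forgetting $t_{i_0}$ (so $g_1\circ i_{i_0}=\mathrm{id}$, $g_1^{-1}(\mathfrak H_{i_0}\cap\mathfrak H_i)=\mathfrak H_i$, $g_1^{-1}(H'\cap H_{i_0})=H'$), and $g_1^\#:\mathfrak A^\#\to\mathfrak H_{i_0}^\#$ the induced log-morphism; the square $(w,g_1,g_1^\#,w'_1)$ commutes. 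By \ref{u+commhdag-coro}, $\R\underline{\Gamma}^\dag_{H_{i_0}}(w_{H'+}\E^\bullet)\riso w_{H'+}(\R\underline{\Gamma}^\dag_{H_{i_0}}\E^\bullet)$. Applying $g_{1,H'\cap H_{i_0}+}$ and using \ref{trans+} on the commutative square gives $g_{1,H'\cap H_{i_0}+}(\R\underline{\Gamma}^\dag_{H_{i_0}}(w_{H'+}\E^\bullet))\riso w'_{1,H'\cap H_{i_0}+}(g^\#_{1,H'+}(\R\underline{\Gamma}^\dag_{H_{i_0}}\E^\bullet))$, and by \ref{overconvbis-cone2} together with \ref{overconvbis} (applied termwise, with $\T=\S$ and $\ZZ_1=\mathfrak H_{i_0}$) the inner term is, up to a shift, a complex $\mathcal F^\bullet$ of coherent $\D^\dag_{\mathfrak H_{i_0}^\#}(\hdag H'\cap H_{i_0})_\Q$-modules, locally projective of finite type over $\O_{\mathfrak H_{i_0}}(\hdag H'\cap H_{i_0})_\Q$, satisfying (a) and (b). On the other hand $\R\underline{\Gamma}^\dag_{H_{i_0}}(w_{H'+}\E^\bullet)$ is supported on $H_{i_0}$, hence is $i_{i_0+}i_{i_0}^!(w_{H'+}\E^\bullet)$, and since $g_1\circ i_{i_0}=\mathrm{id}$, transitivity makes $g_{1,H'\cap H_{i_0}+}\circ i_{i_0+}$ the identity on complexes supported on $H_{i_0}$; combining the two computations gives $\R\underline{\Gamma}^\dag_{H_{i_0}}(w_{H'+}\E^\bullet)\riso i_{i_0+}w'_{1,H'\cap H_{i_0}+}(\mathcal F^\bullet)$ up to a shift. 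The induction hypothesis on $\mathfrak H_{i_0}\cong\Spf\V\{t_j:j\neq i_0\}$ for the pair $(I\setminus\{i_0\},I')$ (legitimate as $|I\setminus\{i_0\}|<|I|$ and $i_0\notin I'$) provides divisors $T^{(1)}_1=\cup_{i\in I\setminus\{i_0\}}(\mathfrak H_{i_0}\cap H_i)\supseteq T^{(1)}_2\supseteq\cdots$ of $\A^{n-1}_k$, all sub-divisors of $\cup_{i\in I\setminus\{i_0\}}(\mathfrak H_{i_0}\cap H_i)$, giving a smooth devissage of $w'_{1,H'\cap H_{i_0}+}(\mathcal F^\bullet)$; writing $T^{(1)}_h=\cup_{i\in S_h}(\mathfrak H_{i_0}\cap H_i)$ with $S_h\subseteq I\setminus\{i_0\}$ and lifting to $\hat T^{(1)}_h:=\cup_{i\in S_h}H_i$ (a sub-divisor of $H$ with $\hat T^{(1)}_h\cap H_{i_0}=T^{(1)}_h$), and using that local cohomology commutes with $i_{i_0+}$ for supports inside $H_{i_0}$ (as in the proof of \ref{theorhoiso}), the $\hat T^{(1)}_h$ give a smooth devissage of $\R\underline{\Gamma}^\dag_{H_{i_0}}(w_{H'+}\E^\bullet)$ over the closed stratum.

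Concatenating the two devissages via \ref{n1n2dev} produces divisors $T_1=\tilde T_1\cup H_{i_0}=H$, then $\tilde T_2\cup H_{i_0},\dots$, then $\hat T^{(1)}_1,\hat T^{(1)}_2,\dots$, all of them sub-divisors of $H$, giving a smooth devissage of $w_{H'+}(\E^\bullet)$ over $\A^n_k$; since $i_0=\min I$ and the two inductive inputs depend only on $(I\setminus\{i_0\},I'\cup\{i_0\})$ and on $(I\setminus\{i_0\},I')$, the whole list depends only on $I$ and $I'$, which closes the induction. The main obstacle is the identification carried out in the closed-stratum step — recognizing that $\R\underline{\Gamma}^\dag_{H_{i_0}}(w_{H'+}\E^\bullet)$, which a priori lives on $\mathfrak A$, is the push-forward along $i_{i_0}$ of a $w'_1$-direct image of a log-overconvergent complex on $\mathfrak H_{i_0}$ satisfying (a) and (b); this is precisely where the relative comparison theorem \ref{overconvbis} enters, via the section $g_1$ and the transitivity isomorphisms \ref{trans+} and \ref{u+commhdag-coro}. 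Everything else is routine bookkeeping inside the localization and Mayer–Vietoris formalism already developed above.
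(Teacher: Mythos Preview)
Your argument is correct and follows essentially the same route as the paper: localize with respect to one hyperplane $H_{i_0}$, handle the open piece by passing to the smaller log structure $(\mathfrak A,\mathfrak H^{(1)})$ via \ref{preZZ'u}--\ref{ZZ'u} and invoking induction, and handle the closed piece by projecting to $\mathfrak H_{i_0}$ via $g_1$, applying \ref{overconvbis} to identify $g_{1+}^\#\R\underline{\Gamma}^\dag_{H_{i_0}}(\E)$ with a log-overconvergent complex on $\mathfrak H_{i_0}^\#$, and invoking induction again. The only cosmetic differences are that the paper inducts on the lexicographic pair $(n,|I|)$ and first reduces to a single module by a hypercohomology spectral sequence (using that $w_{H'+}$ is exact on such modules by \cite[5.25.1]{caro_log-iso-hol}), whereas you induct on $|I|$ alone (quantified over all $n$) and apply \ref{overconvbis} termwise; also the paper first localizes with respect to the full $H$ before peeling off $H_1$, while you localize directly with respect to $H_{i_0}$---both orderings work and yield the same concatenated list via \ref{n1n2dev}.
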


\begin{proof}

\noindent $0^\circ$ {\it Induction.}

For the sake of convenience, we add the case $n=0$ where $\mathfrak{A}=\Spf \V$ (and then $I$ and $I'$ are empty).
  We proceed by induction on the lexicographic order $(n, |I|)$, with $n\geq 0$.
  The case $n=0$ is obvious. So we can suppose that $n \geq1$ and the proposition is checked for $n-1$.
  Moreover, the case where $|I|=0 $ means that $H $ is empty. This case is thus straightforward.
  So, we come down to treat the case $|I|\geq 1$. Up to a re-indexation, we can suppose $1 \in I$.

\noindent $1^\circ$ {\it We come down to the case where $\E^\bullet $ is a module.}

So, suppose here that there exist some divisors $T _1, \dots, T_N$ such that, for any
coherent
$\D_{\mathfrak{A}^\#}^\dag(\hdag H') _{ \Q}$-module $\E$, locally projective of finite type
as $\O_{\mathfrak{A}}(\hdag H')_{ \Q}$-module and satisfying above $(a), (b)$ conditions,
$T _1, \dots, T_N$ give a
smooth devissage of $w _{H '+} (\E  )$ in partially overconvergent isocrystals
over $\A _k^n$.

Following \cite[5.25.1]{caro_log-iso-hol},
for any coherent $\D_{\mathfrak{A}  ^\#}^\dag(\hdag H') _{ \Q}$-module $\E $,
locally projective of finite type as $\O_{\mathfrak{A}}(\hdag H')_{ \Q}$-module,
for any $j\not =0$, $\H ^j (w_{+} (\E) )=0$.
We pose $\FF ^\bullet : = w _{H '+} (\E ^\bullet )$. Then, for any integer $r$,
$\FF ^r = w _{H '+} (\E ^r )$.

For any $i: =0,\dots, r-1$, let
$Y _i := T _0 \cap T _1 \cap \dots \cap T _i \setminus T _{i+1}$ (with $T _0:=\smash{\overline{Y}}$)
and pose $\Phi:= \underline{\Gamma} ^\dag _{Y _i}=\underline{\Gamma} ^\dag _{T _0 \cap T _1 \cap \dots \cap T _i }
  \circ
  (\hdag T _{i+1})$.
Then, the first spectral sequence of hypercohomology of $\Phi$
gives $E _{1} ^{r,s}=\H ^s (\R \phi (\FF ^r)) \Rightarrow \H ^n (\R  \phi (\FF^\bullet ))$.
If for any $r,s$, $\H ^s (\R \phi (\FF ^r))$ is an isocrystal on $Y _i$ overconvergent along $\overline{Y _i} \setminus Y_i$,
then so is $\H ^n (\R  \phi (\FF^\bullet ))$.
Then we can suppose that $\FF^\bullet$ has only term.
Thus, $\E ^\bullet$ has only a term.
From now, we will write $\E$ instead of $\E ^\bullet$.

\noindent $2^\circ$ {\it Devissage.}

Via the exact triangle of localization of $w _{H '+} (\E)$
with respect to $H $, it is sufficient to check that
$\R \underline{\Gamma} ^\dag _H w _{H '+} (\E )$
is
smoothly devissable in partially overconvergent isocrystals.

  The exact triangle of localization of $\R \underline{\Gamma} ^\dag _H w _{H '+} (\E)$ with respect to
  $H _1$ is of the form:
  \begin{equation}
\label{rhoisolemm2-triloc}
\R \underline{\Gamma} ^\dag _{H _1} w _{H '+} (\E)
\rightarrow
\R \underline{\Gamma} ^\dag _H w _{H '+} (\E)
\rightarrow
(\hdag H _1) \R \underline{\Gamma} ^\dag _H w _{H '+} (\E )
\rightarrow
\R \underline{\Gamma} ^\dag _{H_1} w _{H '+} (\E) [1].
  \end{equation}
From the exact triangle \ref{rhoisolemm2-triloc} and using \ref{n1n2dev},
it is sufficient to check
the following two last steps.
\medskip

\noindent $3^\circ$ {\it $(\hdag H _1) \R \underline{\Gamma} ^\dag _H w _{H '+} (\E )$
is smoothly devissable in partially overconvergent isocrystals.}

Let $\widetilde{\mathfrak{H}}:= \cup \mathfrak{H} _{i \in I \setminus\{1\}}$,
$\widetilde{w}\,:\,(\mathfrak{A}, \widetilde{\mathfrak{H}}) \rightarrow \mathfrak{A}$
be the canonical map.
Similarly to the begin of the proof of \ref{rhoisolemm} (i.e., using a Mayer-Vietoris exact triangle),
we get the second isomorphism:
$(\hdag H _1) \R \underline{\Gamma} ^\dag _H w _{H '+} (\E )
\riso
\R \underline{\Gamma} ^\dag _H \circ (\hdag H _1)  \circ w _{H '+} (\E )
\riso
\R \underline{\Gamma} ^\dag _{\widetilde{H}}\circ  (\hdag H _1) \circ  w _{H '+} (\E )$.
We get from \ref{ZZ'u-equ2} the isomorphism:
$(\hdag H _1) (w _{H'+} (\E )) \liso \widetilde{w} _{H'\cup H_1,+} (\E (\hdag H _1)) $.
Thus,
$(\hdag H _1) \R \underline{\Gamma} ^\dag _H w _{H '+} (\E )
\riso
\R \underline{\Gamma} ^\dag _{\widetilde{H}}
\widetilde{w} _{H'\cup H_1,+} (\E (\hdag H _1)).$
By the induction hypothesis,
$\R \underline{\Gamma} ^\dag _{\widetilde{H}}
\widetilde{w} _{H'\cup H_1,+} (\E (\hdag H _1))$ is
smoothly devissable in partially overconvergent isocrystals.
\medskip

\noindent $4^\circ$ {\it $\R \underline{\Gamma} ^\dag _{H _1} w _{H '+} (\E)$
is smoothly devissable in partially overconvergent isocrystals.}

Let $\mathfrak{H} ^\# _1 = (\mathfrak{H} _1, \mathfrak{H} _1\cap \widetilde{\mathfrak{H}})$,
$i _1\,:\, \mathfrak{H} _1 \hookrightarrow \mathfrak{A}$,
$g _1\,:\, \ \mathfrak{A} \rightarrow \mathfrak{H} _1$,
$g _1 ^\# \,:\, \ \mathfrak{A} ^\#  \rightarrow \mathfrak{H} ^\# _1 $,
$w _1\,:\, \mathfrak{H} ^\# _1 \rightarrow \mathfrak{H} _1$
be the canonical morphisms.

By \ref{overconvbis} (and with the remark \ref{overconvbis-cone2}),
$g _{1+} ^\#  \circ \R \underline{\Gamma} ^\dag _{H _1}( \E)$
is a complex of
coherent $\D_{\mathfrak{H} _1 ^\#}^\dag(\hdag H_1 \cap H') _{ \Q}$-modules,
locally projective of finite type as $\O_{\mathfrak{H}_1}(\hdag H_1 \cap  H')_{ \Q}$-modules
and satisfying conditions (a) and (b).
Then, by induction hypothesis,
$w _{1+} \circ g _{1+} ^\#  \circ \R \underline{\Gamma} ^\dag _{H _1}( \E)$
is
smoothly devissable in partially overconvergent isocrystals.
Moreover,
\begin{gather}
\label{rhoisolemm2-iso1311}
w _{1,+} \circ g _{1+} ^\#  \circ \R \underline{\Gamma} ^\dag _{H _1}( \E)
\riso
g _{1+} \circ w _{H '+}  \circ \R \underline{\Gamma} ^\dag _{H _1}( \E)
\underset{\ref{u+commhdag-coro}}{\riso}
g _{1+} \circ \R \underline{\Gamma} ^\dag _{H _1} \circ w _{H '+}  ( \E)
\riso
i _1 ^! w _{H '+} (\E).
\end{gather}
Thus,
$i _1 ^! w _{H '+} (\E)$
is smoothly devissable in partially overconvergent isocrystals
and so is
$\R \underline{\Gamma} ^\dag _{H _1} w _{H '+} (\E)$.
\end{proof}

\begin{defi}
\label{defi-strgcoh}
  Let $\X$ be a smooth formal $\V$-scheme, $D$ a divisor of $X$ and
  $\E \in D (\D ^\dag _{\X } (\hdag D) _{\Q})$.
  To avoid the confusion with the coherence over $\D ^\dag _{\X } (\hdag D) _{\Q}$,
  we will say that $\E$ is
  ``{\it $-1$-overholonomic}'' if
  $\E \in D ^\mathrm{b} _\mathrm{coh} (\D ^\dag _{\X,\Q})$.
\end{defi}

\begin{lemm}
\label{rhoisolemm2-complm}
    Let $\mathfrak{A} = \Spf \V \{ t _1,\dots, t_n\}$,
  and, for $i =1,\dots, n$, let
  $\mathfrak{H} _i$ be the formal closed subscheme of $\mathfrak{A}$ defined by $t _i =0$.
  Let $I$ be a subset of $\{ 1,\dots , n\}$.
  We pose $\mathfrak{H} := \cup _{i \in I} \mathfrak{H} _i$.
  Let $\mathfrak{A} ^\# := (\mathfrak{A},\mathfrak{H})$,
$w$ : $\mathfrak{A} ^\# \rightarrow \mathfrak{A}$ be the canonical morphism.
Let $\E$ be coherent
$\D ^\dag _{\mathfrak{A}^\#,\Q}$-module, locally projective of finite type
as $\O_{\mathfrak{A},\Q}$-module and satisfying the conditions (a) and (b) of
\ref{Nobuorigid}.
Then the partially overconvergent isocrystals which appear in the smooth devissage of
$w _{+} (\E  )$ given by the divisors $T _1,\dots ,T _N$ of \ref{rhoisolemm2}
are $-1$-overholonomic.
\end{lemm}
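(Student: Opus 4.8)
The plan is to reprove \ref{rhoisolemm2} by the same induction on the lexicographic order $(n,|I|)$, carrying along the extra assertion that every partially overconvergent isocrystal occurring in the devissage is $-1$-overholonomic, i.e. lies in $D^\mathrm{b}_\mathrm{coh}(\D^\dag_{\mathfrak{A},\Q})$ (see \ref{defi-strgcoh}); in particular I would work in the same generality as \ref{rhoisolemm2}, allowing $I'\neq\emptyset$ and $\E^\bullet$ a bounded complex, the present lemma being the special case $I'=\emptyset$, $\E^\bullet=\E$, and the divisors $T_1,\dots,T_N$ being exactly those produced there. The reduction $1^\circ$ of \ref{rhoisolemm2} (the spectral sequence of hypercohomology of $\R\underline{\Gamma}^\dag_{Y_i}$ applied to the terms of $\E^\bullet$) brings us to the case of a single module $\E$, since $D^\mathrm{b}_\mathrm{coh}(\D^\dag_{\mathfrak{A},\Q})$ is stable under extensions inside $D(\D^\dag_{\mathfrak{A}}(\hdag H')_\Q)$; the base case $n=0$, and the case $|I|=0$, are immediate. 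So fix $n\geq 1$ and, after re-indexing, $1\in I$, and treat the three parts of the devissage in \ref{rhoisolemm2}.

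For the open stratum the relevant piece is $\R\underline{\Gamma}^\dag_{Y_0}(w_{H'+}(\E))\riso(\hdag H)w_{H'+}(\E)$, which by \ref{u+commhdag} is, up to twist by the invertible sheaf $\O_{\mathfrak{A}}(\mathfrak{H})$, the module $\E(\hdag H\cup H')$. The hypotheses give (b) but not (b'), so I would twist first: choose an integer $c$ at least as large as every positive integral exponent of $\E$ along a component of $\mathfrak{H}$; then $\E(c\mathfrak{H})$ still satisfies (a) and now satisfies (b') (its exponents are those of $\E$ minus $c$), while $I_{\mathfrak{H}}^{\otimes -c}$ is invertible and trivialized by $(\hdag H)$, so $\E(c\mathfrak{H})(\hdag H\cup H')\riso\E(\hdag H\cup H')$. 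Applying \ref{coro-theorhoiso} — in the form with an auxiliary divisor $H'$, which one reads off from \cite[5.25]{caro_log-iso-hol} — to $\E(c\mathfrak{H})$ shows that $\E(\hdag H\cup H')$ is holonomic, hence coherent, over $\D^\dag_{\mathfrak{A},\Q}$; the twist by $\O_{\mathfrak{A}}(\mathfrak{H})$, being by a line bundle equipped with its canonical connection, is locally trivial as a $\D$-module operation and preserves this. Thus the open stratum piece is $-1$-overholonomic.

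For the part $(\hdag H_1)\R\underline{\Gamma}^\dag_H w_{H'+}(\E)$, a Mayer--Vietoris argument as at the beginning of the proof of \ref{rhoisolemm}, together with \ref{ZZ'u}, gives $(\hdag H_1)\R\underline{\Gamma}^\dag_H w_{H'+}(\E)\riso\R\underline{\Gamma}^\dag_{\widetilde H}\,\widetilde w_{H'\cup H_1,+}(\E(\hdag H_1))$, where $\widetilde{\mathfrak{H}}=\cup_{i\in I\setminus\{1\}}\mathfrak{H}_i$ and $\E(\hdag H_1)$ is a coherent $\D^\dag_{(\mathfrak{A},\widetilde{\mathfrak{H}})}(\hdag H'\cup H_1)_\Q$-module, locally projective of finite type over $\O_{\mathfrak{A}}(\hdag H'\cup H_1)_\Q$, whose exponents along $\widetilde{\mathfrak{H}}$ are unchanged, hence still satisfying (a), (b); since $|I\setminus\{1\}|<|I|$, the induction hypothesis applies, and $\R\underline{\Gamma}^\dag_{\widetilde H}$ of a smoothly devissable object with $-1$-overholonomic pieces is again such, its pieces being taken from the former ones via the localization triangle and \ref{n1n2dev}. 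For the part $\R\underline{\Gamma}^\dag_{H_1}w_{H'+}(\E)\riso i_{1+}i_1^!w_{H'+}(\E)$, step $4^\circ$ of \ref{rhoisolemm2} gives $i_1^!w_{H'+}(\E)\riso w_{1+}\circ g_{1+}^\#\circ\R\underline{\Gamma}^\dag_{H_1}(\E)$, and by \ref{overconvbis} the complex $g_{1+}^\#\R\underline{\Gamma}^\dag_{H_1}(\E)$ consists of coherent $\D^\dag_{\mathfrak{H}_1^\#}(\hdag H_1\cap H')_\Q$-modules, locally projective over $\O$, satisfying (a), (b); since $\dim\mathfrak{H}_1=n-1$, the induction (through the reduction to modules) makes $w_{1+}g_{1+}^\#\R\underline{\Gamma}^\dag_{H_1}(\E)$ smoothly devissable over $\mathfrak{H}_1$ with $-1$-overholonomic pieces, and $i_{1+}$, being a closed immersion direct image (so $\R i_{1*}=i_{1*}$ is exact, carries coherent $\D^\dag_{\mathfrak{H}_1,\Q}$-modules to coherent $\D^\dag_{\mathfrak{A},\Q}$-modules, and commutes with the functors $\sp_+$), preserves both smooth devissability and $-1$-overholonomicity. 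Assembling the three contributions with \ref{n1n2dev} exactly as in \ref{rhoisolemm2} closes the induction.

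I expect the main obstacle to be the open-stratum step: one must make sure that the twist $\E\mapsto\E(c\mathfrak{H})$ genuinely leaves the devissage piece unchanged after $(\hdag H)$ — it does, because the twist is by a sheaf invertible away from $H$ — and that the holonomicity input \ref{coro-theorhoiso} is available with the auxiliary divisor $H'$ present, which one takes from \cite[5.25]{caro_log-iso-hol}. A secondary nuisance is the stability of ``smoothly devissable in partially overconvergent isocrystals'' under $\R\underline{\Gamma}^\dag$ and under direct image by a closed immersion, which I would settle with the formalism recalled in \ref{n1n2dev} and the compatibility of the functors $\sp_+$ with closed immersions from \cite{caro-construction}.
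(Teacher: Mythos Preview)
Your approach works in outline but takes a more elaborate route than the paper's. The paper does \emph{not} rerun the $(n,|I|)$-induction of \ref{rhoisolemm2}. Instead it observes that every divisor $T_j$ produced in \ref{rhoisolemm2} is a sub-divisor of $H$, so each stratum functor has the shape $(\hdag D_2)\circ\R\underline{\Gamma}^\dag_{D_1}$ with $D_1,D_2$ finite unions of intersections $H_J=\bigcap_{j\in J}H_j$. It then proves, by induction on $n$ alone and never leaving the setting $I'=\emptyset$, that $\R\underline{\Gamma}^\dag_{H_J}\,w_+(\E)\in D^\mathrm{b}_\mathrm{coh}(\D^\dag_{\mathfrak{A},\Q})$ for every $J\subset I$: the case $J=\emptyset$ is immediate because $w_+(\E)$ itself is a coherent $\D^\dag_{\mathfrak{A},\Q}$-module, and for $1\in J$ one uses the isomorphism $i_1^!\,w_+(\E)\cong w_{1,+}\circ g_{1+}^\#\circ\R\underline{\Gamma}^\dag_{H_1}(\E)$ from \ref{rhoisolemm2-iso1311} to descend to $\mathfrak{H}_1$. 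Mayer--Vietoris and localization triangles then extend coherence to all the devissage pieces.

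Your version carries two complications the paper avoids. First, your open-stratum step brings in holonomicity (twist plus \ref{coro-theorhoiso}) where the paper uses only the elementary fact that $w_+(\E)$ is $\D^\dag_{\mathfrak{A},\Q}$-coherent; the open piece is then automatic from the localization triangle once the closed pieces are handled. Second, because you follow step $3^\circ$ of \ref{rhoisolemm2} you are forced into the regime $I'\neq\emptyset$, and there your appeal to a divisor form of \ref{coro-theorhoiso} ``read off from \cite[5.25]{caro_log-iso-hol}'' is not substantiated by anything stated in the paper; in fact $\D^\dag_{\mathfrak{A},\Q}$-coherence of $\E(\hdag H\cup H')$ with $H'\neq\emptyset$ is essentially what the later results \ref{holo-qunip0}--\ref{theo-overcoh=dev} establish, so one must guard against circularity. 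This loose end can be patched (the open stratum at each sub-level is annihilated by the ambient $\R\underline{\Gamma}^\dag$, so only the top-level open stratum with $I'=\emptyset$ is ever used), but the paper's route makes the patch unnecessary.
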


\begin{proof}
  First, we prove by induction in $n$ that, for any subset $J \subset I$,
  $\R \underline{\Gamma} ^\dag _{H _J} w _{+} (\E  )\in D ^\mathrm{b} _\mathrm{coh} (\D ^\dag _{\X,\Q})$,
  where $\mathfrak{H} _J = \cap _{j \in J} \mathfrak{H} _j$.

Let $J$ a subset of $I$.
The case where $J$ is empty is obvious.
  So, we come down to treat the case $|J|\geq 1$. Up to a re-indexation, we can suppose $1 \in J$.
From \ref{rhoisolemm2-iso1311} and with its notation, we get
$w _{1,+} \circ g _{1+} ^\#  \circ \R \underline{\Gamma} ^\dag _{H _1}( \E)
\riso
i _1 ^! w _{+} (\E)$,
where
$g _{1+} ^\#  \circ \R \underline{\Gamma} ^\dag _{H _1}( \E)$
is a complex of
coherent $\D_{\mathfrak{H} _1 ^\#,\Q}^\dag$-modules,
locally projective of finite type as $\O_{\mathfrak{H}_1, \Q}$-modules
and satisfying the conditions (a) and (b).
Then, by the induction hypothesis,
$\R \underline{\Gamma} ^\dag _{H _J} i _1 ^! w _{+} (\E  )
\in
D ^\mathrm{b} _\mathrm{coh} (\D ^\dag _{\mathfrak{H}_1,\Q})$.
Since $\R \underline{\Gamma} ^\dag _{H _J} w _{+} (\E  )
\riso
i _{1+ } i _1 ^!  \R \underline{\Gamma} ^\dag _{H _J} w _{+} (\E  )
\riso
i _{1+ } \R \underline{\Gamma} ^\dag _{H _J} i _1 ^! w _{+} (\E  )$, it follows that
$\R \underline{\Gamma} ^\dag _{H _J} w _{+} (\E  ) \in
D ^\mathrm{b} _\mathrm{coh} (\D ^\dag _{\X,\Q})$.

Secondly, let $J$ and $J'$ be two subsets of $I$.
Then, using a Mayer-Vietoris exact sequence,
since $H _J \cap H _{J'} = H _{J\cup J'}$,
we check that
$\R \underline{\Gamma} ^\dag _{H _J \cup H _{J'}} w _{+} (\E  ) \in
D ^\mathrm{b} _\mathrm{coh} (\D ^\dag _{\X,\Q})$.
Similarly,
we obtain by induction on $r\geq 1$ that,
for any subsets
$J_1 ,\dots, J_r$ of $I$,
the complex
$\R \underline{\Gamma} ^\dag _{\cup _{s=1,\dots, r} H _{J_s}} w _{+} (\E  ) $
belongs to
$D ^\mathrm{b} _\mathrm{coh} (\D ^\dag _{\X,\Q})$.
If $D _1$ and $D _2$ are some divisors which are a finite union of some divisors of the form $H _J$ with $J$ as subset of $I$,
by the exact triangle of localization
of $\R \underline{\Gamma} ^\dag _{D_1} w _{+} (\E  ) $ with respect to
$D_2$, we get
$(\hdag D _2 )\circ \R \underline{\Gamma} ^\dag _{D_1} w _{+} (\E  ) \in
D ^\mathrm{b} _\mathrm{coh} (\D ^\dag _{\X,\Q})$.

\end{proof}

\begin{lemm}
  \label{h+=0is0overhol}
Let $r\geq -1$ be an integer,
$h$ : $\X \rightarrow \X'$ be a finite and \'etale morphism of smooth formal $\V$-schemes,
  $D'$ be a divisor of $X'$, $D := h ^{-1}(D')$,
$\E \in D ^\mathrm{b} _\mathrm{coh} (\D ^\dag _{\X } (\hdag D) _{\Q})$.
If $ h _+ (\E)$ is smoothly devissable in $r$-overholonomic (see \cite[3.1]{caro_surholonome}) partially overconvergent isocrystals
then
$\E$ is smoothly devissable in $r$-overholonomic partially overconvergent isocrystals.
\end{lemm}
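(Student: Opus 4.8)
The plan is to reduce the statement to the single geometric fact that $\E$ is a direct summand of $h^! h_+(\E)$ --- the same mechanism that underlies Lemma~\ref{h+=0is0} --- together with the stability of the property ``smoothly devissable in $r$-overholonomic partially overconvergent isocrystals'' under extraordinary inverse image by a finite \'etale morphism and under direct summands.

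First I would recall why $\E$ is a direct summand of $h^! h_+(\E)$. Since $h$ is finite and \'etale, the diagonal $\Delta\,:\,\X\hookrightarrow\X\times_{\X'}\X$ is open and closed, so $\X\times_{\X'}\X$ is the disjoint union of $\Delta(\X)$ and a complementary open and closed formal subscheme, and the two projections $p_1,p_2\,:\,\X\times_{\X'}\X\rightarrow\X$ restrict to the identity on $\Delta(\X)$ and to finite \'etale morphisms on the complement. As $h$ is proper, base change gives $h^! h_+(\E)\riso p_{1+}p_2^!(\E)$; no cohomological shift appears because $h$ is \'etale, and the divisors match, $p_1^{-1}(D)$ and $p_2^{-1}(D)$ both being equal to $(hp_1)^{-1}(D')$. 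The splitting of $\X\times_{\X'}\X$ then decomposes the right-hand side, the contribution of $\Delta(\X)$ being $\E$ itself via the unit of adjunction, whence $\E$ is a direct summand of $h^! h_+(\E)$.

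Next I would verify the two stability statements. For stability under $h^!$: starting from divisors $(T'_1,\dots,T'_N)$ giving a smooth devissage of $h_+(\E)$ over $X'\setminus D'$ with strata $Y'_i$, set $T_j:=h^{-1}(T'_j)$ and $Y_i:=h^{-1}(Y'_i)$; since $h$ is finite flat, $h^{-1}$ commutes with intersections, complements and closures, and since $h$ is \'etale it preserves smoothness, so each $\overline{Y_i}$ is smooth. Moreover $h^!$ commutes with the local cohomological functors $\R\underline{\Gamma}^\dag_{(-)}$, intertwines $\sp_{\overline{Y'_i}\hookrightarrow\X',T'_{i+1},+}$ with $\sp_{\overline{Y_i}\hookrightarrow\X,T_{i+1},+}$ --- the inverse image of an overconvergent isocrystal along a finite \'etale morphism being again an overconvergent isocrystal --- and preserves $r$-overholonomicity (for $r\geq 0$ by the stability of overholonomicity under extraordinary inverse images, see \cite[3.1]{caro_surholonome}; for $r=-1$ trivially, see \ref{defi-strgcoh}). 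Hence $(T_1,\dots,T_N)$ gives a smooth devissage of $h^! h_+(\E)$ over $X\setminus D$ in $r$-overholonomic partially overconvergent isocrystals. For stability under direct summands: keeping the same divisors, $\R\underline{\Gamma}^\dag_{Y_i}$ commutes with finite direct sums, a direct summand of an object in the essential image of $\sp_{\overline{Y_i}\hookrightarrow\X,T_{i+1},+}$ lies again in that essential image (the category of overconvergent isocrystals being idempotent complete), and a direct summand of an $r$-overholonomic complex is $r$-overholonomic. Applying the three facts to the summand $\E$ of $h^! h_+(\E)$ gives the lemma.

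The \textbf{main obstacle} is the first step: making precise, inside the formalism of the inductive-limit categories of quasi-coherent complexes used in \ref{defi-smthdevi}, the base-change isomorphism $h^! h_+\riso p_{1+}p_2^!$ and the splitting of $\X\times_{\X'}\X$ into the diagonal and its complement --- this is exactly the content behind \ref{h+=0is0}, and its proof can be reused. Once $\E$ is identified as a direct summand of $h^! h_+(\E)$, the remaining points (commutation of $h^!$ and of $\R\underline{\Gamma}^\dag$ with the functors involved, idempotent completeness of the category of overconvergent isocrystals, and stability of $r$-overholonomicity under direct summands and \'etale pullback) are routine.
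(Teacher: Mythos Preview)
Your argument is correct, and it is built on the same finite \'etale transfer mechanism as the paper's proof, but you organise it differently. You apply the splitting $\E\hookrightarrow h^!h_+(\E)$ once, at the level of the ambient $\X$, and then deduce the lemma from two inheritance statements: smooth devissability in $r$-overholonomic partially overconvergent isocrystals is preserved by $h^!$ (pulling back the divisors $T'_j$) and by taking direct summands (using full faithfulness of $\sp_{\overline{Y_i}\hookrightarrow\X,T_{i+1},+}$, idempotent completeness of the isocrystal category, and stability of $r$-overholonomicity under summands). The paper instead first reduces to the case where $\E$ is concentrated on a single stratum $Z=h^{-1}(Z')$, then works locally on $\X$, chooses liftings $\iota:\ZZ\hookrightarrow\X$, $\iota':\ZZ'\hookrightarrow\X'$ and $a:\ZZ\to\ZZ'$ of the induced finite \'etale map, identifies $\iota'^!(h_+(\E))\riso a_+\iota^!(\E)$, and only then invokes the direct summand $\iota^!(\E)\hookrightarrow a^*a_+\iota^!(\E)$ \emph{at the level of the stratum}; the fact that $\iota^!(\E)$ is an overconvergent isocrystal is obtained not from the summand argument but from a finiteness criterion (\cite[2.2.12--13]{caro_courbe-nouveau}) applied to $\Gamma(\Y,\iota^!(\E))$. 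Your global route is cleaner and avoids choosing local liftings, at the cost of having to check the compatibility of $\sp_{+}$ with \'etale pullback and with idempotents; the paper's stratum-level route trades this for an explicit isocrystal criterion.
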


\begin{proof}
  Let $Z'$ be a smooth closed subscheme of
  $X'$, $T'$ a divisor which contains $D'$ such that
  $T' \cap X'$ is a divisor of $Z'$ and
  the cohomological spaces of
  $\R \underline{\Gamma} ^\dag _{Z'}(\hdag T') (h _+ (\E))$
  are $r$-overholonomic and in the essential image of the functor $\sp _{Z' \hookrightarrow \X', T', +}$.
Pose $T := h ^{-1}(T')$ and $Z := h ^{-1}(Z')$.
Then, $h _+  (\R \underline{\Gamma} ^\dag _{Z}(\hdag T)(\E))
\riso
\R \underline{\Gamma} ^\dag _{Z'}(\hdag T') (h _+ (\E))$.
With this remark, we check that it is sufficient by smooth devissage of $h _+ (\E)$ to prove that if
$\E \riso \R \underline{\Gamma} ^\dag _{Z}(\hdag T)(\E)$,
$\E \in D ^\mathrm{b} _\mathrm{coh} (\D ^\dag _{\X } (\hdag T) _{\Q})$
and
the cohomological spaces of $h _+ (\E)$ are
$r$-overholonomic and in the essential image of the functor $\sp _{Z' \hookrightarrow \X', T', +}$
then the cohomological spaces of $\E$ are
$r$-overholonomic and in the essential image of the functor $\sp _{Z \hookrightarrow \X, T, +}$.
Since $h _+$ is exact, we can also suppose that $\E$ is a coherent $\D ^\dag _{\X } (\hdag T) _{\Q}$-module.
Since this is local in $\X$ and $h$ is affine, we can suppose $\X$ and $\X'$ affine.
Then, there exists respectively some liftings $a\,:\, \ZZ \rightarrow \ZZ'$,
$\iota\,:\, \ZZ \hookrightarrow \X$,
$\iota '\,:\, \ZZ '\hookrightarrow \X'$ of $Z \rightarrow Z'$, $Z \hookrightarrow X$, $Z' \hookrightarrow X'$.
Since $h _+$ commutes with the local cohomological functor with compact support,
$\iota '_+ \iota ^{\prime !}  (h _+ (\E)) \riso h _+ \iota _+ \iota ^!  (\E)$.
Because the direct images of arithmetic $\D$-modules do not depend (up to a canonical isomorphism) on
the choice of the lifting,
$h _+ \iota _+ \iota ^!  (\E) \riso
\iota '_+ a _+  \iota ^!  (\E) $.
Hence $\iota '_+ \iota ^{\prime !}  (h _+ (\E)) \riso \iota '_+ a _+  \iota ^!  (\E) $.
Since $\iota ^{\prime !}  \iota '_+ \riso Id$,
$\iota ^{\prime !}  (h _+ (\E)) \riso a _+  \iota ^!  (\E) $.
This means that $\iota ^!  (\E) $ is a coherent $\D ^\dag _{\ZZ } (\hdag T\cap Z) _{\Q}$-module
(for the coherence, recall that $\E$ has its support in $Z$)
such that $a _+  \iota ^!  (\E) $ is $r$-overholonomic and $\O _{\ZZ '} (\hdag T'\cap Z') _{\Q}$-coherent.
Let $\Y:= \ZZ \setminus T$, $\Y':= \ZZ '\setminus T'$.
Since the morphism $\Y \rightarrow \Y'$ induced by $a$ is finite (and \'etale),
the fact that $a _+  \iota ^!  (\E) $ is $\O _{\ZZ '} (\hdag T'\cap Z') _{\Q}$-coherent
implies that
$\Gamma (\Y, \iota ^! (\E))$ is of finite type over $\Gamma (\Y, \O _{\Y,\Q})$.
Then, by \cite[2.2.12--13]{caro_courbe-nouveau},
$\iota ^! (\E)$ is associated to an isocrystal on $Y$ overconvergent along $T\cap Z$.
Since $a$ is finite and \'etale, $a _+ =a_*$ and thus
$\iota ^! (\E)$ is a direct factor of $ a ^* a _+ \iota ^! (\E)$.
Then, since $a _+  \iota ^!  (\E) $ is $r$-overholonomic
and that the $r$-overholonomicity is stable under extraordinary inverse image
(e.g., under $a^!=a^*$), we get the $r$-overholonomicity of $\iota ^! (\E)$.
Since $\E \riso \iota _+ \iota ^! (\E)$, $\E$ is $r$-overholonomic and is
in the essential image of
$\sp _{Z \hookrightarrow \X, T,+}$, which finishes the proof.
\end{proof}

\begin{nota}
\label{nota-f*}
  Let $\X$, $\X'$ be two smooth formal $\V$-schemes, $f _0$ : $X' \rightarrow X$ a
  morphism of $k$-schemes, $Z$ (resp. $Z'$) a divisor of $X$ (resp. $X'$) such that
  $f _0 ^{-1} ( Z) \subset Z'$.

>From \cite[2.1.6]{Be2}, we have a functor:
$f _{0} ^! $ :
$\smash{\underset{^{\longrightarrow}}{LD}} ^{\mathrm{b}} _{\Q ,\mathrm{qc}}
  ( \smash{\widehat{\D}} _{\X } ^{(\bullet)})
\rightarrow
\smash{\underset{^{\longrightarrow}}{LD}} ^{\mathrm{b}} _{\Q ,\mathrm{qc}}
  ( \smash{\widehat{\D}} _{\X ^{\prime}} ^{(\bullet)})$.
We obtain:
$f _{0, Z', Z} ^! := (\hdag Z') \circ f _{0} ^!  \circ \mathrm{Forg} _Z$:
$\smash{\underset{^{\longrightarrow}}{LD}} ^{\mathrm{b}} _{\Q ,\mathrm{qc}}
  ( \smash{\widehat{\D}} _{\X } ^{(\bullet)}(Z))
\rightarrow
\smash{\underset{^{\longrightarrow}}{LD}} ^{\mathrm{b}} _{\Q ,\mathrm{qc}}
  ( \smash{\widehat{\D}} _{\X ^{\prime}} ^{(\bullet)}(Z'))$.
  When there exists a lifting $f$ : $\X' \rightarrow \X$ of $f _0$, we retrieve
$f _{Z', Z} ^!$.
We pose $f _{0,Z',Z} ^* = \H ^0 \circ f _{0, Z', Z} ^! [-d _{X'/X}]$
and $f _{Z',Z} ^* = \H ^0 \circ  f _{Z', Z} ^! [-d _{X'/X}]$,
where
$d _{X'/X}$ is the relative dimension of $X'$ over $X$.
We keep the previous notation when we work with coherent complexes.
Remark that if $f _0 ^{-1} ( Z) =Z'$ then
$f _{Z',Z} ^*=f^*$, where $f ^*$ is the usual inverse image functor (as $\O _{\X}$-modules).
\end{nota}

\begin{lemm}
\label{commf_0*}
  Let $\X $, $\X'$ be two smooth formal $\V$-schemes,
$\ZZ$ (resp. $\ZZ'$) be a strict normal crossing divisor of $\X$ (resp. $\X'$). Let $f _0$ : $X'\rightarrow X$
be a morphism of $k$-schemes such that $f_0 ^{-1} (Z) \subset Z'$.
We note $f ^\# _0$ : $(X',Z') \rightarrow (X,Z)$ the induced
morphism. Let $\E$ (resp. $\FF$) be a coherent $F$-$\D_{(\X,\ZZ),\Q}^\dag$-module (resp. $\D_{(\X,\ZZ),\Q}^\dag$-module), locally
projective of finite type over $\O_{\X,\Q}$ (see
\ref{Frobenius-structure}).
\begin{enumerate}
  \item We have the isomorphism of coherent
$F$-$\D ^\dag _{\X'} (\hdag Z ') _{\Q}$-modules, $\O _{\X'} (\hdag Z') _{\Q}$-coherent:
\begin{equation}
  \label{commf_0*-iso1}
(\hdag Z')  (f _0 ^{\#*} (\E))  \riso  f _{0,Z',Z} ^{*} (\E  (\hdag Z)) ,
\end{equation}
where the first (resp. second) inverse image is defined in
\ref{def-inv image} (resp. \ref{nota-f*}).

\item Suppose that there exists a lifting $f $ : $\X' \rightarrow \X$ of $f _0$ which induces a lifting
$f ^\#$ : $(\X' ,\ZZ')\rightarrow (\X ,\ZZ)$ of $f _0 ^\#$.
Then, we have the isomorphism of coherent $\D ^\dag _{\X'} (\hdag Z ') _{\Q}$-modules, $\O _{\X'} (\hdag Z') _{\Q}$-coherent:
\begin{equation}
  \label{commf_0*-iso2}
  (\hdag Z')  (f  ^{\#*} (\FF))  \riso  f _{Z',Z} ^{*} (\FF  (\hdag Z)).
\end{equation}

\end{enumerate}

\end{lemm}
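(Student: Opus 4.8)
The plan is to reduce assertion (1) to assertion (2) and then prove (2) by transporting everything to the rigid-analytic side, where the comparison becomes a tautology about overconvergent sections.

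First I would observe that the three functors occurring in (1) --- the inverse image $f_0^{\#*}$ of convergent log-isocrystals (\ref{def-inv image}), the localisation functor $(\hdag Z')$, and the extraordinary inverse image $f_{0,Z',Z}^*$ of \ref{nota-f*} --- are all defined canonically, without reference to a lifting, so there is a canonical morphism $(\hdag Z')(f_0^{\#*}(\E))\rightarrow f_{0,Z',Z}^*(\E(\hdag Z))$; I would construct it, after localising along $Z'$, from the canonical transition morphism of transfer bimodules $\D^\dag_{\X'^\#\rightarrow\X^\#}\rightarrow\D^\dag_{\X'\rightarrow\X}$, in the spirit of the morphism $\rho$ of \ref{defi-rho}. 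It automatically respects Frobenius structures, since every functor in sight commutes with pullback by the absolute Frobenius (\ref{Frobenius-structure}). As being an isomorphism is local on $\X'$, and as locally $f_0^\#$ lifts to a morphism $f^\#\,:\,(\X',\ZZ')\rightarrow(\X,\ZZ)$ of smooth formal log-$\V$-schemes (by formal smoothness of $\X'$ over $\V$ and a choice of local coordinates adapted to $\ZZ$ and $\ZZ'$), in which case $f_0^{\#*}$ becomes $f^{\#*}$ and $f_{0,Z',Z}^*$ becomes $f_{Z',Z}^*$, assertion (1) will follow from assertion (2).

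For (2) I would put $E:=\sp^*(\FF)$ and translate through the equivalences of \ref{Ddag=overcvcondition}.\ref{Ddag=overcvcondition1} and the isomorphisms of \ref{spcommDD'}. Using that $\D^\dag_{\X^\#}(\hdag Z)_\Q\riso\D^\dag_{\X}(\hdag Z)_\Q$, the module $\FF(\hdag Z)$ identifies with $\sp_*(j_Y^\dag E)$, where $Y:=X\setminus Z$, by \ref{spcommDD'iso1} applied with $D=\emptyset$ and $D'=Z$. Then I would check that $\sp^*$ carries the left-hand side of (2) to $j_{Y'}^\dag(f_K^{\#*}E)$ and the right-hand side to $j_{Y'}^\dag\O_{]X'[_{\X'}}\otimes_{f_K^{-1}j_Y^\dag\O_{]X[_\X}}f_K^{-1}(j_Y^\dag E)$ with the pulled-back connection, where $Y':=X'\setminus Z'$, and that, because $f_0^{-1}(Z)\subset Z'$, the natural map between these two is an isomorphism of $j_{Y'}^\dag\O_{]X'[_{\X'}}$-modules with connection --- essentially the tautology $j_{Y'}^\dag\circ f_K^{\#*}\,=\,j_{Y'}^\dag\circ f_K^*\circ j_Y^\dag$ on overconvergent sections (cf. \cite[2.1]{Be} and \ref{remexp}.\ref{remexp-5}). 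Along the way this shows that both members of (2) are coherent $\D^\dag_{\X'}(\hdag Z')_\Q$-modules, locally projective of finite type over $\O_{\X'}(\hdag Z')_\Q$: the left-hand side by the overconvergence of $f_K^{\#*}E$ (\ref{remexp}.\ref{remexp-5}) together with \ref{Ddag=overcvcondition}.\ref{Ddag=overcvcondition1}; the right-hand side because $\FF(\hdag Z)$ is $\O_{\X}(\hdag Z)_\Q$-flat, so that $f_{Z',Z}^!(\FF(\hdag Z))$ is concentrated in degree $-d_{X'/X}$ with locally projective cohomology.

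Finally I would conclude exactly as in the proof of \ref{spcommDD'}: the canonical $\D$-module morphism of (2) restricts over $X'\setminus Z'$ to the identity of $f^*\FF$ --- there the logarithmic structures of $\X^\#$ and $\X'^\#$ are trivial --- hence it is an isomorphism outside $Z'$, and a morphism between coherent $\D^\dag_{\X'}(\hdag Z')_\Q$-modules which are locally projective of finite type over $\O_{\X'}(\hdag Z')_\Q$ and is an isomorphism outside $Z'$ is an isomorphism, by \cite[4.8]{caro_log-iso-hol}. The hard part, I expect, will be the bookkeeping needed to identify the canonical $\D$-module morphism (built from transfer bimodules) with the tautological rigid-analytic comparison, and to verify the local projectivity of $f_{Z',Z}^*(\FF(\hdag Z))$, i.e. that the overconvergent inverse image of an overconvergent isocrystal along an arbitrary lifted morphism is again a single overconvergent isocrystal; once these points are settled, the ``isomorphism outside a divisor implies isomorphism'' principle closes the argument.
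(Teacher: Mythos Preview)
Your treatment of part (2) is essentially the paper's: the paper simply says that \ref{commf_0*-iso2} ``becomes straightforward'' using \ref{spcommDD'iso1}, and your rigid-analytic computation via $\sp^*$, together with the ``isomorphism outside the divisor'' principle of \cite[4.8]{caro_log-iso-hol}, is exactly the unpacking of that one line.

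For part (1), however, your route and the paper's diverge, and your argument has a genuine gap. The paper does \emph{not} reduce (1) to (2). Instead it observes that both sides of \ref{commf_0*-iso1} are overconvergent $F$-isocrystals on $X'\setminus Z'$ along $Z'$, and that their restrictions to $X'\setminus Z'$ obviously coincide (both log structures become trivial there). It then invokes Kedlaya's two full-faithfulness theorems \cite[6.4.5]{kedlaya-semistableI} and \cite[4.2.1]{kedlaya-semistableII} --- the second of which genuinely requires the Frobenius structure --- to conclude that the isomorphism on the open extends uniquely. No morphism needs to be constructed by hand at all.

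Your plan, by contrast, proposes to build a canonical global morphism ``from the canonical transition morphism of transfer bimodules $\D^\dag_{\X'^\#\rightarrow\X^\#}\rightarrow\D^\dag_{\X'\rightarrow\X}$'', and then to verify it is an isomorphism locally where a lifting $f^\#$ exists. The difficulty is that the transfer bimodules themselves only make sense once a lifting $f^\#$ is chosen; so your construction of the morphism is itself local, and you must show that the morphisms produced from different local liftings glue. This pasting step is precisely what the paper flags, in the Remark immediately following the lemma, as ``technical'' and deliberately avoided: the authors note that \ref{commf_0*-iso1} \emph{should} hold without Frobenius, but that proving it would require one ``to paste local isomorphisms'', which they do not carry out. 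In other words, you have rediscovered the Frobenius-free strategy the authors had in mind but chose not to pursue, and the step you describe as routine (the existence of a canonical global morphism) is exactly the missing ingredient. A secondary point: your assertion that $f_0^\#$ lifts locally to $f^\#:(\X',\ZZ')\to(\X,\ZZ)$ is not just formal smoothness of $\X'$ over $\V$; one needs formal smoothness of $\X$, and then one must arrange the lift so that $f^{-1}(\ZZ)\subset\ZZ'$, which requires an argument using the normal-crossing structure.
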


\begin{proof}
  The sheaf $f _0 ^{\#*} (\E)$ is a coherent $F$-$\D_{(\X',\ZZ'),\Q}^\dag$-module, locally
projective of finite type over $\O_{\X',\Q}$. By both Kedlaya's fully faithfulness theorems \cite[6.4.5]{kedlaya-semistableI}
and \cite[4.2.1]{kedlaya-semistableII}, it is sufficient to check
the isomorphism \ref{commf_0*-iso1} outside $Z'$, which is obvious.
Using \ref{spcommDD'iso1}, the isomorphism \ref{commf_0*-iso2} becomes straightforward.
\end{proof}

\begin{rema}
  In the proof of \ref{commf_0*-iso1} we use the Frobenius structure
  (more precisely, the second Kedlaya's fully faithfulness theorem, i.e., \cite[4.2.1]{kedlaya-semistableII},
  needs a Frobenius structure). But, the isomorphism \ref{commf_0*-iso1} should be true without a Frobenius structure on $\E$.
  This check is technical (we have to paste local isomorphisms)
  and we avoid it because this is not really useful in this paper.
\end{rema}

\begin{empt}
[log-relative duality isomorphism]
\label{relativeduality}
We recall in this paragraph the isomorphism \cite[5.25.2]{caro_log-iso-hol}
and give a version of this.
This isomorphism will be essential in the next theorem.
Let $\X$ be a smooth formal $\V$-scheme,
$\ZZ$ a strict normal crossing divisors of $\X$,
$\X ^\#:=(\X, \ZZ) $ the induced smooth logarithmic formal $\V$-scheme,
$u$ : $\X ^\#\rightarrow \X$ the canonical morphism.
Let $\E$ be a coherent
$\D ^\dag _{\X^\#,\Q}$-module which is a locally projective
$\O_{\X,\Q}$-module of finite type.
It follows from \cite[5.25.2]{caro_log-iso-hol} that
$\DD_{\X } \circ u_+ (\E) \riso u_+ \circ \DD _{\X ^\#} (\E (\ZZ))$ (see the notation \ref{defidual}).
By \cite[5.22]{caro_log-iso-hol}, $\DD _{\X ^\#} (\E (\ZZ)) \riso (\E (\ZZ)) ^\vee \riso
\E ^\vee (-\ZZ)$.
Then:
\begin{equation}
  \label{relativeduality-iso}
\DD_{\X } \circ u_+ (\E) \riso u_+ (\E ^\vee (-\ZZ)).
\end{equation}
\end{empt}

\begin{theo}
\label{theorhoiso2}
Let $\X$ be a smooth formal $\V$-scheme,
$\ZZ$ a strict normal crossing divisors of $\X$,
$\X ^\#:=(\X, \ZZ) $ the induced smooth formal $\V$-scheme,
$u$ : $\X ^\#\rightarrow \X$ the canonical morphism.
Let $\E$ be a coherent
$\D ^\dag _{\X^\#,\Q}$-module which is a locally projective
$\O_{\X,\Q}$-module of finite type satisfying the following condition:
\begin{list}{}{}
\item[\mbox{\rm (c)}] none of elements of $\mathrm{Exp}(\E)^\gr$
(see the definition in \ref{remexp})
is a $p$-adic Liouville number.
\end{list}
Then $u _{+} (\E)$ is overholonomic.
\end{theo}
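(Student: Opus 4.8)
The plan is to reduce the overholonomicity of $u_+(\E)$ to the overholonomicity of partially overconvergent isocrystals appearing in a smooth devissage, using exactly the machinery built up in \ref{rhoisolemm2} and \ref{rhoisolemm2-complm}, but now exploiting the stronger hypothesis (c) in place of (a), (b'). The reason condition (c) is essential is precisely the log-relative duality isomorphism \ref{relativeduality-iso}: overholonomicity requires stability under the dual functor $\DD_\X$, and $\DD_\X\circ u_+(\E)\riso u_+(\E^\vee(-\ZZ))$, where the twist $\E^\vee(-\ZZ)$ generically has exponents that are the negatives of those of $\E$ plus $1$ along each component. So the class of objects we must work with has to be closed under $\alpha\mapsto -\alpha$ and $\alpha\mapsto\alpha+m$ on exponents, i.e. closed under passing to $\mathrm{Exp}(\E)^\gr$; and the non-Liouville condition on the whole group $\mathrm{Exp}(\E)^\gr$ is exactly what is preserved. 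Note also that $\mathrm{Exp}(\E^\vee(-\ZZ))^\gr = \mathrm{Exp}(\E)^\gr$, so condition (c) is self-dual.

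First I would reduce, as in the proof of \ref{theorhoiso} (step $1^\circ$), to a local situation: by \cite[Theorem 2]{Kedlaya-coveraffinebis} there is an open dense $\U\subset\X$ and a finite \'etale $h_0:U\to\A_k^n$ carrying the components of $\ZZ$ through $x$ to coordinate hyperplanes. Lifting to $h:\X\to\widehat{\A}^n_\V$, introducing the auxiliary log-structure $\ZZ''=h^{-1}(\HH)$ and $\ZZ'$ as in that proof, we have $h^\#_+(\E)$ a coherent $\D^\dag$-module, locally projective over $\O$, satisfying (c) on $\widehat{\A}^{n\#}_\V$ (by \ref{h+finiteetalecondab}-type reasoning: exponents are preserved under finite \'etale pushforward by \ref{remexp-6}, so $\mathrm{Exp}(h^\#_+\E)^\gr=\mathrm{Exp}(\E)^\gr$, and also, since (c) implies both (a) and (b), it implies (b')). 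Then $w_{H+}(h^\#_+\E)$ admits, by \ref{rhoisolemm2}, a smooth devissage over $\A^n_k$ in partially overconvergent isocrystals given by the divisors $T_1,\dots,T_N$, and by \ref{rhoisolemm2-complm} these isocrystals are $-1$-overholonomic (i.e., coherent over $\D^\dag_{\X,\Q}$). The key new point is to upgrade ``$-1$-overholonomic'' to ``overholonomic'': an isocrystal on a smooth subvariety which is $-1$-overholonomic and whose dual (under $\DD$) is again of the same type is overholonomic, and this is where (c), being self-dual under the twist $\E\mapsto\E^\vee(-\ZZ)$, does its work via \ref{relativeduality-iso}.

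The main obstacle, and the heart of the argument, is showing that each partially overconvergent isocrystal $\mathcal{G}_i$ produced in the devissage is genuinely overholonomic, not merely coherent over $\D^\dag_{\X,\Q}$. The strategy here mirrors the inductive structure of \ref{rhoisolemm2-complm}: one runs the same induction on $(n,|I|)$, but at each step one must verify that the objects $g^\#_{1+}\circ\R\underline{\Gamma}^\dag_{H_1}(\E)$ appearing on the divisor $\mathfrak{H}_1$ still satisfy condition (c) — this follows from \ref{overconvbis} together with the remark in \ref{remexp} that exponents along the other components are $m$ copies of those of $\E$, so the group $\mathrm{Exp}(\cdot)^\gr$ of the restricted object is a subgroup of $\mathrm{Exp}(\E)^\gr$, hence still non-Liouville. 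Then one invokes the induction hypothesis to get overholonomicity of the pieces on $\mathfrak{H}_1$, and descends along the finite \'etale cover using \ref{h+=0is0overhol} (with $r$ the relevant overholonomicity level), exactly as in that lemma, to conclude overholonomicity on $\X$ itself. Finally, since overholonomicity is stable under direct images, local cohomological functors, and descent under proper generically \'etale morphisms, assembling the devissage triangles (via \ref{n1n2dev} and the localization triangles in \ref{rema-smthdevi}) and pushing forward by $w_H$ and $h$ shows $u_+(\E)$ is overholonomic; a standard gluing over the open cover from step $1^\circ$ then gives the global statement. The one genuinely delicate verification is that the duality compatibility propagates through every stage of the devissage — i.e., that applying $\DD_\X$ to a piece of the devissage of $u_+(\E)$ yields a piece of the devissage of $u_+(\E^\vee(-\ZZ))$, so that the self-duality of (c) is actually usable at each inductive step rather than only at the top level.
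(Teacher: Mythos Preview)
Your proposal has the right ingredients in spirit but misses the central mechanism. The devissage produced by \ref{rhoisolemm2} has, as its very first piece, the open stratum: $(\hdag H)w_+(h^\#_+\E)\riso h^\#_+\E(\hdag H)$, an overconvergent isocrystal on the full-dimensional complement $\A^n_k\setminus H$. Your induction on $(n,|I|)$, together with the $i_1^!$ trick from \ref{rhoisolemm2-iso1311}, handles only the pieces supported on proper subvarieties of $H$; it says nothing about this top stratum. So your claim that ``assembling the devissage triangles shows $u_+(\E)$ is overholonomic'' leaves exactly the hardest piece unproved. Your parenthetical ``an isocrystal which is $-1$-overholonomic and whose dual is again of the same type is overholonomic'' is not a theorem available here: that would give at best holonomicity ($0$-overholonomic), and to climb to $r$-overholonomic for all $r$ you must show that $\DD_\X(\E(\hdag Z\cup D))$ is $(r-1)$-overholonomic for \emph{every} divisor $D$, not only the normal-crossing one you started with.

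This is where the paper's proof diverges sharply from yours. It sets up a double induction, on $\dim X=n$ and on the overholonomicity level $r\ge -1$, via three statements $(P_{n,r})$, $(Q_{n,r})$, $(R_{n,r})$ (respectively: $u_+(\E)$, $\R\underline{\Gamma}^\dag_Z u_+(\E)$, and $\E(\hdag Z)$ are $r$-overholonomic). Your argument is essentially the implication $(P_{n-1,r})\Rightarrow(Q_{n,r})$, which the paper also proves (Step (I)). The missing half is $(P_{n,r-1})+(Q_{n,r})\Rightarrow(R_{n,r})$: for an arbitrary divisor $D$, one must control $\E(\hdag Z\cup D)$. Since $Z\cup D$ is not normal crossing in general, the paper invokes de Jong's alterations to pull back along a proper, generically finite \'etale $a:X'\to X$ so that $a^{-1}(Z\cup D)$ becomes normal crossing; then on $\X'$ one is back in a log situation where \ref{relativeduality-iso} applies, and the hypothesis $(P_{n,r-1})$ (for the dual object, which still satisfies (c)) plus $(Q_{n,r})$ close the loop. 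Without this de Jong step and the explicit induction on $r$, your proof cannot leave the level $r=-1$ established in \ref{rhoisolemm2-complm}.
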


\begin{proof}
Let $r \geq -1$, $n\geq 0$ be two integers and
let us consider the next properties:
  \begin{enumerate}
\item [$(P _{n,r})$] If $\dim X \leq n$ then $u _{+} (\E)$ is $r$-overholonomic ;

  \item [$(Q _{n,r})$]
  If $\dim X \leq n$ then $\R \underline{\Gamma} ^\dag _Z  u _{+} (\E)$ is $r$-overholonomic ;

    \item [$(R _{n,r})$]
  If $\dim X \leq n$ then $\E (\hdag Z) $ is $r$-overholonomic.
\end{enumerate}
\medskip

\noindent $\mathrm{(I)}$ {\it First, for any $n\geq 1$, $r \geq -1$, we check that
$(P _{n-1,r}) \Rightarrow (Q _{n,r})$.}

$1^\circ$ {\it How to use the case \ref{rhoisolemm2-complm} of affine spaces.}

Let $\HH _1,\dots, \HH _n$ be the coordinate hyperplanes of
$\Spf \V \{ t _1,\dots, t_n\}$,
$\HH := \HH _1 \cup \cdots \cup \HH _r$ for some $r \leq n$,
$\smash{\widehat{\A}} ^n _\V:=\Spf \V \{ t _1,\dots, t_n\}$ and
$\smash{\widehat{\A}} ^{n\#} _\V  := (\Spf \V \{ t _1,\dots, t_n\}, \HH)$.
Since $r$-overholonomicity in local in $\X$,
similarly to the first step of the proof of theorem \ref{theorhoiso},
we come down to the case where
there exists a commutative diagram of the form:
\begin{equation}
  \notag
  \xymatrix{
  {\X} \ar@{=}[r]  & {\X} \ar[r] ^-{h} & {\smash{\widehat{\A}} ^n _\V} \\
{(\X ,\ZZ)} \ar[u] ^-{u}  & {(\X ,\ZZ'')} \ar[l] _-{\tilde{u}} \ar[u] ^-{v} \ar[r] ^-{h ^\#} &
{\smash{\widehat{\A}} ^{n\#} _\V,} \ar[u] ^-{w}
}
\end{equation}
where
$h$ is a finite \'etale morphism,
$\ZZ '':= h ^{-1} (\HH)$ and where
$h ^\#$, 
$w$, 
$v$, 
$\tilde{u}$
are the canonical induced morphisms.
Moreover,
denote by $\X ^{\# \prime}:= (\X, \ZZ'')$ and $\ZZ'$
the union of the irreducible components of $\ZZ''$ which are not an irreducible component of $\ZZ$.

 $2^\circ$ {\it $\R \underline{\Gamma} ^\dag _{H } w _{+} h ^\#  _{+}  (\E)$
is $r$-overholonomic.}

The case where $r=-1$ is already known from \ref{rhoisolemm2-complm}.
Suppose now $r \geq 0$.
We notice (for example see \ref{I.6.11}) that
$\E$ is also a coherent
$\D ^\dag _{\X^{\#\prime},\Q}$-module which is a locally projective
$\O_{\X,\Q}$-module of finite type.
Since $h $ is finite and \'etale,
$h ^\#  _{+}  (\E )$ is a coherent
$\D^\dag _{\smash{\widehat{\A}} ^{n\#} _\V,\Q}$-module which is a locally projective
$\O_{\smash{\widehat{\A}} ^{n} _\V ,\Q}$-module of finite type and such that
the condition (c) holds (see \ref{remexp}.\ref{remexp-6}).
Hence, by \ref{rhoisolemm2},
$\R \underline{\Gamma} ^\dag _{H} w _{+} h ^\#  _{+}  (\E)$
is smoothly devissable in partially overconvergent isocrystals.
Also, in the proof of \ref{rhoisolemm2} (see \ref{rhoisolemm2-iso1311})
and with its notation,
we have checked that
$i_1 ^! w _{+} h ^\#  _{+}  (\E)$
is isomorphic to the image by $w _{1+}$
of a complex of
coherent $\D^\dag _{\mathfrak{H} _1 ^\#, \Q}$-module which are locally projective
$\O_{\mathfrak{H}_1, \Q}$-modules of finite type satisfying the condition (c)
by \ref{overconv}.
The hypothesis $( P _{n-1,r})$ implies that
$i_1 ^! w _{+} h ^\#  _{+}  (\E)$ is $r$-overholonomic.
Then
$i_{1+} i_1 ^! w _{+} h ^\#  _{+}  (\E)\riso
\R \underline{\Gamma} ^\dag _{H _1} w _{+} h ^\#  _{+}  (\E)$
is $r$-overholonomic.
Symmetrically, we obtain for any $i=1,\dots, r$ that
$\R \underline{\Gamma} ^\dag _{H _i} w _{+} h ^\#  _{+}  (\E)$
is $r$-overholonomic.
Using Mayer-Vietoris exact triangles and the stability of $r$-overholonomicity
by local cohomological functors, this implies that
$\R \underline{\Gamma} ^\dag _{H } w _{+} h ^\#  _{+}  (\E)$
is $r$-overholonomic.

 $3^\circ$ {\it $(\hdag Z') \R \underline{\Gamma} ^\dag _Z u _{+} (\E)$
is $r$-overholonomic.}

We have:
$h _{+} (\R \underline{\Gamma} ^\dag _{Z''} v _{+} (\E ))
\riso
\R \underline{\Gamma} ^\dag _{H} h _{+} v _{+} (\E)
\riso
\R \underline{\Gamma} ^\dag _{H} w _{+} h ^\#  _{+}  (\E)$
(see \cite[2.2.18.2]{caro_surcoherent} for the first isomorphism and
\ref{trans+} for the second one).
Then, by \ref{h+=0is0overhol}:
$\R \underline{\Gamma} ^\dag _{Z''} v _{+} (\E )$
is $r$-overholonomic.
We have checked in the proof of \ref{theorhoiso} that
$u _+ (\E (\hdag Z')) \riso v _+ (\E)$.
This implies $\R \underline{\Gamma} ^\dag _{Z''} (\hdag Z')  u _{+} (\E)$
is $r$-overholonomic.
Using a Mayer-Vietoris exact triangle (similarly to \ref{MayerVietoris-rhoiso1}), we obtain
$\R \underline{\Gamma} ^\dag _{Z''} (\hdag Z')  u _{+} (\E) \riso
\R \underline{\Gamma} ^\dag _{Z} (\hdag Z')  u _{+} (\E)$.

Using the exact triangle of localization of $\R \underline{\Gamma} ^\dag _{Z} u _{+} (\E) $
with respect to $Z'$,
we come down to prove
$\R \underline{\Gamma} ^\dag _{Z ' \cap Z} u _{+} (\E)$
is $r$-overholonomic, which is the last step of the proof of $\mathrm{(I)}$.

 $4^\circ$ {\it $\R \underline{\Gamma} ^\dag _{Z ' \cap Z} u _{+} (\E)$
is $r$-overholonomic.}

When $Z \cap Z'$ is empty, this is obvious.
It remains to deal with the case where $Z\cap Z'$ is not empty.
Let $x$ be a closed point of $Z\cap Z'$,
$\ZZ _1,\dots , \ZZ _r$ be the irreducible components of $\ZZ$ containing $x$,
$\ZZ _{r+1}, \dots , \ZZ _s$ be the irreducible components of $\ZZ'$ containing $x$.
Since $\R \underline{\Gamma} ^\dag _{Z ' \cap Z} u _{+} (\E)$ is zero outside $Z \cap Z'$,
it is sufficient to prove its nullity around $x$.
Then, we can suppose that
$\ZZ = \ZZ _1 \cup \cdots \cup \ZZ _r$ and $\ZZ ' = \ZZ _{r+1} \cup \cdots \cup \ZZ _s$.

Let $I$ a subset of $\{r+1,\dots, s\}$,
$\X':=\cap _{i\in I} \ZZ _i$,
$\X ^{\prime \#} := (\X', \X' \cap \ZZ)$,
$\iota$ : $\X' \hookrightarrow \X$,
$\iota ^\#$ : $\X ^{\prime \#} \hookrightarrow \X ^\#$,
$u'$ : $\X ^{\prime \#} \rightarrow \X '$
be the canonical morphisms.
Then,
$\R \underline{\Gamma} ^\dag _{X' \cap Z} u _{+} (\E)
\riso \R \underline{\Gamma} ^\dag _{Z} \iota_+ \iota^! u _{+} (\E)
\underset{\ref{lemmtheorhoiso}}{\riso} \R \underline{\Gamma} ^\dag
_{Z} \iota_+ u ' _{+} \iota^{\#!}  (\E) \riso \iota_+
\R \underline{\Gamma} ^\dag _{Z\cap X' } u'  _{+} \iota^{\#!}  (\E)$.
From $(P _{n-1,r}) $, we get that
$\R \underline{\Gamma} ^\dag _{Z\cap X' } u'  _{+} \iota^{\#!}  (\E)$
is $r$-overholonomic.
Hence, using the stability of the $r$-overholonomicity
under the direct image by a proper morphism,
$\R \underline{\Gamma} ^\dag _{X' \cap Z} u _{+} (\E)$
is also $r$-overholonomic.
Using Mayer-Vietoris exact triangles, we get
that if $\X''$ is the union of some intersections of some irreducible
components of $\ZZ'$ then $\R \underline{\Gamma} ^\dag _{X'' \cap Z} u _{+} (\E)$
is $r$-overholonomic. In particular,
$\R \underline{\Gamma} ^\dag _{Z' \cap Z} u _{+} (\E)$
is $r$-overholonomic.
\medskip

\noindent $\mathrm{(II)}.$
{\it We prove $(P _{n,r-1}) + (Q _{n,r}) \Rightarrow (R _{n,r})$ for any $n\geq 0$, $r \geq 0$.}

We suppose $r =0$ (resp. $r \geq 1$)
  By \ref{commf_0*-iso2}, it is sufficient to prove that for any divisor
  $D$ of $X$, $\E (\hdag Z \cup D)$ is $\D ^\dag _{\X,\Q}$-coherent
  (resp. $\DD _{\X } (\E (\hdag Z \cup D)) $ is $r-1$-overholonomic).
Using de Jong's desingularization theorem (\cite{dejong}),
there exist
a proper smooth morphism $f$ : $\PP' \rightarrow \X$ of smooth formal $\V$-schemes,
a smooth scheme $X'$ over $k$, a closed immersion
$\iota _0 '$ : $X ' \hookrightarrow \PP'$,
a projective, surjective, generically finite and \'etale morphism $a _0 $ : $X' \rightarrow X$
such that
$a _0=f _0\circ \iota' _0 $ and
  $Z'' := a _0 ^{-1} (Z\cup D)$ is a strict normal crossing divisor of $X'$.
Since $\E (\hdag Z \cup D)$ is associated to an isocrystal on $X \setminus (Z \cup D)$ overconvergent along
$Z \cup D$ (i.e., is a
coherent
$\D ^\dag _{\X} (\hdag Z \cup D) _{\Q}$-modules, $\O _{\X} (\hdag Z \cup D) _{\Q}$-coherent),
by \cite[6.1.4]{caro_devissge_surcoh}
and \cite[6.3.1]{caro_devissge_surcoh}
$\E (\hdag Z \cup D)$ is a direct factor
of
$f _+ \R \underline{\Gamma} ^\dag _{X'} f ^! (\E (\hdag Z \cup D)) $.
Since $r-1$-overholonomocity
is stable under direct image by a proper morphism
(resp. and furthermore since $f _+$ commutes with $\DD_{\X }$),
it remains to prove that $\R \underline{\Gamma} ^\dag _{X'} f ^! (\E (\hdag Z \cup D)) $
is $\D ^\dag _{\PP',\Q}$-coherent
(resp. $\DD _{\X } \circ \R \underline{\Gamma} ^\dag _{X'} \circ f ^! (\E (\hdag Z \cup D)) $ is $r-1$-overholonomic).
This is local in $\PP'$.
Then, we can suppose that there exists a lifting
$\iota '$ : $\X ' \hookrightarrow \PP'$ of $\iota ' _0$ and that
$Z ''$ lifts to a
relatively strict normal crossing divisor $\ZZ''$ of $\X'$ over $\V$.
We pose $a = f \circ \iota '$ and denote by
$u'\,:\, (\X', \ZZ '') \rightarrow \X'$ and
$a ^\#$ : $ (\X ' , \ZZ'') \rightarrow (\X,\ZZ)$
the canonical morphisms.

By \cite[4.4.5]{Beintro2},
$\R \underline{\Gamma} ^\dag _{X'} f ^! (\E (\hdag Z \cup D))
\riso
\iota ' _+ \iota ^{\prime !} f ^! (\E (\hdag Z \cup D))
\riso
\iota ' _+ a ^! (\E (\hdag Z \cup D))$.
Then, we come down to prove that
$a ^! (\E (\hdag Z \cup D)) =a ^* (\E (\hdag Z \cup D)) $ (by flatness)
is $\D ^\dag _{\X',\Q}$-coherent (resp. $\DD _{\X } a ^* (\E (\hdag Z \cup D))$ is $r-1$-overholonomic).
We have $a ^* (\E (\hdag Z \cup D))\riso (\hdag Z'') \circ a ^* (\E (\hdag Z)) \riso a _{ Z'',Z} ^{*} (\E  (\hdag Z))$.
We get from \ref{commf_0*-iso2} the following isomorphism:
$a _{Z'',Z} ^{*} (\E  (\hdag Z))
\riso  (\hdag Z'')(a ^{\#*} (\E))   $.
Thus, it remains to prove that
$(\hdag Z'')(a ^{\#*} (\E))   $
is $\D ^\dag _{\X',\Q}$-coherent (resp. $\DD _{\X } \circ (\hdag Z'')(a  ^{\#*} (\E))   $ is $r-1$-overholonomic).
We check this separately:

{\it Non respective case.} By $(Q _{n,0})$, since $a  ^{\#*} (\E)$
satisfies the condition (c) (see \ref{remexp}.\ref{remexp-5}),
the morphism
  $\R \underline{\Gamma} ^\dag _{Z''} u'_+ (a  ^{\#*} (\E)) $ is overcoherent.
 By \ref{exacttriu+F},
 using the exact triangle of localization of $u'_+ (a  ^{\#*} (\E))$ with respect to $Z''$,
 this implies that
$(\hdag Z'') (a  ^{\#*} (\E))  $ is $\D ^\dag _{\X',\Q}$-coherent.

{\it Respective case.}
By applying the functor $\DD_{\X }$ to the exact triangle of localization of
$u'_+ (a  ^{\#*} (\E))$ with respect to $Z''$ (see \ref{exacttriu+F}), we get
$\DD _{\X } \circ (\hdag Z'')(a  ^{\#*} (\E))   =
\mathrm{Cone} \left(
\DD _{\X } \circ  u'_+ (a  ^{\#*} (\E))
\rightarrow
\DD _{\X } \circ \R \underline{\Gamma} ^\dag _{Z''} \circ  u'_+ (a  ^{\#*} (\E))
\right)[-1]$.
Since $a  ^{\#*} (\E)$ satisfies the condition (c) (see \ref{remexp}.\ref{remexp-5}),
using $(Q _{n,r})$ hypothesis,
we get that
$\DD _{\X } \circ \R \underline{\Gamma} ^\dag _{Z''} \circ u'_+ (a  ^{\#*} (\E))$
is $r-1$-overholonomic.
Also, the log-relative duality isomorphism of \ref{relativeduality-iso} gives:
$\DD _{\X }\circ u'_+ (a  ^{\#*} (\E)) \riso
u'_+ ((a  ^{\#*} (\E)) ^{\vee} (-\ZZ''))$.
Since $(a  ^{\#*} (\E)) ^{\vee} (-\ZZ'')$ satisfies also the condition (c) (see \ref{remexp}.\ref{remexp-5})
of our theorem,
using $(P _{n,r-1})$ we obtain that
$u'_+ ((a  ^{\#*} (\E)) ^{\vee} (-\ZZ''))$
is $r-1$-overholonomic.
Hence,
$\DD _{\X } \circ (\hdag Z'')(a  ^{\#*} (\E)) $ is
$r-1$-overholonomic.
\medskip

\noindent $\mathrm{(III)}.$
{\it Conclusion.}

For any $n\geq 0$, we know that $(P _{n, -1})$ is true.
Also, for any $r \geq -1$,
$(P _{0,r})$ is already known
(see \cite[7.3]{caro_surholonome}).

We get from the two previous steps that,
for any $r\geq 0$ and $n\geq 1$, $(P _{n, r-1})+ (P _{n-1, r})\Rightarrow (Q _{n, r})+(R _{n, r})$.
Using the exact triangle of localization of
$u _+ (\E)$ with respect to $Z$ we get
$(Q _{n, r})+(R _{n, r})\Rightarrow (P _{n, r})$. Thus,
$(P _{n, r-1})+ (P _{n-1, r})\Rightarrow (P _{n, r})$.
This implies that
$(P _{n, r})$ is true for any $r \geq -1$ and $n \geq 0$.
\end{proof}

\begin{rema}
  We have used in (the step $\mathrm{(II)}$ of) the proof of \ref{theorhoiso2},
  the stability of the condition (c) by
inverse image
and above all by the functor $\E \mapsto \E^\vee(-\ZZ)$.
Since condition (b') of \ref{Nobuo} is not stable by the functor $\E \mapsto \E^\vee(-\ZZ)$,
we do need the strong version of theorem \ref{Nobuorigid} and proposition \ref{overconv}.
\end{rema}

\begin{theo}
  \label{holo-qunip0}
Let $\PP$ be a separated smooth formal scheme over $\V$, $T $ a divisor of $P$,
$X$ a closed smooth subscheme such that $Z:=T \cap X$ is a divisor of $X$, $Y:= X \setminus Z$.
  Let $E$ be an $F$-isocrystal on $Y$ overconvergent along $Z$.
Then $\sp _{X \hookrightarrow \PP,T, +} (E)$ is overholonomic.
\end{theo}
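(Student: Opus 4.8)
The plan is to deduce the theorem from Kedlaya's semistable reduction theorem \ref{semi-stable} and the overholonomicity of the log-extendable case established in Theorem \ref{theorhoiso2}, glued together by a proper generically étale descent of de Jong type. First I would reduce to the case $X=P$: by transitivity of the realisation functor one has $\sp_{X\hookrightarrow\PP,T,+}(E)\riso\iota_+\bigl(\sp_{X\hookrightarrow X,Z,+}(E)\bigr)$ for the closed immersion $\iota\colon X\hookrightarrow\PP$, and overholonomicity is stable under direct images by proper morphisms (in particular closed immersions), so it suffices to treat $\sp_{X\hookrightarrow X,Z,+}(E)$. Replacing $\PP$ by $X$, we may assume $\PP=\X$ is a separated smooth formal $\V$-scheme, $Z=T$ a divisor of $X$, and $E$ an $F$-isocrystal on $Y=X\setminus Z$ overconvergent along $Z$; the object to control is then $\E(\hdag Z):=\sp_*(E)$, a coherent $\D^\dag_{\X}(\hdag Z)_\Q$-module which is $\O_{\X}(\hdag Z)_\Q$-coherent, and we must show it is overholonomic as a $\D^\dag_{\X,\Q}$-module.

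Next I would invoke Theorem \ref{semi-stable}: there is a proper, surjective, generically étale morphism $f\colon X_1\to X$, an open immersion $X_1\hookrightarrow\overline{X}_1$ into a smooth projective $k$-variety with $D_1:=f^{-1}(Z)\cup(\overline{X}_1\setminus X_1)$ a strict normal crossing divisor, such that $f^*E$ on $Y_1:=f^{-1}(Y)$ overconvergent along $D_1\cap X_1$ is log-extendable on $X_1$ in the sense of \ref{unipotent}. Exactly as in step $\mathrm{(II)}$ of the proof of \ref{theorhoiso2} --- using de Jong's desingularisation \cite{dejong} together with \cite[6.1.4]{caro_devissge_surcoh} and \cite[6.3.1]{caro_devissge_surcoh} --- the object $\sp_{X\hookrightarrow X,Z,+}(E)$ is a direct factor of the direct image by a proper morphism of the $\D^\dag$-module attached to $f^*E$ on $X_1$ overconvergent along $D_1\cap X_1$, realised inside a smooth formal model of (an alteration of) $\overline{X}_1$. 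Since overholonomicity is preserved by proper direct images and passes to direct factors, it remains to prove that the $\D^\dag$-module associated to a log-extendable $F$-isocrystal is overholonomic.

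Finally I would treat the log-extendable case. Working Zariski-locally on $\overline{X}_1$ (and, if necessary, after a further de Jong alteration so as to have a smooth formal lift available), we may assume there is a separated smooth formal $\V$-scheme $\overline{\X}_1$ lifting $\overline{X}_1$ together with a strict normal crossing divisor $\ZZ_1$ lifting $D_1$; put $\overline{\X}_1^\#=(\overline{\X}_1,\ZZ_1)$ and let $u\colon\overline{\X}_1^\#\to\overline{\X}_1$ be the canonical morphism. The convergent log-isocrystal with nilpotent residues extending $f^*E$ corresponds, by \ref{Ddag=overcvcondition}.\ref{Ddag=overcvcondition2}, to a coherent $\D^\dag_{\overline{\X}_1^\#,\Q}$-module $\E_1$ which is locally projective of finite type over $\O_{\overline{\X}_1,\Q}$; since its residues are nilpotent, all its exponents vanish, so $\mathrm{Exp}(\E_1)^\gr=0$ and condition (c) of \ref{theorhoiso2} holds trivially (as do (a), (b') and (b)). Theorem \ref{theorhoiso2} then gives that $u_+(\E_1)$ is overholonomic, and Theorem \ref{theorhoiso} gives an isomorphism $\rho\colon u_+(\E_1)\riso\E_1(\hdag D_1)$; hence $\E_1(\hdag D_1)=\sp_{X_1\hookrightarrow\overline{\X}_1,D_1,+}(f^*E)$ is overholonomic. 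Gluing the local pieces (overholonomicity being a local property) and feeding this into the descent of the previous paragraph yields the theorem. The main obstacle I expect is the second paragraph: making the de Jong / cohomological descent precise --- producing the smooth formal model on which Theorem \ref{theorhoiso2} applies, keeping careful track of the two divisors $Z$ and $\overline{X}_1\setminus X_1$, and checking that $\sp_{X\hookrightarrow X,Z,+}(E)$ really is a direct factor of a proper direct image of the log-extendable object --- together with the translation, via \ref{Ddag=overcvcondition}, of ``log-extendable with nilpotent residues'' into the exact hypotheses (coherence, local projectivity over $\O_{\X,\Q}$, vanishing exponents) required by Theorem \ref{theorhoiso2}.
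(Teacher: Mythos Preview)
Your strategy is the paper's strategy: semistable reduction, proper generically étale descent via \cite[6.1.4, 6.3.1]{caro_devissge_surcoh}, then Theorem~\ref{theorhoiso2} on the log-extendable pullback (whose nilpotent residues make $\mathrm{Exp}^{\gr}=0$, so condition~(c) is trivial). Two points deserve tightening.

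First, your opening reduction to $X=P$ is both unnecessary and, as stated, not quite well-posed: writing $\sp_{X\hookrightarrow X,Z,+}$ presupposes a global smooth formal lift of $X$, which is not given. The paper avoids this by never leaving the ambient $\PP$: it produces a proper smooth $f\colon\PP'\to\PP$ of formal schemes with a closed immersion $X'\hookrightarrow\PP'$ (using projectivity of $\overline{X}_1$ to embed into a projective space and taking $\PP'$ of the form $\PP\times\widehat{\mathbb{P}}^N_\V$), shows $\E$ is a direct factor of $f_{T,+}\sp_{X'\hookrightarrow\PP',f^{-1}(T),+}(a_0^*E)$, and only then localises on $\PP'$ to lift $X'$ and its divisor. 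Your localisation should happen at that stage, not before.

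Second, in your final paragraph you work on $\overline{X}_1$ with divisor $D_1$, but Definition~\ref{defi-qunipt} only furnishes the log-extension on $(X_1,f^{-1}(Z))$, i.e.\ on $X'$ with divisor $a_0^{-1}(Z)$ in the paper's notation --- not on the compactification. Replace $\overline{X}_1,D_1$ by $X_1,f^{-1}(Z)$ throughout; the parenthetical about a ``further de Jong alteration'' is then unnecessary (smooth affines lift). Your explicit invocation of Theorem~\ref{theorhoiso} alongside \ref{theorhoiso2} is correct and in fact clarifies a step the paper leaves implicit: \ref{theorhoiso2} gives overholonomicity of $u'_+(\E_1)$, and one needs $\rho$ to identify this with $\E_1(\hdag Z')=\sp_*(a_0^*E)$.
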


\begin{proof}
Since $E$ admits a semi-stable reduction (see \ref{semi-stable}),
there exists a commutative diagram of the form:
\begin{equation}
  \label{holo-qunip-diag}
  \xymatrix @R=0,3cm {
  { Y '} \ar[r] \ar[d] ^{b_0} & {X'} \ar[r] ^{\iota _0'} \ar[d] ^{a_0} & {\PP'} \ar[d] ^f \\
  {Y} \ar[r] & {X } \ar[r] ^{\iota _0} & {\PP,}}
\end{equation}
such that $f$ is a proper smooth morphism of smooth formal $\V$-schemes,
the left square is cartesian, $X'$ is a smooth scheme over $k$, $\iota _0'$ is a closed immersion,
  $a_0$ is a projective, surjective, generically finite and \'etale morphism,
  $a_0 ^{-1} (Z)$ is a strict normal crossing divisor of $X'$
  and the $F$-isocrystal $a_0^* (E)$ on $Y'$ overconvergent along $a_0 ^{-1} (Z)$ is log-extendable on $X'$.
  We pose $\E:= \sp _{X \hookrightarrow \PP, T, +} (E)$.
We have $\R \underline{\Gamma} ^\dag _{X'} f _T ^! (\E ) \riso
\sp _{X' \hookrightarrow \PP', f ^{-1}(T),+} (a_0^* (E))$.
By \cite[6.1.4]{caro_devissge_surcoh}), $\E \in F\text{-}\mathrm{Isoc}^{\dag \dag} (\PP, T, X/K)$.
Then by \cite[6.3.1]{caro_devissge_surcoh}, we check that $\E$ is a direct factor of
  $f _{T,+} \sp _{X' \hookrightarrow \PP', f ^{-1} (T),+} (a_0^* (E))$.
Since the overholonomicity is stable under direct image by a proper morphism,
it is sufficient to prove that
$\sp _{X' \hookrightarrow \PP', f ^{-1}(T),+} (a_0^* (E))$ is
overholonomic. This last statement is local in $\PP'$. Then,
we can suppose that there exists a lifting $\iota '$ : $\X' \hookrightarrow \PP'$ of $\iota _0 '$ and that
$a_0 ^{-1} (Z)$ lifts to a strict normal crossing divisor $\ZZ'$ of $\X'$ over $\S$.
Then,
$\sp _{X' \hookrightarrow \PP', f ^{-1}(T),+} (a_0^* (E))
\riso
\iota ' _+ \sp _* (a_0^* (E))$, where
$\sp$ : $ \X' _K \rightarrow \X'$ is the specialization morphism of $\X'$.
It remains to check that $\sp _* (a_0^* (E))$ is
overholonomic.
But since $a_0^* (E)$ is an $F$-isocrystal on $Y'$ overconvergent along $a_0 ^{-1} (Z)$ which is log-extendable on $X'$,
it follows from \ref{theorhoiso2} that $\sp _* (a_0^* (E))$ is overholonomic.
\vspace*{3mm}

\end{proof}

The following theorem answers partially positively to the conjecture \cite[3.2.25.1]{caro-2006-surcoh-surcv}:
\begin{theo}
  \label{holo-qunip}
  Let $Y$ be a smooth separated scheme of finite type over $k$.
  Let $E$ be an overconvergent $F$-isocrystal on $Y$.
Then $\sp _{Y+} (E)$ is an overholonomic arithmetic $\D _Y$-module (see \cite[3.2.10]{caro_surcoherent}), where
$\sp _{Y,+}:\, F\text{-}\mathrm{Isoc} ^{\dag}( Y/K )\cong F\text{-}\mathrm{Isoc} ^{\dag \dag}( Y/K )$
is the canonical equivalence from
the category of overconvergent $F$-isocrystals on $Y$ into the category of overcoherent $F$-isocrystals on $Y$
(see \cite[2.3.1]{caro-2006-surcoh-surcv}).
\end{theo}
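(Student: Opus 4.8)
The plan is to reduce the assertion to the local ``frame'' situation already settled in Theorem~\ref{holo-qunip0}, using two facts: that overholonomicity of a complex of arithmetic $\D$-modules over a $k$-variety is a Zariski-local property, and that the equivalence $\sp _{Y,+}$ is constructed precisely so as to be compatible with localisation on $Y$.

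First I would recall the relevant inputs. By \cite[2.3.1]{caro-2006-surcoh-surcv}, the category $F\text{-}\mathrm{Isoc} ^{\dag \dag}(Y/K)$ of overcoherent $F$-isocrystals on $Y$ is obtained by glueing the local objects $\sp _{X \hookrightarrow \PP, T, +}(E)$ attached to frames $(Y', X, \PP)$ with $Y'$ an open of $Y$; the functor $\sp _{Y,+}$ is the resulting equivalence, and by construction $\sp _{Y,+}(E)|_{Y'} \riso \sp _{X \hookrightarrow \PP, T, +}(E|_{Y'})$ for any such frame, independently of the auxiliary smooth formal lift $\PP$. On the other hand, by \cite[3.2.10]{caro_surcoherent} (see also \cite[3.1]{caro_surholonome}) an object of $D ^\mathrm{b} _\mathrm{coh}(\D ^\dag _{Y,\Q})$ is overholonomic as soon as its restrictions to the members of some open covering of $Y$ are overholonomic. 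Hence it suffices to check overholonomicity of $\sp _{Y,+}(E)$ after restriction to each member of an affine open covering of $Y$.

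Next I would pick a finite covering $Y = \bigcup _i Y_i$ by affine opens. Since $Y$ is smooth, each $Y_i$ is a smooth affine $k$-scheme, hence lifts to a separated smooth affine formal $\V$-scheme $\PP _i$ with special fibre $Y_i$. I then apply Theorem~\ref{holo-qunip0} to the frame given by $\PP := \PP _i$, the closed smooth subscheme $X := Y_i$, the empty divisor $T := \emptyset$ (so $Z := T \cap X = \emptyset$ is a divisor of $X$ and $Y_i = X \setminus Z$), and the $F$-isocrystal $E|_{Y_i}$ on $Y_i$ overconvergent along the empty divisor: this gives that $\sp _{Y_i \hookrightarrow \PP _i, \emptyset, +}(E|_{Y_i})$ is overholonomic. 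Combining this with the identification $\sp _{Y,+}(E)|_{Y_i} \riso \sp _{Y_i \hookrightarrow \PP _i, \emptyset, +}(E|_{Y_i})$ and with the local criterion for overholonomicity recalled above, I obtain that $\sp _{Y,+}(E)$ is overholonomic.

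The one genuinely non-formal point --- really a matter of bookkeeping rather than of a new idea --- will be the compatibility isomorphism $\sp _{Y,+}(E)|_{Y_i} \riso \sp _{Y_i \hookrightarrow \PP _i, \emptyset, +}(E|_{Y_i})$, i.e.\ that the globally glued object $\sp _{Y,+}(E)$ coincides locally with the frame-wise object produced by $\sp _{X \hookrightarrow \PP, T, +}$; this is contained in the very construction of $F\text{-}\mathrm{Isoc} ^{\dag \dag}(Y/K)$ and of $\sp _{Y,+}$ in \cite{caro-2006-surcoh-surcv}, together with the independence of $\sp _{X \hookrightarrow \PP, T, +}$ of the chosen lift of $X$. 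Everything else is the reduction to Theorem~\ref{holo-qunip0}, whose own proof already absorbs Kedlaya's semistable reduction (Theorem~\ref{semi-stable}) and Theorem~\ref{theorhoiso2}.
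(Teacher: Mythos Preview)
Your localisation step is fine, but the frame you choose after restricting to an affine open $Y_i$ is not. Taking $\PP_i$ to be an affine lift of $Y_i$ with $T=\emptyset$ forces $Z=\emptyset$, so the input to Theorem~\ref{holo-qunip0} in that situation is an $F$-isocrystal on $Y_i$ overconvergent along the \emph{empty} divisor, i.e.\ merely a convergent $F$-isocrystal. The object $\sp_{Y,+}(E)|_{Y_i}$, however, lives in $F\text{-}\mathrm{Isoc}^{\dag\dag}(Y_i/K)$ and records the overconvergence of $E$ along the whole boundary of a compactification of $Y_i$; it is \emph{not} the object $\sp_{Y_i \hookrightarrow \PP_i,\emptyset,+}$ attached to the underlying convergent isocrystal. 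The compatibility you quote from \cite{caro-2006-surcoh-surcv} is stated for frames $(Y',X,\PP)$ with $\PP$ proper and $X$ the closure of $Y'$, precisely so that the overconvergence is visible along $T$; your affine frame with $T=\emptyset$ falls outside that setting, and the claimed identification $\sp_{Y,+}(E)|_{Y_i}\riso\sp_{Y_i\hookrightarrow\PP_i,\emptyset,+}(E|_{Y_i})$ fails.

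This is exactly why the paper proceeds differently. After reducing to $Y$ affine it immerses $Y$ in a \emph{proper} smooth formal $\V$-scheme $\PP$, takes $X$ to be the closure of $Y$ in $P$, and chooses a divisor $T$ of $P$ with $Y=X\setminus T$, so that $\sp_{Y,+}(E)$ genuinely identifies with $\sp_{X\hookrightarrow\PP,T,+}(E)$. The price is that $X$ may be singular, and an additional step --- de Jong's desingularisation, exactly as in the proof of Theorem~\ref{holo-qunip0} --- is required to reduce to smooth $X$ before Theorem~\ref{holo-qunip0} applies. Your shortcut of picking $X=Y_i$ smooth from the outset discards the overconvergence and therefore breaks the link with $\sp_{Y,+}(E)|_{Y_i}$.
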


\begin{proof}
  The theorem is local in $Y$. We can suppose $Y$ affine and then that
there exists an immersion of $Y$ into in proper smooth formal $\V$-scheme $\PP$,
a divisor $T$ of $P$ such that $Y =X \setminus T$ where
$X$ is the closure of $Y$ in $P$. Let $Z := X \cap T$ and $\E:= \sp _{Y+} (E) \in F\text{-}\mathrm{Isoc} ^{\dag \dag} (Y/K) =
F\text{-}\mathrm{Isoc}^{\dag \dag} (\PP, T, X/K)$ (notation of \cite[6.2.1]{caro_devissge_surcoh} and \cite[2.2.4]{caro-2006-surcoh-surcv}).

Using de Jong's desingularization, we come down to the case where $X$ is smooth
(similarly to the proof of \ref{holo-qunip0}), which was already checked in \ref{holo-qunip0}.
\end{proof}

\begin{theo}
\label{theo-overcoh=dev}
Let $\PP$ be a proper smooth formal scheme over $\V$, $T $ a divisor of $P$,
$\E \in F \text{-}D ^\mathrm{b} _\mathrm{coh} (\D ^\dag _{\PP} (\hdag T) _{\Q})$.
Then the following assertion are equivalent:
\begin{enumerate}
  \item The $F$-complex $\E$ is $\D ^\dag _{\PP} (\hdag T) _{\Q}$-overcoherent;
  \item The $F$-complex $\E$ is $\D ^\dag _{\PP,\Q}$-overcoherent;
   \item The $F$-complex $\E$ is overholonomic;
  \item The $F$-complex $\E$ is devissable in overconvergent $F$-isocrystals.
\end{enumerate}
\end{theo}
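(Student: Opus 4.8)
The plan is to prove the equivalence by closing the cycle $(3)\Rightarrow(1)\Rightarrow(4)\Rightarrow(3)$ together with $(3)\Rightarrow(2)\Rightarrow(4)$; since then (1) and (2) are both sandwiched between (3) and (4), all four assertions become equivalent and one never has to compare (1) with (2) directly. The implications $(3)\Rightarrow(1)$ and $(3)\Rightarrow(2)$ are immediate from the very construction of the category of overholonomic $F$-complexes in \cite[3.1]{caro_surholonome}: overholonomicity is by definition a property of overcoherent complexes, and via the forgetful functor an overholonomic $\E\in F\text{-}D^\mathrm{b}_\mathrm{coh}(\D^\dag_\PP(\hdag T)_\Q)$ is overcoherent both over $\D^\dag_\PP(\hdag T)_\Q$ and over $\D^\dag_{\PP,\Q}$. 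The implications $(1)\Rightarrow(4)$ and $(2)\Rightarrow(4)$ are exactly the ``second step'' recalled in the introduction: an overcoherent $F$-complex of arithmetic $\D$-modules (for either ring structure) is devissable in overconvergent $F$-isocrystals, by \cite{caro_devissge_surcoh} and \cite{caro-2006-surcoh-surcv}, using the equivalence \ref{Ddag=overcvcondition} between overconvergent and overcoherent $F$-isocrystals.

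Hence the only genuinely new implication is $(4)\Rightarrow(3)$, and this is where the results of the present paper enter. Assume $\E$ is devissable in overconvergent $F$-isocrystals. Unwinding the definition (cf.\ \ref{defi-smthdevi} and \ref{rema-smthdevi}), there is a finite family of divisors $T_1,\dots,T_r$ of $X$ with $T_r=\emptyset$ and a chain of localization triangles
$$\R\underline{\Gamma}^\dag_{X_{i+1}}(\E)\to\R\underline{\Gamma}^\dag_{X_i}(\E)\to\R\underline{\Gamma}^\dag_{Y_i}(\E)\to\R\underline{\Gamma}^\dag_{X_{i+1}}(\E)[1],$$
where $X_i=X\cap T_1\cap\dots\cap T_i$, $Y_i=X_i\setminus T_{i+1}$, each $\overline{Y}_i$ is smooth, and the cohomology sheaves of $\R\underline{\Gamma}^\dag_{Y_i}(\E)$ lie in the essential image of $\sp_{\overline{Y}_i\hookrightarrow\PP,T_{i+1},+}$, i.e.\ are overconvergent $F$-isocrystals on the smooth variety $Y_i$ (overconvergent along the divisor $\overline{Y}_i\cap T_{i+1}$ of $\overline{Y}_i$). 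By Theorem \ref{holo-qunip0} each such sheaf is overholonomic; since a bounded complex with overholonomic cohomology is overholonomic, each graded piece $\R\underline{\Gamma}^\dag_{Y_i}(\E)$ is overholonomic. As the category of overholonomic $F$-complexes is triangulated and stable under $\R\underline{\Gamma}^\dag_{\widetilde{X}}$, shifts, cones, direct images by proper morphisms and extraordinary inverse images --- the stability results of \cite{caro_surholonome} --- one descends the triangles above from $\R\underline{\Gamma}^\dag_{X_r}(\E)=0$ up to $\R\underline{\Gamma}^\dag_{X_0}(\E)=\E$ and concludes that $\E$ is overholonomic. This proves $(4)\Rightarrow(3)$ and closes the cycle.

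The main obstacle is therefore not in the present statement but entirely in Theorem \ref{holo-qunip0} --- the overholonomicity of $\sp_{X\hookrightarrow\PP,T,+}(E)$ for an overconvergent $F$-isocrystal $E$ on a smooth $Y$ --- which is precisely what the whole paper builds toward. That result reduces, via Kedlaya's semistable reduction theorem \ref{semi-stable} together with proper generically \'etale descent (licensed by the stability of overholonomicity under proper direct images and extraordinary inverse images), to the log-extendable case, i.e.\ to Theorem \ref{theorhoiso2} on the overholonomicity of $u_+(\E)$ under the Liouville-freeness condition (c); and this in turn rests on the comparison theorem \ref{Nobuo} (hence \ref{Nobuorigid}) of the first chapter and on the reduction to affine space \ref{rhoisolemm2}. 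Once Theorem \ref{holo-qunip0} is granted, the proof of the present theorem is purely a matter of assembling the implications above, the only subtle point being to notice that $(1)\Leftrightarrow(2)$ then comes for free.
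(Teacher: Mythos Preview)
Your proof is correct and takes essentially the same approach as the paper. The paper's proof is organized as a single cycle $(1)\Rightarrow(4)\Rightarrow(3)\Rightarrow(2)\Rightarrow(1)$: it cites \cite[3.1.2]{caro-2006-surcoh-surcv} for $(1)\Rightarrow(4)$, Theorem~\ref{holo-qunip} for $(4)\Rightarrow(3)$, and notes that $(3)\Rightarrow(2)\Rightarrow(1)$ are obvious. Your version uses a few more arrows and spells out the devissage triangles for $(4)\Rightarrow(3)$ more explicitly (invoking \ref{holo-qunip0} rather than \ref{holo-qunip}, which is fine since in the smooth devissage one already has the required embedding $\overline{Y}_i\hookrightarrow\PP$), but the substance is identical.
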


\begin{proof}
By \cite[3.1.2]{caro-2006-surcoh-surcv},
if $\E$ is $F$-$\D ^\dag _{\PP} (\hdag T) _{\Q}$-overcoherent
then there exists a devissage of
$\E$ in overconvergent $F$-isocrystals.
By \ref{holo-qunip}, if there exists a devissage of
$\E$ in overconvergent $F$-isocrystals then $\E$ is overholonomic.
Finally, it is obvious that if $\E$ is overholonomic then
$\E$ is $\D ^\dag _{\PP,\Q}$-overcoherent
and that if $\E$ is $\D ^\dag _{\PP,\Q}$-overcoherent then $\E$ is $\D ^\dag _{\PP} (\hdag T) _{\Q}$-overcoherent.
\end{proof}

We end this section with the following consequences of \ref{theo-overcoh=dev} explained respectively in \cite[3.2.26.1]{caro-2006-surcoh-surcv}
and \cite[5.8]{caro-stab-prod-tens}:

\begin{coro}
Let $\PP$ be a proper smooth formal scheme over $\V$, $T $ a divisor of $P$,
$Y$ a subscheme of $P$.

\begin{enumerate}
\item We have an equivalence between the category of quasi-coherent $F$-complexes devissable in overconvergent $F$-isocrystals and
the category of coherent $F$-complexes devissable in overconvergent $F$-isocrystals, i.e.,
$$F \text{-} \smash[b]{\underset{^{\longrightarrow }}{LD }}  ^\mathrm{b} _{\Q, \mathrm{dev}}
(\overset{^\mathrm{g}}{} \smash{\widehat{\D}} _{\PP} ^{(\bullet)} (T ))
\cong
F\text{-}D ^\mathrm{b} _{\mathrm{dev}}
(\smash{\D} ^\dag _{\PP} (\hdag T) _\Q).$$

  \item Denoting by $F\text{-}D ^\mathrm{b} _\mathrm{ovhol} (\smash{\D} _{Y }) $,
  the category of overholonomic $F$-complexes of arithmetic $\D _Y$-modules,
  we get a canonical tensor product:
\begin{equation}\label{impl-de-conj-otimes}
  -\smash{\overset{\L}{\otimes}} ^\dag _{\O _{Y} } -
  \ : \
F\text{-}D ^\mathrm{b} _\mathrm{ovhol} (\smash{\D} _{Y }) \times
F\text{-}D ^\mathrm{b} _\mathrm{ovhol} (\smash{\D} _{Y })
\rightarrow
F\text{-}D ^\mathrm{b} _\mathrm{ovhol} (\smash{\D} _{Y }).
\end{equation}

\end{enumerate}

\end{coro}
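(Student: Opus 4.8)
Both statements are to be read as formal consequences of Theorem~\ref{theo-overcoh=dev}, which identifies, for an $F$-complex $\E\in F\text{-}D^\mathrm{b}_\mathrm{coh}(\D^\dag_\PP(\hdag T)_\Q)$ on a proper smooth formal $\V$-scheme, the four properties: $\D^\dag_\PP(\hdag T)_\Q$-overcoherence, $\D^\dag_{\PP,\Q}$-overcoherence, overholonomicity, and devissability in overconvergent $F$-isocrystals. The plan is to feed this equivalence into the two arguments already written down in \cite[3.2.26.1]{caro-2006-surcoh-surcv} and \cite[5.8]{caro-stab-prod-tens}, which were conditional on exactly this identification.

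\textbf{Part (1).} First I would recall that the functor $\underset{\longrightarrow}{\lim}$ already induces an equivalence $F\text{-}\smash[b]{\underset{^{\longrightarrow }}{LD }}^\mathrm{b}_{\Q,\mathrm{coh}}(\overset{^\mathrm{g}}{}\smash{\widehat{\D}}_\PP^{(\bullet)}(T))\cong F\text{-}D^\mathrm{b}_\mathrm{coh}(\smash{\D}^\dag_\PP(\hdag T)_\Q)$ between the coherent subcategories (compatibly with Frobenius, as in the discussion preceding \ref{hdagDtildeDcoh} and as in \cite[4.2.4]{Beintro2}). The only point to check is that the restriction of this equivalence to devissable objects is well defined in both directions, i.e. that "devissable in overconvergent $F$-isocrystals" is an intrinsic property of the object of $F\text{-}D^\mathrm{b}_\mathrm{coh}(\smash{\D}^\dag_\PP(\hdag T)_\Q)$ and that a quasi-coherent complex admitting a smooth devissage in overconvergent $F$-isocrystals is automatically coherent (this last fact follows since the successive $\underset{\longrightarrow}{\lim}\R\underline{\Gamma}^\dag_{Y_i}$ are coherent, being essentially images of overconvergent isocrystals, and the devissage triangles then propagate coherence). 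Both facts are supplied by Theorem~\ref{theo-overcoh=dev}: devissability in overconvergent $F$-isocrystals coincides with $\D^\dag_\PP(\hdag T)_\Q$-overcoherence, a notion formulated purely inside $D^\mathrm{b}_\mathrm{coh}(\D^\dag_\PP(\hdag T)_\Q)$. Hence the restricted functor gives the asserted equivalence $F\text{-}\smash[b]{\underset{^{\longrightarrow }}{LD }}^\mathrm{b}_{\Q,\mathrm{dev}}(\overset{^\mathrm{g}}{}\smash{\widehat{\D}}_\PP^{(\bullet)}(T))\cong F\text{-}D^\mathrm{b}_{\mathrm{dev}}(\smash{\D}^\dag_\PP(\hdag T)_\Q)$; this is exactly the argument of \cite[3.2.26.1]{caro-2006-surcoh-surcv}, now unconditional.

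\textbf{Part (2).} Here I would combine the second step of the introduction with Theorem~\ref{theo-overcoh=dev}. By \cite{caro-stab-prod-tens}, $F$-complexes devissable in overcoherent $F$-isocrystals are stable under internal and external tensor products; since over a smooth $k$-variety overcoherent $F$-isocrystals are identified with overconvergent $F$-isocrystals (\cite{caro_devissge_surcoh}, \cite{caro-2006-surcoh-surcv}), devissability in overconvergent $F$-isocrystals is likewise stable under tensor products. By Theorem~\ref{theo-overcoh=dev} (applied locally, overholonomicity being a local property that glues), this property is the same as overholonomicity, so $\smash{\overset{\L}{\otimes}}^\dag_{\O_Y}$ preserves $F\text{-}D^\mathrm{b}_\mathrm{ovhol}(\smash{\D}_Y)$, which gives the bifunctor \ref{impl-de-conj-otimes}. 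Together with the stability of overholonomicity under the other four of Grothendieck's operations (\cite{caro_surholonome}), this produces a $p$-adic cohomology stable under all six operations; the remaining bookkeeping is carried out in \cite[5.8]{caro-stab-prod-tens}.

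\textbf{Main obstacle.} There is essentially nothing hard left at this stage: the real content is upstream, in Theorem~\ref{theo-overcoh=dev}, which rests on Theorem~\ref{theorhoiso2} (overholonomicity of $u_+(\E)$ under condition (c)), hence ultimately on the comparison theorem \ref{Nobuorigid}/\ref{Nobuo} between relative log-rigid and rigid cohomology and on Kedlaya's semistable reduction theorem \ref{semi-stable}. Once those are available, the corollary is a formal deduction and the only care needed is to match the categorical conventions of the cited references with those used here.
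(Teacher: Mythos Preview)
Your proposal is correct and follows exactly the paper's own approach: the corollary is stated without an independent proof, merely as a consequence of Theorem~\ref{theo-overcoh=dev} via the arguments already given in \cite[3.2.26.1]{caro-2006-surcoh-surcv} and \cite[5.8]{caro-stab-prod-tens}, and you have simply spelled out those deductions in more detail.
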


\subsection{Some precisions for the case of curves}
In this section, $i$ : $\ZZ \hookrightarrow \X$ is a closed immersion of
separated smooth formal $\V$-schemes such that $\dim X=1$ and $Z$ is a divisor of $X$.
Let $\Y:= \X \setminus \ZZ$, $\X ^\# := (\X,\ZZ)$,
$u $ : $ \X ^\# \rightarrow \X$, $f$ : $\X \rightarrow \S$ be the canonical morphisms
and $ f ^\#:=f \circ u\; : \; \X ^\# \rightarrow  \S$.

The next theorem is slightly better for curves than \ref{theorhoiso}
because we have another divisor $D$.
\begin{prop}
\label{conjrhocurve}
  Let $D$ be a divisor of $X$,
$\E$ be a coherent
$\D_{\X^\#}^\dag(\hdag D) _{ \Q}$-module which is a locally projective
$\O_{\X}(\hdag D)_{ \Q}$-module of finite type.
Suppose that $\E$ satisfies the conditions (a) and (b') (see \ref{Nobuo}),
then the canonical morphism
$\rho$ : $u _{D+} (\E) \rightarrow \E (\hdag Z)$ (see \ref{defi-rho}) is an isomorphism.
\end{prop}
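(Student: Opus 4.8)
The plan is to reduce this one-dimensional statement to the already proved higher-dimensional theorem \ref{theorhoiso}, while gaining the extra divisor $D$ by a localization argument. As in the proof of \ref{theorhoiso} and in \ref{rhoisolemm}, proving that $\rho$ is an isomorphism is equivalent, via the localization exact triangle \ref{exacttriu+F}, to showing that $\R \underline{\Gamma} ^\dag _Z \circ u _{D+} (\E) = 0$. First I would observe that since $\dim X = 1$, the divisor $Z$ is a finite set of closed points, so $Z \cap D$ is either empty or all of $Z$; the interesting case is where $Z \not\subset D$, i.e. where $Z \cap D$ is a proper closed subset of $Z$ (when $Z \subset D$ the functor $\R \underline{\Gamma} ^\dag _Z$ applied to something of the form $(\hdag D)(-)$ vanishes, and the assertion is trivial).

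The core step is to separate the part of $Z$ contained in $D$ from the part not contained in $D$. Shrinking $\X$ (the statement is local on $\X$, and $\R \underline{\Gamma} ^\dag _Z \circ u _{D+}(\E)$ is zero away from $Z$), I would reduce to the situation where $Z$ is a single closed point $z$ with $z \notin D$, so that $D \cap Z$ is empty. In that local situation $D$ is a divisor avoiding $z$, hence the canonical morphism $\D ^\dag _{\X ^\#} (\hdag D) _\Q \to \D ^\dag _{\X ^\#, \Q}$ is an isomorphism near $z$ — more precisely, since $Z \cap D = \emptyset$, the sheaf $\O_{\X}(\hdag D)_\Q$ agrees with $\O_{\X,\Q}$ in a neighborhood of $]Z[_\X$, so $\E$ is (locally near $Z$) just a coherent $\D ^\dag _{\X ^\#, \Q}$-module which is locally projective of finite type over $\O_{\X,\Q}$, and $u _{D+}(\E) \riso u _+(\E)$ near $Z$ by \ref{forg+!}. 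Then $\R \underline{\Gamma} ^\dag _Z \circ u _{D+}(\E) \riso \R \underline{\Gamma} ^\dag _Z \circ u _+(\E)$, and the conditions (a) and (b') are preserved since the exponents of $\E$ along the components of $Z$ not contained in $D$ are unchanged. Now \ref{theorhoiso} applies directly and gives $\R \underline{\Gamma} ^\dag _Z \circ u _+(\E) = 0$, hence $\R \underline{\Gamma} ^\dag _Z \circ u _{D+}(\E) = 0$ in the original setting.

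The only real subtlety is the bookkeeping in the reduction to $Z \cap D = \emptyset$: one must check that removing the points of $Z$ that lie in $D$ (by passing to the open complement $\X \setminus (Z \cap D)$, which is legitimate because $\R \underline{\Gamma} ^\dag _Z \circ u _{D+}(\E)$ is supported on $Z$ and its restriction to $Z \cap D$ is visibly killed by the $(\hdag D)$-twist implicit in $u_{D+}$ of a module over $\D ^\dag_{\X^\#}(\hdag D)_\Q$) does not destroy the hypotheses. Concretely, $u_{D+}(\E) \riso (\hdag D)(u_+(\text{``}\E\text{''}))$-type identities and the Mayer–Vietoris / localization triangles from \ref{locfunclog} let one write $\R \underline{\Gamma} ^\dag _Z \circ u _{D+}(\E) \riso \R \underline{\Gamma} ^\dag _{Z \setminus (Z \cap D)} \circ u _{D+}(\E)$, and on the open set $\X \setminus (Z\cap D)$ the divisor $D$ no longer meets the relevant tube. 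I expect this localization/support argument to be the main obstacle, but it is entirely of the same flavor as steps $2^\circ$ and $3^\circ$ in the proof of \ref{theorhoiso} and the Mayer–Vietoris manipulations in \ref{rhoisolemm}; once it is in place, the conclusion is immediate from \ref{theorhoiso}. Alternatively, one could bypass it by noting that in dimension one $\E(\hdag Z) \riso \E(\hdag Z \cup D)$ and directly invoking \ref{theorhoiso} for the coherent $\D^\dag_{\X^\#}(\hdag D)_\Q$-module after checking, as in \ref{Nobuobis} and \ref{spcommDD'}, that the relevant $\R\underline{\Gamma}^\dag_Z$ is insensitive to the auxiliary overconvergence along $D$.
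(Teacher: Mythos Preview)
Your argument is correct, but the paper takes a different and more direct route. Instead of localizing near the points of $Z$ to strip off $D$ and then invoking the full-strength Theorem~\ref{theorhoiso}, the paper applies $f_+$ (with $f:\X\to\S$ the structure morphism) to the localization triangle of $u_{D+}(\E)$ with respect to $Z$; Corollary~\ref{Nobuobis} shows $f_+(\rho)$ is an isomorphism, so $f_+\,\R\underline{\Gamma}^\dag_Z\,u_{D+}(\E)=0$. Since $\R\underline{\Gamma}^\dag_Z\simeq i_+i^!$ and $f\circ i:\ZZ\to\S$ is finite \'etale (because $\dim X=1$ forces $\ZZ$ to be $0$-dimensional and smooth over a perfect field), Lemma~\ref{h+=0is0} gives $i^!u_{D+}(\E)=0$ and hence the vanishing.

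Your reduction---splitting off the points of $Z$ lying in $D$ (where the local cohomology of a $(\hdag D)$-module automatically vanishes) and then shrinking to an open where $D$ is empty so that \ref{theorhoiso} applies verbatim---is sound, and the bookkeeping you flag is indeed routine. The trade-off is that you pass through \ref{theorhoiso}, whose own proof already relies on \ref{rhoisolemm} and hence on \ref{Nobuobis}; so your argument is logically correct but less economical. The paper's approach exploits the one-dimensional geometry (finiteness and \'etaleness of $\ZZ\to\S$) to go straight from \ref{Nobuobis} to the conclusion, keeping the divisor $D$ throughout and avoiding the induction and local-coordinate machinery built into \ref{theorhoiso}.
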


\begin{proof}
By \ref{exacttriu+F}, this is equivalent to check
that $\R \underline{\Gamma} ^\dag _Z \circ u _+ (\E)  =0$.
By applying the functor $f _+$ to the localization triangle of $u_{D+} (\E)$ with respect to $Z$ we get :
  \begin{equation}
    \label{f+exacttriu+F}
    f _+ \circ \R \underline{\Gamma} ^\dag _Z \circ u _+ (\E)  \longrightarrow
    f _+ \circ u_+ (\E)
    \overset{f _+ (\rho)}{\longrightarrow} f _+ (\E(\hdag Z))
    \longrightarrow f _+ \circ  \R \underline{\Gamma} ^\dag _Z \circ u _+ (\E)  [1].
  \end{equation}
Following \ref{Nobuobis}, the morphism
$f _+ \circ u_+ (\E)\rightarrow f _+ (\E(\hdag Z))$ is an isomorphism. Then,
by \ref{f+exacttriu+F},
$f _+ \circ \R \underline{\Gamma} ^\dag _Z \circ u _+ (\E)=0$.
Furthermore, since $\R \underline{\Gamma} ^\dag _Z  \riso i _+ \circ i^!$ (by \cite[4.4.5]{Beintro2}), we get
$(f \circ i) _+ \circ i^!\circ u _+ (\E) \riso f _+ \circ  \R \underline{\Gamma} ^\dag _Z  \circ u _+ (\E) =0 $.
Because $f \circ i$ is finite and \'etale, by \ref{h+=0is0} this implies $i^!\circ u _+ (\E) =0$ and
then $\R \underline{\Gamma} ^\dag _Z \circ u _+ (\E) =0 $.
\end{proof}

\begin{rema}
  Even if the assertions look different,
  the proof of
  \ref{conjrhocurve} is the same as that of \cite[2.3.2]{caro_courbe-nouveau}: here the coherent $\D ^\dag _{\X,\Q}$-module
  is $u _+ (\E)$ and we have replaced the finiteness theorem
  of rigid cohomology (this requires the properness of $\X$ and a Frobenius structure) by \ref{Nobuobis}.
\end{rema}

The following theorem extends \cite[2.3]{caro_courbe-nouveau}
(e.g., notice that here $\X$ does not need to be proper).

\begin{theo}
  \label{233}
Let $\E \in F\text{-}D ^\mathrm{b} _\mathrm{coh} (\D ^\dag _{\X ,\Q})$. The following assertions are equivalent:
\begin{enumerate}
  \item \label{233-i} For any closed point $x$ of $X$, for any lifting $i _x$ of the canonical closed immersion induced by $x$,
  the cohomological spaces of $i^! _x (\E)$ have finite dimension as $K$-vector spaces.
  \item For any divisor $T$ of $X$, the complex $\E (\hdag T)$ belongs to $F\text{-}D ^\mathrm{b} _\mathrm{coh} (\D ^\dag _{\X ,\Q})$.
  \item The complex $\E$ is holonomic.
  \item \label{233-iv} The complex $\E$ is smoothly devissable in partially overconvergent $F$-isocrystals.
    \item \label{233-v} The complex $\E$ is $\D ^\dag _{\X ,\Q}$-overcoherent.
  \item \label{233-vi} The complex $\E$ is overholonomic.
\end{enumerate}
\end{theo}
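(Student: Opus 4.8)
The plan is to prove the chain of equivalences by combining the tools developed in the previous sections with the general machinery of overholonomicity and smooth devissage for curves, most of which is already available from \cite{caro_courbe-nouveau}, \cite{caro_surcoherent} and \cite{caro-2006-surcoh-surcv}. The key new input for curves is Proposition \ref{conjrhocurve}, which upgrades the isomorphism $\rho\,:\,u_{D+}(\E)\riso\E(\hdag Z)$ to the setting where an auxiliary divisor $D$ is allowed; this is exactly what is needed to handle the localization $\E(\hdag T)$ for an arbitrary divisor $T$ of $X$ without losing control of the exponents. The implications will be organized as a cycle, roughly $(\ref{233-vi})\Rightarrow(\ref{233-v})\Rightarrow(\text{holonomic})\Rightarrow(2)\Rightarrow(\ref{233-i})$ together with $(\ref{233-i})\Rightarrow(\ref{233-iv})\Rightarrow(\ref{233-vi})$, so that every assertion is reached.

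\textbf{The easy implications.} First I would dispatch the implications that are formal: overholonomicity implies $\D^\dag_{\X,\Q}$-overcoherence (this is immediate from the definitions, as noted after Theorem \ref{theo-overcoh=dev}); $\D^\dag_{\X,\Q}$-overcoherence implies holonomicity in dimension one by the standard argument (an overcoherent module on a curve has holonomic characteristic variety); and holonomicity implies that $\E(\hdag T)\in F\text{-}D^\mathrm{b}_\mathrm{coh}(\D^\dag_{\X,\Q})$ for every divisor $T$, since $(\hdag T)$ preserves holonomicity on curves (Berthelot's results in dimension one, cf.\ \cite{Beintro2}). The implication from the stability of $\E(\hdag T)$ under coherence to assertion \ref{233-i} follows by taking $T$ to be a divisor containing the given point $x$ and using the localization triangle $\R\underline{\Gamma}^\dag_x(\E)\to\E\to\E(\hdag T)\to$ together with $\R\underline{\Gamma}^\dag_x\riso i_{x+}i_x^!$ (by \cite[4.4.4]{Beintro2}); coherence of the two outer terms forces the cohomology of $i_x^!(\E)$ to be finite-dimensional over $K$ by the usual dimension count, using $f\circ i_x$ finite and \cite{caro_courbe-nouveau}[2.2.12--13].

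\textbf{The substantive implications.} The real content is the passage from the pointwise finiteness condition \ref{233-i} to smooth devissability in partially overconvergent $F$-isocrystals (\ref{233-iv}), and from there to overholonomicity (\ref{233-vi}). For the first of these, the idea is to argue as in \cite{caro_courbe-nouveau}[2.3]: condition \ref{233-i} says that $\E$ is, away from a finite set of points, an overconvergent $F$-isocrystal, and its restriction to each such point is a coherent $\D^\dag_\Q$-module with finite-dimensional cohomology, hence (by \cite{caro_courbe-nouveau}[2.2.12--13]) again associated to an overconvergent $F$-isocrystal on a point; this exhibits a smooth devissage of $\E$ over the stratification $X = Y \sqcup \{\text{points}\}$, where on the open dense part one uses Kedlaya's full faithfulness and the equivalence \ref{Ddag=overcvcondition}. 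For \ref{233-iv}$\Rightarrow$\ref{233-vi}, one invokes Theorem \ref{holo-qunip}: each partially overconvergent $F$-isocrystal appearing in the devissage is $\sp_{\overline{Y}_i\hookrightarrow\X,T_{i+1},+}$ of an overconvergent $F$-isocrystal on a smooth variety $Y_i$, hence overholonomic; since overholonomicity is stable under $i_{+}$ and under the local cohomological functors (Mayer--Vietoris, \cite{caro_surcoherent}[2.2.16]), assembling the devissage triangles gives overholonomicity of $\E$. Finally, I would close the circle by deriving \ref{233-v} and \ref{233-vi} back to holonomicity, or simply note that \ref{233-vi} trivially implies \ref{233-v}.

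\textbf{The main obstacle.} The delicate point I anticipate is not any single implication but rather verifying that the devissage produced from \ref{233-i} genuinely consists of \emph{overconvergent} ($F$-)isocrystals with the Frobenius structure intact, so that Theorem \ref{holo-qunip} (which rests on Kedlaya's semistable reduction, hence on the Frobenius) applies; in particular one must be careful that the localized complex $\E(\hdag T)$ retains its Frobenius structure and that the finiteness in \ref{233-i} is the dimension of the cohomology of $i_x^!$ and not of $i_x^*$ --- this is exactly the subtlety handled in \cite{caro_courbe-nouveau}[2.2.12--13] and in the proof of \ref{conjrhocurve}, where the finiteness theorem of rigid cohomology has been replaced by Corollary \ref{Nobuobis}. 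The second conjectural-flavoured implication --- that pointwise finiteness forces a smooth devissage --- also requires knowing that the ``bad'' locus is a divisor (automatic on a curve, where it is a finite set of closed points), which is why the statement is unconditional here and only conjectural in higher dimension.
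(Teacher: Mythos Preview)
Your substantive implications $(\ref{233-i})\Rightarrow(\ref{233-iv})\Rightarrow(\ref{233-vi})$ are essentially right and match the paper's argument: the paper uses \cite[2.2.17]{caro_courbe-nouveau} to produce a divisor $Z$ with $\E(\hdag Z)$ an overconvergent isocrystal, and then Theorem~\ref{holo-qunip0} to conclude overholonomicity from the devissage. Your discussion of the ``main obstacle'' is also apt.

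However, two of your ``easy implications'' are not easy, and your justifications are either wrong or circular. First, $(\ref{233-v})\Rightarrow(3)$: overcoherence is a statement about preservation of coherence under extraordinary inverse images, not a characteristic-variety condition, so ``an overcoherent module on a curve has holonomic characteristic variety'' is not a valid argument. Second, $(3)\Rightarrow(2)$: stability of holonomicity under $(\hdag T)$ is precisely Berthelot's \emph{conjecture} \cite[5.3.6]{Beintro2}, not a theorem; for curves it is a consequence of the very circle of results being proved here (it uses the overholonomicity of overconvergent $F$-isocrystals, i.e.\ Theorem~\ref{holo-qunip0}). Citing \cite{Beintro2} as if it were already established makes this step circular.

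The paper avoids both problems by routing differently: it takes $(\ref{233-vi})\Rightarrow(\ref{233-v})\Rightarrow(\ref{233-i})$ as the obvious chain (overcoherence gives coherence of $i_x^!(\E)$ directly, hence finite-dimensionality over a point), and then handles the equivalence $(\ref{233-i})\Leftrightarrow(2)\Leftrightarrow(3)$ by rewriting the proof of \cite[2.3.3]{caro_courbe-nouveau}, replacing the old input \cite[2.3.2]{caro_courbe-nouveau} by Theorem~\ref{holo-qunip0}. Finally, your emphasis on Proposition~\ref{conjrhocurve} as the ``key new input'' is misplaced: that proposition plays no role in the proof of Theorem~\ref{233}; the genuine new input is Theorem~\ref{holo-qunip0}.
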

\begin{proof}
To check the equivalence between the three first assertions,
we have only to rewrite the proof of \cite[2.3.3]{caro_courbe-nouveau} where we replace
\cite[2.3.2]{caro_courbe-nouveau} by \ref{holo-qunip0}.

Proof of $\ref{233-i}\Rightarrow \ref{233-iv}$: suppose $\E$ satisfies \ref{233-i}.
By \cite[2.2.17]{caro_courbe-nouveau},
there exists a divisor $Z$ of $X$ such that $\E (\hdag Z)$ is an isocrystal on $X\setminus Z$
overconvergent along $Z$.
Let $i \, :\, \ZZ \hookrightarrow \X$ be a lifting of the $Z \subset X$. Then, by hypothesis,
$i^! (\E)$ is $\O _{\ZZ,\Q}$-coherent.
Hence $\E$ is smoothly devissable in partially overconvergent $F$-isocrystals.
The implication $\ref{233-iv}\Rightarrow \ref{233-vi}$ is a consequence of \ref{holo-qunip0}.
Finally, $\ref{233-vi}\Rightarrow \ref{233-v}\Rightarrow \ref{233-i}$ are obvious.
\end{proof}

For curves the following statement answers positively to
Berthelot's conjecture of \cite[5.3.6.D]{Beintro2} in the case of curves:

\begin{theo}
\label{theoconjD}
  Let $\E \in F\text{-}D ^\mathrm{b} _\mathrm{coh} (\D ^\dag _{\X } (\hdag Z) _{\Q})$ whose restriction on $\Y$
  is a holonomic $F$-$\D ^\dag _{\Y ,\Q}$-module. Then $\E$ is
a holonomic $F$-$\D ^\dag _{\X ,\Q}$-module.
\end{theo}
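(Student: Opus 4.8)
The plan is to compare $\E$ with its localization $\E (\hdag T)$ along a suitable divisor $T \supseteq Z$, chosen so that $\E (\hdag T)$ is an overconvergent $F$-isocrystal to which \ref{holo-qunip0} (hence Kedlaya's semistable reduction) applies, and then to control the local cohomology $\R \underline{\Gamma} ^\dag _T (\E)$ at the finitely many bad points by a finite-dimensionality argument, using the hypothesis that $\E |_{\Y}$ is holonomic. First I would reduce to the case where $\E$ is a single coherent $F$-$\D ^\dag _{\X } (\hdag Z) _{\Q}$-module: holonomicity of a complex over $\D ^\dag _{\X,\Q}$ is tested termwise, the Frobenius structure and the holonomicity of $\E |_{\Y}$ pass to each cohomology sheaf $\H ^j (\E)$ (restriction to the open $\Y$ being exact), and $F\text{-}D ^\mathrm{b} _\mathrm{hol} (\D ^\dag _{\X,\Q})$ is a triangulated subcategory, so an induction on the amplitude via the truncation triangles suffices.

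Using the hypothesis that $\E |_{\Y}$ is holonomic together with the generic structure theory of coherent $F$-$\D ^\dag$-modules on curves, \cite[2.2.17]{caro_courbe-nouveau} (and the finiteness criterion \cite[2.2.12]{caro_courbe-nouveau} at the points of $Z$, to secure $\O$-coherence near $Z$ and not merely over $\Y$) provides a divisor $T$ of $X$ with $Z \subseteq T$ such that $\E (\hdag T)$ is the coherent $\D ^\dag _{\X } (\hdag T) _{\Q}$-module attached to an overconvergent $F$-isocrystal $E$ on $X \setminus T$. Applying \ref{holo-qunip0} with $\PP := \X$ (separated and smooth) and with $X$ its own special fibre, so that $T$ plays the role of both ``$T$'' and ``$Z$'' of that statement, we get $\E (\hdag T) \riso \sp _{X \hookrightarrow \X, T, +} (E)$ overholonomic, hence in particular a holonomic $F$-$\D ^\dag _{\X,\Q}$-module.

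It then remains to identify $\R \underline{\Gamma} ^\dag _T (\E)$, which sits in the localization triangle $\R \underline{\Gamma} ^\dag _T (\E) \to \E \to \E (\hdag T) \to \R \underline{\Gamma} ^\dag _T (\E)[1]$. Since $\E \riso \E (\hdag Z)$ and $Z \subseteq T$, write $T = Z \cup T _1$ with $T _1$ the closure in $X$ of $T \setminus Z$; the Mayer--Vietoris triangle for $Z$ and $T _1$, together with $\R \underline{\Gamma} ^\dag _Z (\E) = 0$ and the composition rule (which forces $\R \underline{\Gamma} ^\dag _{Z \cap T _1}(\E) = \R \underline{\Gamma} ^\dag _{Z \cap T _1} \R \underline{\Gamma} ^\dag _Z (\E) = 0$), yields $\R \underline{\Gamma} ^\dag _{T _1}(\E) \riso \R \underline{\Gamma} ^\dag _T (\E)$. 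Writing $i _1 : \mathfrak{T} _1 \hookrightarrow \X$ for a lift of the closed immersion $T _1 \hookrightarrow X$, with $T _1$ zero-dimensional, we have $\R \underline{\Gamma} ^\dag _{T _1}(\E) \riso i _{1+} i _1 ^! (\E)$ by \cite[4.4.5]{Beintro2}. At a point $x$ of $T _1$ lying in $Z$ one gets $i _x ^! (\E) = 0$, since $i _{x+} i _x ^! (\E) = \R \underline{\Gamma} ^\dag _{\{x\}}(\E) = \R \underline{\Gamma} ^\dag _{\{x\}} \R \underline{\Gamma} ^\dag _Z (\E) = 0$ and $i _{x+}$ is faithful; at a point $x$ of $T _1 \setminus Z \subseteq Y$ the complex $i _x ^! (\E) = i _x ^! (\E |_{\Y})$ has finite-dimensional cohomology over $K$ by \ref{233} (the equivalence of holonomicity with condition \ref{233-i}), since $\E |_{\Y}$ is holonomic. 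Hence $i _1 ^! (\E)$ is a coherent complex of $F$-$\D ^\dag$-modules on the zero-dimensional $\mathfrak{T} _1$, automatically holonomic, so $\R \underline{\Gamma} ^\dag _T (\E) = i _{1+} i _1 ^! (\E)$ is a holonomic $F$-$\D ^\dag _{\X,\Q}$-complex. Being the middle term of a triangle between two such complexes, $\E$ is coherent over $\D ^\dag _{\X,\Q}$ and holonomic, which is the assertion.

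The substantive input is \ref{holo-qunip0}, which packages Kedlaya's semistable reduction theorem, the comparison theorem \ref{Nobuorigid} and the overholonomicity result \ref{theorhoiso2}; once it is granted, the only delicate point is the bookkeeping of supports in the two localization and Mayer--Vietoris triangles, where one has to make sure that every bad point introduced by passing to $\E (\hdag T)$ lies either over $Z$ (where $\E$ carries no local cohomology) or over $Y$ (where the holonomicity of $\E |_{\Y}$ is available).
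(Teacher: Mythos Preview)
Your proof is correct and follows essentially the same route as the paper, which simply instructs one to rerun the proof of \cite[4.3.5]{caro_courbe-nouveau} with \cite[4.3.4]{caro_courbe-nouveau} replaced by \ref{holo-qunip0} and \cite[2.3.3]{caro_courbe-nouveau} replaced by \ref{233}; your argument spells out exactly that devissage via the localization triangle along a divisor $T\supseteq Z$ chosen so that $\E(\hdag T)$ is an overconvergent $F$-isocrystal, and your control of $\R\underline{\Gamma}^\dag_T(\E)$ through $i_x^!$ at the finitely many points of $T\setminus Z$ is the intended use of \ref{233}. One small simplification: since $\E|_\Y$ is assumed to be a \emph{module} and each $\H^j(\E)$ is $\D^\dag_\X(\hdag Z)_\Q$-coherent, the vanishing of $\H^j(\E)|_\Y$ for $j\neq 0$ already forces $\H^j(\E)=0$, so the truncation induction is unnecessary; likewise, in dimension~$1$ one has $T_1=T\setminus Z$ with $T_1\cap Z=\emptyset$, so your case ``$x\in T_1\cap Z$'' is vacuous.
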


\begin{proof}
Replacing \cite[4.3.4]{caro_courbe-nouveau} by \ref{holo-qunip0} and
\cite[2.3.3]{caro_courbe-nouveau} by \ref{233},
it is sufficient to rewrite the proof of \cite[4.3.5]{caro_courbe-nouveau}.
\end{proof}

\begin{rema}
\label{finalremark}
  This Berthelot's conjecture above (of \cite[5.3.6.D]{Beintro2})
  leads to Berthelot's conjecture on the stability of the holonomicity under inverse image.
  This latter conjecture, following \cite{caro_surholonome}, implies that holonomicity equals overholonomicity.
\end{rema}

\providecommand{\bysame}{\leavevmode ---\ }
\providecommand{\og}{``}
\providecommand{\fg}{''}
\providecommand{\smfandname}{et}
\providecommand{\smfedsname}{\'eds.}
\providecommand{\smfedname}{\'ed.}
\providecommand{\smfmastersthesisname}{M\'emoire}
\providecommand{\smfphdthesisname}{Th\`ese}

\bigskip
\noindent Daniel Caro\\
Arithm\'etique et G\'eom\'etrie alg\'ebrique\\
B\^atiment 425\\
Universit\'e Paris-Sud\\
91405 Orsay Cedex\\
France.\\
E-mail : daniel.caro@math.u-psud.fr

\bigskip
\noindent Nobuo Tsuzuki\\
          Department of Mathematics\\
          Graduate School of Science\\
          Hiroshima University\\
          Higashi-Hiroshima 739-8526\\
          Japan.\\
E-mail : tsuzuki@math.sci.hiroshima-u.ac.jp


\begin{thebibliography}{Car06b}



\bibitem[AB01]{AB1}
{\scshape Y. Andr\'e, F.~Baldassarri} --
{\og De Rham cohomology of differential modules on algebraic varieties\fg},
\emph{Progress in Mathematics} \textbf{189}, Birkh\"auser, 2001.

\bibitem[BB04]{BB1}
{\scshape F.~Baldassarri, P.~Berthelot} --
{\og On Dwork cohomology for singular hypersurfaces\fg},
\emph{Geom. Aspects of Dwork theory} Vol. I (2004), p.~177--244.

\bibitem[BC92]{BC1}
{\scshape F.~Baldassarri, B.~Chiarellotto} -- {\og On Chrisol's theorem.
A generalization to system of PDE's with logarithmic singularities
depending upon parameters \fg},
\emph{Contemporary Math.} \textbf{133} (1992), p.~1--24.

\bibitem[BC94]{BC2}
\bysame , {\og Algebraic versus rigid cohomology with logarithmic coefficients \fg},
\emph{Barsotti Symposium in Algebraic Geometry,
Perspective in Math.} \textbf{15}, Academic press (1994), p.~11--50.

\bibitem[Ber90]{Be0}
{\scshape P.~Berthelot} -- {\og Cohomologie rigide et th\'eorie des
  ${\mathcal{D}}$-modules\fg}, $p$-adic analysis (Trento, 1989), Springer,
  Berlin, 1990, p.~80--124.


\bibitem[Ber96a]{Be}
\bysame , {\og Cohomologie rigide et cohomologie rigide \`a
supports propres. Premi\`ere Partie\fg}, preprint, Universit\'e de Rennes (1996).


\bibitem[Ber96b]{Be1}
\bysame , {\og ${\mathcal{D}}$-modules arithm\'etiques. {I}. {O}p\'erateurs
  diff\'erentiels de niveau fini\fg}, \emph{Ann. Sci. \'Ecole Norm. Sup. (4)}
  \textbf{29} (1996), no.~2, p.~185--272.


\bibitem[Ber00]{Be2}
\bysame , {\og {$\mathcal{D}$}-modules arithm\'etiques. {I}{I}. {D}escente par
  {F}robenius\fg}, \emph{M\'em. Soc. Math. Fr. (N.S.)} (2000), no.~81,
  p.~vi+136.

\bibitem[Ber02]{Beintro2}
\bysame , {\og {Introduction \`a la th\'eorie arithm\'etique des
  {$\mathcal{D}$}-modules}\fg}, \emph{Ast\'erisque} (2002), no.~279, p.~1--80,
  Cohomologies {$p$}-adiques et applications arithm\'etiques, {II}.


\bibitem[BGR84]{BGR}
{\scshape S.~Bosch, U.~G\"untzer, R.~Remmert} -- {\og
Non-Archimedean analysis \fg},
\emph{Grundlehren der Mathematischen Wissenschaften} \textbf{261},
Springer-Verlag, Berlin, 1984.


\bibitem[Car04]{caro_surcoherent}
{\scshape D.~Caro} -- {\og {$\mathcal{D}$}-modules arithm{\'e}tiques
  surcoh{\'e}rents. {A}pplication aux fonctions {L}\fg}, \emph{Ann. Inst.
  Fourier, Grenoble} \textbf{54} (2004), no.~6, p.~1943--1996.


\bibitem[Car05a]{caro_surholonome}
\bysame , {\og {$\mathcal{D}$-modules arithm\'etiques surholonomes.}\fg},
  \emph{ArXiv Mathematics e-prints} (2005).

\bibitem[Car05b]{caro-construction}
\bysame , {\og {$\mathcal{D}$-modules arithmétiques associés aux isocristaux
  surconvergents. Cas lisse}\fg}, \emph{ArXiv Mathematics e-prints} (2005).

\bibitem[Car06a]{caro_devissge_surcoh}
\bysame , {\og {D\'evissages des $F$-complexes de $\mathcal{D}$-modules
  arithm\'etiques en $F$-isocristaux surconvergents}\fg}, \emph{Invent. Math.}
  \textbf{166} (2006), no.~2, p.~397--456.

\bibitem[Car06b]{caro_courbe-nouveau}
\bysame , {\og Fonctions {L} associ{\'e}es aux {$\mathcal{D}$}-modules
  arithm{\'e}tiques. {C}as des courbes\fg}, \emph{Compositio Mathematica}
  \textbf{142} (2006), no.~01, p.~169--206.

\bibitem[Car07a]{caro_log-iso-hol}
\bysame , {\og {Log-isocristaux surconvergents et holonomie}\fg}, \emph{ArXiv
  Mathematics e-prints} (2007).

\bibitem[Car07b]{caro-2006-surcoh-surcv}
\bysame , {\og {Overconvergent F-isocrystals and differential
  overcoherence}\fg},
\emph{Invent. Math.}
  \textbf{170} (2007), no.~3, p.~507--539.




\bibitem[Car07c]{caro-stab-prod-tens}
\bysame , {\og {Sur la stabilit\'e par produits tensoriels des $F$-complexes de
  $\D$-modules arithm\'etiques}\fg},
  \emph{ArXiv Mathematics e-prints}, (2007).

\bibitem[Car07d]{caro_caract-surcoh}
\bysame , {\og {{Une caract\'erisation de la surcoh\'erence}}\fg}, \emph{ArXiv Mathematics e-prints}, (2008).





\bibitem[CT03]{descent}
{\scshape B.~Chiarellotto, N.~Tsuzuki} --
{\og Cohomological descent of rigid cohomology for \'etale coverings},
\emph{Rendiconti di Padova} \textbf{109} (2003), p~63--215.

\bibitem[Chr84]{transfer}
{\scshape G.~Christol} -- {\og {Un th\'eor\`eme de transfert pour les disques singuliers r\'eguliers}\fg}, \emph{Ast\'erisque} (1984), no.~119--120, p.~151--168,
  Cohomologie {$p$}-adique.

\bibitem[Cre98]{crew-finite}
{\scshape R.~Crew} -- {\og Finiteness theorems for the cohomology of an overconvergent isocrystal on a curve\fg},
\emph{Ann. scient. \'Ec. Norm. Sup.} $4^e$ s\'erie, \textbf{31} (1998), no.~6, p.~717--763.

\bibitem[Cre06]{crew-arith-D-mod-curve}
\bysame {\og Arithmetic {$\mathcal{D}$}-modules on a formal
  curve\fg}, \emph{Math. Ann.} \textbf{336} (2006), no.~2, p.~439--448.


\bibitem[dJ96]{dejong}
{\scshape A.~J. de~Jong} -- {\og{Smoothness, semi-stability and alterations}\fg},
\emph{Inst. Hautes \'Etudes Sci. Publ. Math.}, \textbf{83} (1996), 51--93.

\bibitem[DGS94]{dgs}
{\scshape B.~Dwork, G.~Gerotto, F.~J.~Sullivan} -- \emph{An introduction to $G$-functions},
{Annals of Mathematics Studies}, \textbf{133},
Princeton University Press, Princeton, NJ, 1994.

\bibitem[Keda]{kedlaya-semistableI}
{\scshape K.~S. Kedlaya} -- {\og {Semistable reduction for overconvergent
  F-isocrystals, I: Unipotence and logarithmic extensions}\fg}, To appear in
  Compositio Mathematica.

\bibitem[Kedb]{kedlaya-semistableII}
\bysame , {\og {Semistable reduction for overconvergent $F$-isocrystals, II: A
  valuation-theoretic approach}\fg}, arXiv:math.NT/0508191.

\bibitem[Kedc]{kedlaya-semistableIII}
\bysame , {\og {Semistable reduction for overconvergent F-isocrystals, III:
  Local semistable reduction at monomial valuations}\fg},
  http://www.citebase.org/abstract?id=oai:arXiv.org:math/0609645.

\bibitem[Kedd]{kedlaya-semistableIV}
\bysame , {\og {Semistable reduction for overconvergent F-isocrystals, IV:
  Local semistable reduction at nonmonomial valuations}\fg},
  http://www.citebase.org/abstract?id=oai:arXiv.org:math/0712.3400v2.

\bibitem[Ked03]{kedlaya_semi-stable}
\bysame , {\og Semistable reduction for overconvergent {$F$}-isocrystals on a
  curve\fg}, \emph{Math. Res. Lett.} \textbf{10} (2003), no.~2-3, p.~151--159.

\bibitem[Ked04]{kedlaya_full-faithful}
\bysame , {\og Full faithfulness for overconvergent $F$-isocrystals\fg},
\emph{Geom. Aspects of Dwork theory} Vol. II (2004),
p.~819--835.


\bibitem[Ked05]{Kedlaya-coveraffinebis}
\bysame , {\og More \'etale covers of affine spaces in positive
  characteristic\fg}, \emph{J. Algebraic Geom.} \textbf{14} (2005), no.~1,
  p.~187--192.

\bibitem[Kie67]{quasistein}
{\scshape R.~Kiehl} -- {\og Theorem A und Theorem B in der
nichtarchimedischen Funktionentheorie\fg}, \emph{Invent. Math.}
\textbf{2} (1967),  p.~256--273.

\bibitem[LS07]{LeStum-book-isoc}
{\scshape B.~Le~Stum} -- \emph{Rigid cohomology}, Cambridge Tracts in
  Mathematics, vol. 172, Cambridge University Press, Cambridge, 2007.



\bibitem[Nag62]{nagata_local}
{\scshape M.~Nagata} -- \emph{Local rings},
{Interscience Tracts in Pure and Applied Mathematics}
\textbf{13}, Interscience Publishers, 1962.


\bibitem[Shi00]{Shiho-log-isocI}
{\scshape A.~Shiho} -- {\og Crystalline fundamental groups. {I}. {I}socrystals
  on log crystalline site and log convergent site\fg}, \emph{J. Math. Sci.
  Univ. Tokyo} \textbf{7} (2000), no.~4, p.~509--656.


\bibitem[Shi02]{Shi1}
{\scshape A.~Shiho} -- {\og Crystalline fundamental groups. II.
Log convergent cohomology and rigid cohomology\fg},
\emph{J. Math. Sci. Univ. Tokyo} \textbf{9} (2002), p.~1--163.



\bibitem[Tsu02a]{Tsu-mono}
{\scshape N.~Tsuzuki} -- {\og Morphisms of ${F}$-isocrystals and the finite
  monodromy theorem for unit-root ${F}$-isocrystals\fg}, \emph{Duke Math. J.}
  \textbf{111} (2002), no.~3, p.~385--418.


\bibitem[Tsu02b]{Tsu1}
{\scshape N.~Tsuzuki} -- {\og On the Gysin isomorphism of rigid cohomology\fg},
\emph{Hiroshima Math. J.} \textbf{29} (1999), p.~479--527.





\end{thebibliography}
\end{document}